\numberwithin{equation}{section}
\definecolor{dred}{rgb}{.8,0,0}
\definecolor{ddmagenta}{rgb}{0.7,0,0.9}
\definecolor{ddcyan}{rgb}{0,0.2,1.0}
\definecolor{dblue}{rgb}{0,0,0.7}
\definecolor{ddgreen}{rgb}{0,0.4,0.4}
\def\trait #1 #2 #3 {\vrule width #1pt height #2pt depth #3pt}
\def\fin{
    \trait .3 5 0
    \trait 5 .3 0
    \kern-5pt
    \trait 5 5 -4.7
    \trait 0.3 5 0
\medskip}
\newcommand{\bele}{\begin{lemm}\begin{sl}}
\newcommand{\enle}{\end{sl}\end{lemm}}
\newcommand{\bedef}{\begin{defi}\begin{sl}}
\newcommand{\eddef}{\end{sl}\end{defi}}
\newcommand{\bete}{\begin{teor}\begin{sl}}
\newcommand{\ente}{\end{sl}\end{teor}}
\newcommand{\beos}{\begin{osse}\begin{rm}}
\newcommand{\eddos}{\end{rm}\end{osse}}
\newcommand{\bepr}{\begin{prop}\begin{sl}}
\newcommand{\empr}{\end{sl}\end{prop}}
\newcommand{\bepro}{\begin{prob}\begin{rm}}
\newcommand{\empro}{\end{rm}\end{prob}}
\newcommand{\bede}{\begin{defin}\begin{sl}}
\newcommand{\edde}{\end{sl}\end{defin}}
\newcommand{\beco}{\begin{coro}\begin{sl}}
\newcommand{\enco}{\end{sl}\end{coro}}
\newcommand{\behy}{\begin{hypo}\begin{sl}}
\newcommand{\enhy}{\end{sl}\end{hypo}}
\newcommand{\thspace}{\hspace{3mm}}
\newcommand{\RR}{\mathbb{R}}
\newcommand{\beeq}[1]{\begin{equation}\label{#1}}
\newcommand{\eddeq}{\end{equation}}
\newcommand{\beeqa}[1]{\begin{eqnarray}\label{#1}}
\newcommand{\eddeqa}{\end{eqnarray}}
\newcommand{\beal}[1]{\begin{align}\label{#1}}
\newcommand{\eddal}{\end{align}}
\newcommand{\bespl}[1]{\begin{split}\label{#1}}
\newcommand{\edspl}{\end{split}}
\newcommand{\bega}[1]{\begin{gather}\label{#1}}
\newcommand{\edga}{\end{gather}}
\newcommand{\beeqax}{\begin{eqnarray*}}
\newcommand{\eddeqax}{\end{eqnarray*}}
\newcommand{\no}{\nonumber}
\newcommand{\weaksto}{{\rightharpoonup^*}}
\newcommand{\weakto}{\rightharpoonup}
\newcommand{\hn}{\|_{L^2(\Omega)}}
\newcommand{\vn}{\|_{\V}}
\newcommand{\dt}{\partial_t}
\newcommand{\dn}{\partial_{\bf n}}
\newcommand{\itt}{\int_0^t}
\newcommand{\io}{\int_\Omega}
\newcommand{\e}{\varepsilon}
\newcommand{\mezzo}{{\frac{1}{2}}}
 \DeclareMathOperator{\dive}{div}
\let\TeXchi\chi
\def\chi{{\setbox0 \hbox{\mathsurround0pt
$\TeXchi$}\hbox{\raise\dp0 \copy0 }}}
\newcommand{\ub}{\mathbf{u}}
\newcommand{\uu}{\mathbf{u}}
\newcommand{\vv}{\mathbf{v}}
\newcommand{\ww}{\mathbf{w}}
\newcommand{\vb}{\mathbf{v}}
\newcommand{\teta}{\vartheta}
\newcommand{\aein}{\text{a.e. in\,}}
\newcommand{\eeta}{{\mbox{\boldmath$\eta$}}}
\newcommand{\zzeta}{{\mbox{\boldmath$\zeta$}}}
\newcommand{\tensore}{\varepsilon({\bf u})}
\newcommand{\tensoret}{\varepsilon({\mathbf{u}_t})}
\newcommand{\forae}{\text{for a.a.}}
\newcommand{\foraa}{\text{for a.a.}}
\def\fine{\hfill\kern4pt \vrule height4pt depth0pt width4pt }
\numberwithin{equation}{section}
\numberwithin{equation}{section}
\newcommand{\Ha}{L^2 (\Omega;\R^d)}
\newcommand{\V}{H^1 (\Omega)}
\newcommand{\Vp}{W^{1,p} (\Omega)}
\newcommand{\boZ}{H_{0}^1(\Omega;\R^d)}
\newcommand{\boY}{H_{0}^2(\Omega;\R^d)}
\newcommand{\pairing}[4]{ \sideset{_{#1 }}{_{ #2}}  {\mathop{\langle #3 , #4  \rangle}}}
\newcommand{\bilh}[3]{a_{\mathrm{el}}({#1}#2,#3)}
\newcommand{\bilj}[3]{a_{\mathrm{vis}}({#1}#2,#3)}
\newcommand{\oph}[2]{\mathcal{E}\left({#1}{#2}\right)}
\newcommand{\ophname}{\mathcal{E}}
\newcommand{\opjname}{\mathcal{V}}
\newcommand{\opj}[2]{\mathcal{V}\left({#1}{#2}\right)}
\newcommand{\opchi}{\mathcal{B}}
\newcommand{\argmin}{\mathrm{Argmin}}
\newcommand{\heat}{\mathsf{c}}
\newcommand{\condu}{\mathsf{K}}
\newcommand{\ental}{w}
\newcommand{\w}{\ental}
\newcommand{\mmu}{{\mbox{\boldmath$\mu$}}}
\newcommand{\utau}[1]{\uu_{\tau}^{#1}}
\newcommand{\wtau}[1]{\w_{\tau}^{#1}}
\newcommand{\btau}[1]{b_{\tau}^{#1}}
\newcommand{\vtau}[1]{v_{\tau}^{#1}}
\newcommand{\chitau}[1]{\chi_{\tau}^{#1}}
\newcommand{\chitaue}[1]{\chi_{\tau,\eps}^{#1}}
\newcommand{\xitau}[1]{\xi_{\tau}^{#1}}
\newcommand{\zetau}[1]{\zeta_{\tau}^{#1}}
\newcommand{\ftau}[1]{\mathbf{f}_{\tau}^{#1}}
\newcommand{\gtau}[1]{g_{\tau}^{#1}}
\newcommand{\Thetatau}[1]{{\Theta^*}_{\tau}^{#1}}
\newcommand{\dtau}[2]{\mathrm{D}_{\tau,{#1}}(#2)}
\newcommand{\duetau}[2]{\mathrm{D}_{\tau,{#1}}^2(#2)}
\newcommand{\pwc}[2]{\overline{#1}_{#2}}
\newcommand{\pwl}[2]{{#1}_{#2}}
\newcommand{\pwwll}[2]{\widehat{#1}_{#2}}
\newcommand{\upwc}[2]{\underline{#1}_{#2}}
\newcommand{\supp}{\mathrm{supp}}
\newcommand{\DDD}[3]{\begin{array}[t]{c}#1\vspace*{-1em}\\_{#2}\vspace*{-.5em}\\_{#3}\end{array}}
\newcommand{\ddd}[3]{\DDD{\begin{array}[t]{c}\underbrace{#1}\vspace*{.6em}\end{array}}{\text{\footnotesize #2}}{\text{\footnotesize #3}}}
\newcommand{\down}{\downarrow}
\newcommand{\ciro}{\mathcal{C}_\rho}
\newcommand{\BV}{\mathrm{BV}}
\newcommand{\expo}{\mathfrak{s}}
\newenvironment{rcomm}{\color{dred} \textsf{R:}\,}{\color{black}}
\newenvironment{bcomm}{\color{dblue}}{\color{black}}
\newenvironment{new}{\color{ddmagenta}}{\color{black}}
\newcommand{\bne}{\begin{new}}
\newcommand{\ene}{\end{new}}
\newcommand{\beric}{\begin{rcomm}}
\newcommand{\eric}{\end{rcomm}}
\newcommand{\bebe}{\begin{bcomm}}
\newcommand{\ebe}{\end{bcomm}}
\newenvironment{rickynew}{\color{ddmagenta}}{\color{black}}
\newcommand{\berin}{\begin{rickynew}}
\newcommand{\erin}{\end{rickynew}}
\newenvironment{rickyrev}{\color{red}}{\color{black}}
\newcommand{\berr}{\begin{rickyrev}}
\newcommand{\err}{\end{rickyrev}}
\newenvironment{newrickyrevnew}{\color{ddmagenta}}{\color{black}}
\newcommand{\nberr}{\begin{newrickyrevnew}}
\newcommand{\nerr}{\end{newrickyrevnew}}
\newenvironment{newrickyrev}{\color{green}}{\color{black}}
\newcommand{\berrn}{\begin{newrickyrev}}
\newcommand{\errn}{\end{newrickyrev}}
\newenvironment{nuovissimoricky}{\color{ddmagenta}}{\color{black}}
\newcommand{\bnvsr}{\begin{nuovissimoricky}}
\newcommand{\envsr}{\end{nuovissimoricky}}
\begin{document}

\title{A degenerating PDE system for
phase transitions and damage}

 \author{%
Elisabetta Rocca \footnote{Dipartimento di Matematica,
Universit\`a di Milano,  Via Saldini 50, 20133 Milano, Italy,
E-Mail: ~~{\tt elisabetta.rocca@unimi.it}. The work of E.R. was
supported by the FP7-IDEAS-ERC-StG Grant \#256872
(EntroPhase).} \and Riccarda Rossi\footnote{Sezione di Matematica del Dipartimento DICATAM,  Universit\`a di Brescia, Via Valotti 9, 25133 Brescia,
Italy, E-Mail: ~~{\tt riccarda.rossi@ing.unibs.it}. R.R. was
partially supported by a MIUR-PRIN 2008 grant for the project
``Optimal mass transportation, geometric and functional
inequalities and applications'' and by the FP7-IDEAS-ERC-StG Grant \#256872
(EntroPhase).} }

\date{}

 \maketitle
%
%---------------------------------------------------------------------

%--------------------------------------------------------------------------

\begin{abstract}
In this paper, we analyze a PDE system %of e
arising in the modeling of phase transition and damage  phenomena
in thermoviscoelastic materials. The resulting evolution equations
in the unknowns $\teta$ (absolute temperature), $\uu$ (displacement), and $\chi$
(phase/damage parameter)
are strongly nonlinearly coupled.
Moreover, the momentum equation for $\uu$ contains  $\chi$-dependent
elliptic operators, which degenerate at the \emph{pure phases} (corresponding to the values
$\chi=0$ and $\chi=1$), making the  \emph{whole} system degenerate.

 That is why, we have to resort to  a suitable weak solvability  notion
for the analysis of the problem: it consists of the weak formulations
 of the heat and momentum equation, and,  for the phase/damage parameter $\chi$, of a generalization of the principle of
 virtual powers, partially mutuated from the theory of rate-independent damage processes.

 To   prove an existence result for this weak formulation,
 an approximating problem is
introduced, where the elliptic degeneracy  of the displacement equation  is  ruled out:
in the framework of damage models,
this corresponds to  allowing for \emph{partial damage}
 only.
 For such
an approximate system, global-in-time existence and well-posedness results are established in various
cases.
 Then, the passage to the
limit to the
degenerate system is performed via suitable variational techniques.
\end{abstract}

\noindent {\bf Key words:}\thspace Phase transitions, damage
phenomena,  { thermoviscoelastic materials,} elliptic degenerate operators,  nonlocal operators, global existence of weak
solutions, continuous dependence.\vspace{4mm}

\noindent {\bf AMS (MOS) subject clas\-si\-fi\-ca\-tion:}\thspace
35K65, 35K92, 35R11,  80A17, 74A45.

%%%%%%%%%%%%%%%%%%%%%%%%%%
%%%%%%%%%%%%%%%%%%%%%%%%%%%%%%%%55
\section{Introduction}
We consider the following PDE system
\begin{align}
& \heat(\teta) \teta_t +\chi_t \teta +\rho\teta \dive (\ub_t)
-\dive(\condu( \teta) \nabla\teta) = g \quad\hbox{in
}\Omega\times (0,T),\label{eq0}
\\
&\ub_{tt}-\dive(a(\chi)\mathrm{R}_v\tensoret+b(\chi)\mathrm{R}_e\tensore-\rho\teta\mathbf{1})={\bf
f}\quad\hbox{in }\Omega\times (0,T),\label{eqI}
\\
&\chi_t +\mu \partial I_{(-\infty,0]}(\chi_t)-\dive({\bf
d}(x,\nabla\chi))+W'(\chi) \ni - b'(\chi)\frac{\tensore \mathrm{R}_e \tensore}2 +
\teta \quad\hbox{in }\Omega \times (0,T),\label{eqII}
\end{align}
which describes a thermoviscoelastic system occupying a reference
domain $\Omega \subset \R^d$, $d \in \{2,3\}$,  supplemented with suitable initial and boundary conditions.
 The symbols $\teta$ and $\uu$ respectively denote the
absolute temperature of the system and the vector of \emph{small
displacements}. Depending on the choices of the functions $a$ and
$b$,
 we obtain a model
\begin{compactenum}
 \item[-] for \emph{phase transitions}: in this case, $\chi$ is the order parameter, standing for the local proportion
of one of the two phases;
\item[-]  for \emph{damage}: in this case, $\chi$ is the damage parameter,
 assessing
 the soundness of the material.
\end{compactenum}
We will assume that $\chi$  takes values
 between $0$ and $1$, choosing $0$ and $1$
 as reference values:
\begin{compactenum}
\item[-] for the \emph{pure phases} in phase change models (for
example, $\chi=0$ stands for the solid phase and $\chi=1$ for the
liquid one in solid-liquid phase transitions, and  one has $0 <
\chi < 1$ in  the so-called \emph{mushy regions});
\item[-] for the completely \emph{damaged} $\chi=0$ and the \emph{undamaged} state $\chi =1$,
respectively, in damage models, while $0 <
\chi < 1$ corresponds to \emph{partial damage}.
\end{compactenum}

\subsection{The model}
\label{ss:1.1}
 Let us now briefly  illustrate  the derivation of the
 PDE system \eqref{eq0}--\eqref{eqII}.  We shall systematically  refer
 for more details to
\cite{rocca-rossi1}, where we dealt with the case of phase transitions in thermoviscoelastic materials,
and just
underline here the main differences with respect to the discussion in \cite{rocca-rossi1}.

 Equation \eqref{eqI},   governing  the evolution of the displacement $\ub$,
  is the classical balance
equation for macroscopic movements (also known as the {\em stress-strain
relation}),  in which inertial effects are  taken into account as well.   It is derived from
the principle of virtual power (cf.~\cite{fremond}),  which yields
\begin{equation}
\ub_{tt}-\dive\sigma =\mathbf{f}\quad \hbox{in }
\Omega\times (0,T),  \label{equieq1}
\end{equation}
 where
the symbol $\dive$ stands
both for the scalar and for the  vectorial divergence operator,
$\sigma
$ is the stress tensor, and
 $\mathbf{f}$  an  exterior volume force. For $
 \sigma$, we adopt    the well-known constitutive law
\begin{equation}\label{constisigma}
\sigma =\sigma^{\mathrm{nd}}+\sigma^{\mathrm{d}}=\frac{\partial {\cal F} }{\partial
\varepsilon ( \mathbf{u})}+\frac{\partial {\cal P} }{\partial
\varepsilon (\mathbf{u}_{t})},
\end{equation}
with $\varepsilon(\ub)$ the linearized symmetric strain tensor, which in the
(spatially) three-dimensional case is given by
 $\varepsilon _{ij}(
\mathbf{u}):=(u_{i,j}+u_{j,i})/2$, $i,j=1,2,3$ (with the commas we
denote space derivatives).  Hence, the   explicit
 expression of
$\sigma$  depends on
the form   of the free energy functional ${\cal F}$ and  of the pseudopotential of dissipation
${\cal P}$.  The former is a function of the
state variables, namely %the order parameter
 $\chi$, its gradient $\nabla\chi$, the absolute temperature $\teta$, and the linearized
symmetric strain tensor $\varepsilon(\ub)$. According to  Moreau's approach
(cf.~\cite{fremond} and references therein),  we include dissipation in the model by means of
 the latter potential, which depends on the dissipative
variables $\nabla \teta$, $\chi_t$, and  $\varepsilon (\mathbf{u}_{t})$. We will  make precise  our choice
for $\mathcal{F}$ and $\mathcal{P}$ below,
cf.\  \eqref{psi} and \eqref{fi}.

{We shall
supplement \eqref{equieq1} with
 a
  zero Dirichlet boundary condition
  %non-displacement prescription
  on the boundary of $\Omega$
\begin{equation}\label{bouu}
\ub=%\ub_t=
{\bf 0}\quad \hbox{on }\partial\Omega\times (0,T)\,,
\end{equation}
yielding a  {\em pure displacement} boundary value problem for
$\uu$, according to the terminology of~\cite{ciarlet}. However,
 our analysis carries over to other kinds of boundary conditions
on $\uu$,} %like Neumann conditions ({\em pure traction} problem) or
%mixed Dirichlet-Neumann conditions ({\em displacement-traction}
%problem),
 see Remark~\ref{rem-other-b.c.}.

Following Fr\'emond's
perspective,
 \eqref{eqI}
is  coupled with the equation of microscopic movements for the phase
variable $\chi$ (cf.\ \cite[p.\ 5]{fremond}), leading to \eqref{eqII}.
Let  ${B}$ (a density of energy function) and
$\mathbf{H}$ (an energy flux vector) represent the internal
microscopic forces  responsible for the mechanically induced heat
sources, and let us denote by  $B^{\text{d}}$ and
$\mathbf{H}^{\text{d}}$ their dissipative parts, and by
$B^{\text{nd}}$ and $\mathbf{H}^{\text{nd}}$ their non-dissipative
parts. Standard constitutive
relations yield
\begin{align}\label{constiB}
B=\,& \,B^{\text{nd}}+B^{\text{d}}=\frac{\partial {\cal F} }{
\partial\chi}+\frac{\partial {\cal P} }{\partial\chi_t}\,,\\
\label{constiH} \mathbf{H}=\,&
\,\mathbf{H}^{\text{nd}}+\mathbf{H}^{\text{d}}=\frac{\partial {\cal F} }{
\partial\nabla\chi}+\frac{\partial {\cal P} }{\partial\nabla\chi_t}\,.
%=\frac{\partial {\cal F} }{
%\partial\nabla\chi},
\end{align}
 %where the latter equality  holds because we have choose  $\mathcal{P}$ to be independent of $\nabla\chi_t$ %(cf.~\eqref{fi} below).
Then, if the volume
amount of mechanical energy provided to the domain by the external
actions (which do not involve macroscopic motions) is zero, the
equation for the microscopic motions can be written as
\begin{equation}\label{mombal}
B-\dive{\mathbf{H}}=0\quad \hbox{in }\Omega \times (0,T),
\end{equation}
 where  $B$ and $\mathbf{H}$ will be  specified according to
the expression of  $\mathcal{F}$ and  $\mathcal{P}$.
{The natural  boundary condition for
this equation of motion is
$$
\mathbf{H}\cdot \mathbf{n}=0\quad \hbox{on }\partial \Omega \times
(0,T), $$
where ${\bf n}$ is the outward unit normal to
$\partial\Omega$. Thus (cf.\ \eqref{psi}) we obtain  the  homogeneous Neumann boundary
condition on $\chi$
\begin{equation}\label{bouchi}
\dn\chi=0\quad\hbox{on }\partial\Omega\times (0,T).
\end{equation}}

Finally, equation \eqref{eq0} is derived from the internal energy balance
\begin{equation}\label{internbal}
e_t+\dive {\bf q}=g+\sigma :\tensoret+B\chi_t+\mathbf{H}\cdot\nabla\chi_t \quad\hbox{in }\Omega\times(0,T),
\end{equation}
where $g$ denotes a heat source and $e$ and ${\bf q}$ are obtained from $\mathcal{F}$
 and $\mathcal{P}$ by means of the standard
constitutive relations
\begin{equation}\label{constieq}
e=\calF-\teta\frac{\partial \calF}{\partial\teta}, \quad {\bf
q}=\frac{\partial \calP}{\partial\nabla\teta}.
\end{equation}
{We couple equation \eqref{internbal} with a  no-flux boundary condition:
\[{\bf q}\cdot{\bf n}=0 \quad\hbox{on }\partial\Omega\times (0,T)\]
implying (cf.\ \eqref{fi}) the homogeneous Neumann  boundary condition
\begin{equation}\label{boteta}
\dn\teta=0\quad\hbox{on }\partial\Omega\times (0,T).
\end{equation}}

From the above relations and the following choices for the free energy functional and of the pseudopotential
of dissipation (cf.~\eqref{psi} and \eqref{fi}), we derive the
PDE system \eqref{eq0}--\eqref{eqII} within the \emph{small perturbation assumption} \cite{germain} (i.e. neglecting
the quadratic
terms $|\chi_t|^2+a(\chi)\tensoret \mathrm{R}_v\tensoret$ on the right-hand side of the heat equation).
In agreement with Thermodynamics (cf.\ \cite{fremond,fne} and  \cite[Sec.\ 4, 6]{fre-newbook}), we choose
  the volumetric free
energy ${\cal F}$  of the form
\begin{equation}\label{psi}
{\cal F} (\teta, \tensore
,\chi,\nabla\chi)= \int_{\Omega}\left( f(\teta)+b(\chi)\frac{\varepsilon(\ub) \mathrm{R}_e\varepsilon(\ub)}{2}
+\phi(x,\nabla\chi) +W(\chi)  -\teta\chi  -\rho\teta\hbox{tr}(\varepsilon(\ub)) \right) \dd x\,,
\end{equation}
where $f$ is a concave function of $\teta$.
Notice that
the symmetric, positive-definite elasticity tensor
$\mathrm{R}_e$ is pre-multiplied by
 a  function $b$ of the phase/damage parameter $\chi$. In particular,
 \begin{compactenum}
 \item[-]
  in
the case of phase transitions in viscoelastic materials, a
meaningful choice for $b$ is $b(\chi)=1-\chi$, or a function
vanishing at $1$ \cite[Sec.\ 4.5, pp.\ 42-43]{fre-newbook}.
  This reflects the fact that we  have
the
 full
elastic contribution of $b(\chi)\varepsilon(\ub)
\mathrm{R}_e\varepsilon(\ub)$ only in the non-viscous phase,
 and that  such a contribution  is null  in the viscous one (i.e.~when $\chi=1$);
 \item[-]
for damage models a significant choice is
instead $b(\chi)=\chi$ (cf.~\cite{fne} and \cite[Sec.\ 6.2, pp.\ 102-103]{fre-newbook}  for further comments on this
topic). The term $\chi\frac{\varepsilon(\ub) \mathrm{R}_e\varepsilon(\ub)}{2}$ represents the classical
elastic contribution in which the stiffness of the material decreases as $\chi$ approaches $0$, i.e.~during the evolution
of damage.
\end{compactenum}
The term $\phi(x,\nabla\chi)+W(\chi)$  is a
\textsl{mixture} or \textsl{interaction free-energy}. We shall suppose that
 $\phi: \Omega \times \R^d \to [0,+\infty)$
is a normal integrand, such that for almost all $x \in \Omega$ the function
$\phi(x,\cdot): \R^d \to [0,+\infty)$ is convex,  $\mathrm{C}^1$,  with $p$-growth, and $p >d$.
 Hence,
 the field
${\bf d}(x,\cdot)= \nabla \phi(x,\cdot)\,:\,\RR^d\to \RR^d$, $x\in \Omega$,
leads to a
$p$-Laplace type operator in \eqref{eqII}.
The prototypical example is $\phi(x,\nabla \chi)= \frac1p |\nabla \chi|^p$, yielding
${\bf d}(x, \nabla\chi):=|\nabla\chi|^{p-2}\nabla\chi$.
Let us point out that the gradient of  $\chi$
accounts for
interfacial energy effects in phase transitions, and  for the influence of damage at a material point,
undamaged in its neighborhood, in  damage models.
In this sense we can say that the term $\frac{1}{p}|\nabla\chi|^p$ models
nonlocality of the phase transition or the damage process,
i.e.\   the feature that  a particular point  is influenced by its surrounding.
 In damage, this leads to possible hardening or softening effects (cf.\ also \cite{bmr} for further comments on this topic).
Gradient regularizations of $p$-Laplacian type
are often adopted  in the mathematical papers on damage  (see for example \cite{bobo2,bosch,hk1,MieRou06,mrz,mt}),
and in the
 modeling literature as well
  (cf., e.g., \cite{fremond,fne,la}).
  In a different context,  a $p$-Laplacian elliptic regularization with $p>d$ has also been exploited
  in
\cite{abels}, in order to study a diffuse interface model for the
flow of two viscous incompressible Newtonian fluids in a bounded domain.

In the following, we will also scrutinize another kind of elliptic regularization in {\eqref{eqII}},
given by the {\sl nonlocal}  $s$-Laplacian
operator on the Sobolev-Slobodeckij space $W^{s,2}(\Omega)$,  hereafter denoted by $A_s$
 (cf.\ \eqref{As} later on for its precise definition).
 Recently,  fractional Laplacian operators have been widely investigated
(cf., e.g., \cite{CafSoug, Valdi} and the  references therein), and used
 in connection with  real-world applications, such as thin obstacle problems,
 finance, material sciences,
but also phase transition and damage phenomena (cf., e.g. \cite{GL} and \cite{Knees-Rossi-Zanini}).
For analytical reasons, we will have to assume $s>d/2$, which ensures the (compact)
embedding $ W^{s,2}(\Omega) \Subset \mathrm{C}^0 (\overline{\Omega})$, in the same way as
$W^{1,p}(\Omega) \Subset \mathrm{C}^0 (\overline{\Omega})$ for $p>d$. This property will play
a crucial role in the \emph{degenerate} limit to complete damage, as it did in~\cite{mrz} within the
rate-independent context, cf.\ Remark \ref{rmk:explanation} for more details.

As for the potential $W$, we  suppose that
\begin{equation}
\label{dabliu}
W = \widehat\beta + \widehat \gamma,
\end{equation}
with $\widehat\beta: \R \to (-\infty,+\infty]$
 convex  and possibly
nonsmooth, and
 $\widehat\gamma : \R \to \R$ smooth  and possibly nonconvex.
We will take the domain of $\widehat{\beta}$ to be
contained in $[0,1]$. Note that, in this way, the values outside
$[0,1]$ (which indeed are not physically meaningful for the order
parameter $\chi$, denoting a phase or damage proportion) are
excluded. Typical examples of functionals which we can include in
our analysis are the logarithmic potential
\begin{equation}
\label{logW} W(r)= r \ln (r) + (1-r) \ln(1-r) -c_1 r^2 -c_2 r -c_3
\quad \text{for } r \in (0,1),
\end{equation}
where $c_1$ and $ c_2$  are positive constants, as well as the sum
of the indicator function $\widehat \beta:= I_{[0,1]}$ with a \emph{nonconvex}
$\widehat\gamma$. In such a  case,
 in
\eqref{eqII}
the derivative
 $W'$ needs to be understood as the subdifferential $\partial W= \partial\widehat\beta +\widehat\gamma' $ in the sense of convex analysis.

The term $\rho\teta\hbox{tr}(\varepsilon(\ub))$ in
\eqref{psi} accounts for the thermal expansion of the system, with
 the thermal expansion coefficient $\rho$  assumed
to be constant (cf., e.g., \cite{KreRoSprWilm}).
 Indeed, one could consider
  more general functions $\rho$
depending, e.g., on the phase parameter $\chi$ and vanishing when $\chi=0$.
  This would be meaningful  especially in damage models, where the
terms associated with
 deformations should disappear once the material is completely damaged (cf., e.g., \cite{bobo2}). We will discuss
the mathematical difficulties   attached to  this extension  in Section~\ref{mathdiff}.

For the pseudo-potential $\mathcal{P}$, following  \cite[Sec.\ 4, 6]{fre-newbook}  we take
\begin{align}\label{fi}
{\cal P} (\nabla\teta, \chi_t, \varepsilon (\mathbf{u}_{t}))=\,&
\,\frac{\mathsf{K}(\teta)}{2}|\nabla\teta|^2
+{\frac{1}{2}}{|\chi_t|^2}  + \mu I_{(-\infty,0]}(\chi_t)
+a(\chi)\frac{\e(\ub_{t})\mathrm{R}_v\e(\ub_t)}{2}\,,
\end{align}
where  %for the sake of simplicity all physical parameters have been set equal to $1$ and
$\mathrm{R}_v$  is  a symmetric and positive
definite viscosity matrix,
premultiplied by a function $a$ of $\chi$. In particular,
for phase change models, one can take for  example  $a(\chi)= \chi$. The underlying physical
interpretation is that
the viscosity term $\chi \e(\ub_{t})\mathrm{R}_v\e(\ub_t)$ vanishes
when we are in the non-viscous phase, i.e.\ in the solid phase $\chi=0$.  Also in damage models
the choice $a(\chi)=\chi$ is considered, cf.\ e.g.\ \cite{mrz}.
The heat conductivity   function $\mathsf{K}$ will be assumed  continuous; for the analysis of
system \eqref{eq0}--\eqref{eqII},
we will need to impose some compatibility conditions on the growth  of $\mathsf{K}(\teta)$ and of  the
heat capacity function   $\mathsf{c}(\teta)=-\teta f{''}(\teta)$
 %(i.e.\ the Legendre transform
 %of $-f$)
 in \eqref{eq0},  see %\eqref{hyp-K}
 Hypothesis (II)
  in Section~\ref{ss:assumptions}.
 Furthermore, in \eqref{fi} $\mu \geq 0$ is a non-negative coefficient: for $\mu>0$
we encompass in our model the \emph{unidirectionality} constraint $\chi_t \leq 0$ a.e.\ in $\Omega \times (0,T)$.
In fact,   throughout the paper  we are going to use the term \emph{irreversible}
in connection with the case in which the
 process under consideration is unidirectional, which is indeed typical of
damage phenomena.

With straightforward computations, from \eqref{constiB}--\eqref{mombal} and using the
form of the free energy functional \eqref{psi} and of the pseudopotential of dissipation \eqref{fi}, we
derive equations \eqref{eq0}--\eqref{eqII},
 neglecting the quadratic contributions in the velocities on the
right-hand side in \eqref{eq0} by means of the aforementioned \emph{small perturbation assumption}~\cite{germain}.
{This is a simplification needed
from the analytical point of view in order to solve the problem.}
Indeed, in a forthcoming paper we plan to tackle the PDE system \eqref{eq0}--\eqref{eqII}, featuring in addition
these quadratic terms in the temperature equation. To do so, we are going to resort to specific techniques, partially mutuated from
\cite{fpr09}, however confining the analysis to some particular cases.
 %However, the case of quadratic dissipative contributions on the right hand side will be the subject of a forthcoming contribution, where we will handle these nonlinearities in some particular cases and with different techniques.}

In fact, to our knowledge only few results are available on
diffuse interface  models in thermoviscoelasticity (i.e.\ also accounting for the evolution of the
displacement variables, besides the temperature and the
order parameter): among others, we quote \cite{fr,fralloys,rocca-rossi1,rocca-rossi2}.
In all of these papers, the small perturbation assumption is adopted. For,
without it in the spatial
three-dimensional case existence results seem to be out of reach, at the moment, even when
the equation for displacements is neglected
(whereas the existence of solutions to the \emph{full} phase change model in the unknowns $\teta$ and $\chi$ has been obtained in $1D$ in \cite{ls1}). This has led to the development of suitable \emph{weak solvability} notions
to handle (the usually neglected) quadratic terms, like in \cite{fpr09} (where however $\ub$ is
 still   taken constant).
 Also in \cite{roubicek-SIAM},
a PDE system coupling the displacement and the temperature equation (with quadratic nonlinearities)  and a \emph{rate-independent} flow rule for an internal dissipative variable $\chi$ (such as the damage parameter) has been
analyzed. Rate-independence means that the evolution equation for $\chi$ has no longer the \emph{gradient flow} structure of \eqref{eqII}:  the term $\chi_t$ therein
is replaced by $\mathrm{Sign}(\chi_t)$, viz.\ in the pseudo-potential $\mathcal{P}$, instead of
the quadratic contribution $\frac12 |\chi_t|^2$ we have the $1$-homogeneous dissipation term $|\chi_t|$.
In the frame of the (weak) \emph{energetic formulation} for rate-independent systems \cite{Mie05}, suitably adapted to the
temperature-dependent case, in \cite{roubicek-SIAM} existence results have been obtained.
A temperature-dependent, \emph{full} model
for (rate-dependent) damage has been addressed
in \cite{bobo2} as well, with local-in-time existence results.
%%%%%%%
%%%%%%

\subsection{Mathematical difficulties and related literature}
\label{mathdiff}

The main difficulties attached to the analysis of system \eqref{eq0}--\eqref{eqII}
are:
\begin{compactenum}
\item[{\bf 1)}] the \emph{elliptic degeneracy} of the momentum
equation \eqref{eqI}: in particular, we allow for the
 positive coefficients
$a(\chi)$ and $b(\chi)$ to tend to zero simultaneously; %dropping the condition
 % $a(\chi)+b(\chi)=1$;
\item[{\bf 2)}]
the \emph{highly nonlinear coupling} between the single equations, resulting in the
 the quadratic terms $\chi_t \teta $,  $\teta \dive(\uu_t)$,  and  $|\e(\ub)|^2$
 in the heat and phase equations \eqref{eq0} and
 \eqref{eqII}, respectively;
 \item[{\bf 3)}]  the \emph{poor regularity} of the temperature variable, which brings
 about
  difficulties in dealing with the
 coupling between equations \eqref{eq0} and \eqref{eqI}
 when we consider the thermal expansion terms (i.e. we take $\rho\neq0$);
\item[{\bf 4)}]  the \emph{doubly nonlinear} character of \eqref{eqII}, due to the
nonsmooth graph $\partial \widehat \beta$ and the nonlinear
 operator $-\mathrm{div}(\mathbf{d}(\nabla \chi)) \sim -\Delta_p \chi$
  (which on the other hand has a key regularizing
role). Furthermore,
if we set  $\mu>0$  in \eqref{eqII} to enforce an irreversible evolution for $\chi$, the simultaneous
presence
of the terms  $-\mathrm{div}(\mathbf{d}(\nabla \chi))$ and $\partial I_{(-\infty,0]}(\chi_t)$  makes it difficult
to derive suitable estimates for $\chi$, also due to the low regularity of the right-hand side of
\eqref{eqII}.
\end{compactenum}
We now partially survey
how each of these problems
has been handled in the recent literature.

As for \textbf{1)}, in \cite{rocca-rossi1,rocca-rossi2}
we have focused on the \emph{phase transition}
  case, in which $a(\chi)=\chi$ and $b(\chi)=1-\chi$. We have proved the local-in time (in the $3D$-setting)
  and the global-in-time (in the $1D$-setting) well-posedness of a  %\emph{degenerating}
  system in thermoviscoelasticity analogous to \eqref{eq0}--\eqref{eqII} (with the
Laplacian  instead of the $p-$Laplacian in \eqref{eqII}, in the case $\mu=0$ and  $\rho=0$, and
for constant heat capacity  and heat conductivity in
\eqref{eq0}). The main idea in \cite{rocca-rossi1,rocca-rossi2}
to handle the possible elliptic degeneracy of \eqref{eqI} is in fact to \emph{prevent it}. Specifically, we have shown that, if the initial datum
$\chi_0$ stays away from the values points $0$ and $1$, so does $\chi$ during this evolution,
guaranteeing
 that the
operators in \eqref{eqI} are uniformly elliptic.
 This \emph{separation property} is proved
 by exploiting a sufficient coercivity of $W$ at the thresholds $0$ and $1$, which for example holds true for
 the logarithmic potential \eqref{logW}.

In \cite{bosch,bss} an  \emph{isothermal} (irreversible) model for damage
 has been considered: therein,
because of the elliptic degeneracy of \eqref{eqI}, the authors only prove a local-in-time existence result.
For (isothermal) \emph{rate-independent} damage models \cite{MieRou06,bmr,mrz,mt},
 the results change significantly:
in this realm, only poor time-regularity of the solution component $\chi$ is to be expected, because
the $1$-homogeneous  dissipation contribution in $\chi_t$
 to $\mathcal{P}$ just ensures $\mathrm{BV}$-estimates for the function  $t \mapsto \chi(x,t)$. That is why,
 one has to resort to the aforementioned  notion of \emph{energetic solution} \cite{Mie05}, in which no time-derivatives of $\chi$
 are featured. Therefore, this concept is very flexible for analysis, and  has allowed for
 handling the (degenerate) case of \emph{complete damage} in \cite{bmr,mrz} by means of a
 specially devised formulation we will refer to later.

Concerning problem \textbf{2)}, as already mentioned
existence results have been obtained in \cite{fpr09} for
a \emph{full} model of phase transitions (in the reversible case $\mu=0$
and for constant $\ub$), even featuring the term
$|\chi_t|^2$ on the right-hand side of the temperature equation. Therein,
  a suitable notion of weak solution is addressed, consisting of the phase equation, coupled with a total
  energy balance and  a weak entropy inequality, for which existence is proved by
relying on an iterative
regularization procedure.
This technique cannot be applied to system \eqref{eq0}--\eqref{eqII}. Nonetheless,
 let us mention that a key assumption in \cite{fpr09} is a suitable growth of the heat conductivity $\mathsf{K}$.
Following \cite{roubicek-SIAM,rossi-roubi}, here we will combine it with conditions on
the heat capacity  coefficient $\mathsf{c}$ to handle the quadratic nonlinearities $\chi_t \teta$
 and  $\teta\dive (\ub_t)$   in \eqref{eq0}.

 Due to the lack of ``good" a priori estimates
 for $\teta$
  mentioned in
  \textbf{3)},
  we will not be able to encompass in our analysis the case of a non-constant
  thermal expansion coefficient $\rho$, e.g.\
  $\rho(\chi)= \chi$, which would still be interesting for damage \cite{bobo2}.
  Indeed, such a choice would lead to an additional term of the
   type $\teta\chi_t\dive (\uu)$ in the heat equation, which
   we would not be able to handle without resorting to further
   regularizations, and possibly proving only local-in-time existence results.
   Nonetheless, let us stress
   that, especially in case of phase transition phenomena,
the choice of a constant $\rho$  is quite reasonable  (cf., e.g., \cite{KreRoSprWilm}).

As for \textbf{4)}, in \cite{hk1} (dealing with Cahn-Hilliard systems coupled with elasticity and damage processes; see
also \cite{hk2}), the authors have devised a weak formulation of
\eqref{eqII} (in the irreversible case $\mu=1$) which  has allowed them to circumvent its triply nonlinear
character. Such a formulation strongly relies on the
 special choice
$\widehat{\beta}(\chi)= I_{[0,+\infty)}(\chi)$ (which, joint with the irreversibility constraint,
 still ensures that $\chi$ takes values in the meaningful interval $[0,1]$,
provided that $\chi_0 \in [0,1]$).
It consists of a \emph{one-sided} variational inequality (i.e.\ with test functions
having a fixed sign), and of an \emph{energy inequality}, see \eqref{weaksol-intro} later.

%%%%%%%
%%%%%%
\subsection{Our results}
Unlike \cite{rocca-rossi1, rocca-rossi2}, here we shall not
enforce  separation of $\chi$
from the threshold values $0$ and $1$, and accordingly  we will  allow for general initial configurations
of $\chi$. Then, it is not to be expected that
either of the coefficients $a(\chi)$ and $b(\chi)$
stay away from $0$, which results in the elliptic degeneracy of
the displacement equation \eqref{eqI}.
To handle it, we shall approximate
system \eqref{eq0}--\eqref{eqII}
with a non-degenerating one, where we replace \eqref{eqI} with
\begin{equation}
\label{eqI-delta}
\ub_{tt}-\dive((a(\chi)+\delta)\mathrm{R}_v\tensoret+ (b(\chi)+\delta) \mathrm{R}_e\tensore-\rho\teta{\bf 1})={\bf
f}\quad\hbox{in }\Omega\times (0,T), \qquad \text{for } \delta>0.
\end{equation}
{Let us note  that,  to rule out the degeneracy,
it is sufficient to truncate away from zero only the coefficient $a(\chi)$ of the \emph{viscous} part of the elliptic operator in the momentum
equation. However,
for technical reasons which will become apparent in Section
\ref{sec:5}   (cf.\ Rmk.\ \ref{rmk:added}),
 when addressing the asymptotic analysis  as $\delta \down 0$ to the \emph{degenerate} limit, we will need to
 truncate the coefficient
 $b(\chi)$  as well, resulting in \eqref{eqI-delta}.}
% \nerr to truncate the coefficient $b(\chi)$ only when addressing the asymptotic analysis to the \emph{degenerate} case  (cf. Section~\ref{sec:5} for further comments on this topic).}
 In the analysis of \eqref{eqI-delta}, we will distinguish the cases $\rho =0$ and $\rho \neq 0$: let us stress that, in the latter,
 there is an additional coupling between the heat and the momentum balance equations, which
 needs to be carefully
handled and indeed requires strengthening of some of our assumptions.
Furthermore, to avoid overburdening the paper we will tackle the case   $\rho \neq 0$
only for the \emph{reversible} system (i.e.\ with $\mu=0$).
More specifically,
in Theorems \ref{teor1}, \ref{teor3} and \ref{teor4},
 we will establish global-in-time
 existence results for the
non-degenerating system (\ref{eq0}, \ref{eqI-delta}, \ref{eqII}) with $\rho=0$,
both in the \emph{reversible}
 and in the \emph{irreversible} cases.
 We will work
under quite general
assumptions on $\heat$ and $\condu$,  basically requiring
that $\heat$ and $\condu$ are bounded from below  and above by the sum of a bounded function and function behaving like a small power of $\teta$ (cf.\ Hypotheses (I) and (II) in Sec.\ \ref{ss:assumptions}).
In Theorem \ref{teor1bis}, we will handle the case $\rho \neq 0, \, \mu=0$ and
prove the existence of global solutions to (\ref{eq0}, \ref{eqI-delta}, \ref{eqII}),
under the
more restrictive assumption that $\condu$ is bounded from below   and above  by a function behaving like
$\teta^{2+\nu}$,
 with  $0 \leq (d-2)/(d+2)<\nu <1$,  % depending on the space dimension $d$,
  cf.\ Hypothesis (VIII).
  A continuous dependence result,  yielding uniqueness of solutions,
  for the non-degenerating \emph{isothermal} reversible system, possibly with $\rho \neq 0$,
  will be given in Theorem \ref{teor2}.
    Finally,
    we will address the
degenerate limit $\delta \down 0$ in Theorem \ref{teor5} in a less
general setting, in particular confining ourselves to the case
$\rho=0$. In what follows, we give more details on Thms.\
\ref{teor1}--\ref{teor5}.

Our first main result Thm.\ \ref{teor1} states the
existence of solutions to
system (\ref{eq0}, \ref{eqI-delta}, \ref{eqII}) {with $\rho=0$,  in the  \emph{reversible} case $\mu=0$,} with the heat equation
\eqref{eq0}  suitably reformulated
by means of
an \emph{enthalpy} transformation
(cf.\ Sec.\ \ref{ss:assumptions}),
switching from the temperature variable $\teta$ to the enthalpy $w$.  Already in the proof of  this global-in-time  existence result,
a key role is played by the aforementioned $p$-growth assumption on the function $\phi$ \eqref{psi}
with $p>d$. In fact, it enables us to derive an estimate for $\chi$ in $L^\infty (0,T;W^{1,p}(\Omega))$,
which in turns   allows for
a suitable regularity estimate on the displacement variable $\uu$, leading to
 a global-in-time bound   on the quadratic
nonlinearity $|\tensore|^2$ on the right-hand side of \eqref{eqII}.
For further details we refer to the proof of Thm.\ \ref{teor1}, developed
by passing to the limit in a carefully designed time-discretization scheme  and exploiting
\textsc{Boccardo\&Gallou\"{e}t}-type estimates on $\teta$.

Relying on the stronger Hyp.\ (VIII),  in the case $\rho \neq 0, \, \mu =0$ we will obtain enhanced estimates
on (the sequence, constructed by time discretization, approximating) $\teta$,  cf.\ also
 Remark \ref{rmk:afterThm2} later on.
These bounds and the related enhanced convergences will enable us to
handle the (passage to the limit in the time discretization of the) thermal expansion terms in \eqref{eq0}
and \eqref{eqI-delta}. In this way, we will conclude the proof of
  the existence Theorem  \ref{teor1bis} for system  (\ref{eq0}, \ref{eqI-delta}, \ref{eqII})
in the case $\rho \neq 0$.

In the reversible and \emph{isothermal} case,   continuous dependence of the solutions on the
initial and problem data
 is proved in Thm.\  \ref{teor2} under  a slightly more restrictive
condition on the field $\phi$, which is however satisfied in the prototypical case of the
$p$-Laplacian operator. %This last result holds true also in case $\rho\neq 0$.

 As
already mentioned, in  the \emph{irreversible} case  $\mu>0$  a major difficulty
in the analysis  of
system (\ref{eq0}, \ref{eqI-delta}, \ref{eqII}) stems from
 the simultaneous presence in \eqref{eqII} of the multivalued operators
$\partial I_{(-\infty,0]}(\chi_t)$ and  $\beta(\chi) = \partial
\widehat \beta (\chi)$, (cf.\ \eqref{dabliu}), as well as of the
$p$-Laplacian type operator $-\mathrm{div}(\mathbf{d}(x,\nabla
\chi))$, which still has  a key role in providing global-in-time
estimates for $|\tensore|^2$. To tackle this problem, following the
approach of \cite{hk1} we restrict to the yet meaningful case
$\widehat{\beta}=I_{[0,+\infty)}$ and consider a suitable weak
formulation of \eqref{eqII}. It consists (cf.\ Definition
\ref{def-weak-sol} later on) of the \emph{one-sided} variational
inequality
\begin{subequations}
\label{weaksol-intro}
\begin{equation}
\label{weaksol-intro-1}
\begin{aligned}
 &
 \int_\Omega  \Big( \chi_t(t) \varphi     +
\mathbf{d}(x,\nabla\chi(t)) \cdot \nabla \varphi  + \xi(t) \varphi +
\gamma(\chi(t)) \varphi   + b'(\chi(t))\frac{\varepsilon(\ub(t))
\mathrm{R}_e\varepsilon(\ub(t))}{2}\varphi  -\teta(t) \varphi \Big)
\, \mathrm{d}x    \geq 0 \\ & \quad   \text{for all }  \varphi \in
W^{1,p}(\Omega) \hbox{  with  }   \varphi \leq 0, \ \foraa\, t \in (0,T),  \qquad
\text{ with } \chi_t \leq 0, \  \xi \in
\partial I_{[0,+\infty)}(\chi),
\end{aligned}
\end{equation}
 and of  the following energy
inequality for all $t \in (0,T]$, for $s=0$,  and for almost all $0
< s\leq t$:
\begin{equation}
\label{weaksol-intro-2}
\begin{aligned}
 &  \int_s^t   \int_{\Omega} |\chi_t|^2 \dd x \dd r    +
  \int_{\Omega}\left( \phi(x,\nabla\chi(t)){+} W(\chi(t)) \right)\dd x\\ & \quad  \leq
 \int_{\Omega}\left( \phi(x,\nabla\chi(s)){+} W(\chi(s)) \right)\dd x
  +\int_s^t  \int_\Omega \chi_t \left(- b'(\chi)
  \frac{\varepsilon(\ub)\mathrm{R_e}\varepsilon(\ub)}2
+\teta\right)\dd x \dd r.
\end{aligned}
\end{equation}
\end{subequations}
In Sec.\ \ref{glob-irrev}, several comments and remarks shed
light on this weak solvability notion for \eqref{eqII}. In
particular, Proposition \ref{more-regu} shows that, if $\chi$ is
regular enough, \eqref{weaksol-intro}  and the subdifferential
inclusion \eqref{eqII} are equivalent.  In Theorem \ref{teor3}
we state the existence of global-in-time solutions to the weak
formulation of system (\ref{eq0}, \ref{eqI-delta}, \ref{eqII}) with
$\mu>0$, consisting of the (weakly formulated) enthalpy equation, of
\eqref{eqI-delta} and of \eqref{weaksol-intro}. The proof is again
carried out via a time-discretization procedure, combined with
Yosida-regularization techniques.

 Finally,  Theorem \ref{teor4}
focuses on the isothermal case, i.e.\ with a fixed temperature profile.
In this setting, we succeed in proving enhanced regularity for
$\chi$,
thus solving \eqref{eqII} in a stronger sense than \eqref{weaksol-intro}.
 In the particular case $\phi(x,\nabla \chi)=\frac1p |\nabla \chi|^p$
 the crucial estimate consists in  testing
\eqref{eqII} by $\partial_t (A_p \chi +\beta(\chi))$ (where for simplicity we
write $\beta$ as single-valued). This enables us to estimate separately the terms
$\partial I_{(-\infty,0]}(\chi_t)$ (again written as single-valued),  $A_p \chi$, and $\beta(\chi)$
in $L^\infty (0,T;L^2(\Omega))$, which is the  key step for proving the existence of solutions to the
\emph{pointwise}
subdifferential
inclusion \eqref{eqII}.

Uniqueness results for the
\emph{irreversible} system, even in the isothermal case, do not seem to be at hand,
due to the triply nonlinear character of equation \eqref{eqII}, cf.\ also
Remark~\ref{uni-irrev} ahead. Nonetheless, both in the reversible and in the irreversible case,
in Thms.\ \ref{teor1},   \ref{teor1bis}  and \ref{teor3}  we
will prove positivity  of the temperature $\teta$. In fact,
under suitable conditions on the initial temperature,
 for  $\mu>0$ we will also obtain a strictly
positive lower bound for $\teta$.

For the analysis of the degenerate limit $\delta \down 0$ of (\ref{eq0}, \ref{eqI-delta}, \ref{eqII}),
we have carefully adapted to the present setting techniques from
\cite{bmr} and \cite{mrz}. These two papers  deal with \emph{complete damage} in the fully rate-independent case,
and, respectively,
for a system featuring  a  rate-independent damage flow rule for $\chi$
and a displacement equation with viscosity and inertia according to
Kelvin-Voigt rheology. In particular, we have
extended the results from \cite{mrz} to the case of a \emph{rate-dependent} equation for $\chi$,
also coupled with the temperature equation.
Following \cite{bmr, mrz}, the key observation
is that, for any family $(\w_\delta,\uu_\delta,\chi_\delta)_\delta$
of  solutions to (\ref{eq0}, \ref{eqI-delta}, \ref{eqII}) (where $\w$ denotes the \emph{enthalpy}),
it is possible to deduce  for the quantities
$
\mmu_\delta:= \sqrt{ a(\chi_\delta)+\delta  } \, \eps(\partial_t \uu_\delta)$ and  $ \eeta_\delta:= \sqrt{ b(\chi_\delta)+\delta } \, \eps(\uu_\delta)$
the estimates
\[
\| \mmu_\delta \|_{L^2(0,T;L^2(\Omega;\R^{d \times d}))}, \ \| \eeta_\delta \|_{L^\infty(0,T;L^2(\Omega;\R^{d \times d}))} \leq C
\]
for a positive constant independent of $\delta$.
Therefore, there exist $\mmu $ and
$\eeta $ such that,
 up to a subsequence
$\mmu_\delta \weakto \mmu$ in $L^2(0,T;L^2(\Omega;\R^{d \times
d}))$ and $\eeta_\delta \weaksto \eeta$ in
$L^\infty(0,T;L^2(\Omega;\R^{d \times d}))$ as $\delta
\down 0$. According the terminology of \cite{mrz}, we refer
to $\mmu$ and $\eeta$, respectively, as
 the viscous and elastic \emph{quasi-stresses}.

In Theorem \ref{teor5} we will focus on the degenerate limit $\delta
\down 0$, confining the discussion to the
 case where  $\rho=0$ and
 $\mu>0$  (viz.\ the map
 $t\mapsto \chi(t,x)$ is nonincreasing for all $x\in
 \overline{\Omega}$).
%  and the $p$-Laplacian type operator $-\mathrm{div}(\mathbf{d}(x,\nabla \chi))$
%in \eqref{eqII} is replaced by the aforementioned linear $s$-Laplacian $A_s$.
We refer to Remark \ref{rmk:explanation} for a thorough justification of these
choices.
 Passing to the limit as $\delta \down 0$ in \eqref{eqI-delta}
and exploiting the above convergences  for $(\mmu_\delta)_\delta$ and $(\eeta_\delta)_\delta$
 we will  prove that there exist a triple $(\uu,\mmu,\eeta)$ solving the \emph{generalized} momentum
 balance
 \begin{subequations}
 \label{degen-intro}
 \begin{equation}
 \label{degen-intro-1}
 \ub_{tt}-\dive(\sqrt{a(\chi)}\mathrm{R}_v\mmu +\sqrt{b(\chi)}\mathrm{R}_e\eeta)={\bf
f}\quad\hbox{in }\Omega\times (0,T),
\end{equation}
such that the quasi-stresses fulfill
\begin{equation}
 \label{degen-intro-2}
 \mmu=  \sqrt{a(\chi)} \,\eps(\uu_t), \ \eeta  = \sqrt{b(\chi)} \, \eps(\uu)
 \ \text{a.e. in any open set } A \subset \Omega \times (0,T)
 \text{ s.t. }   A \subset \{ \chi >0\}.
 \end{equation}
 In addition to \eqref{degen-intro-1}--\eqref{degen-intro-2},
the notion of weak solution  to system \eqref{eq0}--\eqref{eqII}
arising in the limit $\delta \down 0$ consists of the (weak
formulation of the) enthalpy equation, of the \emph{one-sided}
variational inequality
\begin{equation}
 \label{degen-intro-3}
 \begin{aligned}
&  \int_0^T \int_\Omega \Big( \left(\chi_t+
\gamma(\chi)\right)\varphi  + \mathbf{d}(x,\nabla\chi) \cdot \nabla
\varphi \Big)   \leq \int_0^T \int_\Omega \left(
 -\frac1{2  b(\chi) } \eeta \,\mathrm{R}_e\, \eeta + \teta \right) \varphi \dd x
\dd t
 \\ &
 \qquad   \quad   \text{for all }  \varphi \in   L^p
 (0,T;W^{1,p}(\Omega))
\cap L^\infty (Q) \text{ with }  \varphi \geq 0 \text{ and }\mathrm{supp}(\varphi) \subset \{
\chi>0\},
\end{aligned}
 \end{equation}
 \end{subequations}
%(where $a_s(\cdot,\cdot)$ is the bilinear form associated with the operator $A_s$),
and of a \emph{generalized} total energy inequality,  featuring the quasi-stresses $\eeta$ and $\mmu$.
While referring to Remark \ref{rmk:comparison-with-hk} for more comments
 in this direction,
 we may observe here that \eqref{degen-intro-3} is in fact the  integrated  version in terms of \emph{quasi-stresses}
  of
 the variational inequality
 \eqref{weaksol-intro-1}.
%Let us notice that, as it is explained in Remark \ref{rmk:explanation},
%%in Theorem \ref{teor5} we are forced to restrict our analysis to the case of
%%\begin{compactenum}
%%\item the $s$-Laplacian (instead of the $p$-Laplacian)
%mainly because we need the operator to be linear;
%\item the irreversible case because we need ;
%\item the case $\rho=0$ because we loose in the degenerate limit $\delta\down 0$ the regularity properties of %$\dive( \ub_t)$ needed in order to
%pass to the limit in the term $\rho\teta\dive (\ub_t)$ in \eqref{eq0} in case $\rho\neq 0$.
%\end{compactenum}

%%%%%%%%%%%%%%%%%%
\paragraph{Plan of the paper.}  In the next Section~\ref{s:main}
 we introduce the variational formulation for the
initial boundary value problem associated to the PDE system
\eqref{eq0}--\eqref{eqII}, as well as our main assumptions. Then, we state Theorems \ref{teor1}--\ref{teor4}
 on
the existence/uniqueness of solutions for  the reversible and the irreversible \emph{non-degenerating}
systems (i.e.\ $\delta>0$). The existence Thms.\ \ref{teor1},
 \ref{teor1bis},  \ref{teor3}, and \ref{teor4}  rely on the time-discretization
procedure of Section \ref{s:time-discrete}; their proof is carried  out
by passing to the limit with the time discretization
in Sections \ref{ss:4.1},   \ref{sec:4.2new},   \ref{ss:5.1}, and \ref{ss:5.2}. The continuous dependence Thm.\ \ref{teor2}
is proved in
Section \ref{CD}. Finally, Section
\ref{sec:5} is devoted to the  passage to the degenerate limit
$\delta \down 0$.

\noindent
The following table summarizes
  our results

\begin{center}
\begin{tabular}{|p{0.25\textwidth}|p{0.25\textwidth}|p{0.25\textwidth}|}
\hline
{\bf Results}&     $\mu=0$      & $\mu=1$   \\
\hline
&&\\
 $\rho=0$,  $\delta>0$ &  Theorem~\ref{teor1} (Sec.~\ref{ss:glob-rev}): $\, \exists$ %
 &  Theorem~\ref{teor3} (Sec.~\ref{glob-irrev}):  $\exists\,$
 \\
\hline
&&\\
 $\rho\neq 0$,  $\delta>0$& Theorem~\ref{teor1bis} (Sec.~\ref{ss:glob-rev}):  $\exists\,$
 &
 %Theorem~\ref{teor3}, Section~\ref{glob-irrev}
 \\
\hline
&&\\
 $\rho=0$, $\teta$ constant, $\delta>0$& Theorem~\ref{teor2} (Sec.~\ref{ss:glob-rev}): uniqueness %\& cont.\ dep.
    &  Theorem~\ref{teor4} (Sec.~\ref{glob-irrev}):
  improved regularity
   \\
\hline
&&\\
 $\rho\neq 0$, $\teta$ constant, $\delta>0$
 & Theorem~\ref{teor2} (Sec.~\ref{ss:glob-rev}): uniqueness %\& cont.\ dep.
 % Section~\ref{ss:glob-rev}
  &  Theorem~\ref{teor4} (Sec.~\ref{glob-irrev}):
  improved regularity
\\
 %& Theorem~\ref{teor2}, Section~\ref{ss:glob-rev} &Theorem~\ref{teor4}, Section~\ref{glob-irrev}\\
\hline
&&\\
 $\rho=0$, %$\teta\neq\teta^*$,
  $\delta\down0$ &   %
  & Theorem~\ref{teor5}  (Sec.~\ref{sec:5}): $\exists\,$ degenerate case \\
\hline
\end{tabular}
\end{center}
%%%%%%%%%%%%%%%
%%%%%%%%%%%%%
%%%%%%%%%%%%%%%
\section{Setup and results for the non-degenerating system}
\label{s:main}

\subsection{Notation and preliminaries}
\label{ss:prelims}
\begin{notation}
\label{not:2.1}
\upshape
 Throughout the paper, given a Banach space $X$
we shall denote by $\|\cdot\|_{X}$ %both
its norm,
%and the norm of the
%space  $X^3$,
and use the symbol $\pairing{}{X}{\cdot}{\cdot}$ for
the duality pairing between $X'$ and $X$.

Hereafter, we shall suppose that
\[
\Omega\subset\RR^d, \quad d\in \{2,3\} \ \ \text{is
 a bounded connected domain,   with $\mathrm{C}^2$-boundary $\partial\Omega$.}
\]
We will  identify both $L^2 (\Omega)$ and
$\Ha$ with their dual spaces, and denote by $(\cdot,\cdot)$ the
scalar product in $\R^d$,  by  $(\cdot,\cdot)_{L^2(\Omega)}$
both the scalar product in
 $L^2(\Omega)$,
and  in $\Ha$, and by
 $\boZ$ and $\boY$
the  spaces
$$
\begin{aligned}
&
\boZ:=\{\vv \in H^1(\Omega;\R^d) \,:\ \vv= 0 \ \hbox{ on }\partial\Omega
\,\}, \text{ endowed with the norm } \| \vv\|_{H_0^1(\Omega)}^2: = \int_{\Omega} \e(\vv) \colon \e(\vv)\, \dd x,
\\
&
\boY:=\{\vv \in H^2(\Omega;\R^d)\,:\ \vv ={0} \ \hbox{ on
}\partial\Omega \,\}.
\end{aligned}
$$
For $\sigma, \, p \geq 1 $ we will use the notation
\begin{equation}
\label{not-hk} W_+^{\sigma,p}(\Omega):= \left\{\zeta \in
W^{\sigma,p}(\Omega)\, : \ \zeta(x) \geq 0 \quad \foraa\, x \in
\Omega \right\} \quad \text{ and analogously for }
W_-^{\sigma,p}(\Omega).
\end{equation}
 We standardly denote by
\[
\text{$
A: H^1(\Omega) \to H^1(\Omega)'$ the operator $\pairing{}{H^1(\Omega)}{Au}{v}:= \int_\Omega \nabla u \cdot \nabla v \dd x$}
\]
and, for any $w \in H^1(\Omega)$, by
$m(w):= \pairing{}{H^1(\Omega)}{w}{1}$
its mean value.

 Given    a (separable) Banach space $X$,  we will denote by
  ${\rm BV}([0,T];X)$ (by $\mathrm{C}^0_{\mathrm{weak}}([0,T];X)$, respectively),
 the space
of functions from $[0,T]$ with values in $ X$ that are defined at every  $t \in [0,T]$
and  have  bounded
variation on  $[0,T]$  (and are \emph{weakly} continuous   on  $[0,T]$, resp.)
 %and
%of Radon measures on $[0,T]$ with values in $X$, respectively.

Finally, throughout the paper we shall denote by the symbols $c,\,c',\,
C,\,C'$  various positive constants depending only on known
quantities {and by $v_t$ (respectively $v_{tt}$) or (whenever it turns out to be more convenient) $\partial_t v$ (respectively $\partial_{tt}v$) the first (respectively) second partial derivatives  with respect to time of  a function $v$.}
\end{notation}
%%%%%%%%%

\noindent
\textbf{Preliminaries of mathematical elasticity.}
In what follows, we shall  assume the material to be homogeneous and
isotropic, so that the elasticity matrix $\mathrm{R}_e$ in equation
\eqref{eqII} may be represented by
$$
\mathrm{R}_e\varepsilon({\bf u})=\lambda_1\hbox{tr}(\varepsilon({\bf
u})){\bf 1}+2\lambda_2\varepsilon({\bf u}),
$$
where $\lambda_1,\lambda_2>0$ are the so-called Lam\'e constants and ${\bf
1}$ is the identity matrix.
In order to state the variational formulation  of the
initial-boundary value problem for \eqref{eq0}--\eqref{eqII}, we need
to introduce the bilinear forms related to the $\chi$-dependent
elliptic operators appearing in~\eqref{eqI}. Hence,  given a
\emph{non-negative} function $\eta \in L^\infty (\Omega)$,  let us
consider the continuous bilinear symmetric forms
$\bilh{\eta}{\cdot}{\cdot}, \, \bilj{\eta}{\cdot}{\cdot}: \, \boZ \times \boZ \to \RR$
defined for all $ \ub, \vb \in \boZ$ by
\begin{equation}
\label{bilinear-forms}
\begin{aligned}
& \bilh {\eta}{\uu}{\vv}:= \pairing{}{H^1(\Omega;\R^d)}{-\dive(\eta
\mathrm{R}_e \tensore)}{\vv} =\lambda_1\int_\Omega\eta\,\dive({\bf
u})\dive({\bf v})+2\lambda_2\sum_{i,j=1}^d
\int_\Omega\eta\,\varepsilon_{ij}({\bf u}) \varepsilon_{ij}({\bf
v}),
\\
& \bilj {\eta}{\uu}{\vv}:= \pairing{}{H^1(\Omega;\R^d)}{-\dive(\eta
\mathrm{R}_v \tensore)}{\vv}= \sum_{i,j=1}^d \int_\Omega\eta\,
\ell_{ij}\,\varepsilon_{ij}(\ub)\varepsilon_{ij}(\vb),
\end{aligned}
\end{equation}
where $(\ell_{ij}) \in \R^{d\times d}$ is the viscosity matrix
$\mathrm{R}_v$. Now, by Korn's inequality (see
eg~\cite[Thm.~6.3-3]{ciarlet}), the forms
$\bilh{\eta}{\cdot}{\cdot}$ and $\bilj{\eta}{\cdot}{\cdot}$ are
$\boZ$-elliptic and continuous. Namely,  there exist constants $C_1,
\, C_2>0 $, only depending on $\lambda_1$ and $\lambda_2$, such that
 such that for all~$\ub,\,\vb \in  \boZ$
\begin{align}
& \label{korn} \bilh{\eta}{\uu}{\uu}
 \geq \inf_{x \in \Omega}(\eta(x))\,C_1\Vert{\bf
u}\Vert^2_{\V}, \qquad  \bilj{\eta}{\uu}{\uu}\geq  \inf_{x \in
\Omega}(\eta(x))\,C_1\Vert{\bf u}\Vert^2_{\V},
\\
& \label{a:conti-form} |\bilh{\eta}{\uu}{\vv}| +
|\bilj{\eta}{\uu}{\vv} |
 \leq C_2 \| \eta \|_{L^\infty (\Omega)} \| \mathbf{u} \|_{\V} \| \mathbf{v}
 \|_{\V}.
\end{align}
 We  shall denote by $\ophname(\eta\, \cdot):\boZ\to
H^{-1}(\Omega;\R^d)$ and $\opjname(\eta\, \cdot):\,\boZ\to
H^{-1}(\Omega;\R^d)$ the linear operators associated with
$\bilh{\eta}{\cdot}{\cdot}$
 and $\bilj{\eta}{\cdot}{\cdot}$, respectively,  namely
\begin{equation}
\label{operator-notation-quoted}
\pairing{}{H^1(\Omega;\R^d)}{\oph{\eta}{\vv}}{\ww}:=
\bilh{\eta}{\vv}{\ww},
 \quad
\pairing{}{H^1(\Omega;\R^d)}{\opj{\eta}{\vv}}{\ww}:=
\bilj{\eta}{\vv}{\ww} \qquad \text{for all }{\bf v},\,{\bf w}\in
\boZ.
\end{equation}
It can be checked via an approximation argument
  that the following
regularity results hold:
\begin{subequations}
\begin{align}
& \label{reg-pavel-a} \text{if $\eta \in L^\infty(\Omega)$ and
${\uu}\in \boZ$, \ then  \, $\oph{\eta}{\uu}, \, \opj{\eta}{\uu} \in
H^{-1} (\Omega;\R^d)$,} \\
& \label{reg-pavel-b}
\text{if $\eta \in
W^{1,d} (\Omega)$ and ${\uu}\in \boY$, \ then  \, $\oph{\eta}{\uu}, \,
\opj{\eta}{\uu} \in \Ha$.}
\end{align}
\end{subequations}
%%%%
\begin{remark}[The anisotropic inhomogeneous case]
\upshape \upshape In fact, the calculations we will develop
extend to the case of an anisotropic and inhomogeneous
material, for which the elasticity and viscosity matrices
$\mathrm{R}_e$ and  $\mathrm{R}_v$ are of the form
 $\mathrm{R}_e=(g_{ijkh})$ and  $\mathrm{R}_v=(\ell_{ijkh})$,
with functions
\begin{equation}
\label{funz_g-l}
 g_{ijkh},  \ \ell_{ijkh} \in \mathrm{C}^{1}(\Omega)\,,
\quad i,j,k,h=1,2,3,
\end{equation}
 satisfying the classical symmetry and
ellipticity conditions (with the usual summation convention)
\begin{equation}
\label{ellipticity}
\begin{array}{ll}
&  \!\!\!\!\!\!\!\!  \!\!\!\!\!\!\!\! g_{ijkh}=g_{jikh}=g_{khij}, \quad  \quad
\ell_{ijkh}=\ell_{jikh}=\ell_{khij}\,,
  \quad   i,j,k,h=1,2,3
\\
&  \!\!\!\!\!\!\!\!  \!\!\!\!\!\!\!\! \exists \, C_1>0 \,:  \qquad g_{ijkh} \xi_{ij}\xi_{kh}\geq
C_1\xi_{ij}\xi_{ij}, \   \ell_{ijkh} \xi_{ij}\xi_{kh}\geq
C_1\xi_{ij}\xi_{ij}   \quad   \text{for all } \xi_{ij}\colon
\xi_{ij}= \xi_{ji}\,,\quad i,j=1,2,3\,.
\end{array}
\end{equation}
Clearly, \eqref{ellipticity} ensures \eqref{korn}, whereas not only does
\eqref{funz_g-l}  imply \eqref{a:conti-form}, but the
$\mathrm{C}^1$-regularity also allows us to perform the third a
priori estimate of Section \ref{ss:3.2} rigorously.
\end{remark}
In what follows we will use  %Korn's inequality and
 the following elliptic regularity result  (see
e.g.~\cite[Thm.\ 6.3-.6, p.\ 296]{ciarlet}, cf.\ also \cite[p.\ 260]{necas}):
% there exist constants $C_{3},
% \, C_{4} >0$ such that
\begin{equation}
\label{cigamma} \exists \, C_3,\, C_4>0 \quad \forall\,  \uu \in
\boY\, : \qquad
 C_{3} \| \uu \|_{H^2(\Omega)}  \leq \|\dive (\eps
(\uu))\|_{L^2(\Omega)} \leq C_{4} \| \uu \|_{H^2(\Omega)}\,.
\end{equation}

 Finally, in the weak formulation of the momentum equation \eqref{eqI},
besides $\opjname$ and $\ophname$
 we will also make use of the
operator
\begin{equation}
\label{op_ciro}
\mathcal{C}_\rho: L^2 (\Omega) \to  H^{-1}(\Omega;\R^d) \quad \text{defined by} \quad
\pairing{}{H^1(\Omega;\R^d)}{\ciro(\teta)}{\vv}:= - \rho \int_\Omega \teta \dive (\vv)\, \mathrm{d}x.
\end{equation}
\paragraph{Useful  inequalities.}
We recall  the celebrated  Gagliardo-Nirenberg
inequality (cf.~\cite[p.~125]{nier}) in a particular case: for
 all $r,\,q\in [1,+\infty],$ and for all $v\in L^q(\Omega)$ such that
$\nabla v \in L^r(\Omega)$, there holds
\begin{equation}\label{gn-ineq}
\|v\|_{L^s(\Omega)}\leq C_{\mathrm{GN}}
\|v\|_{W^{1,r}(\Omega)}^{\theta} \|v\|_{L^q(\Omega)}^{1-\theta},
\text{ with } \frac{1}{s}=\theta
\left(\frac{1}{r}-\frac{1}{d}\right)+(1-\theta)\frac{1}{q}, \ \  0
\leq \theta \leq 1, \end{equation}
 the positive constant $C_{\mathrm{GN}}$ depending only on
$d,\,r,\,q,\,\theta$. Combining the compact embedding
\begin{equation}
\label{dstar}
 \boY \Subset W^{1,d^\star{-}\eta}(\Omega;\R^d),
\quad \text{with } d^{\star}=
\begin{cases} \infty & \text{if }d=2,
\\
6 & \text{if }d=3,
\end{cases}
 \quad \text{for all $\eta >0$},
\end{equation}
(where for $d=2$ we mean that $\boY \Subset W^{1,q}(\Omega;\R^d)$ for all $1 \leq q <\infty$),
   with \cite[Thm. 16.4, p. 102]{LM}, we have
\begin{equation}
\label{interp} \forall\, \varrho>0 \ \ \exists\, C_\varrho>0 \ \
\forall\, \uu \in \boY\,: \ \
\|\e(\uu)\|_{L^{d^\star{-}\eta}(\Omega)}\leq \varrho
\|\uu\|_{H^2(\Omega)}+C_\varrho\|\uu\|_{L^2(\Omega)}.
\end{equation}
We will also make use of the compact Sobolev embedding
\begin{align}
& \label{hynek}  \Vp\Subset \mathrm{C}^0(\overline\Omega)
 \qquad \text{for $p>d$, with $d \geq 2$}.
 \end{align}
 We conclude with the following Poincar\'{e}-type inequality
 (cf.\ \cite[Lemma 2.2]{gmrs}), with  $m(w)$ the mean value of $w$:
 \begin{equation}
 \label{poincare-type}
 \forall\, q>0 \quad \exists\, C_q >0 \quad \forall\, w \in H^1(\Omega)\, : \qquad
 \| |w|^{q} w \|_{H^1(\Omega)} \leq C_q (\| \nabla (|w|^{q} w )\|_{L^2(\Omega)} + |m(w)|^{q+1})\,.
\end{equation}
%%%%%%%%%%%%%%%%%%%
%%%%%%%%%%%%%%%%%
%%%%%%%%%%%%%%%%
\subsection{Assumptions and weak formulations}
\label{ss:assumptions}
We enlist below our basic assumptions on the functions $\heat$,  $\condu$, $W$, $\mathbf{d}$ in system \eqref{eq0}--\eqref{eqII}.

\noindent
\textbf{Hypothesis (I).} We suppose that
\begin{align}
&
\label{hyp-heat}
\begin{aligned}
&
 \text{the function }\heat:[0,+\infty)\to[0,+\infty)\ \text{ is
continuous, and}
\\
&
 \exists\,\sigma_1\ge\sigma
>\frac{2d}{d{+}2},\ c_1\ge c_0>0\ \
\forall\,\teta\in[0,+\infty)\,:\quad c_0(1{+}\teta)^{\sigma-1}\le
\heat(\teta)\le c_1(1{+}\teta)^{\sigma_1-1}\,.
%\\
%& \qquad \exists\, \sigma^*>0\, : \ \  \sigma_1\leq \frac{2d}{d{+}2}+\sigma^*<2.
\end{aligned}
\end{align}

\noindent
\textbf{Hypothesis (II).} We assume that
\begin{align}
\label{hyp-K}
\begin{aligned}
&
\text{the function }   \condu:[0,+\infty)\to(0,+\infty)  \  \text{ is
 continuous and}
\\
&
\exists c_2,\,c_3>0  \ \ \forall\teta\in[0,+\infty)\,:\quad
 c_2 \heat({\teta})
\leq \condu(\teta) \leq c_3 (\heat({\teta})+1)\,.
\end{aligned}
\end{align}
%%%%%%%%%%%%%%
%%%%%%%%%%%%%%

\noindent
\textbf{Hypothesis (III).}
We require
\begin{align}
& \label{data-a} a \in \mathrm{C}^1(\R), \ b \in \mathrm{C}^2(\R)  \
\text{ are such that } a(x), \ b(x) \geq 0 \ \text{for all } x \in
[0,1].
\end{align}
%%%%%%

\noindent
\textbf{Hypothesis (IV).}
We suppose that the potential $W$ in~\eqref{eqII} is given by $ W=
\widehat{\beta} + \widehat{\gamma},$ where
\begin{align}
 \label{databeta}  & \overline{\text{\rm
dom}(\widehat{\beta})}=[0,1]\,, \quad
 \widehat\beta:  \text{\rm dom}(\widehat{\beta}) \to \R \,
  \text{ is proper, l.s.c.,  convex;}
  \\
  \label{data-gamma}
  &
   \widehat{\gamma} \in
 {\rm C}^{2}(\R).
\end{align}
Hereafter, we shall denote by $\beta=\partial\widehat\beta$ the
subdifferential of $\widehat\beta$, and set $ \gamma:=
\widehat\gamma'$.

\noindent
\textbf{Hypothesis (V).}
We require that
there exists
\begin{equation}
\label{dataphi}
\begin{aligned}
&\text{a Carath\'eodory integrand }   \phi\,:\,\Omega\times \RR^d\to
[0,+\infty)
   \hbox{ such that }   \forae \  x \in \Omega
\\
   & \text{the map }\phi(x,\cdot):  \R^d\to [0,+\infty) \ \text{ is convex, with $\phi(x,0)=0$,  and in
$ \mathrm{C}^1(\RR^d)$}\,,
\end{aligned}
\end{equation}
and, setting $\mathbf{d}:=
\nabla_{\zzeta}\phi\,:\,\Omega\times\RR^d\to\RR^d$,
 the following coercivity and growth conditions hold true:
\begin{equation}\label{datad2}
\exists\, p>d, \ \ c_4,\,c_5,\,c_6>0  \ \ \forae\, x \in \Omega \ \
\forall\, \zzeta \in \R^d\, : \ \
\left\{
\begin{array}{ll} &
\phi(x,{\zzeta})\geq
c_4|{\zzeta}|^p-c_5,
\\
 &  |{\bf d}(x,{\zzeta})|\leq c_6(1+|{
\zzeta}|^{p-1})\,.
\end{array}
\right.
\end{equation}

\noindent
\textbf{A generalization of the $p$-Laplace operator.}
We now  consider the realization in $L^2(\Omega)$ of $\phi$, i.e.
\begin{equation}\label{defiPhi}
\Phi\,:\,L^2(\Omega)\to[0,+\infty],\quad \Phi(\chi):=\begin{cases}
\io\phi(x,\nabla\chi(x)) \dd x & \text{if
$\phi(\cdot,\nabla\chi(\cdot)) \in L^1 (\Omega)$,}
\\
+\infty & \text{otherwise.}
\end{cases}
\end{equation}
Relying on \cite[Thm.~2.5, p.~22]{giaquinta}, it is possible to
prove that $\Phi$ is convex and lower semicontinuous on
$L^2(\Omega)$, with domain $D(\Phi):=\Vp$ (due to \eqref{datad2});
its subdifferential $\partial \Phi: L^2(\Omega) \rightrightarrows
L^2(\Omega)$ is a maximal monotone operator.  In \eqref{eqII} we will  take
 the
 elliptic operator
 \begin{equation}
\label{def-opchi} \opchi:=\partial \Phi:  L^2(\Omega)
\rightrightarrows L^2(\Omega).
\end{equation}
Clearly, $\opchi$ is a generalization of the $p$-Laplace operator.
Note that
\[
  \mathrm{dom}(\opchi):=\left\{v\in
W^{1,p}(\Omega)\,:\,\sup_{w\in D(\Phi)\setminus \{0\}} \left|\io
{\bf d}(x, \nabla v(x))\cdot\nabla w(x)\,
dx\right|/\|w\|_{L^2(\Omega)}<+\infty\right\},
\]
and (cf. \cite[Ex.~2.4]{nsv})
\begin{equation}\label{defAp}
(\opchi v, w)_{L^2 (\Omega)}=\io{\bf d}(x,\nabla v(x))\cdot\nabla
w(x) \dd x\quad\hbox{for all }w\in D(\Phi)\,.
\end{equation}

In order to obtain further regularity and uniqueness results
for system \eqref{eq0}--\eqref{eqII},
we will have to assume that either of the following
additional hypotheses holds true.

\noindent
\textbf{Hypothesis (VI).} We require that  the function  $\phi$ fulfills the
$p$-coercivity condition
\begin{equation}\label{pcoercive}
 \exists\,c_7>0 \qquad \forall\, x \in
\Omega \ \forall\, \zzeta,\eeta\in \RR^d \,: \qquad ({\bf
d}(x,\zzeta)-{\bf d}(x,\eeta), \zzeta-\eeta)\geq c_7
|\zzeta-\eeta|^p,
\end{equation}
and  it is Lipschitz with respect to $x$, viz.
\begin{equation}
\label{Lipschitz-x}
 \exists\, L>0 \quad \forall\, x\,,y \in \Omega \
\forall\, \zzeta \in\R^d\,:  \qquad |\phi(x,\zzeta)-\phi(y,\zzeta) |
\leq L |x-y| (1+|\zzeta|^p).
\end{equation}
\begin{remark}[A regularity result]
\label{rmk:reg-sav}
\upshape It was proved in \cite[Thm.\ 2, Rmk.\ 3.5]{savare98} that, if in
addition  to Hypothesis (V) the function  $\phi$ fulfills Hypothesis (VI),
then
\begin{equation}
\label{sav-reg}
\begin{aligned} &
 \mathrm{dom}(\opchi) \subset W^{1+\sigma,p}(\Omega) \text{ for all
 } 0 <\sigma<\frac1p, \text{ and }
 \\ & \forall\, 0 <\sigma<\frac1p \ \  \exists\,C_\sigma>0 \ \forall\, v \in  W^{1+\sigma,p}(\Omega)\, : \
 \|v\|_{W^{1+\sigma,p}(\Omega)} \leq C_\sigma \| \opchi(v)\|_{L^2(\Omega)}.
 \end{aligned}
\end{equation}
\end{remark}

\noindent
\textbf{Hypothesis (VII).} We assume that
$\phi$  complies with \eqref{Lipschitz-x}
 and with the following \emph{convexity} requirement
 \begin{equation}
 \label{phi-conv}
 \exists\, c_8>0 \ \exists\, \kappa>0  \qquad \forall\, x \in
\Omega \ \forall\, \zzeta,\eeta\in \RR^d, \ \zzeta \neq 0 \,: \qquad
\mathrm{D}_\zzeta^2 \phi(x,\zzeta) \eeta \eeta \geq c_8
(\kappa+|\zzeta|^2) |\eeta|^2.
 \end{equation}
 %%%%
\begin{remark}
\upshape  Assumptions
\eqref{Lipschitz-x} and \eqref{phi-conv} guarantee the validity of
the following inequality (cf.\ \cite{dorothee-thesis} for
a proof)
\begin{equation}
\label{essential-inequality} \exists\,c_9>0 \qquad \forall\, x \in
\Omega \ \forall\, \zzeta,\eeta\in \RR^d \,: \qquad ({\bf
d}(x,\zzeta)-{\bf d}(x,\eeta), \zzeta-\eeta)\geq c_9 (\kappa +
|\zzeta|+ |\eeta|)^{p-2} |\zzeta-\eeta|^2,
\end{equation}
which will play a crucial role in the proof of Thm.\ \ref{teor2}.
\end{remark}
\begin{example}
\upshape \label{ex:operators}
 The two  $p$-Laplacian operators
 \begin{align}
 &
 \label{p-laplacian1}
A_p(\chi):=-\dive(|\nabla\chi|^{p-1}\nabla\chi),  &  p>d,
\\
 &
 \label{p-laplacian2}
\mathsf{A}_p(\chi):=-\dive((1+|\nabla\chi|^2)^{p/2}),  &  p>d,
\end{align}
 are clearly of the form \eqref{defAp}, and comply with
 \eqref{datad2} and
 \eqref{pcoercive}--\eqref{Lipschitz-x} (cf.
 \cite[Ch.~I, 4--(iii)]{dibene}).

 Observe that \eqref{phi-conv} is fulfilled by the
 $p$-Laplacian operator $\mathsf{A}_p$
 \eqref{p-laplacian2}, whereas for the  \emph{degenerate} operator $A_p$ \eqref{p-laplacian1},
 inequality
\eqref{phi-conv}  holds with $\kappa =0$.
 \end{example}
\paragraph{A \emph{nonlocal} alternative to the $p$-Laplacian operator.}
As done in \cite{Knees-Rossi-Zanini}, we could replace the
$p$-Laplacian-type operator
 $\opchi$ \eqref{defAp} in
\eqref{eq2d} with a  \emph{linear} operator, with domain compactly
embedded in $\mathrm{C}^0 (\overline {\Omega})$. More precisely, as
in \cite{Knees-Rossi-Zanini} we could choose
\begin{equation}
\label{slobo-choice} \opchi:= A_s : H^s(\Omega) \to H^s(\Omega)^*
\quad \text{ with $s >\frac{d}{2}$}.
\end{equation}
In \eqref{slobo-choice},  $H^s(\Omega)$ denotes the  Sobolev-Slo\-bo\-de\-ckij space
$W^{s,2}(\Omega)$, endo\-wed with the in\-ner pro\-duct
\[
(z_1,z_2)_{H^s(\Omega)}:= (z_1, z_2)_{L^2(\Omega)}+ a_s (z_1,z_2),
\]
where
\begin{align}
\label{bilinear-s_a} a_s(z_1,z_2):= \int_\Omega
\int_\Omega\frac{\big(\nabla z_1(x) - \nabla
   z_1(y)\big)\cdot \big(\nabla z_2(x) - \nabla
   z_2(y)\big)}{|x-y|^{d + 2 (s - 1)}}\dd x\dd y.
\end{align}
Indeed, since $d\in \{2,3\}$,  we may suppose that $s\in (1,2)$.
Then, we denote by   $A_s : H^s(\Omega) \to H^s(\Omega)^*$
 the associated operator, viz.
\begin{equation}
\label{As} \pairing{}{H^s(\Omega)}{A_s \chi}{w} := a_s(\chi,w) \quad
\text{for every $\chi,\,w \in H^s(\Omega).$}
\end{equation}
Observe that, for $s>d/2$ we have $H^s(\Omega) \Subset \mathrm{C}^0
(\overline {\Omega})$.
%%%%%%%
%%%%%%%%

%%%%%%%%%%%%%%%%%%%%%
\paragraph{Enthalpy transformation.}
We now reformulate  PDE system \eqref{eq0}--\eqref{eqII} in terms of
the \emph{enthalpy} $\ental$, related to the absolute  temperature
$\teta$~via
\begin{equation}
\label{defin-enthalpy} \ental= h(\teta) \quad \text{with } h(r):=
\int_0^{r} \heat(s) \dd s.
\end{equation}
 It follows from \eqref{hyp-heat} that the function $h$ is
strictly increasing on $[0,+\infty)$. Thus, we are entitled to define
\begin{align}\label{K-T}
\Theta(\ental):= \begin{cases} h^{-1}(\ental) &\text{if }\ental\ge0,
\\
0              &\text{if }\ental<0,
\end{cases}
 \qquad\quad K(\ental):=
\frac{\condu(\Theta(\ental))}{\heat(\Theta(\ental))}.
\end{align}

In terms of the enthalpy $\ental$, the PDE system
\eqref{eq0}--\eqref{eqII} rewrites as
\begin{align}
& \ental_t +\chi_t \Theta(\ental) +\rho\Theta(w)\dive(\ub_t)-\dive(K(\ental) \nabla\ental) =
g \quad\hbox{in }\Omega\times (0,T),\label{eq0-ental}
\\
&
\ub_{tt}-\dive(a(\chi)\mathrm{R}_v\tensoret+b(\chi)\mathrm{R}_e\tensore-\rho\Theta(\w)\mathbf{1})={\bf
f}\quad\hbox{in }\Omega\times (0,T),\label{eqI-ental}
\\
&\chi_t +\mu \partial I_{(-\infty,0]}(\chi_t)-\dive({\bf
d}(x,\nabla\chi))+W'(\chi) \ni - b'(\chi)\frac{|\tensore|^2}2 +
\Theta(\ental) \quad\hbox{in }\Omega \times (0,T),\label{eqII-ental}
\end{align}
 supplemented with
the initial and boundary conditions (where $\mathbf{n}$ denotes the outward unit normal to $\partial\Omega$)
\begin{align}
\label{i-c}
&w(0)=w_0, \quad \ub(0)=\ub_0,\quad \ub_t(0)=\vb_0,\quad \chi(0)=\chi_0\qquad     \text{in }\Omega, \\
\label{b-c}
&\partial_\mathbf{n} w=0, \qquad \uu =0, \qquad \partial_\mathbf{n} \chi=0 \qquad    \text{on }\partial\Omega \times
(0,T).
\end{align}
\begin{remark}
\upshape The enthalpy transformation \eqref{defin-enthalpy} was
proposed in \cite{roubicek-SIAM}, and further developed in
\cite{rossi-roubi}, in order to deal with PDE systems where a
 quasilinear internal energy
balance analogous to  \eqref{eq0} is coupled with
\emph{rate-independent} processes. The advantage of  this change of
variables, is that the \emph{nonlinear} term $\heat(\teta) {\teta}_t $
in \eqref{eq0}
 is replaced  by the \emph{linear} contribution
${\w}_t$ in \eqref{eq0-ental}. We will exploit this fact, when
proving the existence of solutions to (an approximation of) system
\eqref{eq0-ental}--\eqref{b-c}  by means of a
time-discretization scheme.
\end{remark}
 For later use, let us observe that Hyp.\ (I) implies
\begin{align}
& \label{conseq-1} \exists\, d_0,\, d_1>0\, \ \
\forall\, \ental \in [0,+\infty)\, : \quad d_1
(\ental^{1/\sigma_1} -1) \leq \Theta(\ental) \leq d_0
(\ental^{1/\sigma} +1)\,,\\
\no
&\hbox{the map } w\mapsto \Theta(w) \quad \hbox{is Lipschitz continuous.}
\end{align}
 A straightforward consequence of the first of \eqref{conseq-1} is that for every $s \in (1,\infty)$
\begin{equation}\label{used-later}\exists\, C_s >0 \ \  \forall\,w\in
L^1(\Omega)\,: \ \  \|\Theta(\w) \|_{L^s (\Omega)} \leq C_s (
\| \w\|_{L^{s/\sigma}(\Omega)}^{1/\sigma} +1).
\end{equation}
Moreover,
  Hypotheses  (I) and (II) entail
\begin{align}
& \label{conseq-2} \exists\, \bar c>0 \ \ \forall\, \ental \in \R\,: \
c_2\leq K(\ental) \leq \bar c.
\end{align}
%%%%%%%%%%%
%%%%%%

Finally, in order to deal with the case $\rho\neq0$, we will adopt the following further assumption,
which we directly state in terms of
 the function $K$
 instead of $\mathsf{K}$ and $\mathsf{c}$, in \emph{replacement}
of Hypothesis (II).
\noindent
\textbf{Hypothesis (VIII).} We require that  the function $K$ defined in \eqref{K-T} (where $\heat$
 fulfills Hyp.\ (I) and $\condu:[0, +\infty)\to (0, +\infty)$   is a continuous function)
satisfies
\begin{equation}\label{hyp-K-rho}
\exists c_{10}>0 \  \exists q  \geq  \frac{d+2}{2d}  \quad \forall w\in [0,+\infty) \,:\, \quad
K(w)=c_{10} \left(w^{2q}+1\right).
\end{equation}
Indeed, we could slightly weaken \eqref{hyp-K-rho} by prescribing that
$K$ is bounded from below and above by two functions behaving like $w^{2q}$,
and we have restricted to \eqref{hyp-K-rho} for simplicity only. Let us stress that,
if \eqref{hyp-K-rho} holds, $K$ is no longer bounded from above.

\paragraph{Problem and Cauchy data.}
We suppose that  bulk force $\mathbf{f}$ and the heat source $g$
fulfill
\begin{align}
\label{bulk-force} & \mathbf{f}\in L^2(0,T;\Ha),
\\
 \label{heat-source} &  g \in L^1(0,T;L^1(\Omega)) \cap L^2 (0,T; H^1(\Omega)'),
\end{align}
and that the initial data comply with
\begin{align}
& \label{datoteta} \teta_0 \in L^{\sigma_1}(\Omega) \quad
\text{whence} \quad \ental_0:= h(\teta_0) \in L^1(\Omega)\,,
\\
 \label{datou}
&\ub_0\in \boY,\quad \vb_0\in \boZ\,,
\\
 & \label{datochi}
 \chi_0\in  \mathrm{dom}(\opchi),\quad \widehat{\beta}(\chi_0)\in
L^1(\Omega).
\end{align}
%where $\sigma_1$ is defined in \eqref{hyp-heat} and $h$ in \eqref{defin-enthalpy}.
%%%%%%%%%%%%%%%%%%%%%

\paragraph{Variational formulation of the non-degenerating system.}
%%%%%%%%%%%%
We  now  consider the \emph{non-degenerate}
version of system \eqref{eq0-ental}--\eqref{b-c}:  as already mentioned in the introduction,
 to rule out the elliptic  degeneracy of the momentum equation \eqref{eqI-ental}, it is indeed sufficient
to truncate away from zero
only
the coefficient $a(\chi)$, %of the \emph{viscous} part of the elliptic operator in
%the momentum equation ,
 cf.\ \eqref{eq1d} below.
In
specifying   the variational formulation of the
initial-boundary value problem for the \emph{non-degenerate} system,
due to the
$0$-homogeneity of the operator $\partial I_{(-\infty, 0]}$ \eqref{1-homog} we will just distinguish the
two cases  $\mu=0$ and $\mu=1$. We mention in advance that
 the $L^r(0, T;W^{1,r}(\Omega))$-regularity for
$\w$ derives from \textsc{Boccardo\&Gallou\"{e}t}-type estimates \cite{boccardo-gallouet1} on the enthalpy equation, combined with
the Gagliardo-Nirenberg inequality \eqref{gn-ineq}. We refer to the forthcoming Sec.\ \ref{ss:3.2} and to
\cite{roubicek-SIAM} for all details.
\begin{problem}
\label{prob}
 Given $\delta>0$,  $\mu \in \{0,1\}$, find %(measurable)
 functions
\begin{align}
\label{reg-ental}  & w \in L^r(0,T;W^{1,r}(\Omega)) \cap L^\infty
(0,T;L^1(\Omega)) \cap \mathrm{BV}([0,T];W^{1,r'}(\Omega)^*)\quad
\text{for every } 1 \leq r <\frac{d+2}{d+1},
\\
& \label{reg-u} \uu \in    H^1(0,T;H_0^{2}(\Omega;\R^d)) \cap
W^{1,\infty} (0,T;\boZ)  \cap H^2 (0,T;L^2(\Omega;\R^d)),
\\
& \label{reg-chi} \chi \in L^\infty (0,T;W^{1,p} (\Omega)) \cap H^1
(0,T;L^2 (\Omega)),
\end{align}
fulfilling
 the initial conditions
\begin{align}
 \label{iniu}  & \uu(0,x) = \uu_0(x), \ \ \uu_t (0,x) = \vb_0(x) & \forae\, x \in
 \Omega,
 \\
 \label{inichi}  & \chi(0,x) = \chi_0(x) & \forae\, x \in
 \Omega,
\end{align}
 the equations
\begin{align}
&
\begin{aligned}
\label{eq0d}   & \io \varphi(t) \,\ental (t)(\mathrm{d}x) -\int_0^t\io \ental \varphi_t \dd x \dd s   +\itt \io
\chi_t \Theta(\ental)\varphi \dd x \dd s + \rho\itt \io
\hbox{\rm div}(\ub_t) \Theta(\ental)\varphi \dd x \dd s \\
&\qquad \qquad +\itt \io K(\ental)
\nabla \ental\nabla\varphi \dd x \dd s   =\itt \io g \varphi
 +\io \ental_0\varphi(0)\dd x\\
 &\quad \text{for all }
\varphi\in \mathscr{F}:=\mathrm{C}^{0}([0,T]; W^{1,r'}(\Omega)) \cap
W^{1, r'}(0,T; L^{r'}(\Omega))  \text{ and for all } t\in (0,T],
\end{aligned}
\\
 \label{eq1d}
&\ub_{tt}+\opj{(a(\chi)+\delta)}{\ub_t}+\oph{b(\chi)}{\uu} +  \ciro( \Theta(w)) =\mathbf{f}
\quad  \text{in $H^{-1}(\Omega;\R^d)$ \quad  a.e.\  in } (0,T),
\end{align}
 and the subdifferential inclusion
\begin{align}
&\label{eq2d}
\begin{aligned}
 \! \!\!\!\!\!\! \chi_t + \mu \partial I_{(-\infty, 0]}(\chi_t) + \opchi(\chi)+\beta(\chi)+ \gamma(\chi)
  \ni-b'(\chi)\frac{\varepsilon(\ub)
\mathrm{R}_e\varepsilon(\ub)}{2}+\Theta(\ental)  \text{ in }
W^{1, p}(\Omega)^*  \text{ a.e.\ in $(0,T)$}.
%\\
%& \qquad
%, \ \  \text{a.e.~in }(0,T).
\end{aligned}
\end{align}
\end{problem}
\begin{remark}
\label{w-Radon} \upshape Since $\w\in\mathrm{BV} ([0,T];W^{1,r'}(\Omega)^*)$,
 for all $t \in [0,T]$ one has
 $\w(t) \in W^{1,r'}(\Omega)^*$. Combining this with the fact that   $\w\in
L^\infty(0,T;L^1(\Omega))$, we have that $\w(t)$ is a Radon measure
on $\Omega$ for all $t \in [0,T]$, which justifies the notation in
the first integral term on the left-hand side of \eqref{eq0d}. {Moreover, let us note that
the $\BV$-regularity w.r.t.\ time of the absolute temperature, which is mainly  due to the presence
of  quadratic nonlinearities $\chi_t\Theta(w)$ and
$\dive(\ub_t)\Theta(w)$ in \eqref{eq0d}, is quite natural for this kind of problems (cf., e.g. \cite{boccardo-gallouet1}, \cite{fpr09}, and \cite{roubicek-SIAM}).}
\end{remark}
\begin{remark}
\label{rem-other-b.c.} \upshape {The proof of our results could be carried out with
suitable modifications  in the case of Neumann boundary conditions
on $\uu$, as well. We would also be able to handle the case of
Neumann conditions on a portion $\Gamma_0$ of $\partial \Omega$  and
Dirichlet conditions on $\Gamma_1:=\partial \Omega \setminus
\Gamma_0$ ($|\Gamma_0|, \, |\Gamma_1|>0$), provided that the
closures of the sets $\Gamma_0$ and $\Gamma_1$ do not intersect.
 Indeed, without the latter  geometric condition,
  the elliptic regularity results ensuring the (crucial)
 $\boY$-regularity of $\uu$ may fail to hold, see~\cite[Chap.~VI,~Sec.~6.3]{ciarlet}.}
\end{remark}
 \noindent In what follows, we will refer to system
 \eqref{eq0d}--\eqref{eq2d} with $\mu=0$ (with $\mu=1$, respectively), as
 the (non-degenerating)
\emph{reversible full system} (\emph{irreversible full system},
resp). In both cases $\mu=0$ and $\mu=1$, we will call
\emph{isothermal} the  (non-degenerating)  system
\eqref{eq1d}--\eqref{eq2d}, where $\Theta(\w)$ in \eqref{eq2d} is
replaced  by a given temperature
profile~$\Theta^*$.

%%%%%%%%%%%%%%
\subsection{Global existence and uniqueness results for the reversible system}
\label{ss:glob-rev}
%%%%%%%%%%%%
Our first main result states the existence of a solution
$(\w,\uu,\chi)$ to
 the reversible full system under  Hypotheses (I)--(V); under the further
  Hypothesis (VI)  we are able to obtain some enhanced regularity for $\chi$.
Its proof will be developed in
Section \ref{sec:proof2} by passing to
the limit in the time-discretization scheme set up in Sec.\
\ref{s:time-discrete}.
\begin{maintheorem}[Global existence for the full system, $\mu=0$, $\rho=0$]
\label{teor1} Let  $\mu=0$, $\rho=0$, and assume   Hypotheses (I)--(V)
and  condi\-tions \eqref{bulk-force}--\eqref{datochi} on the data $\mathbf{f}, \, g,
\teta_0,\, \uu_0, \, \vv_0, \, \chi_0,$. Then,
\begin{compactenum}
\item[1.]
 Problem
\ref{prob} admits a solution $(\ental,\uu,\chi)$, such that there
exists
\begin{align}
& \label{xi-qualification} \xi \in L^2 (0,T;L^2(\Omega)) \text{ with
} \xi(x,t) \in \beta(\chi(x,t)) \ \forae\,(x,t)\in \Omega\times
(0,T), \text{ fulfilling }
\\ &
\label{eq-with-xi}
 \chi_t + \opchi(\chi)+\xi+ \gamma(\chi)
=-b'(\chi)\frac{\varepsilon(\ub)
\mathrm{R}_e\varepsilon(\ub)}{2}+\Theta(\ental) \qquad \text{a.e.~in
}\ \Omega \times (0,T).
\end{align}
Furthermore, $(\ental,\uu,\chi)$ satisfies the \emph{total
energy equality}
\begin{equation}
\label{total-energy-eq}
\begin{aligned}
 &
\int_\Omega w(t)(\mathrm{d} x) +\frac12 \int_\Omega |\uu_t (t)|^2\,
\mathrm{d} x +\int_s^t \int_\Omega |\chi_t|^2 \, \mathrm{d} x\,
\mathrm{d} r \\  & \quad + \int_s^t
\bilj{(a(\chi)+\delta)}{\uu_t}{\uu_t} \, \mathrm{d} r +\frac12
\bilh{b(\chi(t))}{\uu(t)}{\uu(t)} + \Phi(\chi(t)) +\int_\Omega
W(\chi(t)) \, \mathrm{d} x
\\ &
 = \int_\Omega w(s) (\mathrm{d} x) +\frac12 \int_\Omega |\uu_t
(s)|^2\, \mathrm{d} x +\frac12 \bilh{b(\chi(s))}{\uu(s)}{\uu(s)} +
\Phi(\chi(s)) +\int_\Omega W(\chi(s)) \, \mathrm{d} x
\\ & \qquad \qquad \qquad
+\int_s^t \int_\Omega \mathbf{f} \, \cdot \, \uu_t \, \mathrm{d} x
\mathrm{d} r + \int_s^t \int_\Omega g \, \mathrm{d} x\mathrm{d} r  \qquad
\text{for all } 0 \leq s \leq t \leq T.
\end{aligned}
\end{equation}
\item[2.]
 If, in addition, $\phi$ complies with
 Hypothesis (VI),  then there holds
\begin{align}
& \label{furth-reg-chi} \chi \in L^2(0,T;W^{1+\sigma,p}(\Omega))
\quad \text{for all } 0<\sigma<\frac1p.
\end{align}
 \item[3.] Suppose  that
\begin{equation}
\label{g-pos} g(x,t) \geq 0 \quad \forae\, (x,t) \in \Omega \times
(0,T).
\end{equation}
Then, $w\geq 0$ a.e. in $\Omega\times (0,T)$, hence
\begin{equation}
\label{teta-non-neg} \teta(x,t):= \Theta(w(x,t)) \geq 0 \quad
\forae\, (x,t) \in \Omega \times (0,T).
\end{equation}
\end{compactenum}
\end{maintheorem}
\noindent We are going to prove the energy equality \eqref{total-energy-eq}
 by testing \eqref{eq0d} by $\varphi \equiv 1$, \eqref{eq1d} by
 $\uu_t$, \eqref{eq-with-xi} by $\chi_t$, adding the resulting
 relations, integrating  in time, and developing the calculations at
 the end of the proof of Thm.\ \ref{teor1} in  Sec.\
 \ref{ss:4.1}.

 We now turn to the case when the thermal expansion coefficient
$\rho \neq 0$. As previously mentioned, to prove existence of solutions
 we need to replace Hyp.\ (II) with Hyp.\ (VIII), which has a key role in deriving the
 enhanced regularity \eqref{reg-w-rho} for $\w$, cf.\ Remark \ref{rmk:afterThm2} later on.
 %%%%%%%%%%%%%%%%%
 %%%%%%%%%%%%%%%%%
\begin{maintheorem}[Global existence for the full system, $\mu=0$,  $\rho\neq 0$]
\label{teor1bis} Let  $\mu=0$, $\rho\neq 0$, and assume   Hypotheses (I) and  (III)--(V)
and  condi\-tions \eqref{bulk-force}--\eqref{datochi} on the data $\mathbf{f}, \, g,
\teta_0,\, \uu_0, \, \vv_0, \, \chi_0,$. Suppose moreover that  Hypothesis (VIII)
 is satisfied (in place of  Hypothesis (II)), and that
 \begin{equation}
 \label{w_0-hyp-forte}
w_0\in L^2(\Omega).
\end{equation}
Then,
\begin{compactenum}
\item[1.]
 Problem
\ref{prob} admits a solution $(\ental,\uu,\chi)$
fulfilling
\eqref{xi-qualification}-\eqref{eq-with-xi}, such that
 $w$ has the further regularity
\begin{equation}\label{reg-w-rho}
\begin{aligned}
w\in L^2(0,T; H^1(\Omega)) & \cap L^{2(q+1)} (0,T;L^{6(q+1)}(\Omega)) \cap
 L^\infty(0,T; L^2(\Omega))\\ & \cap  \mathrm{BV}([0,T];
W^{2,\expo}(\Omega)') \quad \text{with } \expo=\frac{6q+6}{4q+5},
\end{aligned}
\end{equation}
 and  the weak formulation
 of the enthalpy equation
 holds in the form
 \begin{equation}
 \begin{aligned}
\label{eq0d-new}
 \io \varphi(t) \,\ental (t)(\mathrm{d}x) -\int_0^t\io \ental \varphi_t \dd x \dd s   &  +\itt \io
\chi_t \Theta(\ental)\varphi \dd x \dd s + \rho\itt \io
\hbox{\rm div}(\ub_t) \Theta(\ental)\varphi \dd x \dd s
\\
& \quad
 +\itt \int_\Omega \widehat{K}(\ental)
A\varphi \dd x \dd s   =\itt \io g \varphi
 +\io \ental_0\varphi(0)\dd x
 \end{aligned}
 \end{equation}
for all test functions
$\varphi\in \mathscr{F}':=\mathrm{C}^0([0,T]; W^{2,\expo}(\Omega))\cap H^1(0,T; L^{6/5}(\Omega)) $,
 where $\widehat{K}(w)= \frac 1{2q+1} w^{2q+1}$ is  a primitive of $K$.
  Moreover,
 $(\ental,\uu,\chi)$ complies with the \emph{total
energy equality} \eqref{total-energy-eq}.
\item[2.]
 If, in addition, $\phi$ complies with
 Hypothesis (VI),  then  the further regularity result \eqref{furth-reg-chi} holds true.
\item[3.]
 If  in addition $g$ complies with \eqref{g-pos}, then \eqref{teta-non-neg}
holds.
\end{compactenum}
\end{maintheorem}
\begin{remark}[Outlook to the enhanced regularity \eqref{reg-w-rho}]
\upshape
\label{rmk:afterThm2}
Let us justify the additional regularity \eqref{reg-w-rho}
for $\w$, by developing
on a purely \emph{formal} level, enhanced estimates on the enthalpy equation \eqref{eq0-ental},
based on the stronger Hypothesis (VIII). Indeed, we (formally) choose $\varphi=\w$ as a test
function for \eqref{eq0d}: re-integrating by parts in time and exploiting \eqref{hyp-K-rho}
we obtain for any $t \in (0,T)$:
\begin{equation}
\label{est-9.1-rmk}
\begin{aligned}
&
\frac12\io |\w(t)|^2\dd x +c_{10}\int_0^t\io\left(|\w|^{2q}+1\right)|\nabla \w|^2 \dd x \dd s
\\ & \leq
\frac12 \int_{\Omega}|\w(0)|^2 \dd x + \int_0^t \io |g||\w|\dd x  + \int_0^t \io \left(|\chi_t|+|\rho||\dive(\uu_t)|\right)|\Theta(\w)||\w|\dd x\dd s\,.
\end{aligned}
\end{equation}
Now, we observe that $\int_\Omega |\w|^{2q} |\nabla \w|^2 \dd x=  1/ (q+1)^2 \int_\Omega
 |\nabla (|\w|^q \w)|^2 \dd x $ and that,
 due to the Poincar\'{e} inequality \eqref{poincare-type} and
 to the fact that $\w \in L^\infty (0,T;L^1(\Omega))$, there holds
 \[
\| |\w|^q w \|_{H^1(\Omega)}^2 \leq C (\|\nabla (|\w|^q \w)\|_{L^2(\Omega)}^2 +1 )\,.
 \]
 Therefore,
 taking into account the continuous embedding $H^1(\Omega) \subset L^6(\Omega)$,
   for the l.h.s.\ of \eqref{est-9.1-rmk} we have the lower bound
 \begin{equation}
 \label{est-9.2-rmk}
 \int_0^t \io\left(|\w|^{2q}+1\right)|\nabla \w|^2 \dd x\dd s  \geq
c \int_0^t  \left(\|\nabla
\w\|_{L^2(\Omega)}^2+\|w\|^{2(q+1)}_{L^{6(q+1)}(\Omega)}\right) \dd s -C.
 \end{equation}
 Clearly, relying on \eqref{heat-source} we can absorb the second term on the r.h.s.\ of
 \eqref{est-9.1-rmk} into its left-hand side.
 On the other hand, using the fact that
 $\ell:=\chi_t + \dive (\uu_t) \in L^2(0,T;L^2(\Omega))$ and taking into account the growth
 \eqref{conseq-1} of $\Theta$, we can estimate the last summand on the r.h.s.  by
 \begin{equation}
 \label{est-9.3.-rmk}
\begin{aligned}
d_0 \int_0^t\int_\Omega |\ell|(w^{1+1/\sigma} {+}1) \dd x \dd s  & \leq
C \int_0^t\int_\Omega |\ell|(w^{q+1} {+}1) \dd x \dd s
\\ &
\leq \varrho \int_0^t \|w\|^{2(q+1)}_{L^{2(q+1)}(\Omega)} \dd s  + C_\varrho\left(
\int_0^t  \| \ell \|_{L^2(\Omega)}^2+1 \right),
\end{aligned}
 \end{equation}
 where the first inequality follows from the fact that $1+1/\sigma < (3d+2)/(2d) \leq q+1$
 thanks to
 \eqref{hyp-heat} and \eqref{hyp-K-rho}, and $\varrho>0$ is chosen sufficiently small, in such a way as to absorb
 $\int_0^t \|w\|^{2(q+1)}_{L^{2(q+1)}(\Omega)} \dd s$ into the r.h.s.\
 of \eqref{est-9.2-rmk}.
 Plugging \eqref{est-9.2-rmk} and \eqref{est-9.3.-rmk} into \eqref{est-9.1-rmk}
we immediately deduce an estimate for $\w$ in the space $w\in L^2(0,T; H^1(\Omega))\cap L^{2(q+1)} (0,T;L^{6(q+1)}(\Omega)) \cap
 L^\infty(0,T; L^2(\Omega))$. Observe that, as a consequence of $\w \in L^{2(q+1)} (0,T;L^{6(q+1)}(\Omega))$,  we have
$w^{2q+1} \in L^{1+1/\epsilon} (0,T;L^{3+3/\epsilon}(\Omega))  $, with
$\epsilon= 2q+1$. Therefore, %recalling the definition \eqref{giulio-space} of the space
%$\mathcal{W}^{-2,s}(\Omega)$,
\[
A (\widehat{K}(w)) \text{ is estimated in } L^{1+1/\epsilon} (0,T;
W^{2,\expo}(\Omega)'),
\]
with $\expo= (6q+6)/(4q+5)$ the conjugate exponent of $3+3/(2q+1)$.
A comparison in \eqref{eq0-ental}  entails an estimate for $w_t \in L^1 (0,T;
W^{2,\expo}(\Omega)')$, and we conclude \eqref{reg-w-rho}.

In fact, the $\BV$-estimate for $\w$ (and accordingly, the regularity required of the test functions)
could be slightly improved by resorting to
refined interpolation arguments: however, to avoid overburdening this exposition we choose
not to detail this point.
\end{remark}
\noindent
To prove Thm.\ \ref{teor1bis}, we will need to
combine the time-discretization procedure for system \eqref{eq0d}--\eqref{eq2d}, with a truncation
of the function $K$ in the elliptic operator of \eqref{eq0d}, cf.\ Problem \ref{prob:rhoneq0}
 later on. Hence, in order to make the
estimates
developed in Rmk.\ \ref{rmk:afterThm2} rigorous, we will have to pass to the limit in two phases,
first with the time-step, and then with the truncation parameter, cf.\ the discussion in
Sec.\ \ref{sec:4.2new}.

For the \emph{isothermal} reversible system,  in both cases
$\rho =0$ and $\rho \neq 0$,
 we obtain a continuous dependence result, in particular
yielding uniqueness of solutions, under the additional convexity property for $\phi$ in  Hypothesis (VII).
Indeed, the latter ensures the monotonicity inequality \eqref{essential-inequality} for $\mathbf{d}$,
which is crucial for the continuous dependence estimate. We also need to restrict to the case
in which
$a$ is constant.
\begin{maintheorem}[Continuous dependence on  the data for the isothermal system, $\mu=0$, $\rho\in \RR$]
\label{teor2} Let $\mu=0$, $\rho\in \RR$. As\-su\-me that Hypotheses
 (III)--(V) and (VII)   are satisfied, and, in addition, that
\begin{equation}
\label{constant-a} \text{the function $ a $   is
constant.}
\end{equation}
 Let $(\mathbf{f}_i,\ub_0^i, \vb_0^i,\chi_0^i)$, $i=1,2$, be two
sets of data complying with
\eqref{bulk-force} and
\eqref{datou}--\eqref{datochi},
 and, accordingly, let
$(\ub_i,\chi_i)$, $i=1,2$, be the associated solutions on some
$[0,T]$ with fixed temperature profiles $\Theta(w_i)=\bar \Theta_i\in L^2(0,T; L^2(\Omega))$.
Set
$
\mathrm{M}:=  \max_{i=1,2} \left\{
\| \ub_i \|_{H^{1}(0,T;\boY)}+1 \right\}\,.
$
Then there exists a positive constant $S_0$, depending on $\mathrm{M}$,
 $\delta$, $T$, and  $|\Omega|$, such that
 \begin{equation}
 \label{cont-depe}
\begin{aligned}
 & \| \uu_1 -  \uu_2 \|_{W^{1,\infty}(0,T; \Ha) \cap H^1 (0,T; H^1(\Omega;\R^d))}
+ \| \chi_1 -\chi_2 \|_{L^\infty(0,T; L^2(\Omega)) \cap L^p(0,T; \Vp)} \\
&\leq S_0 \Big( \| \uu_0^1 -\uu_0^2 \|_{H^1(\Omega;\R^d)} +
\|\vv_0^1 -\vv_0^2 \|_{L^2(\Omega;\R^d)} +
 \| \chi_0^1 -\chi_0^2 \|_{L^2(\Omega)}
 \\
 & \qquad \qquad \qquad +
 \| \mathbf{f}_1 - \mathbf{f}_2 \|_{L^2 (0,T;
H^{-1}(\Omega))}+ \|\bar\Theta_1-\bar\Theta_2\|_{L^2(0,T; L^2(\Omega))}\Big)\,.
\end{aligned}
\end{equation}
In particular, the isothermal  reversible system admits a unique solution
$(\uu,\chi)$.
\end{maintheorem}
The proof is
postponed to Section \ref{CD}.
\begin{remark}\label{aconst-sLapl}
\upshape
Let us notice that, if we consider the $s$-Laplacian \eqref{bilinear-s_a} instead of the $p$-Laplacian
\eqref{defAp}, the continuous dependence result stated in Theorem~\ref{teor2} still holds true also without
assumption \eqref{constant-a}. In this case,
for any two solutions $\chi_1$ and $\chi_2$ of the isothermal reversible system,
\eqref{cont-depe} yields an estimate on
 $ \| \chi_1 -\chi_2 \|_{L^2(0,T; H^s(\Omega))}$.
For further details, we refer to   Remark~\ref{sLaplCD} at the end of Sec.~\ref{CD}.
\end{remark}

%%%%%%%%%%%%%%%
%%%%%%%%%%%%%%%%%%%%%%%
%%%%%%%%%%%%%%%%%%%%
%%%%%%%%%%
\subsection{Global existence  results for the  irreversible system}
\label{glob-irrev} \noindent \textbf{Heuristics for weak solutions.}
As mentioned in the introduction, the major problem in dealing with the subdifferential inclusion
\eqref{eq2d} in the case $\mu=1$ is the simultaneous presence of the
three nonlinear, maximal monotone operators $\partial I_{(-\infty, 0]}$, $\opchi$, and
$\beta$, which need to be properly identified when passing to the
limit in the time-discretization scheme we are going to set up in
Section \ref{s:time-discrete}. We now discuss the attached
difficulties on a formal level, treating $\beta$ and  $\partial I_{(-\infty, 0]}$ as
single-valued.

 It would be  possible to handle
$\beta=\partial\widehat{\beta}$ by exploiting the strong-weak
closedness (in the sense of graphs) of the (induced operator)
$\beta: L^2 (0,T;L^2(\Omega)) \rightrightarrows L^2
(0,T;L^2(\Omega))$.
Nonetheless, a $L^2
(0,T;L^2(\Omega))$-estimate
of  the term $\beta(\chi)$ in
\eqref{eq2d} cannot be obtained without estimating as well
$\opchi(\chi)$ (and hence $\partial I_{(-\infty, 0]}(\chi_t)$ by comparison), in $L^2
(0,T;L^2(\Omega))$. To our knowledge, this can be proved  by
testing \eqref{eq2d} by $\partial_t (\opchi(\chi)+\beta(\chi))$ (cf.\ also \cite{bss}). The
related calculations (which we will develop in Sec.\
\ref{s:time-discrete}, on the time-discrete level, for the
\emph{isothermal} irreversible system) would involve an integration
by parts of the terms on the right-hand side of \eqref{eq2d}. Thus,
they would rely on an estimate in $W^{1,1}(0,T;L^2(\Omega))$ of the
term $-b'(\chi)\frac{\varepsilon(\ub)
\mathrm{R}_e\varepsilon(\ub)}{2}+\Theta(\ental)$. However, presently
this enhanced bound  for $\Theta(w)$ does not seem to be at hand due to
the poor time-regularity of $w$, cf.\ \eqref{reg-ental}.

That is why, for the temperature-dependent irreversible system we
are only able to obtain the existence of solutions  $(w, \ub, \chi)$ to a suitable
\emph{weak formulation} of \eqref{eq2d}, mutuated from \cite{hk1},
where we also restrict to the particular case in which
\begin{equation}
\label{e:particular-widehatbeta} \widehat{\beta}=I_{[0,+\infty)}.
\end{equation}
\begin{remark}
\upshape In the present irreversible context it is sufficient to
choose $\widehat\beta$ as in \eqref{e:particular-widehatbeta} to
enforce the constraint $\chi \in [0,1]$ a.e.\ in $\Omega\times
(0,T)$.
 Indeed,  starting
from an initial datum $\chi_0 \leq 1$ a.e.\ in $\Omega$ we will
obtain by irreversibility that
 $\chi(\cdot,t) \leq \chi_0 \leq 1$
  a.e.\ in $\Omega$, for almost all $t \in (0,T)$.
\end{remark}

 The underlying motivation
for the weak formulation of \eqref{eq2d} we will consider
 is that, due to the $1$-homogeneity
 of $I_{(-\infty, 0]}$, it is not difficult to check
that \eqref{eq2d} is equivalent to the system
\begin{subequations}
\label{ineq-system}
\begin{align}
 \label{ineq-system1}  & \chi_t(x,t) \leq 0 \qquad \foraa\, (x,t)
 \in \Omega \times (0,T),
 \\
 \label{ineq-system2}  &
\begin{aligned}
   & \pairing{}{W^{1,p}(\Omega)}{\chi_t(t) +\opchi(\chi(t))+
 \xi(t)+ \gamma(\chi(t))  + b'(\chi(t))\frac{\varepsilon(\ub(t))
\mathrm{R}_e\varepsilon(\ub(t))}{2} -\Theta(w(t))}{\varphi} \geq 0 \\
&  \qquad \qquad \qquad \qquad \qquad  \qquad \qquad \qquad \qquad
\text{for all } \varphi \in W_-^{1,p}(\Omega) \ \foraa\, t\in (0,T),
\end{aligned}
\\
\label{ineq-system3}
 &
\begin{aligned}
   &
 \pairing{}{W^{1,p}(\Omega)}{\chi_t(t) +\opchi(\chi(t))+
 \xi(t) + \gamma(\chi(t))  + b'(\chi(t))\frac{\varepsilon(\ub(t))
\mathrm{R}_e\varepsilon(\ub(t))}{2} -\Theta(w(t))}{\chi_t(t)} \leq 0\\
&  \qquad \qquad \qquad \qquad \qquad  \qquad \qquad \qquad \qquad \
\foraa\, t\in (0,T),
\end{aligned}
\end{align}
\end{subequations}
with $\xi \in \partial I_{[0,+\infty)}(\chi)$ a.e.\ in $\Omega
\times (0,T)$ (and $\pairing{}{W^{1,p}(\Omega)}{\cdot}{\cdot}$ denoting the duality pairing
 between $W^{1,p}(\Omega)'$ and $W^{1,p}(\Omega)$, cf.\ Notation \ref{not:2.1}).  In  order to see this, it is sufficient to
 subtract
 \eqref{ineq-system3} from \eqref{ineq-system2}, and use the definition of $\partial I_{[0,+\infty)}$.
 However, for reasons analogous to those mentioned in the above
lines, the proof of \eqref{ineq-system3} is at the moment an open
problem. Therefore, following \cite{hk1}, in  the forthcoming
Definition \ref{def-weak-sol} we weakly formulate
\eqref{ineq-system} by means of \eqref{ineq-system1},  (an
integrated version of) \eqref{ineq-system2}, and the \emph{energy
inequality} \eqref{energ-ineq} below, in place of
\eqref{ineq-system3}.
\begin{definition}[Weak solution to the  (non-degenerating) irreversible full system]
\label{def-weak-sol} Let $\mu=1$. We call a triple $(w,\uu,\chi) $
as in \eqref{reg-ental}--\eqref{reg-chi}  a \emph{weak solution} to
Problem \ref{prob} if, besides fulfilling the weak enthalpy and
momentum equations \eqref{eq0d}--\eqref{eq1d}, it satisfies
$\chi_t(x,t) \leq 0$ for almost all $(x,t) \in \Omega \times (0,T)$,
as well as
\begin{equation}
 \label{ineq-system2-integrated}
\begin{aligned}
  %\int_0^T
  \int_\Omega  \Big( \chi_t (t) \varphi     +
\mathbf{d}(x,\nabla\chi(t)) \cdot \nabla \varphi  + \xi(t) \varphi +
\gamma(\chi(t)) \varphi %\\ & \quad
 & + b'(\chi(t))\frac{\varepsilon(\ub(t))
\mathrm{R}_e\varepsilon(\ub(t))}{2}\varphi  -\Theta(w(t)) \varphi
\Big) \,
\mathrm{d}x % \, \mathrm{d}t
  \geq 0 \\ &  \text{for all }  \varphi
\in %L^p (0,T;
W_-^{1,p}(\Omega), \quad \foraa\, t \in (0,T),
%) \cap L^\infty (Q),
\end{aligned}
\end{equation}
with $\xi \in
\partial I_{[0,+\infty)}(\chi)$ in the following sense:
\begin{equation}
\label{xi-def} \xi \in L^1(0,T;L^1(\Omega)) \qquad\text{and}\qquad
\pairing{}{W^{1,p}(\Omega)}{\xi(t)}{\varphi-\chi(t)} \leq 0 \  \
\forall\, \varphi \in W_+^{1,p}(\Omega), \ \foraa\, t \in (0,T),
\end{equation}
 and the energy
inequality for all $t \in (0,T]$, for $s=0$,  and for almost all $0
< s\leq t$:
\begin{equation}
\label{energ-ineq}
\begin{aligned}
 \int_s^t   \int_{\Omega} |\chi_t|^2 \dd x \dd r  & +
 \Phi(\chi(t)) +  \int_{\Omega} W(\chi(t))\dd x\\ & \leq
 \Phi(\chi(s))+ \int_{\Omega}W(\chi(s))\dd x
  +\int_s^t  \int_\Omega \chi_t \left(- b'(\chi)
  \frac{\varepsilon(\ub)\mathrm{R_e}\varepsilon(\ub)}2
+\Theta(\ental)\right)\dd x \dd r.
\end{aligned}
\end{equation}
\end{definition}
\noindent The following result sheds light on the properties of this solution concept.
First of all, it states the \emph{total energy
inequality} \eqref{total-energy-ineq}:  from \eqref{total-energy-ineq},  we will deduce
in Sec.\ \ref{sec:5}
 suitable
estimates independent of $\delta$, which  will allow us to pass to
the limit in Problem \ref{prob} as $\delta \downarrow 0$ for $\mu=1$.
Furthermore,  the second part
of Proposition \ref{more-regu} (whose proof closely follows the
argument for  \cite[Prop.\ 4.1]{hk1}) shows that, if $\chi$ is
regular enough, then \eqref{ineq-system1} and
\eqref{ineq-system2-integrated}--\eqref{energ-ineq} are equivalent to \eqref{eq2d}.
\begin{proposition}\label{more-regu}
Let $\mu=1$. Then, any weak solution $(w,\uu,\chi)$ in
the sense of Def.\ \ref{def-weak-sol} fulfills
the  \emph{total energy inequality}  for all $t \in (0,T]$,
for $s=0$,  and for almost all $0 < s\leq t$
\begin{equation}
\label{total-energy-ineq}
\begin{aligned}
 &
\int_\Omega w(t)(\mathrm{d} x) +\frac12 \int_\Omega |\uu_t (t)|^2\,
\mathrm{d} x +\int_s^t \int_\Omega |\chi_t|^2 \, \mathrm{d} x\,
\mathrm{d} r \\  & \quad + \int_s^t
\bilj{(a(\chi)+\delta)}{\uu_t}{\uu_t} \, \mathrm{d} r +\frac12
\bilh{b(\chi(t))}{\uu(t)}{\uu(t)} + \Phi(\chi(t)) +\int_\Omega
W(\chi(t)) \, \mathrm{d} x
\\ &
 \leq \int_\Omega w(s)(\mathrm{d} x) +\frac12 \int_\Omega |\uu_t
(s)|^2\, \mathrm{d} x +\frac12 \bilh{b(\chi(s))}{\uu(s)}{\uu(s)} +
\Phi(\chi(s)) +\int_\Omega W(\chi(s)) \, \mathrm{d} x
\\ & \qquad \qquad \qquad
+\int_s^t \int_\Omega \mathbf{f} \, \cdot \, \uu_t \, \mathrm{d} x
\mathrm{d} r + \int_s^t \int_\Omega g \, \mathrm{d} x\mathrm{d} r \,.
\end{aligned}
\end{equation}
Assume now Hypotheses  (III)--(V).
 Let $(w,\uu,\chi)$ be as in
\eqref{reg-ental}--\eqref{reg-chi}, and suppose in addition that
\begin{equation}
\label{reg-chain-rule}
\begin{aligned}
&\opchi(\chi) \in L^2 (0,T;L^2(\Omega)) \text{
and there exists } \xi \in L^2 (0,T; L^2(\Omega)) \text{ with }
\\
& \xi (x,t) \in
\partial I_{[0,+\infty)}(\chi(x,t)) \ \foraa\, (x,t)\in \Omega \times (0,T),
\end{aligned}
\end{equation}
such that $(\w,\uu,\chi,\xi)$ comply with \eqref{ineq-system1} and
\eqref{ineq-system2-integrated}--\eqref{energ-ineq}.   Then,
\begin{equation}
\label{eq-with-zeta}
\begin{gathered}
\exists\, \zeta  \in L^2(0,T;L^2(\Omega)) \text{ with } \zeta(x,t)
\in \partial I_{(-\infty, 0]}(\chi_t (x,t)) \ \foraa\, (x,t)\in \Omega \times (0,T)
\text{ s.t. }
\\
 \chi_t +\zeta+  \opchi(\chi)+\xi+ \gamma(\chi)
=-b'(\chi)\frac{\varepsilon(\ub)
\mathrm{R}_e\varepsilon(\ub)}{2}+\Theta(w) \qquad \text{a.e.~in }\
\Omega \times (0,T).
\end{gathered}
\end{equation}
\end{proposition}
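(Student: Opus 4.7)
\medskip
\noindent\textbf{Proof plan for Proposition~\ref{more-regu}.}

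My plan for the total energy inequality \eqref{total-energy-ineq} is to add up three partial energy identities/inequalities and check that the coupling terms cancel pairwise. First I would test the weak enthalpy equation \eqref{eq0d} with the constant $\varphi \equiv 1$, obtaining
$\int_\Omega w(t)\dd x - \int_\Omega w(s)\dd x + \int_s^t\int_\Omega \chi_t\Theta(w)\dd x\dd r +\rho\int_s^t\int_\Omega \dive(\uu_t)\Theta(w)\dd x\dd r = \int_s^t\int_\Omega g\dd x\dd r$. Next I would test the momentum equation \eqref{eq1d} with $\uu_t\in L^2(0,T;\boZ)$ (which is admissible thanks to \eqref{reg-u}); the viscous term produces $\int_s^t \bilj{(a(\chi){+}\delta)}{\uu_t}{\uu_t}\dd r$, while the elastic term is handled via the chain rule $\bilh{b(\chi)}{\uu}{\uu_t} = \frac12\frac{\dd}{\dd t}\bilh{b(\chi)}{\uu}{\uu} - \frac12\int_\Omega b'(\chi)\chi_t\,\eps(\uu)\mathrm{R}_e\eps(\uu)\dd x$, and the coupling operator gives $\pairing{}{H^1(\Omega;\R^d)}{\ciro(\Theta(w))}{\uu_t} = -\rho\int_\Omega\Theta(w)\dive(\uu_t)\dd x$. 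Finally, adding the energy inequality \eqref{energ-ineq} to these two identities makes the thermal-coupling term $\int\chi_t\Theta(w)$, the thermal-expansion term $\rho\int\dive(\uu_t)\Theta(w)$, and the mechanical-feedback term $\frac12\int b'(\chi)\chi_t\,\eps(\uu)\mathrm{R}_e\eps(\uu)$ cancel, yielding exactly \eqref{total-energy-ineq}. The inequality (not equality) character is inherited from \eqref{energ-ineq}.

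For the second part, I would exploit the additional regularity \eqref{reg-chain-rule} to reverse the weak formulation. By \eqref{defAp}, for any $\varphi\in W^{1,p}(\Omega)$ one has $\int_\Omega \mathbf{d}(x,\nabla\chi)\cdot\nabla\varphi\dd x = \int_\Omega (\opchi(\chi))\varphi\dd x$, so \eqref{ineq-system2-integrated} becomes
$\int_\Omega \mathcal{R}(t)\,\varphi\dd x \geq 0$ for all $\varphi\in W^{1,p}_-(\Omega)$, where $\mathcal{R} := \chi_t + \opchi(\chi) + \xi + \gamma(\chi) + b'(\chi)\tfrac{\eps(\uu)\mathrm{R}_e\eps(\uu)}{2} - \Theta(w)$. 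A density argument (using that $W^{1,p}_-(\Omega)$ is dense in $L^2_-(\Omega)$) forces $\mathcal{R}\leq 0$ a.e.\ in $\Omega\times(0,T)$, and I set $\zeta := -\mathcal{R}\ge0$, so that the equation in \eqref{eq-with-zeta} holds with $\zeta\in L^2(0,T;L^2(\Omega))$.

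It remains to identify $\zeta \in \partial I_{(-\infty,0]}(\chi_t)$. Since $\chi_t\leq 0$ a.e.\ by \eqref{ineq-system1} and $\zeta\geq 0$, we already have $\zeta\chi_t\leq 0$ pointwise; the missing complementarity $\zeta\chi_t=0$ will come from the energy inequality. Indeed, under \eqref{reg-chain-rule} I can test the now-pointwise identity in \eqref{eq-with-zeta} against $\chi_t\in L^2(0,T;L^2(\Omega))$ and invoke the standard chain rule for the convex l.s.c.\ functional $\Phi$ (using $\opchi(\chi)\in L^2(0,T;L^2(\Omega))$ and $\chi\in H^1(0,T;L^2(\Omega))$, cf.\ \cite[Ch.~3]{brezis-monotone-type-classics}), the chain rule for $\widehat{\gamma}\in\mathrm{C}^2$, and the fact that $\xi\in\partial I_{[0,+\infty)}(\chi)$ with $\chi\geq 0$ implies $\int_s^t\int_\Omega \xi\chi_t\dd x\dd r = 0$ (the indicator is constant along the trajectory). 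This yields
\begin{equation*}
\int_s^t\!\!\int_\Omega |\chi_t|^2\dd x\dd r + \int_s^t\!\!\int_\Omega \zeta\chi_t\dd x\dd r + [\Phi(\chi) + \tint\widehat{\gamma}(\chi)\dd x]_s^t = \int_s^t\!\!\int_\Omega \chi_t\Big({-}b'(\chi)\tfrac{\eps(\uu)\mathrm{R}_e\eps(\uu)}{2} + \Theta(w)\Big)\dd x\dd r,
\end{equation*}
where $\tint$ denotes $\int_\Omega$. Comparing with \eqref{energ-ineq} (noting $W(\chi)=\widehat{\gamma}(\chi)$ since $\widehat{\beta}(\chi)=0$ on $\{\chi\geq 0\}$) forces $\int_s^t\int_\Omega \zeta\chi_t\dd x\dd r \geq 0$; combined with the pointwise sign $\zeta\chi_t\le 0$, this gives $\zeta\chi_t=0$ a.e., hence $\zeta\in\partial I_{(-\infty,0]}(\chi_t)$.

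The delicate point I expect to be the main obstacle is justifying rigorously the chain rule for $\Phi$ along $\chi$, namely $\int_s^t\int_\Omega \opchi(\chi)\chi_t\dd x\dd r = \Phi(\chi(t))-\Phi(\chi(s))$: it relies on the $L^2$-regularity of both $\chi_t$ and $\opchi(\chi)$ and on the fact that $\opchi=\partial\Phi$ in the sense of convex analysis on $L^2(\Omega)$, which was established through \eqref{defiPhi}--\eqref{defAp}. Everything else is essentially bookkeeping of cancellations and sign arguments.
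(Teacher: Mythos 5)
Your proposal is correct and follows essentially the same route as the paper's proof: test \eqref{eq0d} by $1$ and \eqref{eq1d} by $\uu_t$, add \eqref{energ-ineq} and let the coupling terms cancel for \eqref{total-energy-ineq}; then, for \eqref{eq-with-zeta}, derive the pointwise sign of $\zeta=-\mathcal{R}$ from \eqref{ineq-system2-integrated} and the complementarity $\zeta\chi_t=0$ from the chain rule for $\mathscr{E}=\Phi+\int_\Omega W$ applied to \eqref{energ-ineq}, which the paper does via \cite[Lemma 3.3]{brezis} and formula \eqref{form-derivative}.
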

\begin{proof}
In order to prove \eqref{total-energy-ineq} it is sufficient choose $\varphi \equiv
1$ in \eqref{eq0d}, test \eqref{eq1d} by $\uu_t$, integrate in time,
perform the calculations in the proof of Thm.\ \ref{teor1},  and add
the resulting equalities with the energy inequality
\eqref{energ-ineq}. The second part of the statement can be proved considering
the energy functional
\begin{equation}
\label{def-mathscrE} \mathscr{E}: L^2(\Omega) \to (-\infty,+\infty],
\qquad \mathscr{E}(\chi):= \Phi(\chi)+\int_\Omega W(\chi)\dd x.
\end{equation}
It follows from  Hypotheses (IV) and (V), as well as the chain rule
of \cite[Lemma 3.3]{brezis} that, if $\chi$ complies with
\eqref{reg-chi} and \eqref{reg-chain-rule}, then  the map $t\mapsto
 \mathscr{E}(\chi(t))$ is absolutely continuous on $(0,T)$ and
 fulfills
 \begin{equation}
\label{form-derivative} \frac{\dd}{\dd t} \mathscr{E}(\chi(t)) =
\int_\Omega  \left(\opchi(\chi(x,t))  + \xi(x,t)  +
\gamma(\chi(x,t)) \right) \chi_t(x,t) \dd x \qquad \foraa\, t \in (0,T).
\end{equation}
 Therefore, differentiating \eqref{energ-ineq} in time and
 using \eqref{form-derivative} we conclude that
$(\w,\uu,\chi,\xi)$ comply with \eqref{ineq-system3},  where the duality
pairing $\pairing{}{W^{1,p}(\Omega)}{\chi_t}{\chi_t}$
 %in $W^{1,p}$of $\chi_t$ with itself
 is replaced by the scalar product in $(\chi_t,\chi_t)_{L^2(\Omega)}$.
 Likewise,
\eqref{ineq-system2-integrated} yields \eqref{ineq-system2}.
Again on account of \eqref{reg-chi} and \eqref{reg-chain-rule}, it
is not difficult to infer from \eqref{ineq-system} that $-
b'(\chi)\frac{\varepsilon(\ub)
\mathrm{R}_e\varepsilon(\ub)}{2}+\Theta(w)- \chi_t
-\opchi(\chi)-\xi-  \gamma(\chi)  \in \partial I_{(-\infty,
0]}(\chi_t)$ a.e.\ in $\Omega \times (0,T)$ as an element of
$L^2(0,T;L^2(\Omega))$, and \eqref{eq-with-zeta} follows.
\end{proof}

\begin{remark}[Energy inequality \eqref{total-energy-ineq} vs.\ Energy identity \eqref{total-energy-eq}]
\upshape As we have already pointed out in the proof of
Proposition~\ref{more-regu}, to prove that \emph{any} weak solution $(w,\uu,\chi)$ in
the sense of Def.\ \ref{def-weak-sol} fulfills in the irreversible case (i.e.\ with $\mu=1$)
the  \emph{total energy inequality}  \eqref{total-energy-ineq}
it is sufficient to choose $\varphi \equiv
1$ in \eqref{eq0d}, test \eqref{eq1d} by $\uu_t$, integrate in time and add
the resulting equalities to
\eqref{energ-ineq}.
Instead, the proof of  the \emph{total energy equality}
\eqref{total-energy-eq}
 relies on the fact
that, for $\mu=0$ we are able to obtain the stronger,
\emph{pointwise} form \eqref{eq-with-xi} of  the subdifferential inclusion \eqref{eq2d}.
\end{remark}

We can now state our existence result in the case $\mu=1$, with $\rho=0$.
Its proof will be developed in Sec.\ \ref{ss:5.1} by passing to the
limit in the time-discretization scheme devised in Sec.\ \ref{s:time-discrete}. We mention in advance that,
in this irreversible setting,
 in addition the basic assumptions of Hypotheses  (I)--(V),
we also have to require the  $p$-coercivity condition \eqref{pcoercive}. It has a crucial role
 in proving \emph{strong} convergence  of the approximate solutions to $\chi$
in $L^p(0,T;W^{1,p}(\Omega))$, which enables us to obtain \eqref{energ-ineq}.
Let us also highlight that, exploiting the additional feature of irreversibility, we
 prove a slightly more refined (in comparison with \eqref{teta-non-neg}) positivity result for the
temperature $\teta$, cf.\ \eqref{strictpos} below.

%%%%%%%%%%%%%%%%%
\begin{maintheorem}[Existence of weak solutions for the full system, $\mu=1$,  $\rho=0$]
\label{teor3} Let $\mu=1$, $\rho=0$, and assume  Hypotheses (I)--(V)  with
$\widehat{\beta}=I_{[0,+\infty)}$ as in
\eqref{e:particular-widehatbeta}, and conditions
\eqref{bulk-force}--\eqref{datochi}  on the data $\mathbf{f},$ $g$,
$\teta_0,$ $\uu_0,$ $\vv_0,$ $\chi_0$. Suppose moreover that
$\phi$ complies with \eqref{pcoercive}.
 Then,
\begin{compactenum}
\item[1.] Problem \ref{prob} admits a weak solution $(w,\uu,\chi)$ (cf. Def.~\ref{def-weak-sol}).
\item[2.]
Suppose in addition that
\begin{equation}\label{ipo:strictpos}
g(x,t)\geq 0 \quad \forae\, (x,t) \in \Omega \times
(0,T)\quad  \text{and } \exists\,   \underline\teta_0 \geq 0  \,  \ \ \forae\, x \in \Omega\, :   \quad \teta_0(x)>\underline\teta_0 \geq  0\,.
\end{equation}
Then %there exists  $\underline\teta>0$ such that
\begin{equation}\label{strictpos}
\teta(x,t):= \Theta(w(x,t))\geq \underline\teta_0  \geq 0 \quad \forae\,
(x,t) \in \Omega \times (0,T).
\end{equation}
\end{compactenum}
\end{maintheorem}
%%%%%%%%%%%%%%%%%%%%%%%%%%
%%%%%%
\begin{remark}
\upshape
If $\opchi$ is given by the $s$-Laplacian operator $A_s$
 \eqref{slobo-choice},
 the Definition~\ref{def-weak-sol} of weak
solution to the irreversible full system is obviously modified:
 in
\eqref{ineq-system2-integrated} the term $\int_\Omega
\mathbf{d}(x,\nabla\chi) \cdot \nabla \varphi$ is replaced by
$a_s(\chi,\varphi)$ with $\varphi \in L^2 (0,T;W_-^{s,2}(\Omega))$,
whereas $\Phi(\chi)$ in \eqref{energ-ineq} now reads $\frac12
a_s(\chi,\chi)$.
Our existence result
Theorem~\ref{teor3} extends to such a  case, cf.\ also the forthcoming
Remark \ref{remk:added-revision}.
%where with obvious
%modifications in : namely

In Section \ref{sec:5}, we will focus on the
irreversible full system \eqref{eq0d}--\eqref{eq2d} with
$\opchi=A_s$, and perform an asymptotic analysis of weak solutions
(in the sense of Definition \ref{def-weak-sol}) in the
\emph{degenerate} limit $\delta \down 0$.
\end{remark}

\begin{remark}\label{irrev-rho}
\upshape
Replacing Hyp.\ (II) with (VIII) and
carefully tailoring the  estimates and  techniques for the proof of Thm.\ \ref{teor1bis}
 (cf.\ Rmk.\ \ref{rmk:afterThm2}) to the irreversible case,
we could indeed prove the existence of \emph{weak} (in the sense of Def.\ \ref{def-weak-sol}) solutions
also for $\mu = 1$
and $\rho \neq 0$. However we expect that, in the latter setting, only the weaker positivity result
\eqref{teta-non-neg}  can be proved. Indeed,  the estimate yielding the lower bound \eqref{strictpos} (cf.\
Step $4$ in the proof of Lemma \ref{lemma:ex-discr}),
cannot be performed on the enthalpy equation due to the presence of term $-\rho\Theta(\w)\dive(\uu_t)$.
\end{remark}

We finally turn to the irreversible \emph{isothermal} case, and
improve the existence result of Theorem \ref{teor3}.
\begin{maintheorem}[Global existence for the isothermal system, $\mu=1$]
\label{teor4} Let $\mu=1$. In addition to
 Hypotheses (III)--(V),
assume that
\begin{equation}
\label{bsecondnull} b{''}(x)=0 \qquad \text{for all } x \in [0,1],
\end{equation}
  and suppose that the data
$\mathbf{f},  \uu_0, \, \vv_0, \, \chi_0$ comply with conditions
\eqref{bulk-force} and \eqref{datou}--\eqref{datochi}. Suppose in
addition $\phi$ fulfills \eqref{pcoercive} and
\eqref{Lipschitz-x},
 that
\begin{equation}
\label{additional-chizero} \opchi(\chi_0), \, \beta(\chi_0)\in
L^2(\Omega),
\end{equation}
 and consider a
 fixed
temperature profile
 \begin{equation}
 \label{fixed-profile}
 \Theta^* \in W^{1,1}
(0,T;L^2 (\Omega)).
\end{equation}

Then,
  there exists a quadruple
$(\uu,\chi,\xi,\zeta)$, fulfilling \eqref{reg-u}--\eqref{reg-chi},
$\xi \in \beta(\chi)$ and $\zeta \in \partial I_{(-\infty, 0]}(\chi_t) $ a.e. in
$\Omega \times (0,T)$,
 as
well as
\begin{align}
&  \label{enhanced-regu-chi}
 \chi \in
L^\infty(0,T;W^{1+\sigma,p}(\Omega)) \cap
W^{1,\infty}(0,T;L^2(\Omega)) \quad \text{ for every $0<\sigma
<\frac1p$},
\\
 & \label{enhanced-regu-xi}
\xi \in L^\infty (0,T;L^2(\Omega)),
\\
&  \label{enhanced-regu-omega} \zeta \in L^\infty (0,T;L^2(\Omega)),
\end{align}
satisfying equations \eqref{eq1d} and \eqref{eq-with-zeta}, with
$\Theta(w)$ replaced by $\Theta^*$.
\end{maintheorem}
\begin{remark}\label{uni-irrev}
\upshape Uniqueness of solutions for the \emph{irreversible} system,
even in the isothermal case, is still an open problem. This is
mainly due to the triply nonlinear character of \eqref{eq2d} (cf.\
also \cite{CV} for non-uniqueness examples for a general doubly
nonlinear equation).
\end{remark}
\paragraph{A more general dissipation potential.} As observed in
Remark \ref{more-general-alpha} later on, in Thm.\ \ref{teor4} we
could consider a more general dissipation potential in \eqref{eqII}.
Indeed, in place of subdifferential operator $\partial
I_{(-\infty,0]}$, we could  allow for a general cyclical monotone
operator
\begin{equation}
\label{1-homog}
\begin{gathered}
\text{ $\alpha:= \partial \widehat{\alpha}: \R \rightrightarrows
\R$, with}
\\
\widehat{\alpha}:\R \to \R \text{ convex, lower semicontinuous, with
$\mathrm{dom}(\alpha) \subset (-\infty,0]$.}
%\\
%\text{
% positively
%homogeneous of degree $1$, viz. } \widehat{\alpha} (\lambda x)=
%\lambda \widehat{\alpha} (x) \quad \forall\, x \in\R \text{ and }
%\lambda \geq 0.
\end{gathered}
\end{equation}

%%%%%%%%%%%%%%%%%%%%%%%%%%%%%%%%%%%%%%%%%%%%%%%%%%%%%%%%%%%%%%%%%%%%%%%%%%%%%%%%%%%%

\section{Time discretization}
\label{s:time-discrete} \noindent First, in Section \ref{ss:3.1} we will
approximate Problem \ref{prob} via
time discretization. In fact, in the reversible case $\mu=0$ with $\rho \in \R$, we will set up
an \emph{implicit} scheme (cf.\ Problems \ref{probk-rev} and \ref{prob:rhoneq0}), whereas for the irreversible system
$\mu=1$ with $\rho=0$,  we will employ the \emph{semi-implicit} scheme of Problem \ref{probk-irr}. Moreover,
we will tackle separately the discretization of the isothermal irreversible
system in Problem \ref{probk-irr-iso}. We refer to Remarks \ref{rmk:comparison-1} and \ref{rem:irrev-discr}
for a thorough comparison between the various time-discretization procedures,
and more comments.
Second,
in Sec.\ \ref{ss:3.2new} we will prove existence results for Problems \ref{probk-rev}--\ref{probk-irr-iso}.
Third,
 in Sec.\ \ref{ss:3.2} we will perform suitable a priori
estimates.

% Actually, we consider two slightly different
%discretized problems, distinguishing the \emph{reversible} system
%($\mu=0$) from the \emph{irreversible} one ($\mu=1$),
% and for $\mu=1$ we will discretize separately the temperature-dependent and the
% isothermal systems.
% cf. Remark
%\ref{rem:irrev-discr} for further comments.

%Third, we pass to the limit.
%%%%%%%%%

\begin{notation}
\upshape \label{not-alpha} In what follows, also in view of the
extension \eqref{1-homog} mentioned at the end of Sec.\
\ref{glob-irrev}  (cf.\ Rmk.\ \ref{more-general-alpha}),  we will use $\widehat{\alpha}$ and $\alpha$ as
place-holders for $I_{(-\infty,0]}$ and $\partial I_{(-\infty,0]}$.
\end{notation}
%%%%%%%%%%
\subsection{Setup of the time discretization}
\label{ss:3.1}
 We consider
an equidistant partition of $[0,T]$, with time-step $\tau>0$ and
nodes $t_\tau^k:=k\tau$, $k=0,\ldots,K_\tau$.
In this framework, we
approximate the data $\mathbf{f}$ and $g$
 by local means, i.e.
setting for all $k=1,\ldots,K_{\tau}$
\begin{equation}
\label{local-means} \ftau{k}:=
\frac{1}{\tau}\int_{t_\tau^{k-1}}^{t_\tau^k} \mathbf{f}(s)\dd s\,,
\qquad  \gtau{k}:= \frac{1}{\tau}\int_{t_\tau^{k-1}}^{t_\tau^k} g(s)
\dd s\,.
\end{equation}

%In the  time discretization \eqref{eq-discr-w} of the enthalpy equation, we will
%use the following operator
%\begin{equation}
%\label{roubicek-operator}
%\mathcal{A}: H^1(\Omega) \to  H^1(\Omega)' \quad \text{defined by} \quad
%\pairing{}{H^1(\Omega)}{\mathcal{A}(w)}{v}:=\int_\Omega K(w) \nabla w \cdot \nabla v\, \mathrm{d}x  \text{ for all } %w, v \in
%  H^1(\Omega)\,.
%\end{equation}
%For later convenience, we point out that,
% under both  Hypotheses (II) and (VIII),  $\mathcal{A}$ is  pseudo-monotone  (e.g.\ according to
%\cite[Chap.\ II, Def.\ 2.1]{roub-NPDE}) and coercive on $H^1(\Omega)$: indeed,
%$\mathcal{A}$ complies with \cite[Chap.\ II, (2.54), (2.55), (2.65)]{roub-NPDE}, therefore
%\cite[Chap.\ II, Lemmas 2.31, 2.32]{roub-NPDE} apply.
\begin{problem}[Time discretization of the full reversible system, $\mu=0$, $\rho=0$]
\label{probk-rev} \upshape  Given
\begin{align}\label{IC2}
\wtau{0}:=\w_{0}, \qquad \utau{0}:=\uu_{0},\qquad
\utau{-1}:=\uu_{0}-\tau \vv_0, \qquad \chitau{0}:=\chi_{0},
\end{align}
find $\{\wtau{k}, \utau{k}, \chitau{k}, \xitau{k}\}_{k=1}^{K_\tau}
\subset  H^1 (\Omega) \times \boY \times W^{1,p}(\Omega)\times L^2
(\Omega)$, with $\xitau{k} \in \beta(\chitau{k})$ a.e. in $\Omega$,
fulfilling
\begin{align}
& \label{eq-discr-w} \frac{\wtau{k} -\wtau{k-1}}{\tau} +
\frac{\chitau{k} -\chitau{k-1}}{\tau}\Theta(\wtau{k})
%-\rho\dive\left(\frac{\utau{k}-\utau{k-1}}{\tau}\right)\Theta(\wtau{k})
+   A_{\wtau{k-1}}(\wtau{k})  = \gtau{k} \quad \text{in $H^1(\Omega)'$,}
\\
& \label{eq-discr-u} \frac{\utau{k} -2\utau{k-1} +
\utau{k-2}}{\tau^2} + \opj{(a(\chitau{k})
+\delta)}{\frac{\utau{k}-\utau{k-1}}{\tau}} +
\oph{b(\chitau{k})}{\utau{k}} % + \ciro(\Theta(\wtau{k}))
= \ftau{k} \quad \aein\, \Omega,
\\
& \label{eq-discr-chi}
\begin{aligned}
 \frac{\chitau{k} -\chitau{k-1}}{\tau}
+ \opchi(\chitau{k}) + \xitau{k} +\gamma(\chitau{k})  = -
b'(\chitau{k-1})\frac{\eps(\utau{k-1})\mathrm{R}_e
\eps(\utau{k-1})}2 + \Theta(\wtau{k})
  \quad \aein\, \Omega,
  \end{aligned}
\end{align}
 where in \eqref{eq-discr-w} the operator  $A_{\wtau{k-1}}:
H^1(\Omega)\rightarrow H^1(\Omega)'$  is defined by
  \begin{equation}
  \label{def:opA}
 \pairing{}{H^1(\Omega)}{A_{\wtau{k-1}}(w)}{v} :=
  \int_{\Omega} K(\wtau{k-1})\nabla w \cdot\nabla v \dd x  \quad \text{for all } w, v \in
  H^1(\Omega).
\end{equation}
\end{problem}
%%%%%%%%%%%%
 For the full reversible system with $\rho \neq 0$, we work under the stronger Hypothesis (VIII)
and thus prescribe a suitable growth on the function
$K$, which is no longer bounded. Therefore, in order to properly deal with the elliptic operator in
the enthalpy equation on the time-discrete level, we need to truncate
$K$.  We thus introduce the operator
\begin{equation}
\label{def-trunc-ope}
 \pairing{}{H^1(\Omega)}{A_{\wtau{k-1},M}(w)}{v} :=
  \int_{\Omega} K_M(\wtau{k-1})\nabla w \cdot\nabla v \dd x  \quad \text{for all } w, v \in
  H^1(\Omega)\,,
\end{equation}
with
\begin{equation}
\label{def-k-m}
K_M(r):=  \left\{ \begin{array}{ll}
K(-M) & \text{if } r <-M,
\\
K(r)   & \text{if } |r| \leq M,
\\
K(M) & \text{if } r >M.
\end{array}
\right.
\end{equation}
Observe that
\begin{equation}
\label{ellipticity-retained}
K_M(r) \geq c_{10} \qquad \text{  for every $r \in \R$}
\end{equation}
 (with $c_{10}$ from \eqref{hyp-K-rho}).
Accordingly, we will have to truncate the function $\Theta$, replacing it with
\begin{equation}
\label{def-theta-m}
\Theta_M(r):= \left\{ \begin{array}{ll}
\Theta(-M) & \text{if } r <-M,
\\
\Theta(r)   & \text{if } |r| \leq M,
\\
\Theta(M)  & \text{if } r >M.
\end{array}
\right.
\end{equation}
\begin{problem}[Time discretization of the full reversible system, $\mu=0$, $\rho\neq 0$]
\label{prob:rhoneq0}
Starting from
$(\utau{0},$ $\utau{-1},$ $\chitau{0},$ $\wtau{0})$ as in \eqref{IC2},
find
 $\{\wtau{k}, \utau{k}, \chitau{k}, \xitau{k}\}_{k=1}^{K_\tau}
\subset  H^1 (\Omega) \times \boY \times W^{1,p}(\Omega)\times L^2
(\Omega)$
with $\xitau{k} \in \beta(\chitau{k})$ a.e. in $\Omega$,
fulfilling
\begin{align}
& \label{eq-discr-w-TRUNC} \frac{\wtau{k} -\wtau{k-1}}{\tau} +
\frac{\chitau{k} -\chitau{k-1}}{\tau}\Theta_M(\wtau{k})
+  \rho\dive\left(\frac{\utau{k}-\utau{k-1}}{\tau}\right)\Theta_M(\wtau{k})
+  A_{\wtau{k-1},M}(\wtau{k})   = \gtau{k} \quad \text{in $H^1(\Omega)'$,}
\\
& \label{eq-discr-u-TRUNC} \frac{\utau{k} -2\utau{k-1} +
\utau{k-2}}{\tau^2} + \opj{(a(\chitau{k})
+\delta)}{\frac{\utau{k}-\utau{k-1}}{\tau}} +
\oph{b(\chitau{k})}{\utau{k}}  + \ciro(\Theta_M(\wtau{k})) = \ftau{k} \quad \aein\, \Omega,
\\
& \label{eq-discr-chi-TRUNC}
\begin{aligned}
 \frac{\chitau{k} -\chitau{k-1}}{\tau}
+ \opchi(\chitau{k}) + \xitau{k} +\gamma(\chitau{k})  = -
b'(\chitau{k-1})\frac{\eps(\utau{k-1})\mathrm{R}_e
\eps(\utau{k-1})}2 + \Theta_M(\wtau{k})
  \quad \aein\, \Omega\,.
  \end{aligned}
\end{align}
\end{problem}

 We now present the time discretization of the  full irreversible system, postponing to
Remark \ref{rmk:comparison-1} a detailed comparison
between Problem \ref{probk-rev} and the forthcoming Problem \ref{probk-irr}. Let us only mention
in advance that, in the irreversible case
we will restrict to the particular choice
$\beta=\partial I_{[0,+\infty)}$. Furthermore,
in Problem \ref{probk-irr} instead of the time discretization of
\eqref{eq2d}, we will consider the  minimum problem
\eqref{eq-discr-chi-irr},
 such that its Euler equation is \eqref{eq2d} discretized. We resort
  to this approach in view of  the
passage to the limit argument as $\tau \to 0$, mutuated from \cite{hk1}, which we
will develop in the proof of Thm.\ \ref{teor3}.
Finally,
due to technical reasons related to the proof of the
 \emph{Third a priori estimate} in Sec.\ \ref{ss:3.2}, we will also need to
 approximate the initial datum $w_0$ with a sequence
 \begin{equation}
 \label{approx-w0}
 (w_{0\tau})_\tau \subset W^{1,\bar{r}}(\Omega) \text{ such that } \
\sup_{\tau>0} \tau\|\nabla w_{0\tau}\|_{L^{\bar{r}}(\Omega)}^{\bar{r}}\leq C, \quad
w_{0\tau}\to w_0 \text{ in $L^1(\Omega)$ as $\tau \to 0$,}
 \end{equation}
  with  $ \bar{r}=(d+2)/(d+1)$, cf.\ \eqref{reg-ental}.
 We construct $(w_{0\tau})_\tau$ in such a way that, if
 $\teta_0$ complies with \eqref{ipo:strictpos}, then for every $\tau>0$
 \begin{equation}
 \label{appr-w-0-pos}
w_{0\tau}(x) \geq  \underline{\w}_0:= h(\underline{\teta}_0)\geq 0 \qquad \foraa\, x \in \Omega.
 \end{equation}
%%%%%%%%%%%%%%%%%%%%
\begin{problem}[Time discretization of the irreversible \emph{full} system, $\mu=1$, $\rho=0$]
\label{probk-irr} \upshape  Starting from the data
$(\utau{0},$ $\utau{-1},$ $\chitau{0},$ $\wtau{0})$ as in \eqref{IC2},  with
$\wtau{0}=w_{0\tau}$ as in \eqref{approx-w0},
 find
$\{\wtau{k}, \utau{k}, \chitau{k},  \zetau{k}\}_{k=1}^{K_\tau} \in
H^1 (\Omega) \times \boY \times W^{1,p}(\Omega)\times L^2 (\Omega)$,
 such that for all $k=1,\ldots,K_\tau$ there holds
 \begin{equation}
\label{e:irrevers} \chitau{k} \leq \chitau{k-1} \qquad \text{ a.e.
in $\Omega$ and } \qquad  \zetau{k} \in \alpha ((\chitau{k}
-\chitau{k-1})/\tau) \qquad \text{ a.e. in $\Omega$,}
\end{equation}
 and $(\wtau{k}, \utau{k}, \chitau{k})$ fulfill
\begin{align}
& \label{eq-discr-w-irr} \frac{\wtau{k} -\wtau{k-1}}{\tau} +
\frac{\chitau{k} -\chitau{k-1}}{\tau}\Theta(\wtau{k-1})
+A_{\wtau{k-1}}(\wtau{k}) = \gtau{k} \quad \text{in $H^1(\Omega)'$,}
\\
& \label{eq-discr-u-irr} \frac{\utau{k} -2\utau{k-1} +
\utau{k-2}}{\tau^2} + \opj{(a(\chitau{k})
+\delta)}{\frac{\utau{k}-\utau{k-1}}{\tau}} +
\oph{b(\chitau{k})}{\utau{k}}= \ftau{k} \quad \aein\, \Omega,
\end{align}
 and
\begin{align}
  \label{eq-discr-chi-irr}
\begin{aligned}
\chitau k \in \mathrm{Argmin}_{\chi \in W^{1,p} (\Omega)}  & \left\{
 \int_{\Omega}\left(\frac{\tau}2\left| \frac{ \chi - \chitau{k-1}}\tau
\right|^2 + \widehat{\alpha} \left(\frac{ \chi - \chitau{k-1}}\tau
\right)\right) \dd x +\Phi(\chi) + \int_{\Omega}(\widehat{\beta}(\chi) +
\widehat{\gamma}(\chi)) \dd x \right.
\\   & \left.   \qquad
+\int_{\Omega} \left(  b'(\chitau{k-1})
\frac{\eps(\utau{k-1})\mathrm{R}_e \eps(\utau{k-1})}2 -
\Theta(\wtau{k-1})\right)\chi \,\dd x \right\}.
  \end{aligned}
\end{align}
\end{problem}
\begin{remark}
\label{rmk:comparison-1}
\upshape
 The main difference
between systems \eqref{eq-discr-w}--\eqref{eq-discr-chi} and \eqref{eq-discr-w-irr}--\eqref{eq-discr-chi-irr}
consists in the discretization of the coupling term
$\Theta(\w)$
in the temperature and in the phase parameter equations.
Indeed, in the reversible case
$\Theta(\w)$ on the l.h.s.\ of
\eqref{eq-discr-w} (and accordingly its coupled term on the r.h.s.\ of \eqref{eq-discr-chi},
which will cancel out in the \emph{First a priori estimate} of Sec.\ \ref{ss:3.2}), is kept \emph{implicit}.
Only relying on this
we can prove the positivity of the discrete enthalpy
$\wtau{k}$, which for system \eqref{eq-discr-w}--\eqref{eq-discr-chi}
would not follow from other considerations.
Instead, in the time discretization  \eqref{eq-discr-w-irr}--\eqref{eq-discr-chi-irr}
we can allow for an \emph{explicit} coupling term
$\Theta(\wtau{k-1})$
in \eqref{eq-discr-w-irr} and in \eqref{eq-discr-chi-irr}. Therein, the positivity
of the discrete enthalpy will be proved by means of a suitable test
of the discrete enthalpy equation, relying on the irreversibility \eqref{e:irrevers}.

%Also the elliptic operator in \eqref{eq-discr-w} is \emph{implicit}, if compared with $A_{\wtau{k-1}}$ in
%\eqref{eq-discr-w-irr}. This is due to the occurrence, for $\rho \neq 0$,
%of the thermal expansion coupling term between \eqref{eq-discr-w} and \eqref{eq-discr-u}.
%In order to handle it, it is necessary to perform enhanced estimates on
%\eqref{eq-discr-w} (cf.\ the \emph{Ninth} and \emph{Tenth a priori estimate} of Sec.\
%\ref{ss:3.2}), which rely on the structure of  the operator $\mathcal{A}
%$ in \eqref{eq-discr-w}. Since  in the irreversible case we restrict to $\rho=0$
%we can dispense with such estimates and  hence can allow for the \emph{explicit}
% operator $A_{\wtau{k-1}}$. In fact, in the case $\mu=1$ and $\rho \neq 0$, we would have to
% resort to an implicit scheme as well in order  to prove the positivity of $\wtau{k}$.
%  However we prefer, for the sake of simplicity,  not to detail this case.

 Because of its implicit character, in Lemma \ref{lemma:ex-discr-rev}
 existence for
 system \eqref{eq-discr-w}--\eqref{eq-discr-chi} will be proved
 by resorting to fixed-point type existence results for elliptic systems
 featuring pseudo-monotone (cf.\ e.g.\ \cite[Chap.\ II]{roub-NPDE})  operators.

 Instead, in the semi-implicit scheme of Problem \ref{probk-irr},
  equations \eqref{eq-discr-w-irr}, \eqref{eq-discr-u-irr}, and \eqref{eq-discr-chi-irr} are decoupled,  hence  we will
 proceed by tackling them
 separately, solving  time-incremental minimization problems. Such a procedure
 could be useful for the numerical analysis of the problem. That is
why,
 in Sec.\ \ref{ss:3.2new} we will focus on the
 proof of Lemma \ref{lemma:ex-discr}  and develop in detail
the calculations for system \eqref{eq-discr-w-irr}--\eqref{eq-discr-chi-irr},
whereas  we will only outline the argument for the existence of solutions to \eqref{eq-discr-w}--\eqref{eq-discr-chi} in Lemma
\ref{lemma:ex-discr-rev}.
\end{remark}

In the time-discretization of the irreversible \emph{isothermal} system, we approximate the given temperature
profile $\Theta^*$ (cf.\ \eqref{fixed-profile}) by local means as well, i.e.
\begin{equation}
\label{local-means-theta}
\Thetatau{k} :=
\frac{1}{\tau}\int_{t_\tau^{k-1}}^{t_\tau^k} \Theta^*(s) \dd s.
\end{equation}
Contrary to the temperature-dependent irreversible case, we  may again address a
 general maximal monotone $\beta: \R \rightrightarrows \R$.
 However, in order to
perform  enhanced estimates on the discrete equation for $\chi$
(cf.\ the \emph{Seventh} and
\emph{Eighth a priori estimates} of Sec.\ \ref{ss:3.2}),
 we will need to replace
 $\beta$
  with
 its Yosida regularization $\beta_\tau: \R\to \R$, namely
the nondecreasing, Lipschitz continuous derivative of the  convex $
\mathrm{C}^1 $  function $\widehat{\beta}_{\tau}(x):= \min_{y \in
\R} \{ |y-x|^2/{2\tau} + \widehat{\beta}(y)\}$, cf.\ e.g.\
\cite{barbu, brezis}. In Problem \ref{probk-irr-iso}
below,  we set the regularization parameter equal to the
time-step, in view of passing to the limit simultaneously in the
time discretization and in the Yosida regularization as $\tau \to
0$.
Furthermore, we will have to work with
a suitable truncation of the
coefficient  $a(\chi)$  in \eqref{eq1d}, cf.\ Remark \ref{rem:irrev-discr} below for further
comments.
\begin{problem}[Time discretization of the irreversible \emph{isothermal} system]
\label{probk-irr-iso}
\upshape
 Starting from the triple of da\-ta $(\utau{0},\utau{-1},\chitau{0})$ defined as in \eqref{IC2}
and
considering the discrete approximations
$(\Thetatau{k})_{k=1}^{K_\tau}$ of the given temperature profile
$\Theta^*$, find $\{\utau{k}, \chitau{k}, \zetau{k}\}_{k=1}^{K_\tau}
\in \boY \times W^{1,p}(\Omega)\times L^2 (\Omega)$,
 such that for all $k=1,\ldots,K_\tau$ there holds
\begin{align}
&
 \label{eq-discr-u-irr-iso} \frac{\utau{k} -2\utau{k-1
 } +
\utau{k-2}}{\tau^2} + \opj{((a(\chitau{k}))^+
+\delta)}{\frac{\utau{k}-\utau{k-1}}{\tau}} +
\oph{b((\chitau{k})^+)}{\utau{k}}{} = \ftau{k} \quad \aein\, \Omega,
\\
&  \label{eq-discr-chi-irr-iso}
\frac{\chitau{k} -\chitau{k-1}}{\tau}  + \zetau{k}
+ \opchi(\chitau{k}) + \beta_\tau(\chitau{k}) +\gamma(\chitau{k})=-
b'((\chitau{k-1})^+) \frac{\eps(\utau{k-1})\mathrm{R}_e
\eps(\utau{k-1})}2 + \Thetatau{k-1}
  \quad \aein\, \Omega,
  \\
  &
  \label{eq-zeta-discr}
  \zetau{k} \in \alpha \left(\frac{\chitau{k} -\chitau{k-1}}{\tau}
\right)  \quad \aein\, \Omega.
  \end{align}
\end{problem}
\begin{remark}
\label{rem:irrev-discr} \upshape  In  Problem
\ref{probk-irr-iso}  we need to approximate $\beta$ by a Lipschitz
continuous function $\beta_\tau$ because, only with such a
regularization can we  test equation  \eqref{eq-discr-chi-irr-iso}  by the
discrete difference $\tau^{-1} (( \opchi(\chitau{k}) +
\beta_\tau(\chitau{k}) )- (\opchi(\chitau{k-1}) +
\beta_\tau(\chitau{k-1})) )$ (cf.\ the following Seventh a priori
estimate). Hence, we need to take the positive part of $a$ in
 \eqref{eq-discr-u-irr-iso} because, replacing $\beta$ by   its Lipschitz
regularization $\beta_\tau$,  we are no longer able to enforce the
constraint that $\chitau{k} \in [0,1]$ a.e.\ in~$\Omega$. Therefore,
at the discrete level we loose all positivity information on the
coefficient $a(\chi)$. The lack of the constraint  $\chitau{k} \in
[0,1]$ also motivates the truncations $b((\chitau{k})^+)$ in
 \eqref{eq-discr-u-irr-iso}  and $b'((\chitau{k})^+)$  in \eqref{eq-discr-chi-irr-iso},
   mainly due to technical
reasons (cf.\ the \emph{First a priori estimate}).
\end{remark}
%%%%%%
%%%%%%%
\subsection{Existence for the time-discrete problems}
\label{ss:3.2new}
First, we prove the existence of solutions to the semi-implicit
schemes \eqref{eq-discr-w-irr}--\eqref{eq-discr-chi-irr}
and \eqref{eq-discr-u-irr-iso}--\eqref{eq-zeta-discr}.
\begin{lemma}[Existence for the time-discrete Problems~\ref{probk-irr} and \ref{probk-irr-iso}, $\mu=1$, $\rho=0$]
\label{lemma:ex-discr}
Let $\mu=1$ and $\rho=0$.
 Assume   Hypotheses (I)--(V), and
\eqref{bulk-force}--\eqref{datochi} on the data $\mathbf{f},\,g,\,
\teta_0,\, \uu_0,\,\vv_0,\, \chi_0$.

Then, Problems \ref{probk-irr} and \ref{probk-irr-iso} admit at
least one solution  $\{(\wtau{k}, \utau{k},
\chitau{k}, \zetau{k})\}_{k=1}^{K_\tau}$   and  $\{(\utau{k}, \chitau{k},
\zetau{k})\}_{k=1}^{K_\tau}$, resp.

Furthermore, if \eqref{ipo:strictpos} holds, then
  any so\-lu\-tion   $\{(\wtau{k}, \utau{k}, \chitau{k},  \zetau{k})\}_{k=1}^{K_\tau}$   of Problem
  \ref{probk-irr}
  fulfills
  \begin{equation}
\label{strict-pos-wk}
 \wtau{k}(x) \geq \underline{\w}_0= h(\underline{\teta}_0)\geq0  \quad \foraa\, x \in \Omega.
 \end{equation}
\end{lemma}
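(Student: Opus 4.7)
The plan is to exploit the \emph{semi-implicit} structure of Problems~\ref{probk-irr} and~\ref{probk-irr-iso}: at each time step $k$ the three equations \eqref{eq-discr-w-irr}--\eqref{eq-discr-chi-irr} (resp.\ \eqref{eq-discr-u-irr-iso}--\eqref{eq-discr-chi-irr-iso}) are \emph{decoupled} in the unknowns $(\wtau{k},\utau{k},\chitau{k})$, so that I proceed by induction on $k$, solving successively for $\chitau{k}$, then for $\utau{k}$, and finally (only in Problem~\ref{probk-irr}) for $\wtau{k}$. Since by induction $\chitau{k-1}$, $\utau{k-1}$, $\utau{k-2}$ and $\wtau{k-1}$ are known at the beginning of step $k$, each sub-problem is standard.

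\textbf{Existence of $\chitau{k}$.} I would treat the minimization problem \eqref{eq-discr-chi-irr} by the direct method in the calculus of variations. The functional to be minimized is the sum of the coercive quadratic term $\frac{1}{2\tau}\|\chi-\chitau{k-1}\|_{L^2}^2$, the p-coercive contribution $\Phi(\chi)\ge c_4\|\nabla\chi\|_{L^p}^p-c_5|\Omega|$ from \eqref{datad2}, the non-negative indicator terms $\int\widehat\alpha((\chi-\chitau{k-1})/\tau)$ and $\int\widehat\beta(\chi)$ (which together with the inductive bound $\chitau{k-1}\in[0,1]$ implicitly enforce $0\le\chi\le\chitau{k-1}$ and hence \eqref{e:irrevers}), the term $\int\widehat\gamma(\chi)$ which is bounded on the a priori $L^\infty$-bounded set where $\chi\in[0,1]$, and the linear term whose coefficient lies in $L^2(\Omega)$ by the inductive $\boY$-regularity of $\utau{k-1}$ and the pointwise bound on $\Theta(\wtau{k-1})$. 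Hence the functional is coercive and sequentially weakly lower semicontinuous on $W^{1,p}(\Omega)$, and a minimizer exists. The (subgradient) Euler--Lagrange relation, together with the $0$-homogeneity of $\alpha=\partial I_{(-\infty,0]}$, then provides $\zetau{k}\in\alpha((\chitau{k}-\chitau{k-1})/\tau)$ a.e.\ in $\Omega$, as required. For Problem~\ref{probk-irr-iso} I would proceed analogously, replacing $\widehat\beta$ by the Moreau--Yosida regularization $\widehat\beta_\tau$ (which is convex and $\mathrm{C}^{1,1}$) and using the data bound \eqref{fixed-profile} to control the linear term.

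\textbf{Existence of $\utau{k}$ and $\wtau{k}$.} Once $\chitau{k}$ is known, \eqref{eq-discr-u-irr} is a linear elliptic system for $\utau{k}$ with bilinear form $(u,v)\mapsto \tau^{-2}(u,v)_{L^2}+\tau^{-1}\bilj{a(\chitau{k})+\delta}{u}{v}+\bilh{b(\chitau{k})}{u}{v}$. Thanks to the truncation $+\delta$ and Korn's inequality \eqref{korn}, the viscous term is $H^1_0$-coercive (for $\tau$ small enough to absorb the elastic contribution, which is in any case non-negative since $\chitau{k}\in[0,1]$ a.e.), so Lax--Milgram produces $\utau{k}\in\boZ$; the $\boY$-regularity then follows from the elliptic regularity statements \eqref{reg-pavel-b} and \eqref{cigamma}, using that $\chitau{k}\in W^{1,p}(\Omega)\hookrightarrow W^{1,d}(\Omega)$ since $p>d$. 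The same argument, with the truncations $(\chitau{k})^+$ and the linearity of $\beta_\tau$, handles \eqref{eq-discr-u-irr-iso}. For \eqref{eq-discr-w-irr}, the crucial observation is that both $K(\wtau{k-1})$ and $\Theta(\wtau{k-1})$ are evaluated at the \emph{previous} step, so the equation is in fact \emph{linear} in $\wtau{k}$; the bilinear form $w\mapsto \tau^{-1}(w,v)_{L^2}+\int K(\wtau{k-1})\nabla w\cdot\nabla v\,dx$ is $H^1$-coercive by Hypothesis~(II), and Lax--Milgram again gives $\wtau{k}\in H^1(\Omega)$.

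\textbf{Positivity \eqref{strict-pos-wk}.} I would proceed by induction on $k$, the base case being \eqref{appr-w-0-pos}. Assuming $\wtau{k-1}\ge\underline{\w}_0$, I rewrite \eqref{eq-discr-w-irr} as
\[
(\wtau{k}-\underline{\w}_0)-\tau\,\dive\!\bigl(K(\wtau{k-1})\nabla(\wtau{k}-\underline{\w}_0)\bigr)=(\wtau{k-1}-\underline{\w}_0)+(\chitau{k-1}-\chitau{k})\Theta(\wtau{k-1})+\tau\gtau{k},
\]
and test by $-(\wtau{k}-\underline{\w}_0)^-\in H^1(\Omega)$. The left-hand side yields $\|(\wtau{k}-\underline{\w}_0)^-\|_{L^2}^2+\tau\int K(\wtau{k-1})|\nabla(\wtau{k}-\underline{\w}_0)^-|^2\,dx$. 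The three summands on the right-hand side are all non-negative: the first by inductive assumption, the second because irreversibility \eqref{e:irrevers} gives $\chitau{k-1}-\chitau{k}\ge 0$ and $\Theta(\wtau{k-1})\ge\underline\teta_0\ge 0$, the third by \eqref{ipo:strictpos}. Hence their product against $-(\wtau{k}-\underline{\w}_0)^-\le 0$ is $\le 0$, forcing $(\wtau{k}-\underline{\w}_0)^-\equiv 0$. The main subtlety in the whole proof lies in carefully handling the constraints built into the minimization for $\chitau{k}$ (in particular, making the selection $\zetau{k}$ compatible with \eqref{e:irrevers}); everything else reduces to standard linear elliptic theory together with the monotone-type structure inherited from the Yosida regularization.
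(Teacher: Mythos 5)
Your treatment of the discrete equations for $\uu$ and $\w$, as well as the positivity argument, is correct and essentially matches the paper's Steps~2, 3, and~4 (you even streamline the positivity proof, since $\Theta\ge 0$ holds by definition \eqref{K-T} and the paper's preliminary step \eqref{pos-wk} is not strictly necessary). For Problem~\ref{probk-irr}, minimizing \eqref{eq-discr-chi-irr} directly by the direct method is exactly what the paper does; note also that you do not need any Euler--Lagrange argument to produce $\zetau{k}$ there, since $\chitau{k}\le\chitau{k-1}$ implies $0\in\alpha\bigl((\chitau{k}-\chitau{k-1})/\tau\bigr)$ and you may simply take $\zetau{k}\equiv 0$.

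The genuine gap is in your treatment of Problem~\ref{probk-irr-iso}. There, \eqref{eq-discr-chi-irr-iso}--\eqref{eq-zeta-discr} is a \emph{pointwise} inclusion a.e.\ in $\Omega$, requiring $\opchi(\chitau{k})\in L^2(\Omega)$ and a selection $\zetau{k}\in L^2(\Omega)$ of $\alpha\bigl((\chitau{k}-\chitau{k-1})/\tau\bigr)$. You propose to ``proceed analogously'' by minimizing the same functional with $\widehat\beta$ replaced by $\widehat\beta_\tau$ and reading off the subgradient Euler--Lagrange condition. This does not work as stated: the functional contains \emph{two} genuinely non\-smooth convex terms, namely $\int_\Omega\widehat\alpha\bigl((\chi-\chitau{k-1})/\tau\bigr)\dd x$ (whose $L^2$-domain has empty interior) and $\Phi$ (also everywhere discontinuous on $L^2(\Omega)$). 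The Rockafellar--Moreau sum rule $\partial(F_1+F_2)=\partial F_1+\partial F_2$ is not available here, so the direct method alone does not yield the decomposed optimality condition \eqref{eq-discr-chi-irr-iso}, let alone the $L^2$-regularity of $\opchi(\chitau{k})$ and $\zetau{k}$. This is precisely why the paper introduces a further Yosida regularization $\alpha_\eps$ of $\alpha$: once $\widehat\alpha_\eps$, $\widehat\beta_\tau$, $\widehat\gamma$ are all $\mathrm{C}^{1,1}$, the only nonsmooth piece left is $\Phi$, the Euler equation \eqref{chi-disc-approx} follows cleanly, and a comparison gives $\opchi(\chitaue{k})\in L^2(\Omega)$. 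One then needs the key \emph{uniform-in-$\eps$} estimate obtained by testing \eqref{chi-disc-approx} by $\opchi(\chitaue{k})-\opchi(\chitau{k-1})$ --- which crucially exploits the Lipschitz continuity of $\beta_\tau$ and the monotonicity of $\alpha_\eps$ --- followed by a passage to the limit $\eps\downarrow 0$ in which $\zetau{k}$ is identified via a $\limsup$ argument and the strong--weak closedness of the maximal monotone graph $\alpha$. Without this two-parameter regularization/limit scheme your proof of existence for Problem~\ref{probk-irr-iso} is incomplete.
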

%%%%%%%%%%%%%%%%%%%
\begin{proof}
We treat Problems \ref{probk-irr}, and
\ref{probk-irr-iso} in a unified way, and proceed by induction on
$k$. Thus,   starting from a quadruple $(\utau{k-2},\wtau{k-1},
\utau{k-1}, \chitau{k-1}) \in \boY \times H^1 (\Omega) \times \boY
\times W^{1,p}(\Omega)$, we show that there exist functions
$(\wtau{k}, \utau{k},
\chitau{k}, \zetau{k})$ and  $(\utau{k}, \chitau{k},\zetau{k})$, resp.,
solving \eqref{eq-discr-w-irr}--\eqref{eq-discr-chi-irr}
for Problem \ref{probk-irr}, and
\eqref{eq-discr-u-irr-iso}--\eqref{eq-zeta-discr} for Problem
\ref{probk-irr-iso}, resp.

\paragraph{Step $1$: discrete equation for $\chi$.}
In the irreversible isothermal case (i.e.\ for Problem
\ref{probk-irr-iso}), in order to solve \eqref{eq-discr-chi-irr-iso}
 we start from the approximate equation
\begin{equation}
\label{chi-disc-approx}
\begin{aligned}
\frac{\chitaue{k} -\chitau{k-1}}{\tau} & +
\alpha_\eps \left(\frac{\chitaue{k} -\chitau{k-1}}{\tau}\right) +
\opchi(\chitaue{k}) + \beta_\tau(\chitaue{k}) +\gamma(\chitaue{k}) \\ &  =
- b'(\chitau{k-1})\frac{\eps(\utau{k-1})\mathrm{R}_e
\eps(\utau{k-1})}2 +  \Thetatau{k-1} \quad \aein\, \Omega,
\end{aligned}
\end{equation}
where $\eps>0$ and $\alpha_\eps $ is the Yosida
regularization of the operator $\alpha$. Clearly,
\eqref{chi-disc-approx} is the Euler equation for the minimum
problem
\[
\min_{\chi \in W^{1,p} (\Omega)} \left\{\tau \int_{\Omega}\left(\left|
\frac{ \chi - \chitau{k-1}}\tau \right|^2 + \widehat{\alpha}_\eps
\left(\frac{ \chi - \chitau{k-1}}\tau \right) \right)\dd x
 +\Phi(\chi)
+ \int_{\Omega}(\widehat{\beta}_\tau (\chi) +
\widehat{\gamma}(\chi)) \dd x+\int_{\Omega} h_{\tau}^{k-1} \chi \dd
x\right\},
\]
where the function
\begin{equation}
\label{not-h}
 h_{\tau}^{k-1}:=
b'(\chitau{k-1})\eps(\utau{k-1})\mathrm{R}_e \eps(\utau{k-1})/2 -
 \Thetatau{k-1} \text{ is in $L^2 (\Omega)$.}
\end{equation}
The latter admits a solution $\chitaue{k}$ by the direct method of
the calculus of variations (also taking into account  the fact that
$\widehat{\beta}_\tau$ is bounded from below because
$\widehat{\beta}$ is). We now want to pass to the limit in
\eqref{chi-disc-approx} as $\eps \down 0$. Note that, a comparison
in \eqref{chi-disc-approx}  and the fact that $\alpha_\eps$ is
Lipschitz continuous yield that $\opchi(\chitaue{k}) \in
L^2(\Omega)$.  Then, following \cite[Sec.\ 3]{lss02} (to which we refer for
all details), we multiply \eqref{chi-disc-approx} firstly by
$\chitaue{k} - \chitau{k-1}$, and secondly by $\opchi (\chitaue{k})
- \opchi(\chitau{k-1})$. To perform the latter estimate,  we  rely
on the Lipschitz continuity of $\beta_{\tau}$ and $\gamma$, as well
as on the monotonicity of $\alpha_\eps$, yielding
\[
\int_{\Omega} \alpha_\eps \left(\frac{\chitaue{k}
-\chitau{k-1}}{\tau}\right)\left( \opchi (\chitaue{k}) -
\opchi(\chitau{k-1}) \right)\dd x \geq 0.
\]
It follows from  these tests  that there exists a constant $C>0$,
depending on $\tau>0$ but not on $\eps>0$, such that
$$ \sup_{\eps>0}(\| \chitaue{k}
\|_{W^{1,p}(\Omega)}+\|\opchi( \chitaue{k}) \|_{L^{2}(\Omega)} )
\leq C .$$
By comparison, $\sup_{\eps>0}\|\alpha_\eps ((\chitaue{k}
-\chitau{k-1})/\tau)\|_{L^{2}(\Omega)} \leq C$.
 Also in view of  the regularity result \eqref{sav-reg}, there exist $(\chitau{k},\zetau{k}) \in
 W^{1+\sigma,p}(\Omega) \times L^2(\Omega)$ for all $0<\sigma<1/p$ such that,  up to  a
subsequence, $(\chitaue{k})_\eps$  strongly converges in
$W^{1,p}(\Omega)$ to $\chitau{k}$ as $\eps \to 0$, and $(\alpha_\eps
((\chitaue{k} -\chitau{k-1})/\tau))_\eps$ weakly converges in
$L^2(\Omega)$ to $\zetau{k}$  as $\eps \to 0$. Therefore,
\[
\limsup_{\eps \to 0}\int_{\Omega} \alpha_\eps
\left(\frac{\chitaue{k} -\chitau{k-1}}{\tau}\right)
\left(\frac{\chitaue{k} -\chitau{k-1}}{\tau}\right)  \dd x \leq
\int_{\Omega} \zetau{k} \left(\frac{\chitau{k}
-\chitau{k-1}}{\tau}\right)  \dd x,
\]
so that $\zetau{k} \in \alpha((\chitau{k}-\chitau{k-1})/\tau)$ thanks to
\cite[p.~42]{barbu}. Thus, passing to the limit as $\eps \to 0$ in \eqref{chi-disc-approx} for
$\tau>0$ fixed,   we conclude that the
functions $(\chitau{k},\zetau{k})$ fulfill \eqref{eq-discr-chi-irr-iso}.

Clearly, the direct method of the calculus of variations also yields
the existence of a solution to the minimum problem
\eqref{eq-discr-chi-irr}.

\paragraph{Step $2$: discrete equation for $\uu$.}
 Next, we solve \eqref{eq-discr-u-irr}, which can be rewritten  $\aein\, \Omega$
in the form
\begin{equation}
\label{ellipt-u}
\left(\mathrm{Id} +\tau \opj{(a(\chitau{k})+\delta)}{\,\cdot} +
\tau^2 \oph{b(\chitau{k})}{\,\cdot} \right) (\utau{k}) = \tau^2
\ftau{k} +\tau \opj{(a(\chitau{k})+\delta)}{\utau{k-1}} +
2\utau{k-1} - \utau{k-2}.
\end{equation}
Combining the fact that  $\chitau{k} \in [0,1] $ a.e.\ in $\Omega$
with \eqref{data-a} on $a$ and $b$ and \eqref{korn}--\eqref{a:conti-form}, we
conclude that (the bilinear form associated with) the operator on
the left-hand side of the above equation is  continuous and coercive.
Hence, by Lax-Milgram's theorem, equation  \eqref{ellipt-u} admits
a (unique) solution $\utau{k}\in \boZ$.
Since the right-hand side of \eqref{ellipt-u} is in $L^2(\Omega;\R^d)$,
 relying on the regularity results of, e.g., \cite{ciarlet},
%and  arguing by comparison in \eqref{eq-discr-u},
we conclude that  in fact $\utau{k}\in
\boY$. The analysis of  \eqref{eq-discr-u-irr-iso}  follows the  very same
lines.

\paragraph{Step $3$: discrete equation for $\w$.}
Finally, let us consider the functional $\mathcal{G}_{\tau}^{k-1}:
 H^{1} (\Omega)  \to \R$ %defined by
 \[
\begin{aligned}
\mathcal{G}_{\tau}^{k-1}(\w):= & \frac1{2\tau}\int_\Omega |\w -
\wtau{k-1}|^2\dd x +  \int_{\Omega}\Theta(\wtau{k-1}) \left(
\frac{\chitau{k} - \chitau{k-1}}{\tau}\right) \dd x
\\ & \quad
+ \frac12
\int_{\Omega} K(\wtau{k-1})|\nabla w|^2 \dd x
-\pairing{}{H^1(\Omega)}{\gtau{k}}{\w}\,.
\end{aligned}
\]
Now, $\mathcal{G}_{\tau}^{k-1}$ is lower semicontinuous w.r.t.\ the
topology of $L^2(\Omega)$. Furthermore,    in view of
\eqref{conseq-2} and of the Young inequality we have for a fixed
$\varrho>0$ %to be specified later
\begin{equation}
\label{coer-est}
\begin{aligned}
\mathcal{G}_{\tau}^{k-1}(\w) \geq
&\frac1{4\tau}\|\w\|_{L^2(\Omega)}^2  + \frac{c_2}2 \| \nabla w
\|_{L^2(\Omega)}^2
-\varrho\|\w\|_{H^1(\Omega)}^2 \\ &
-C_\varrho(\|\wtau{k-1}\|_{L^2(\Omega)}^2 + \|\gtau{k} \|_{H^1(\Omega)'}^2
+
\| (\chitau{k} - \chitau{k-1})/\tau\|_{L^2(\Omega)}^2).
\end{aligned}
\end{equation}
%Thanks to \eqref{conseq-1}, we have that
%\begin{equation}
%\label{used-later}\exists\, C_{\Theta}^2>0 \ \  \forall\,w\in
%L^1(\Omega)\,: \ \  \|\Theta(\w) \|_{L^2(\Omega)} \leq C_{\Theta}^2(
%\| \w\|_{L^{2/\sigma}(\Omega)}^{1/\sigma} +1).
%\end{equation}
%Now,  $\sigma >2d/(d+2) $ implies $2/\sigma < 1 + 2/d$. From the
%continuous embedding $H^1(\Omega) \subset L^{2/\sigma}(\Omega)$ and
%the Young inequality we thus conclude that there exist positive
%constants $C$ and $C'$ such that $ \|\Theta(\w) \|_{L^2(\Omega)}^2
%\leq C  \| \w\|_{H^1(\Omega)}^2 + C' $ for all $\w \in H^1(\Omega)$.
%Combining this inequality with \eqref{coer-est}, and tuning
%$\varrho>0$ in \eqref{coer-est}  in such a way as to absorb the terms
%$ \|\Theta(\w) \|_{L^2(\Omega)}$ and $ \|\w \|_{H^1(\Omega)}$ into $1/(4\tau)\|\w\|_{L^2(\Omega)}^2  + c_2/2 \| \nabla w\|_{L^2(\Omega)}^2 $
Choosing $\varrho$ sufficiently small, we
thus obtain
that there exist two positive constants $c$ and $C$ such that
\begin{align}\no
 \mathcal{G}_{\tau}^{k-1}(\w) \geq & c \| \w \|_{H^1(\Omega)}^2 - C
(1+ \|\wtau{k-1}\|_{L^2(\Omega)}^2 + \|\gtau{k} \|_{H^1(\Omega)'}^2
+ \| (\chitau{k} - \chitau{k-1})/\tau\|_{L^2(\Omega)}^2\quad
\text{for all } \w \in H^1(\Omega)\,.
\end{align}
This shows that the sublevels of $\mathcal{G}_{\tau}^{k-1}$ are
bounded in $H^1(\Omega)$. Hence, again by the direct method in the
calculus of variations, we conclude   that there exists $\wtau{k}
\in \argmin_{\w \in H^1(\Omega)} \mathcal{G}_{\tau}^{k-1}(\w)$, and
$\wtau{k}$ satisfies the associated Euler equation, namely
\eqref{eq-discr-w-irr}.

\paragraph{Step $4$: positivity.}
Let us assume in addition  that
\eqref{ipo:strictpos} holds, and prove \eqref{strict-pos-wk} by induction on $k$.

Preliminarily, we prove by induction on $k$ that
\begin{equation}
\label{pos-wk}
\wtau{k}(x) \geq 0 \qquad \foraa\, x \in \Omega \text{ and  for all } k \in \N.
\end{equation}
Clearly \eqref{pos-wk} holds for $k=0$ thanks to \eqref{appr-w-0-pos}. It remains to show that,
if $\wtau{k-1} \geq 0$
a.e. in $\Omega$, then $\wtau{k} \geq 0$ a.e. in $\Omega$.   Indeed,
let us test  \eqref{eq-discr-w-irr}  by $-(\wtau{k})^-$. Taking into
account  the definition  \eqref{K-T} of $\Theta$, we have that
$\int_{\Omega}\Theta(\wtau{k-1}) (\chitau{k}-\chitau{k-1})
(-(\wtau{k})^-) \dd x \geq 0 $. Combining this with the inequality
\begin{equation}
\label{quoted-later-on}
\frac1{\tau}\int_{\Omega} (\wtau{k}-\wtau{k-1}) (-(\wtau{k})^-) \dd x
\geq \frac1{2\tau}\int_{\Omega}
(|(\wtau{k})^-|^2-|(\wtau{k-1})^-|^2) \dd x,
\end{equation}
and noting that
$(\wtau{k-1})^- =0$  a.e. in $\Omega$, also in view of
\eqref{conseq-2} we obtain
\[
\frac1{2\tau}\int_{\Omega}|(\wtau{k})^-|^2 \dd x + c_2 \int_{\Omega}
|\nabla (\wtau{k})^-|^2 \dd x \leq - \int_{\Omega}
\gtau{k}(\wtau{k})^- \dd x \leq  0,
\]
yielding $(\wtau{k})^-=0$ a.e. in $\Omega$, whence \eqref{pos-wk}.

Now, to prove \eqref{strict-pos-wk}, we observe that  \eqref{strict-pos-wk} holds for $k=0$ due to
\eqref{appr-w-0-pos}.  Suppose now that $\wtau{k-1} \geq
\underline{\w}_0$ a.e. in $\Omega$: in order to prove that $\wtau{k}
\geq \underline{\w}_0$ a.e. in $\Omega$, we test \eqref{eq-discr-w-irr}
by $-(\wtau{k}-\underline{w}_0)^-$. With analogous  calculations as
above we obtain
\[
\frac1{2\tau}\int_{\Omega}|(\wtau{k}-\underline{\w}_0)^-|^2 \dd x +
c_2 \int_{\Omega} |\nabla (\wtau{k}-\underline{\w}_0)^-|^2 \dd x
\leq - \int_{\Omega} \gtau{k}(\wtau{k}-\underline{\w}_0)^- \dd x
\]
\[+
\int_{\Omega}\Theta(\wtau{k}) (\chitau{k-1}-\chitau{k})
(-(\wtau{k}-\underline{\w}_0)^-) \dd x\leq  0,
\]
where the last inequality is due to the fact that $\gtau{k} \geq 0$
a.e. in $\Omega$, and that $\Theta(\wtau{k})
(\chitau{k-1}-\chitau{k}) \geq 0$ a.e.\ in $\Omega$ by the
previously proved \eqref{pos-wk} and the irreversibility constraint.
Thus, we conclude \eqref{strict-pos-wk}.
\end{proof}
%%%%%%%%%%%%%%
The existence result for Problems \ref{probk-rev} {and \ref{prob:rhoneq0}} reads:
\begin{lemma}[Existence for the time-discrete {Problems~\ref{probk-rev}, \ref{prob:rhoneq0}, $\mu=0$}]
\label{lemma:ex-discr-rev}
 Let $\mu=0$.
 Assume Hypotheses (I) and (III)--(V), and \eqref{bulk-force}--\eqref{datochi} on the data $\mathbf{f},\,g,\,
\teta_0,\, \uu_0,\,\vv_0,\, \chi_0$. Furthermore,
\begin{compactenum}
\item[1.] if $\rho =0$, assume Hypothesis (II);
\item[2.]  if $\rho \neq 0$, assume Hypothesis (VIII) and in addition that $w_0 \in L^2(\Omega)$.
\end{compactenum}
Then, Problem \ref{probk-rev} admits at
least one solution $\{(\wtau{k}, \utau{k}, \chitau{k},
\xitau{k})\}_{k=1}^{K_\tau}$.

Moreover, if $g\geq 0$ a.e.\ in $\Omega\times (0,T)$, and $w_0(x) \geq 0$ for a.a.\ $x \in \Omega$,
then any so\-lu\-tion $\{(\wtau{k}, \utau{k}, \chitau{k})\}_{k=1}^{K_\tau}$ of Problem \ref{probk-rev}
fulfills
\begin{equation}
\label{pos-wk-rev}
\wtau{k}(x) \geq 0 \quad \foraa\, x \in \Omega.
\end{equation}
\end{lemma}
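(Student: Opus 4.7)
I proceed by induction on $k$. Given $(\wtau{k-1},\utau{k-1},\utau{k-2},\chitau{k-1})$, I construct $(\wtau{k},\utau{k},\chitau{k},\xitau{k})$ by a fixed-point argument, which is needed because of the implicit coupling between $\wtau{k}$ and $\chitau{k}$ via $\Theta(\wtau{k})$ appearing in \eqref{eq-discr-w} and \eqref{eq-discr-chi} (in the $\rho\neq 0$ case of Problem~\ref{prob:rhoneq0} the unknown $\utau{k}$ is also coupled, but the truncations $\Theta_M$ and $K_M$ guarantee $L^\infty$-bounds on the coupling terms together with uniform ellipticity \eqref{ellipticity-retained}). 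Following the strategy indicated in Remark~\ref{rmk:comparison-1}, I will realise this by Schauder's theorem on a ball of $L^2(\Omega)$; an alternative route via surjectivity of coercive pseudo-monotone operators on a suitable product space leads to the same conclusion.

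\textbf{The fixed-point map.} For $\bar w\in L^2(\Omega)$ I define $T(\bar w)$ in three steps. First, I obtain $\chitau{k}\in W^{1,p}(\Omega)$ as a minimizer of
\[
\chi\mapsto\frac{1}{2\tau}\int_\Omega|\chi-\chitau{k-1}|^2\,\dd x+\Phi(\chi)+\int_\Omega\bigl(\widehat\beta(\chi)+\widehat\gamma(\chi)\bigr)\,\dd x-\int_\Omega\left(\Theta(\bar w)-b'(\chitau{k-1})\frac{\varepsilon(\utau{k-1})\mathrm{R}_e\varepsilon(\utau{k-1})}{2}\right)\chi\,\dd x,
\]
(with $\Theta_M$ in place of $\Theta$ in the truncated case): the direct method applies since $\widehat\beta$ confines $\chi\in[0,1]$ and $\widehat\gamma$ is a compact lower-order perturbation by \eqref{hynek}, and the associated Euler inclusion yields an $L^2$-selection $\xitau{k}\in\beta(\chitau{k})$ satisfying \eqref{eq-discr-chi}. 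Second, the linear momentum equation \eqref{eq-discr-u} (resp.\ \eqref{eq-discr-u-TRUNC} with $\Theta_M(\bar w)$ replacing $\Theta_M(\wtau{k})$) is solved for $\utau{k}\in\boY$, using $\boZ$-coercivity from \eqref{korn} (since $a(\chitau{k})+\delta\geq\delta$) together with the elliptic regularity estimate \eqref{cigamma}. Third, the enthalpy equation is linearised by freezing $\Theta(\wtau{k})$ at $\Theta(\bar w)$ (resp.\ $\Theta_M(\bar w)$), producing a uniformly elliptic linear problem in $H^1(\Omega)$ by \eqref{conseq-2} (resp.\ \eqref{ellipticity-retained}); Lax--Milgram delivers $\wtau{k}=:T(\bar w)$.

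\textbf{Fixed point, positivity, and main difficulty.} Testing the linearised enthalpy equation by $\wtau{k}$, and absorbing the coupling term via the $L^\infty$-bound on $\chitau{k}-\chitau{k-1}$ and the growth estimate \eqref{used-later}, yields a uniform $H^1$-bound on $T(\bar w)$ for $\bar w$ in a sufficiently large ball of $L^2(\Omega)$; in the $\rho\neq 0$ case the $L^\infty$-boundedness of $\Theta_M$ is essential in order to control $\rho\,\dive\bigl((\utau{k}-\utau{k-1})/\tau\bigr)\Theta_M(\bar w)$. Continuity of $T$ on $L^2(\Omega)$ follows from the Lipschitz character of $\Theta$ (resp.\ $\Theta_M$), from standard monotonicity/lower-semicontinuity arguments giving strong convergence of the $\chi$-minimizers (exploiting the strictly convex quadratic leading term and \eqref{hynek}), and from continuous dependence in the subsequent linear problems for $\utau{k}$ and $\wtau{k}$; compactness is provided by $H^1\Subset L^2$, so Schauder's theorem produces a fixed point and thereby the desired discrete solution. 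For positivity, I test the enthalpy equation by $-(\wtau{k})^-$: the identity $\Theta(s)(s)^-\equiv 0$ (and likewise for $\Theta_M$) annihilates both the $\chi$- and $\rho$-coupling contributions, while \eqref{quoted-later-on} applied to $\wtau{k-1}\geq 0$, nonnegativity of the elliptic term, and $g_k\geq 0$ together force $(\wtau{k})^-=0$. The principal technical hurdle lies in the $\rho\neq 0$ case: the superlinear growth of $K$ from Hypothesis~(VIII) makes the untruncated discrete problem ill-suited to a direct Lax--Milgram / Schauder analysis, which is precisely why the truncations $\Theta_M$ and $K_M$ are imposed at the discrete level; the parameter $M$ will subsequently have to be removed by a separate asymptotic passage, as outlined in Remark~\ref{rmk:afterThm2} and carried out in Section~\ref{sec:4.2new}.
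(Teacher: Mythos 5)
Your plan is a genuinely different route from the paper's. The paper treats the whole step $k$ as a single stationary system $\mathcal{R}_{k-1}(\wtau k,\utau k,\chitau k)=H_{k-1}$ and invokes the Leray--Lions surjectivity theorem for coercive pseudo-monotone operators on $H^1(\Omega)\times H_0^1(\Omega;\R^d)\times W^{1,p}(\Omega)$ (mimicking \cite[Lemma 7.4]{rossi-roubi}), whereas you decouple by freezing $\Theta(\wtau{k})$ at $\Theta(\bar w)$ and run Schauder on the composite solution operator. The two routes buy slightly different things: yours is more constructive (each subproblem is a classical minimisation/Lax--Milgram problem, closer to an algorithm), while the paper's works in one shot and, importantly, does not need smallness of $\tau$. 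You correctly identified the key structural facts -- the implicit $\Theta(\wtau{k})$ must be retained at the fixed point for positivity, the cancellation $\Theta(s)(s)^-\equiv 0$ kills the coupling term in the test by $-(\wtau{k})^-$, and for $\rho\neq 0$ the truncations $K_M,\Theta_M$ restore the boundedness and the uniform ellipticity \eqref{ellipticity-retained} that make the $\bar w$-frozen enthalpy problem tractable.

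Two points in your argument are, however, more delicate than your writeup acknowledges. First, the continuity of $T$ hinges on the $\chi$-minimiser being unique: the quadratic term contributes modulus of convexity $1/\tau$, and since $\widehat\gamma$ is only $\mathrm{C}^2$ (and may be nonconvex), strict convexity of the reduced functional on $[0,1]$ holds only once $\tau<1/\sup_{[0,1]}(\widehat\gamma'')^-$. Without that restriction $T$ is a priori multivalued and you would need Kakutani rather than Schauder; the pseudo-monotone route sidesteps this entirely, so your result is effectively conditional on $\tau$ small (harmless for the subsequent $\tau\down 0$ limit, but worth stating). Second, the step ``the Euler inclusion yields an $L^2$-selection $\xitau{k}\in\beta(\chitau{k})$ satisfying \eqref{eq-discr-chi} a.e.'' is not immediate: the minimiser satisfies $-\tfrac{1}{\tau}(\chitau{k}{-}\chitau{k-1})-\gamma(\chitau{k})-h_\tau^{k-1}\in\partial(\Phi+\widehat B)(\chitau k)$ in $L^2(\Omega)$ with $\widehat B(\chi)=\int_\Omega\widehat\beta(\chi)\dd x$, and splitting $\partial(\Phi+\widehat B)=\opchi+\beta$ with both pieces separately in $L^2(\Omega)$ requires a sum-rule argument (e.g.\ the Yosida regularisation of $\beta$ and the resolvent-invariance criterion, as the paper carries out in Step~1 of Lemma~\ref{lemma:ex-discr}). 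This is fixable but needs to be argued; as written it is an assertion, not a proof.
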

%%%%%%
%%%%%%%%%%%%
\begin{proof}

\noindent
\textbf{Step $1$: existence of solutions.}  Our argument relies on
existence results for elliptic systems from the theory of pseudo-monotone operators which
can be found, e.g., in \cite[Chap.\ II]{roub-NPDE}. Indeed, we observe that
system \eqref{eq-discr-w}--\eqref{eq-discr-chi} can be recast as
\begin{equation}
\label{sistemone}
\begin{aligned}
&  \wtau{k} + (\chitau{k} -\chitau{k-1})\Theta(\wtau{k}) %-\rho\dive(\utau{k}-\utau{k-1})\Theta(\wtau{k})
+\tau A_{\wtau{k-1}}(\wtau{k})  = \wtau{k-1} +\tau \gtau{k} \quad \text{in $H^1(\Omega)'$,}
\\
&
\utau{k} + \tau\opj{(a(\chitau{k})
+\delta)}{(\utau{k}-\utau{k-1})} + \tau^2
\oph{b(\chitau{k})}{\utau{k}}  %+\tau^2 \ciro(\Theta(\wtau{k}))
=  2\utau{k-1} -
\utau{k-2} +\tau^2\ftau{k} \quad \aein\, \Omega,
\\
&
\chitau{k}
+\tau \opchi(\chitau{k}) + \tau \beta(\chitau k)+ \tau \gamma(\chitau{k})  - \tau  \Theta(\wtau{k})  \ni  \chitau{k-1} -
\tau
b'(\chitau{k-1})\frac{\eps(\utau{k-1})\mathrm{R}_e
\eps(\utau{k-1})}2
  \quad \aein\, \Omega.
\end{aligned}
\end{equation}
%\end{proof}
%\end{document}
Denoting by $\mathcal{R}_{k-1}$ the operator acting on the unknown $(\wtau k, \utau k, \chitau k)$
and by $H_{k-1}$ the
vector of the terms on the r.h.s.\ of the  above equations,  we can reformulate system
\eqref{sistemone} in the abstract form
\begin{equation}
\label{abstract-sistemone}
\mathcal{R}_{k-1}(\wtau k, \utau k, \chitau k)= H_{k-1}.
\end{equation}
In fact,  mimicking for example the calculations
in \cite[Lemma 7.4]{rossi-roubi},
it can be checked that  $\mathcal{R}_{k-1}$ is a
  pseudo-monotone operator  (according to \cite[Chap.\ II, Def.\ 2.1]{roub-NPDE})  on
$H^1(\Omega)\times H_0^1(\Omega;\R^d) \times W^{1,p}(\Omega)$,  coercive on that space. Therefore,
the Leray-Lions type existence result of \cite[Chap.\ II, Thm.\ 2.6]{roub-NPDE} applies, yielding the existence of a solution $(\wtau k, \utau k, \chitau k)$ to \eqref{abstract-sistemone}.
%%%%%%%%%%%%

\noindent
\textbf{Step $2$: non-negativity of $\wtau{k}$.}
Let us assume in addition  that $g\geq 0$ a.e.\
in $\Omega \times (0,T)$ and
$ w_0\geq 0$ a.e.\ in $\Omega$.
Then $\gtau{k} \geq 0$ a.e. in $\Omega$. To prove \eqref{pos-wk-rev}, we
proceed by induction on $k$ and show that, if $\wtau{k-1} \geq 0$
a.e. in $\Omega$, then $\wtau{k} \geq 0$ a.e. in $\Omega$. Indeed,
let us test \eqref{eq-discr-w} by $-(\wtau{k})^-$. Taking into
account  the definition  \eqref{K-T} of $\Theta$, we have that
\[
\int_{\Omega}\Theta(\wtau{k}) \left((\chitau{k}-\chitau{k-1})+\rho\dive\left(\frac{\utau{k}-\utau{k-1}}{\tau}\right)\right)
(-(\wtau{k})^-) \dd x = 0
\]
 (here we have  kept $\rho \in \R$ also to encompass the case with thermal expansion, cf.\ below).
Combining this with the inequality  \eqref{quoted-later-on}
%\[\tau^{-1}\int_{\Omega} (\wtau{k}-\wtau{k-1}) (-(\wtau{k})^-) \dd x
%\geq (2\tau)^{-1} \int_{\Omega}
%(|(\wtau{k})^-|^2-|(\wtau{k-1})^-|^2) \dd x,
%\]
and noting that
$(\wtau{k-1})^- =0$  a.e. in $\Omega$, also in view of
\eqref{conseq-2} we obtain
\[
\frac1{2\tau}\int_{\Omega}|(\wtau{k})^-|^2 \dd x + c_2 \int_{\Omega}
|\nabla (\wtau{k})^-|^2 \dd x \leq - \int_{\Omega}
\gtau{k}(\wtau{k})^- \dd x \leq  0,
\]
yielding $(\wtau{k})^-=0$ a.e. in $\Omega$, whence \eqref{pos-wk-rev}.
{Under the additional Hypothesis (VIII) (which gives \eqref{ellipticity-retained}),
 an analogous proof of existence of solutions
can be given for Problem \ref{prob:rhoneq0}, hence we omit to give the details.}
\end{proof}

\subsection{A priori estimates}
\label{ss:3.2}
\paragraph{Notation and auxiliary results.}
 Hereafter, for a given Banach space $X$ and a
$K_\tau$-tuple $( b_\tau^k )_{k=1}^{K_\tau} \subset X$, we shall use
the short-hand notation
\[
\dtau{k}{b}:= \frac{b_\tau^k-b_\tau^{k-1}}{\tau}, \qquad
\duetau{k}{b}:= \dtau{k}{\dtau{k}{b}}= \frac{b_\tau^k -2
b_\tau^{k-1} + b_\tau^{k-2}}{\tau^2}.
\]
We recall the well-known \emph{discrete by-part integration} formula
\begin{equation}
\label{discr-by-part} \sum_{k=1}^{K_\tau} \tau \dtau{k}{b} \vtau{k}=
\btau{K_\tau} \vtau{K_\tau} -\btau{0}\vtau{1}
-\sum_{k=2}^{K_\tau}\tau \btau{k-1}\dtau{k}{v} \quad \text{for all
$\{b_\tau^k \}_{k=1}^{K_\tau},\, \{ v_\tau^k \}_{k=1}^{K_\tau} \subset X$.}
\end{equation}

We consider
 the left-continuous and  right-continuous piecewise constant, and the piecewise linear interpolants
 of the values  $\{ b_\tau^k
\}_{k=1}^{K_\tau}$, namely the functions
\[
\left.
\begin{array}{llll}
& \pwc  b{\tau}: (0,T) \to X  & \text{defined by}  & \pwc
b{\tau}(t): = b_\tau^k,
\\
& \upwc  b{\tau}: (0,T) \to X  & \text{defined by}  & \upwc
b{\tau}(t) := b_\tau^{k-1},
\\
 &
\pwl  b{\tau}: (0,T) \to X  & \text{defined by} &
 \pwl b{\tau}(t):
=\frac{t-t_\tau^{k-1}}{\tau} b_\tau^k +
\frac{t_\tau^k-t}{\tau}b_\tau^{k-1}
\end{array}
\right\}
 \qquad \text{for $t \in
(t_\tau^{k-1}, t_\tau^k]$.}
\]
We also introduce the piecewise linear interpolant    of the values
$\{ (b_\tau^k - b_{\tau}^{k-1})/\tau\}_{k=1}^{K_\tau}$ (namely, the
values taken by  the -piecewise constant- function $\pwl
{b}{\tau}'$), viz.
\[
\pwwll  b{\tau}: (0,T) \to X \qquad \pwwll b{\tau}(t)
:=\frac{t-t_\tau^{k-1}}{\tau} \frac{b_\tau^k - b_{\tau}^{k-1}}{\tau}
+ \frac{t_\tau^k-t}{\tau} \frac{b_\tau^{k-1} - b_{\tau}^{k-2}}{\tau}
\qquad \text{for $t \in (t_\tau^{k-1}, t_\tau^k]$.}
\]
Note that $ {\pwwll  {b}{\tau}}'(t) =\duetau{k}{b}$ for $t \in
(t_\tau^{k-1}, t_\tau^k]$.

In view of \eqref{bulk-force}, \eqref{heat-source}, and \eqref{fixed-profile},
it is easy to check that the piecewise constant interpolants $ (\pwc
{\mathbf{f}}{\tau} )_{k=1}^{K_\tau}$, $(\pwc  g{\tau}
)_{k=1}^{K_\tau}$, $(\pwc {\Theta^*}{\tau} )_{k=1}^{K_\tau}$, and
$(\pwl {\Theta^*}{\tau} )_{k=1}^{K_\tau}$
 of the values $\ftau{k}$,
$\gtau{k}$ \eqref{local-means}, and $\Thetatau{k} $ \eqref{local-means-theta}, fulfill as
$\tau \down 0$
\begin{subequations}
\label{converg-interp}
\begin{align}
 \label{converg-interp-f}  & \pwc {\mathbf{f}}{\tau}  \to \mathbf{f}
 & \text{ in $L^2(0,T;L^2(\Omega;\R^d))$,}
 \\
\label{converg-interp-g}  & \pwc g{\tau}  \to g
 & \text{ in $L^1(0,T;L^1(\Omega))\cap L^2(0,T;H^1(\Omega)')$,}
 \\
 \label{converg-interp-Theta}
 &
 \pwc {\Theta^*}{\tau} \to \Theta^* & \text{ in $L^p(0,T;L^2(\Omega))$ for all $1\leq p <\infty$,}
 \\
 &
 \label{converg-interp-Theta-d}
  \| \partial_t \pwl {{\Theta^*}}{\tau} \|_{L^1(0,T;L^2(\Omega))} \leq 2
 \|\partial_t \Theta^*
 \|_{L^1(0,T;L^2(\Omega))} & \text{ for all $\tau>0$.}
\end{align}
\end{subequations}

 Finally, we shall denote by $\pwc{\mathsf{t}}{\tau}$ and by
$\upwc{\mathsf{t}}{\tau}$ the left-continuous and right-continuous
piecewise constant interpolants associated with the partition, i.e.
 $\pwc{\mathsf{t}}{\tau}(t) := t_\tau^k$ if $t_\tau^{k-1}<t \leq t_\tau^k $
and $\upwc{\mathsf{t}}{\tau}(t):= t_\tau^{k-1}$ if $t_\tau^{k-1}
\leq t < t_\tau^k $. Clearly, for every $t \in [0,T]$ we have
$\pwc{\mathsf{t}}{\tau}(t) \downarrow t$ and
$\upwc{\mathsf{t}}{\tau}(t) \uparrow t$ as $\tau\to 0$.

 Propositions \ref{prop:aprio} and \ref{prop:aprio-2} collect
in the cases $\rho =0$ and $\rho \neq 0$
several a priori estimates
on the approximate solutions, obtained by interpolation of the discrete solutions to
Problems \ref{probk-rev}, \ref{probk-irr},  \ref{probk-irr-iso},
and Problem \ref{prob:rhoneq0}, respectively.
\begin{proposition}[$\mu \in \{0,1\}, \, \rho=0$]
\label{prop:aprio}
 Let $\rho=0$.  Assume Hypotheses (I)--(V) and  \eqref{bulk-force}--\eqref{datochi} on the data $\mathbf{f},\,g,\,
\teta_0,\, \uu_0,\,\vv_0,\, \chi_0$. Then,
%Under the same assumptions of
%Lemma~\ref{lemma:ex-discr} in case $\mu=1$ and of Lemma ~\ref{lemma:ex-discr-rev} in case $\mu=0$, we have that
\begin{enumerate}
\item
in the case $\mu \in \{0,1\}$
 there exist a constant $S>0$ such that for the interpolants of the solutions to
 Problem \ref{probk-rev} and to  Problem \ref{probk-irr} there holds:
\begin{align}
& \label{aprio1}
 \sup_{\tau>0}\|\pwl \uu{\tau}\|_{H^1(0,T;\boY) \cap
W^{1,\infty}(0,T;\boZ)}
 \leq S,
\\
& \label{aprio1bis}
 \sup_{\tau>0}\|\pwc \uu{\tau}\|_{L^\infty(0,T;\boY)}
 \leq S,
\\
& \label{aprio2} \sup_{\tau>0}\|\pwwll \uu{\tau}
\|_{H^1(0,T;L^{2}(\Omega;\R^d))} \leq S,
\\
& \label{aprio3}\sup_{\tau>0} \|\pwc
\chi{\tau}\|_{L^\infty(0,T;W^{1,p}(\Omega))} \leq S,
\\
& \label{aprio4} \sup_{\tau>0}\|\pwl
\chi{\tau}\|_{L^\infty(0,T;W^{1,p}(\Omega)) \cap H^1
(0,T;L^2(\Omega))} \leq S,
\\
& \label{aprio6} \sup_{\tau>0}\|\pwc
\w{\tau}\|_{L^{\infty}(0,T;L^{1}(\Omega))} \leq S,
\\
& \label{aprio7}\sup_{\tau>0} \|\pwc
\w{\tau}\|_{L^{r}(0,T;W^{1,r}(\Omega))} \leq S \quad \text{for every
$1\leq r <\frac{d+2}{d+1}$,}
\\
& \label{aprio5} \sup_{\tau>0}\|\pwc
\w{\tau}\|_{\mathrm{BV}([0,T];W^{1,r'}(\Omega)^*)} \leq S,
\\
& \label{aprio8}\sup_{\tau>0} \|\Theta(\pwc
\w{\tau})\|_{L^{2+\epsilon}(0,T;L^{2+\epsilon}(\Omega))} \leq S
\quad \text{for any $0<\epsilon<\frac{\sigma (d+2)}{d}-2$.}
\end{align}
\item if $\mu=0$ in addition there exists $S'>0$ such that
\begin{align}
& \label{aprio9}  \sup_{\tau>0} \left(\|\opchi(\pwc \chi{\tau})
\|_{L^2 (0,T;L^2(\Omega))} + \| \pwc \xi{\tau}\|_{L^2
(0,T;L^2(\Omega))} \right) \leq S'.
\end{align}
Moreover, if $\phi$ also fulfills
Hypothesis (VI), then  %\eqref{pcoercive} and
%\eqref{Lipschitz-x}, then
\begin{align}
& \label{aprio9bis}  \sup_{\tau>0} \|\pwc \chi{\tau} \|_{L^2
(0,T;W^{1+\sigma,p}(\Omega))}\leq S' \quad \text{for every $0<
\sigma <\frac1p$.}
%\\
%& \label{aprio10} \sup_{\tau>0} \|\pwwll \uu{\tau} \|_{H^1(0,T;\Ha)}
%\leq S'.
\end{align}
\item in the isothermal case
with $\mu=1$, if   $b{''} \equiv 0$  (cf.\ \eqref{bsecondnull})   and $\phi$ also fulfills
Hypothesis (VI),
%\eqref{pcoercive} and \eqref{Lipschitz-x},
 estimates
\eqref{aprio1}--\eqref{aprio4} hold. Moreover,
 there exists
$S{''}>0$ such that  for (the interpolants of) the solutions to Problem
 \ref{probk-irr-iso}  %(with $\Theta(\w) $ replaced by a given temperature profile
%$\Theta^*$)
 \begin{align}
& \label{aprio11}
 \sup_{\tau>0} \left(\|\opchi(\pwc \chi{\tau})
\|_{L^\infty (0,T;L^2(\Omega))} + \| \beta_{\tau}(\pwc
\chi{\tau})\|_{L^\infty (0,T;L^2(\Omega))} \right) \leq S{''},
\\
& \label{aprio11bis}  \sup_{\tau>0} \|\pwc \chi{\tau} \|_{L^\infty
(0,T;W^{1+\sigma,p}(\Omega))}\leq S{''} \quad \text{for every $0<
\sigma <\frac1p$,}
\\
& \label{aprio12}  \sup_{\tau>0} \|\pwl \chi{\tau}
\|_{W^{1,\infty}(0,T;L^2(\Omega))}\leq S{''},
\\
& \label{aprio12bis}  \sup_{\tau>0} \|\pwc \zeta{\tau} \|_{L^\infty
(0,T;L^2(\Omega))}\leq S{''}.
 \end{align}
 \end{enumerate}
The constants in \eqref{aprio7}, \eqref{aprio8}, and \eqref{aprio9bis}, \eqref{aprio11bis} also depend on the parameters $r$, $\epsilon$, and $\sigma$, respectively.
 \end{proposition}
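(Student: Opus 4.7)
The strategy is a cascade of discrete tests on the three equations of Problems \ref{probk-rev}, \ref{probk-irr} and \ref{probk-irr-iso}, followed by passage to the interpolants via \eqref{discr-by-part}. First, I would derive the basic energy bound by testing \eqref{eq-discr-u} (resp.\ \eqref{eq-discr-u-irr}) with $\utau k-\utau{k-1}$, using the minimality in \eqref{eq-discr-chi-irr} against the competitor $\chitau{k-1}$ (resp.\ testing \eqref{eq-discr-chi} with $\chitau k-\chitau{k-1}$), and testing \eqref{eq-discr-w} (resp.\ \eqref{eq-discr-w-irr}) with the constant~$1$. The nonlinear coupling is engineered so that the quadratic contributions $(\chitau k-\chitau{k-1})\Theta(\wtau k)$ and the term $b'(\cdot)\,\eps(\utau{k-1})\mathrm{R}_e\eps(\utau{k-1})(\chitau k-\chitau{k-1})/2$ arising from the discrete chain rule on $\bilh{b(\chitau k)}{\utau k}{\utau k}$ cancel against their counterparts in the other equations after summing over $k$. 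Combined with the coercivity \eqref{datad2}, \eqref{korn}--\eqref{a:conti-form}, the lower bound on $\widehat\beta+\widehat\gamma$, and a discrete Gronwall argument, this yields \eqref{aprio2}, the $W^{1,\infty}(0,T;\boZ)$-part of \eqref{aprio1}, and the bounds \eqref{aprio3}, \eqref{aprio4}, \eqref{aprio6}.

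To upgrade to the $H^1(0,T;\boY)$-bound in \eqref{aprio1} and to \eqref{aprio1bis}, I would test the momentum equation with $-\dive(\eps(\dtau k\uu))$; the elliptic regularity \eqref{cigamma}, \eqref{reg-pavel-b} applies because $p>d$ combined with \eqref{hynek} upgrades \eqref{aprio3} to an $L^\infty(0,T;L^\infty(\Omega))$-bound on $a(\chitau k)+\delta$ and $b(\chitau k)$, while the lower-order commutator terms involving $\nabla a(\chitau k)$ and $\nabla b(\chitau k)$ are absorbed through the interpolation \eqref{interp} with a small parameter. For the enthalpy, \eqref{aprio7} is a Boccardo--Gallou\"et bound obtained by testing \eqref{eq-discr-w} (resp.\ \eqref{eq-discr-w-irr}) with $1-(1+\wtau k)^{-\alpha}$ for $\alpha\in(0,1)$: the elliptic contribution $\int_\Omega K(\wtau{k-1})|\nabla\wtau k|^2/(1+\wtau k)^{1+\alpha}\,\dd x$ is interpolated against the $L^\infty(L^1)$-bound \eqref{aprio6} via Gagliardo--Nirenberg \eqref{gn-ineq}, fixing the optimal range $r<(d+2)/(d+1)$. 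The $\mathrm{BV}$-bound \eqref{aprio5} follows by comparison in \eqref{eq-discr-w}, and \eqref{aprio8} combines \eqref{conseq-1}, \eqref{used-later}, \eqref{aprio6} and \eqref{aprio7}.

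The sharper estimates in parts \emph{2} and \emph{3} demand dedicated tests. For \eqref{aprio9} I would test \eqref{eq-discr-chi} with $\opchi(\chitau k)+\xitau k$; the monotonicity of $\opchi$ and of $\beta$ handles the left-hand side, while the quadratic term $b'(\chitau{k-1})|\eps(\utau{k-1})|^2/2$ is controlled in $L^2(0,T;L^2(\Omega))$ via the embedding $\boY\Subset L^\infty(\Omega)$ (available for $d\le 3$) together with \eqref{aprio1bis}. Then \eqref{aprio9bis} is an immediate consequence of the Savar\'e regularity \eqref{sav-reg}. For \eqref{aprio11}--\eqref{aprio12bis} the pivotal test is on \eqref{eq-discr-chi-irr-iso} with the discrete difference $\dtau k{(\opchi(\chi)+\beta_\tau(\chi))}$, which is meaningful precisely because $\beta$ has been replaced by its Lipschitz Yosida regularization $\beta_\tau$; here the assumption $b''\equiv 0$ \eqref{bsecondnull} is indispensable, because through \eqref{discr-by-part} it lets the summand $b'((\chitau{k-1})^+)\eps(\utau{k-1})\mathrm{R}_e\eps(\utau{k-1})/2$ be traded for a contribution involving only $\dtau k\uu$ — already estimated by \eqref{aprio1} — rather than $\duetau k\uu$, for which no uniform $L^2$-bound is available. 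A Gronwall-type absorption and a final comparison in \eqref{eq-discr-chi-irr-iso} then deliver the $L^\infty(L^2)$-bounds on $\opchi(\chitau k)$, $\beta_\tau(\chitau k)$, $\dtau k\chi$ and $\zetau k$. The main obstacle, throughout, is the simultaneous control, uniformly in $\tau$, of the three nonlinearities $\opchi$, $\beta$ (or $\beta_\tau$) and $\alpha$ on the phase-field equation, which is exactly what forces the non-standard test functions $\opchi(\chi)+\xi$ in the reversible case and $\dtau k{(\opchi(\chi)+\beta_\tau(\chi))}$ in the isothermal irreversible case, in tandem with the flatness hypothesis \eqref{bsecondnull}.
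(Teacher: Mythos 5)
Your proposal traces the same cascade of discrete tests that the paper employs: the basic energy estimate (testing the momentum equation by $\utau k-\utau{k-1}$, the phase equation by $\chitau k-\chitau{k-1}$ or via the competitor $\chitau{k-1}$ in \eqref{eq-discr-chi-irr}, and the enthalpy equation by a constant, with cancellation of the cross-terms); the regularity test against $-\dive(\eps(\dtau k\uu))$; the Boccardo--Gallou\"et test of the discrete enthalpy equation with $1-(1+\wtau k)^{-\alpha}$ followed by Gagliardo--Nirenberg interpolation against the $L^\infty(L^1)$-bound; comparison arguments for the $\mathrm{BV}$-bound; the test of \eqref{eq-discr-chi} by $\opchi(\chitau k)+\xitau k$ for \eqref{aprio9}; and the test of \eqref{eq-discr-chi-irr-iso} by $\tau\dtau{k}{(\opchi(\chi)+\beta_\tau(\chi))}$ together with the discrete by-parts formula \eqref{discr-by-part} for \eqref{aprio11}--\eqref{aprio12bis}. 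This is exactly the paper's Proof of Proposition~\ref{prop:aprio}.

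Two misattributions, however, should be corrected. First, the basic energy estimate does \emph{not} yield \eqref{aprio2} nor the $W^{1,\infty}(0,T;\boZ)$-part of \eqref{aprio1}: it only gives $\partial_t\pwl\uu\tau$ bounded in $L^\infty(0,T;L^2(\Omega;\R^d))\cap L^2(0,T;H^1(\Omega;\R^d))$ (i.e.\ \eqref{aprio0}, \eqref{aprio-1}). The $W^{1,\infty}(0,T;\boZ)$-bound on $\pwl\uu\tau$ and the $L^\infty(0,T;\boY)$-bound \eqref{aprio1bis} first come from the test against $-\dive(\eps(\dtau k\uu))$, and only then — via the regularity \eqref{reg-pavel-b}, which requires precisely the $H^2$-bound on $\uu$ and the $W^{1,d}$-bound on $\chi$ — can one deduce \eqref{aprio2} by comparison in the discrete momentum equation. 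Placing \eqref{aprio2} in the conclusion of the energy estimate inverts this logical dependence. Second, the embedding you invoke to control the quadratic term $b'(\chitau{k-1})\eps(\utau{k-1})\mathrm{R}_e\eps(\utau{k-1})/2$ in $L^2$ is not $\boY\Subset L^\infty(\Omega)$ — that controls $\uu$ itself, not its gradient — but rather $\boY=H_0^2(\Omega;\R^d)\subset W^{1,4}(\Omega;\R^d)$ for $d\le 3$, so that $\eps(\uu)\in L^4(\Omega)$ and hence $|\eps(\uu)|^2\in L^2(\Omega)$; this is the embedding the paper actually uses to bound $\|h_\tau^{k-1}\|_{L^2(\Omega)}$.
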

 %%%%%
 \begin{proposition}[$\mu=0$, \, $\rho\neq 0$]
 \label{prop:aprio-2}  Let $\mu=0$ and $\rho \neq 0$.
Assume Hypotheses (I), (III)--(V), and Hypothesis (VIII); suppose that
the data $\mathbf{f},\,g,\,
\teta_0,\, \uu_0,\,\vv_0,\, \chi_0$ comply with
  \eqref{bulk-force}--\eqref{datochi}, and in addition that $\w_0 \in L^2(\Omega)$.
Then,
for the interpolants of the solutions to
 Problem  \ref{prob:rhoneq0}
  estimates \eqref{aprio3}--\eqref{aprio6}
hold with a constant   \emph{independent} of $M$,
whereas estimates \eqref{aprio1}--\eqref{aprio2}, \eqref{aprio9} and (under the additional
Hypothesis (VI)) \eqref{aprio9bis} hold for a constant  \emph{depending} on $M$.
 Moreover,
 there exists a constant $S^{'''}= S^{'''}(M)>0$
such that
 \begin{align}
& \label{aprio6-rho} \sup_{\tau>0}\|\pwc
\w{\tau}\|_{L^{2}(0,T;H^1(\Omega)) \cap L^{\infty}(0,T;L^{2}(\Omega))} \leq S^{'''},
\\
& \label{aprio5-rho}   \sup_{\tau>0}\|\pwl
\w{\tau}\|_{H^1(0,T;H^1(\Omega)')}  \leq S^{'''}.
\end{align}
\end{proposition}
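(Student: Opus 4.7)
is to mirror Proposition \ref{prop:aprio}, the two novelties being (a) the presence of the thermal-expansion coupling $\rho \Theta_M(\w)\dive(\uu_t)$ in \eqref{eq-discr-w-TRUNC}, which must cancel against the dual pairing of $\ciro(\Theta_M(\w))$ from \eqref{eq-discr-u-TRUNC} for the basic energy estimate to remain $M$-independent, and (b) an enhanced $L^\infty(0,T;L^2(\Omega)) \cap L^2(0,T;H^1(\Omega))$ bound for $\w$, obtained by testing \eqref{eq-discr-w-TRUNC} by $\wtau k$ itself. The latter is necessarily $M$-dependent, because the truncation $\Theta_M$ breaks the power-type structure $K(w)\sim w^{2q}+1$ exploited formally in Remark \ref{rmk:afterThm2}. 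The order of attack would be: first the $M$-independent estimates \eqref{aprio3}, \eqref{aprio4}, \eqref{aprio6}; then the $M$-dependent enhanced estimates \eqref{aprio6-rho}, \eqref{aprio5-rho}; finally the remaining $M$-dependent bounds \eqref{aprio1}, \eqref{aprio2}, \eqref{aprio9}, \eqref{aprio9bis}, obtained by transposing verbatim the corresponding arguments of Proposition \ref{prop:aprio}.

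\paragraph{Step 1 ($M$-independent estimates).} I would test \eqref{eq-discr-w-TRUNC} by $\tau$, \eqref{eq-discr-u-TRUNC} by $\utau k - \utau{k-1}$, and \eqref{eq-discr-chi-TRUNC} by $\chitau k - \chitau{k-1}$, and sum over $k=1,\ldots,m$. Two structural cancellations occur: by the definition \eqref{op_ciro} of $\ciro$, the term $\rho\tau\int_\Omega \dive(\dtau k u)\Theta_M(\wtau k)\,\mathrm{d}x$ from the tested heat equation cancels against the pairing $\tau\pairing{}{H^1(\Omega;\RR^d)}{\ciro(\Theta_M(\wtau k))}{\dtau k u} = -\rho\tau\int_\Omega \Theta_M(\wtau k)\dive(\dtau k u)\,\mathrm{d}x$ from the tested momentum equation; and the two copies of $\tau\int_\Omega \chi_t\Theta_M(\wtau k)\,\mathrm{d}x$ appearing, respectively, on the l.h.s.\ of the tested heat equation and on the r.h.s.\ of the tested $\chi$-equation cancel each other. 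Both cancellations are independent of $M$ precisely because the same $\Theta_M$ is used consistently in all three equations of Problem \ref{prob:rhoneq0}. Combined with the discrete chain rules for $\Phi$ and $\widehat\beta+\widehat\gamma$, Korn's inequality \eqref{korn}, the Gagliardo--Nirenberg-type interpolation \eqref{interp} needed to dominate the quadratic source $b'(\chitau{k-1})|\varepsilon(\utau{k-1})|^2\chi_t/2$ on the r.h.s.\ of the $\chi$-equation, and a discrete Gronwall argument, the proof of Proposition \ref{prop:aprio} then transposes verbatim and yields the $M$-independent estimates \eqref{aprio3}, \eqref{aprio4}, \eqref{aprio6}.

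\paragraph{Step 2 ($M$-dependent enhanced estimates).} Mimicking Remark \ref{rmk:afterThm2} at the discrete level, I would test \eqref{eq-discr-w-TRUNC} by $\wtau k$ and sum in $k$. The discrete chain-rule inequality $\int_\Omega \wtau k(\wtau k-\wtau{k-1})\,\mathrm{d}x \geq \tfrac12\|\wtau k\|_{L^2(\Omega)}^2-\tfrac12\|\wtau{k-1}\|_{L^2(\Omega)}^2$, the uniform lower bound \eqref{ellipticity-retained}, and the pointwise bound $|\Theta_M|\leq \Theta(M)$ yield
\begin{equation*}
\tfrac12\|\wtau m\|_{L^2(\Omega)}^2 + c_{10}\sum_{k=1}^{m}\tau\|\nabla\wtau k\|_{L^2(\Omega)}^2 \leq \tfrac12\|\w_0\|_{L^2(\Omega)}^2 + C_M\sum_{k=1}^{m}\tau\bigl(\|\chi_t\|_{L^2(\Omega)}+\|\dtau k u\|_{\boZ}\bigr)\|\wtau k\|_{L^2(\Omega)} + \mathrm{source},
\end{equation*}
with $C_M$ depending on $M$, $\rho$ and $|\Omega|$. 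Since from Step 1 the $L^2(0,T;L^2(\Omega))$-norms of $\chi_t$ and of $\dtau k u\in L^2(0,T;\boZ)$ are $M$-uniformly bounded, Young's inequality followed by discrete Gronwall (initialized by the assumption $\w_0\in L^2(\Omega)$) yields \eqref{aprio6-rho} with an $M$-dependent constant. Estimate \eqref{aprio5-rho} then follows by a direct comparison in \eqref{eq-discr-w-TRUNC}: using $K_M\leq K(M)$ together with \eqref{aprio6-rho} bounds the elliptic operator $A_{\wtau{k-1},M}(\wtau k)$ in $L^2(0,T;H^1(\Omega)')$, and the coupling contributions $\chi_t\Theta_M(\wtau k)$ and $\rho\dive(\dtau k u)\Theta_M(\wtau k)$ are bounded in $L^2(0,T;L^2(\Omega))$.

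\paragraph{Remaining estimates and main obstacle.} The remaining $M$-dependent bounds \eqref{aprio1}, \eqref{aprio2}, \eqref{aprio9}, \eqref{aprio9bis} are obtained by reproducing verbatim the corresponding arguments of Proposition \ref{prop:aprio}, since the only new source in \eqref{eq-discr-u-TRUNC} is $\ciro(\Theta_M(\wtau k))$, bounded in $L^\infty(0,T;\Ha)$ by $|\rho|\Theta(M)|\Omega|^{1/2}$, while the new source $\Theta_M(\wtau k)$ on the r.h.s.\ of \eqref{eq-discr-chi-TRUNC} is bounded in $L^\infty(0,T;L^\infty(\Omega))$ by $\Theta(M)$; both are absorbed exactly as $\ftau k$ is. The main technical obstacle is to keep the $M$-independence of Step 1 strictly separate from the $M$-dependent bounds of Step 2: any attempt to obtain a sharper discrete $\w$-estimate toward the $L^{2(q+1)}(0,T;L^{6(q+1)}(\Omega))$ regularity predicted by Hypothesis (VIII) is blocked at this stage by the truncation, which destroys the power-type structure of $K$. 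This explains why in Section \ref{sec:4.2new} the passage to the limit must be carried out in two stages, first $\tau\downarrow 0$ at $M$ fixed (for which the $M$-dependent bounds of Step 2 are indispensable to identify the limit), then $M\to\infty$ (where only the $M$-independent bounds of Step 1 persist, and the full regularity \eqref{reg-w-rho} is recovered via the calculation of Remark \ref{rmk:afterThm2}).
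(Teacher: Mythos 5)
Your overall architecture matches the paper's exactly: the energy estimate with the two structural cancellations (the $\chi_t\Theta_M$-terms between the heat and phase equations, and the $\rho\,\Theta_M\dive(\partial_t\uu)$-terms between the heat and momentum equations) gives the $M$-uniform bounds \eqref{aprio3}--\eqref{aprio6}, and testing \eqref{eq-discr-w-TRUNC} by $\wtau{k}$ then produces the $M$-dependent bounds \eqref{aprio6-rho}--\eqref{aprio5-rho} (the paper's \emph{Ninth} estimate). Steps 1 and 2 are correct.

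Step 3, however, contains a concrete error. You assert that $\ciro(\Theta_M(\wtau{k}))$ is ``bounded in $L^\infty(0,T;\Ha)$ by $|\rho|\Theta(M)|\Omega|^{1/2}$'' and is therefore ``absorbed exactly as $\ftau{k}$ is.'' This misreads what $\ciro$ produces. By the definition \eqref{op_ciro} and integration by parts (with $\vv = 0$ on $\partial\Omega$), the element of $\Ha$ identified with $\ciro(\Theta_M(\wtau{k}))$ is $\rho\nabla\Theta_M(\wtau{k})$---a gradient, not a truncated quantity. The number you quote is the $L^2(\Omega)$-norm of the tensor $\rho\,\Theta_M(\wtau{k})\mathbf{1}$ \emph{before} the divergence is applied, not the $\Ha$-norm of the divergence itself, and no $L^\infty(0,T)$ bound on $\rho\nabla\Theta_M(\wtau{k})$ is available. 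When testing the discrete momentum equation by $-\dive(\varepsilon(\utau{k}-\utau{k-1}))$ the extra term is $\tau\rho\int_\Omega\nabla\Theta_M(\wtau{k})\cdot\dive(\varepsilon(\dtau{k}{\uu}))\,\mathrm{d}x$, which must be estimated via $\|\nabla\Theta_M(\wtau{k})\|_{L^2(\Omega)}\leq\Lip(\Theta_M)\|\nabla\wtau{k}\|_{L^2(\Omega)}$ and the $L^2(0,T;H^1(\Omega))$ control \eqref{aprio6-rho} you established in Step 2; this gives an $L^2(0,T)$ bound, not the $L^\infty(0,T)$ bound you invoked, and is precisely what the paper's \emph{Tenth a priori estimate} does. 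The needed ingredient is already in your Step 2; the flaw is only in how you invoke it for this term. (Your parallel treatment of $\Theta_M(\wtau{k})$ as an undifferentiated, $L^\infty$-bounded source on the right-hand side of \eqref{eq-discr-chi-TRUNC}, justifying \eqref{aprio9}--\eqref{aprio9bis} by rerunning the Sixth estimate, is correct.)
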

\noindent
We will
treat the proofs
of Propositions \ref{prop:aprio} and \ref{prop:aprio-2} in a unified way,
developing a series of a priori  estimates.
%%%%%
\paragraph{Proof of Proposition \ref{prop:aprio}.}
Most of the calculations below
will be detailed on the
discretization scheme \eqref{eq-discr-w}--\eqref{eq-discr-chi} for the full \emph{reversible}
system, and whenever necessary we will  outline the differences
in comparison with the discrete systems of Problems \ref{probk-irr} and \ref{probk-irr-iso}.
Furthermore, for each estimate we will specify the values of the parameters $\mu$ and $\rho$
for which it is valid and,
to make the computations
more readable, we will  illustrate them  first on the
time-continuous level, i.e.\ referring to system \eqref{eq0d}--\eqref{eq2d}.

\noindent
\textbf{First a priori estimate for $\mu \in \{0,1\}$, $\rho \in \R$:} \emph{we test \eqref{eq1d} by $\uu_t$
 \eqref{eq0d} by $1$, \eqref{eq2d} by $\chi_t$,
add them and integrate in time. This is the so-called {\sl energy estimate}.}
 We test \eqref{eq-discr-u} by
$\utau{k}-\utau{k-1}$. Note that
\begin{equation}
\label{est-1.1} \tau\int_{\Omega} \duetau{k}{\uu} \cdot
\dtau{k}{\uu}\dd x \geq \frac1{2} \|\dtau{k}{\uu}\|_{L^2(\Omega)}^2
-\frac1{2} \|\dtau{k-1}{\uu}\|_{L^2(\Omega)}^2
\end{equation}
for all $k=1,\ldots,K_\tau$. Since $\chitau{k} \in [0,1]$ a.e.\ in
$\Omega$, thanks to \eqref{data-a}
 we have that $a(\chitau{k}) \geq 0 $  a.e.\ in
$\Omega$, thus by \eqref{korn} we have
\begin{equation}
\label{est-1.2} \pairing{}{H^1(\Omega)}{\opj{(a(\chitau{k})
+\delta)}{\dtau{k}{\uu}}}{\utau{k}-\utau{k-1}} \geq  C_1 \delta \tau
\| \dtau{k}{\uu}\|_{H^1(\Omega)}^2.
\end{equation}
On the other hand, using that $\|b(\chitau{k})\|_{L^\infty(\Omega)}
\leq \|b\|_{L^\infty (0,1)}$ and taking into account
\eqref{a:conti-form}, we find
\begin{equation}
\label{est-1.3}
\begin{aligned}
\!\! \!\!\!\!\!\!
|\pairing{}{H^1(\Omega)}{\oph{b(\chitau{k})
}{\utau{k}}}{\utau{k}-\utau{k-1}}| & \leq C_2 \tau \|b\|_{L^\infty
(0,1)} \|\utau{k} \|_{H^1(\Omega)} \|\dtau{k}{\uu} \|_{H^1(\Omega)}
\\ & \leq \frac12 C_1 \delta \tau \| \dtau{k}{\uu}\|_{H^1(\Omega)}^2 +
C\tau \| \utau{0}\|_{H^1(\Omega)}^2  + C_\delta\tau \|\utau{k}- \utau{0}
\|_{H^1(\Omega)}^2.
\end{aligned}
\end{equation}
We estimate the latter term by observing that
\begin{equation}
\label{est-1.4} \|\utau{k}- \utau{0} \|_{H^1(\Omega)}^2 =
\|\sum_{j=1}^k(\utau{j}- \utau{j-1}) \|_{H^1(\Omega)}^2 \leq k
\tau^2 \sum_{j=1}^k \| \dtau{j}{\uu}\|_{H^1(\Omega)}^2\leq T\tau
\sum_{j=1}^k \| \dtau{j}{\uu}\|_{H^1(\Omega)}^2\,.
\end{equation}
Altogether, collecting \eqref{est-1.1}--\eqref{est-1.4}  and summing
over the index $k=1,\ldots,K_\tau$, we conclude
\begin{align}\label{est-u-rho}
&\frac1{2} \|\dtau{K_\tau}{\uu}\|_{L^2(\Omega)}^2 +  \frac12 C_1
\delta \sum_{k=1}^{K_\tau} \tau \| \dtau{k}{\uu}\|_{H^1(\Omega)}^2
 +\rho\sum_{k=1}^{K_\tau}\tau \int_\Omega \Theta(\wtau{k}) \dive(\dtau{k}{\uu})\dd x\\
\no
&\leq  \frac1{2} \|\dtau{0}{\uu}\|_{L^2(\Omega)}^2  + C
\sum_{k=1}^{K_\tau} \tau \left(\sum_{j=1}^k \tau \|
\dtau{j}{\uu}\|_{H^1(\Omega)}^2\right).
\end{align}

We multiply \eqref{eq-discr-chi} by
$\chitau{k} - \chitau{k-1}$. With standard convexity inequalities,
we  obtain
\begin{equation}
\label{to-be-quoted-later}
\begin{aligned}
 \tau &\| \dtau{k}{\chi} \|_{L^2(\Omega)}^2
  + \Phi(\chitau{k}) + \int_{\Omega}  \widehat{\beta}(\chitau{k}) \dd
x + \int_{\Omega} \gamma(\chitau{k})(\chitau{k}-\chitau{k-1}) \dd
x\\ & \leq \Phi(\chitau{k-1}) + \int_{\Omega}
\widehat{\beta}(\chitau{k-1}) \dd x +\tau
\int_{\Omega}\dtau{k}{\chi} \left(\Theta(\wtau{k}) - \frac12
b'(\chitau{k-1}) \eps(\utau{k}) \mathrm{R}_e \eps(\utau{k}) \right)
\dd x.
\end{aligned}
\end{equation}
We then test \eqref{eq-discr-w} by $\tau$ and add the resulting
relation to   \eqref{est-u-rho} and  \eqref{to-be-quoted-later},
 summing over the index
$k=1,\ldots,K_\tau$. The terms
$\tau\int_{\Omega}\dtau{k}{\chi} \Theta(\wtau{k})\dd x $ and
$\rho \tau \int_\Omega \Theta(\wtau{k}) \dive(\dtau{k}{\uu})\dd x$ cancel out. Furthermore,
we note that $|b'(\chitau{k-1})| \leq C $ a.e.\ in $\Omega$ since $b \in
 \mathrm{C}^1(\R)$ and $0 \leq \chitau{k-1} \leq 1$ a.e. in
 $\Omega$,
and  exploit the Lipschitz continuity of the function $\gamma$, which
enables us to estimate the last term on the left-hand side of
\eqref{to-be-quoted-later}. Ultimately,
 we obtain
\begin{equation}
\label{est-2.1}
\begin{aligned}
& \int_\Omega \wtau{K_\tau} \dd x+  \sum_{k=1}^{K_\tau} \tau   \|
\dtau{k}{\chi} \|_{L^2(\Omega)}^2 + \Phi(\chitau{K_{\tau}}) +
\int_{\Omega} \widehat{\beta}(\chitau{K_{\tau}}) \dd x+ \frac1{2} \|\dtau{K_\tau}{\uu}\|_{L^2(\Omega)}^2 \\
\no
&\quad+  \frac12 C_1
\delta \sum_{k=1}^{K_\tau} \tau \| \dtau{k}{\uu}\|_{H^1(\Omega)}^2\\ &  \leq
\int_\Omega w_0 \dd x+ \Phi(\chitau{0}) + \int_{\Omega} \widehat{\beta}(\chitau{0}) \dd x +
\sum_{k=1}^{K_\tau} \tau \| \gtau{k} \|_{H^1(\Omega)^*}\\
& \quad+
 \sum_{k=1}^{K_\tau} C \tau ( \|\chitau{k}
\|_{L^2(\Omega)} + \|| \eps(\utau{k})|^2 \|_{L^2(\Omega)} +1 ) \|
\dtau{k}{\chi}\|_{L^2(\Omega)}
\\ &
\leq C + \frac14 \sum_{k=1}^{K_\tau}\tau  \| \dtau{k}{\chi}
\|_{L^2(\Omega)}^2 + C \sum_{k=1}^{K_\tau}\tau \| \eps(\utau{k})
\|_{L^4(\Omega)}^4 +  C \sum_{k=1}^{K_\tau} \tau \left(\sum_{j=1}^k
\tau \| \dtau{j}{\chi}\|_{L^2(\Omega)}^2\right)\\
\no
&\quad+ C
\sum_{k=1}^{K_\tau} \tau \left(\sum_{j=1}^k \tau \|
\dtau{j}{\uu}\|_{H^1(\Omega)}^2\right),
\end{aligned}
\end{equation}
where the last inequality follows  from assumptions
\eqref{heat-source}--\eqref{datochi} on the data, from  the Young
inequality, and from
 estimating $\tau\|\chitau{k} \|_{L^2(\Omega)}^2 \leq  2\tau\|\chitau{0} \|_{L^2(\Omega)}^2+
 2 \tau\|\chitau{k}-\chitau{0} \|_{L^2(\Omega)}^2
$ and dealing with the latter term like in \eqref{est-1.4}.
Therefore,  applying a discrete version of
the Gronwall lemma  (cf., e.g., \cite[Prop. 2.2.1]{jerome}), we
conclude estimates   \eqref{aprio3}--\eqref{aprio6}, as well as
 estimate
\begin{equation}
\label{aprio0} \sup_{\tau>0} \|\pwl \uu{\tau}\|_{H^1(0,T;\boZ) \cap
W^{1,\infty}(0,T;\Ha)} \leq S,
\end{equation}
 which in turn implies
\begin{equation}
\label{aprio-1} \sup_{\tau>0} \|\pwc \uu{\tau}\|_{L^\infty(0,T;\boZ)}
\leq S.
\end{equation}

We can perform  this energy estimate  on Problem \ref{probk-irr} as well:
calculations \eqref{est-1.1}--\eqref{est-u-rho} can be trivially adapted to \eqref{eq-discr-u-irr},
 whereas
\eqref{to-be-quoted-later} derives from choosing in the minimum
problem \eqref{eq-discr-chi} $\chitau {k-1}$ as a competitor.
We again conclude \eqref{aprio3}--\eqref{aprio6} as well as
\eqref{aprio0}--\eqref{aprio-1}.

 In the case of  Problem \ref{probk-irr-iso},
 \eqref{eq-discr-chi-irr-iso} also features the term $\zetau{k}$,
whence the additional term $ \int_{\Omega}\zetau{k}
\dtau{k}{\chi} \dd x$ on the left-hand side of \eqref{to-be-quoted-later}.
 Since $0 \in \alpha(0)$,  by monotonicity the latter term in nonnegative. Taking into account
 this,  replacing $\widehat{\beta}$ with $\widehat{\beta}_\tau$
 in \eqref{to-be-quoted-later},
and observing that the coefficient of $\eps(\utau{k}) \mathrm{R}_e
\eps(\utau{k}) $ on the right-hand side of \eqref{eq-discr-chi-irr}
is bounded,
  we may repeat the same calculations
 as in the above lines.
The coercivity estimate \eqref{est-1.2} goes through because
$a(\chitau{k})$, which is no longer guaranteed to be positive, is
replaced by $a(\chitau{k})^+$. Furthermore, since $\chitau{k} \leq
\chitau{k-1}\leq \chi_0 \leq 1$ a.e. in $\Omega$ (due to the
irreversibility constraint), we have that $(\chitau{k})^+ \in
[0,1]$ a.e.\ in $\Omega$, thus we may again obtain \eqref{est-1.3}.
 %%%%%%%%%%%%%%%%%%%%%
%%%%%%%%%%%%%%%%%%
 %%%%%%%%%%%%%%%%%%%%%%%%%%%%%%%%

\noindent
\textbf{Second a priori estimate for $\mu \in \{0,1\}$, $\rho=0$:} \emph{
following \cite{bss} (see also \cite{rocca-rossi1}), we
test~\eqref{eq1d} by $- \dive (\eps(\uu_t))$ and integrate
in time.}
 We test
\eqref{eq-discr-u} by $ -{\dive} (\eps(\utau{k}-\utau{k-1})) $. This
gives rise to the following terms on the left-hand side:
\begin{align}
 \label{est-3.0}
 &
\begin{aligned}
 & -\tau\int_{\Omega} \duetau{k}{\uu} \cdot
\dive(\eps(\dtau{k}{\uu})) \dd x \geq \frac12 \int_{\Omega}
|\eps(\dtau{k}{\uu})|^2 \dd x - \frac12 \int_{\Omega}
|\eps(\dtau{k-1}{\uu})|^2 \dd x,
\end{aligned}
\\
&
 \label{est-3.-1}
\begin{aligned}
  -\tau \int_{\Omega} \opj{(a(\chitau{k}) + \delta)}{\dtau{k}{\uu}}
\cdot \dive(\eps(\dtau{k}{\uu})) \dd x &  =  \tau\int_{\Omega}
(\delta+a(\chitau{k})) \dive(\eps(\dtau{k}{\uu}))  \cdot
\dive(\eps(\dtau{k}{\uu})) \dd x \\
 & +\tau \int_{\Omega}
\eps(\dtau{k}{\uu})\nabla a({\chitau{k}}) \cdot
\dive(\eps(\dtau{k}{\uu})) \dd x \\ & \doteq I_0+I_1  \geq \delta
C_3^2 \tau \|\dtau{k}{\uu} \|_{H^2(\Omega)}^2 + I_1,
\end{aligned}
\end{align}
the latter inequality due to \eqref{cigamma}. Moreover, always on the  l.h.s.\ we have
\begin{align}
 & \label{est-3.-2}
\begin{aligned}
 -  \tau \int_{\Omega} \oph{b(\chitau{k})}{\utau{k}} \cdot
\dive(\eps(\dtau{k}{\uu})) \dd x  & =  \lambda_1  \tau  \int_{\Omega}
b(\chitau{k}) \Delta(\dtau{k}{\uu}) \cdot \nabla (\dive(\utau{k}))
\dd x \\ & + 2 \lambda_2  \tau \int_{\Omega} b(\chitau{k}) {\dive}
(\eps(\dtau{k}{\uu}))  \cdot  {\dive} (\eps(\utau{k})) \dd x\\ &  +
\lambda_1  \tau\int_{\Omega} \dive(\utau{k}) \nabla b(\chitau{k}) \cdot
\Delta(\dtau{k}{\uu})\dd x \\ &  + 2 \lambda_2    \tau\int_{\Omega} \eps(\utau{k})
\nabla  b(\chitau{k})\cdot {\dive} (\eps(\dtau{k}{\uu})) \dd x\\ &  \doteq
I_2 +I_3 +I_4 +I_5,
\end{aligned}
\end{align}
(where $\Delta$ stands for the vectorial Laplace operator). On the
right-hand side, we have
\begin{equation}
\label{est-3.1}
 -\tau\int_{\Omega} \ftau{k}\cdot
\dive(\eps(\dtau{k}{\uu}))\dd x  \leq C_\delta \tau \|
\ftau{k}\|_{L^2(\Omega)}^2 + \frac{\delta}{8} C_3^2\tau \| \dtau{k}{\uu}
\|_{H^2(\Omega)}^2,
\end{equation}
where the latter inequality follows from \eqref{cigamma}.  We now
move the integral terms $I_1,\ldots,I_5$ to the right-hand side. Let
us fix $0<\varsigma\leq 3$ such that $p \geq d+\varsigma$  (where $p$ is
the exponent  in \eqref{datad2}). Then,
\begin{align}
 \label{est-3.2}
 \begin{aligned}
| I_1|  & \leq \tau \|{\dive} (\eps(\dtau{k}{\uu})) \|_{L^2(\Omega)}
\|\eps(\dtau{k}{\uu})\|_{L^{d^{\star}{-}\varsigma}(\Omega)}
 \|\nabla a(\chitau{k})\|_{L^{d+\varsigma}(\Omega)}
 \\ &  \leq
  \frac{\delta}4 C_{3}^2 \tau \| \dtau{k}{\uu} \|_{H^2(\Omega)}^2
 +\delta\tau \| \eps (\dtau{k}{\uu})\|_{L^{d^{\star}{-}\varsigma} (\Omega)}^2
 \| \nabla a(\chitau{k}) \|_{L^{d+\varsigma}(\Omega)}^2
 \\ &
 \begin{aligned}
 \leq
\frac{\delta}4 C_{3}^2 \tau \| \dtau{k}{\uu} \|_{H^2(\Omega)}^2  & +
\varrho^2 C\tau \| \dtau{k}{\uu} \|_{H^2(\Omega)}^2 \|
a'\|_{L^\infty (-\mathsf{m},\mathsf{m})}^2 \| \nabla \chitau{k} \|_{L^{p}(\Omega)}^2
\\ & + C_{\varrho,\delta}^2 C' \| \dtau{k}{\uu} \|_{L^2(\Omega)}^2 \|
a'\|_{L^\infty (-\mathsf{m},\mathsf{m})}^2  \| \nabla \chitau{k} \|_{L^{p}(\Omega)}^2
\end{aligned}
\\
& \leq \frac{\delta}2 C_{3}^2 \tau \| \dtau{k}{\uu}
\|_{H^2(\Omega)}^2 + C S^4,
\end{aligned}
\end{align}
where the first and second inequalities respectively follow from the
H\"older and  Young inequalities, with $d^\star$ as in
\eqref{dstar}, the third one from \eqref{interp}, and the last one
taking into account estimates  \eqref{aprio0}  for
$\sup_{k=1,\ldots,K_\tau}\| \dtau{k}{\uu} \|_{L^2(\Omega)}$,
\eqref{aprio3} for $\sup_{k=1,\ldots,K_\tau}\| \chitau{k}
\|_{W^{1,p}(\Omega)}$, which in particular yields
that  $|\chitau{k}| \leq
\mathsf{m}$ a.e.\ in $\Omega \times (0,T)$ for some $\mathsf{m}>0$, and from choosing $\varrho \leq
C^{-1/2} (\| a'\|_{L^\infty (-\mathsf{m},\mathsf{m})} S)^{-1}$. Furthermore, taking
into account that $b(\chitau k) \in L^\infty(\Omega)$,
 one easily
checks that
\begin{align}
 \label{est-3.3}
\begin{aligned}
|I_2 + I_3|  & \leq C \tau \| \dtau{k}{\uu}\|_{H^2(\Omega)} \|
\utau{k}\|_{H^2(\Omega)} \\ & \leq \frac{\delta}8C_{3}^2 \tau \|
\dtau{k}{\uu} \|_{H^2(\Omega)}^2 + C \|\utau{0}\|_{H^2(\Omega)}^2 +
C \tau \sum_{j=1}^k \tau \| \dtau{j}{\uu}\|_{H^2(\Omega)}^2,
\end{aligned}
\end{align}
where the second inequality follows from the Young inequality, from
$ \tau\| \utau{k}\|_{H^2(\Omega)}^2 \leq 2\tau \|
\utau{0}\|_{H^2(\Omega)}^2 + 2\tau\|
\utau{k}-\utau{0}\|_{H^2(\Omega)}^2$, and from estimating the latter
term as in \eqref{est-1.4}. Analogously, again  using
that $\sup_{k=1,\ldots,K_\tau}\| \chitau{k} \|_{W^{1,p}(\Omega)}\leq
S$ and that $|\chitau {k}| \leq \mathsf{m}$ a.e.\ in $\Omega$, we have
\begin{equation}
 \label{est-3.4}
 \begin{aligned}
|I_4+I_5| &  \leq \tau \| \dtau{k}{\uu}\|_{H^2(\Omega)} \|\nabla
b(\chitau{k})\|_{L^3(\Omega)} (\| \dive(\utau{k}) \|_{L^6 (\Omega)}
+ \| \eps(\utau{k})\|_{L^6 (\Omega)}) \\ & \leq C \tau
\|b'\|_{L^\infty (-\mathsf{m},\mathsf{m})}\| \dtau{k}{\uu}\|_{H^2(\Omega)} \|\chitau{k}
\|_{W^{1,p}(\Omega)} \| \utau{k}\|_{H^2(\Omega)}\\ &  \leq
\frac{\delta}8 C_{3}^2 \tau \| \dtau{k}{\uu} \|_{H^2(\Omega)}^2 + C
S^2 \|\utau{0}\|_{H^2(\Omega)}^2  + CS^2\tau \sum_{j=1}^k \tau \|
\dtau{j}{\uu}\|_{H^2(\Omega)}^2.
\end{aligned}
\end{equation}
Collecting \eqref{est-3.0}--\eqref{est-3.4} and summing over the
index $k=1,\ldots,K_\tau$, we obtain
\[
\begin{aligned}
\frac12 \|\eps(\dtau{K_\tau}{\uu})\|_{L^2 (\Omega)}^2 &  +
  \frac{C_3^2\delta}8 \sum_{k=1}^{K_\tau} \tau
\|\dtau{k}{\uu}\|_{H^2(\Omega)}^2  \\ &  \leq   C   +  \frac12
\|\eps(\dtau{0}{\uu})\|_{L^2 (\Omega)}^2   + C \sum_{k=1}^{K_\tau}
\tau \| \ftau{k}\|_{L^2(\Omega)}^2 + C \sum_{k=1}^{K_\tau} \tau
\sum_{j=1}^k \tau \| \dtau{j}{\uu}\|_{H^2(\Omega)}^2.
\end{aligned}
\]
Applying the discrete Gronwall Lemma once again, we conclude
estimate \eqref{aprio1}, whence \eqref{aprio1bis}.

It is immediate to check that calculations
\eqref{est-3.0}--\eqref{est-3.4} can also be performed on  the
discrete momentum equation \eqref{eq-discr-u-irr} in Problem
\ref{probk-irr-iso}.
\begin{remark}
\label{remk:added-revision}
\upshape
The calculations for the \emph{Second a priori estimate}
carry over to the case the operator $\opchi$ is replaced by the $s$-Laplacian $A_s$, provided that $s>\frac d 2$. Indeed, this
ensures the continuous embedding
$W^{s,2}(\Omega)\subset W^{1,\bar{p}}(\Omega)$ for some $\bar{p}>d$, which is crucial in the above calculations, cf.\
\eqref{est-3.2}.
\end{remark}
\noindent
\textbf{Third a priori estimate for $\mu \in \{0,1\}$, $\rho=0$:} \emph{\textsc{Boccardo\&Gallou\"et}-type estimate on \eqref{eq0d}.}
As in the proof of  \cite[Prop.~4.2]{roubicek-SIAM}, we test
equation \eqref{eq-discr-w} by $\Pi(\wtau{k})$, where
\begin{equation}
\label{speciaL-test-func}
\Pi : [0,+\infty) \to [0,1]  \text{ is defined by } \Pi(\w)=
1-\frac1{(1+\w)^{\varsigma}} \quad \text{ for some }\varsigma>0.
\end{equation}
Note that $\Pi(\wtau{k})$ is well-defined, since $\wtau k \geq 0$
a.e.\ in $\Omega$, and it belongs to $ H^1(\Omega)\cap L^\infty(\Omega)$, as $\Pi$ is
Lipschitz continuous. Such a test function has been first proposed
in \cite{feireisl-malek}, as a simplification of the technique by
\textsc{Boccardo\&Gallou\"et} \cite{boccardo-gallouet1}. We shall
denote by $\widehat{\Pi}$ the primitive of $\Pi$ such that
$\widehat{\Pi}(0)=0$ (hence $\widehat{\Pi}(\w) \geq 0$ for $\w\geq
0$).
 Summing
over $k=1,\ldots,K_\tau$, we obtain
\[
\begin{aligned}
c_2\varsigma \sum_{k=1}^{K_\tau} \tau \int_{\Omega} \frac{|\nabla
\wtau{k}|^2}{(1+\wtau{k})^{\varsigma+1}} \dd x &  \leq
\int_{\Omega}\widehat{\Pi}(\wtau{K_{\tau}}) \dd x +
\sum_{k=1}^{K_\tau}\int_{\Omega} K(\wtau{k-1}) \nabla \wtau{k} \cdot
\nabla \Pi(\wtau{k}) \dd x \\ & \leq
\int_{\Omega}\widehat{\Pi}(\wtau{0}) \dd x + \sum_{k=1}^{K_\tau}
\tau(\|\gtau{k}\|_{L^1(\Omega)} + \| \dtau{k}{\chi}
\Theta(\wtau{k})\|_{L^1(\Omega)}) \|\Pi(\wtau{k})\|_{L^\infty
(\Omega)},
\end{aligned}
\]
where the first inequality follows from \eqref{conseq-2}$_1$, the fact
that  $\nabla \Pi(\wtau{k}) = \varsigma (\nabla \wtau{k})/
(1+\wtau{k})^{\varsigma+1}$, and the second one from  the convex
analysis inequality $\int_{\Omega}\Pi(\wtau{k})(\wtau{k}-\wtau{k-1})
\dd x \geq
\int_{\Omega}(\widehat{\Pi}(\wtau{k})-\widehat{\Pi}(\wtau{k-1})) \dd
x $ and from the fact that, due to assumption \eqref{datoteta}, we have
$$\int_{\Omega}\widehat{\Pi}(\wtau{0}) \dd x \leq C\left(\|\wtau{0}\|_{L^1(\Omega)}+1\right)\leq C.$$
Taking into account that $0 \leq \Pi(\wtau{k}(x)) \leq 1 $ for
almost all $x \in \Omega$ and all $k=0, \ldots,K_\tau$, and relying
on \eqref{datoteta} and on \eqref{converg-interp-g}, we conclude
that
\begin{equation}
\label{est-5.1} \sum_{k=1}^{K_\tau} \tau \int_{\Omega} \frac{|\nabla
\wtau{k}|^2}{(1+\wtau{k})^{\varsigma+1}} \dd x \leq C
\sum_{k=1}^{K_\tau} \tau \| \dtau{k}{\chi}\|_{L^2(\Omega)} \|
\Theta(\wtau{k})\|_{L^2(\Omega)} + C'.
\end{equation}
 Now, we argue in the very
same way as in \cite[Proof of Prop.~4.2]{roubicek-SIAM}. Combining
the H\"older and Gagliardo-Nirenberg inequalities
(cf.~\eqref{gn-ineq}) with the previously proved estimate
\eqref{aprio6} and with \eqref{est-5.1}, we see that (cf.
\cite[Formula~(4.35)]{roubicek-SIAM})
\begin{equation}
\label{crucial-Rou} \forall \,1\leq r<\frac{d+2}{d+1} \,  \exists\,
C_r,\,C_r'>0 \, \forall\, \tau>0 \, : \, \sum_{k=1}^{K_\tau}\tau
\|\nabla \wtau{k} \|_{L^r(\Omega)}^r  \leq C_r\sum_{k=1}^{K_\tau}
\tau \| \dtau{k}{\chi} \|_{L^2(\Omega)}
\|\Theta(\wtau{k})\|_{L^2(\Omega)} + C_r',
\end{equation}
where the restriction on the index $r$ in fact derives from the
application of the Gagliardo-Nirenberg inequality \eqref{gn-ineq}. Next,  for a
sufficiently small $\epsilon>0$ such that
$\sigma$  from \eqref{hyp-heat} fulfills $\sigma>(2+\epsilon)d/(d+2)$, there holds
\begin{equation}
\label{gagliarda}
\begin{aligned}
 \|\Theta(\wtau{k})\|_{L^{2+\epsilon}(\Omega)}^{2+\epsilon} \leq C (\|
\wtau{k}\|_{L^{(2+\epsilon)/\sigma}(\Omega)}^{(2+\epsilon)/\sigma}
+1) & \leq C \|\wtau{k}\|_{L^{1}(\Omega)}^{(2+\epsilon)(1-\theta)/\sigma} \|\wtau{k}\|_{W^{1,r}(\Omega)}^{(2+\epsilon)\theta/\sigma} +C'\\ &
\leq C S^{(2+\epsilon)(1-\theta)/\sigma} (S + \|\nabla\wtau{k}
\|_{L^r(\Omega)})^{(2+\epsilon)\theta/\sigma} +C'\\ &  \leq \varrho
\|\nabla \wtau{k} \|_{L^r(\Omega)}^r + C_\varrho \qquad \text{if
$\frac{d(d+2)}{d^2+d+2}<r<\frac{d+2}{d+1}$}
\end{aligned}
\end{equation}
(where we have omitted to indicate the dependence of the constants on $\epsilon$ and $\sigma$).
 The first inequality follows from
\eqref{used-later},
the second one  from the Gagliardo-Nirenberg inequality
\eqref{gn-ineq} with $s=2/\sigma$ and $q=1$: in fact the
constraints
\begin{equation}
\label{epsi}   \frac{\sigma}{2+\epsilon}
>\frac d{d+2}, \ \ \frac{d(d+2)}{d^2+d+2}<r<\frac{d+2}{d+1} \ \
\text{ imply } \  \ \exists\, \theta \in (0,1)\,: \
\frac{\sigma}{2+\epsilon} = \theta\left( \frac1r-\frac1d\right) +
1-\theta,
\end{equation}
in accord with formula \eqref{gn-ineq}. Finally, the
 last inequality in \eqref{gagliarda} is due  to  the
Young inequality, with $C_\varrho$ depending on the constant
$\varrho>0$ to be suitably specified,
under the additional condition that $r<(d+2)/(d+1)$ fulfills
\[
\frac{(2+\epsilon)\theta}{\sigma}<r.
\]
 Combining \eqref{gagliarda}
with \eqref{crucial-Rou}, we immediately obtain
\begin{equation}
\label{stp-intermedio-w}
\sum_{k=1}^{K_\tau}\tau \|\nabla \wtau{k} \|_{L^r(\Omega)}^r  \leq
\frac{C_r}2\sum_{k=1}^{K_\tau} \tau \| \dtau{k}{\chi}
\|_{L^2(\Omega)}^2 + C\varrho \sum_{k=1}^{K_\tau}\tau \|\nabla
\wtau{k} \|_{L^r(\Omega)}^r+ C'.
\end{equation}
Hence, we choose $\varrho>0$ in such a way as to  absorb the second
term on the right-hand side into the left-hand side. Therefore,
 on account of \eqref{aprio4}
$\sup_{\tau} \sum_{k=1}^{K_\tau}\tau \|\nabla \wtau{k}
\|_{L^r(\Omega)} \leq C$, which yields \eqref{aprio7} via
\eqref{aprio6} and the Poincar\'e inequality. Finally, estimate
\eqref{aprio8} ensues from \eqref{aprio7} and \eqref{gagliarda}.

 Observe that, when performing this estimate on the
 semi-implicit equation \eqref{eq-discr-w-irr},
we will obtain on the r.h.s.\ of \eqref{stp-intermedio-w}
the term $\sum_{k=1}^{K_\tau}\tau \|\nabla
\wtau{k-1} \|_{L^r(\Omega)}^r \leq \tau \|\nabla
\w_{0\tau} \|_{L^r(\Omega)}^r +\sum_{k=1}^{K_\tau} \tau \|\nabla
\wtau{k} \|_{L^r(\Omega)}^r
$, and we can estimate $\tau \|\nabla
\w_{0\tau} \|_{L^r(\Omega)}^r$ thanks to \eqref{approx-w0}.
%%%%%%
%%%%%

\noindent
\textbf{Fourth a priori estimate for $\mu \in \{0,1\}$, $\rho=0$:} \emph{comparison in \eqref{eq1d}.}
It follows from estimates \eqref{aprio1}, \eqref{aprio1bis},
\eqref{aprio4},
 and from the  regularity result
\eqref{reg-pavel-b},  that
\[
\sup_{\tau} \|\opj{(a(\pwc \chi\tau) +\delta)}{{\partial_t\pwl
{\uu}{\tau}}} \|_{L^2 (0,T;\Ha)}, \ \sup_{\tau} \|\oph{b(\pwc
\chi\tau)}{\pwc {\uu}{\tau}} \|_{L^\infty (0,T;\Ha)} \leq C.
\]
Thus,    for $\rho=0$  estimate \eqref{aprio2} follows from a comparison in
\eqref{eq-discr-u}.

 The same argument carries over to
\eqref{eq-discr-u-irr}  and to \eqref{eq-discr-u-irr-iso}.

\noindent
\textbf{Fifth a priori estimate for $\mu \in \{0,1\}$, $\rho=0$:} \emph{comparison in \eqref{eq0d}.}
In view of estimates  and of \eqref{converg-interp-g},
 a comparison argument  in
\eqref{eq-discr-w} yields estimate \eqref{aprio5}.  The same for \eqref{eq-discr-w-irr}.

\noindent
\textbf{Sixth a priori estimate for $\mu=0$, $\rho \in \R$:}
\emph{we test \eqref{eq2d} by $\opchi(\chi)+\beta(\chi)$ and
integrate in time.} We test \eqref{eq-discr-chi} by $\tau
\left(\opchi(\chitau{k}) + \xitau{k}\right)$. Arguing as for
\eqref{to-be-quoted-later} via convexity inequalities and referring
to notation \eqref{not-h} for the symbol $h_{\tau}^{k-1}$, we get
\[
\begin{aligned}
\Phi(\chitau{k}) &  + \int_{\Omega}\widehat{\beta}(\chitau{k}) \dd x
+ \tau \| \opchi(\chitau{k}) + \xitau{k}\|_{L^2(\Omega)}^2\\ & \leq
\Phi(\chitau{k-1}) + \int_{\Omega}\widehat{\beta}(\chitau{k-1}) \dd
x + \tau \| \gamma(\chitau{k}) +h_{\tau}^{k-1} \|_{L^2(\Omega)} \|
\opchi(\chitau{k}) + \xitau{k}\|_{L^2(\Omega)} \\ & \leq
\Phi(\chitau{k-1}) + \int_{\Omega}\widehat{\beta}(\chitau{k-1}) \dd
x +  \frac1{2}\tau \| \opchi(\chitau{k}) +
\xitau{k}\|_{L^2(\Omega)}^2 + C \tau ( \|h_{\tau}^{k-1}
\|_{L^2(\Omega)}^2+1)
\end{aligned}
\]
where the last inequality follows from $0 \leq \chitau{k} \leq 1$
a.e.\ in $\Omega$, and the fact that $\gamma$ is Lipschitz
continuous on $[0,1]$. Summing up the above inequality for
$k=1,\ldots,K_\tau$ and taking into account a priori estimates
\eqref{aprio1bis} and \eqref{aprio3}, we conclude
\begin{equation}
\label{intermediate-a-prio} \sup_{\tau>0} \| \opchi(\pwc \chi{\tau})
+ \pwc \xi{\tau}\|_{L^2(0,T;L^2(\Omega))} \leq C.
\end{equation}
From this  bound,  exploiting the monotonicity of $\beta$ and
applying \cite[Prop.~2.17]{brezis}, we deduce \eqref{aprio9}. In
view of \eqref{sav-reg}, from the estimate for $\opchi(\pwc
\chi{\tau})$ we deduce \eqref{aprio9bis}.
%%%%%%%%%%%%%%%%%%
%%%%%%%%%%%%%%%

\noindent
\textbf{Seventh a priori estimate for $\mu=1$, $b{''} \equiv 0$, and  in the isothermal case:}
\emph{we test \eqref{eq2d} by $\partial_t
(\opchi(\chi)+\beta(\chi))$.} Since  $b{''} \equiv 0$, we have that
$b'(\chitau{k-1}) \equiv b$ on $\Omega$.
 We test
\eqref{eq-discr-chi-irr-iso} %(with $\Theta(\wtau{k-1})$ on the
%right-hand side  replaced by $\Thetatau{k-1}$),
by
$\tau\dtau{k}{\opchi(\chi)
+\beta_\tau(\chi)}=(\opchi(\chitau{k})+\beta_\tau(\chitau{k})
-(\opchi(\chitau{k-1})+\beta_\tau(\chitau{k-1}) $. We observe that
\begin{equation}
\label{est-8.1} I_6:=\int_{\Omega}
(\chitau{k}-\chitau{k-1})(\opchi(\chitau{k})-\opchi(\chitau{k-1}))
\dd x =  \int_{\Omega} (\nabla \chitau{k} -\nabla \chitau{k-1})\cdot
(\mathbf{d}(x, \nabla \chitau{k})  - \mathbf{d}(x, \nabla
\chitau{k-1})) \dd x \geq  0,
\end{equation}
and, if \eqref{pcoercive} holds, we have in addition
\begin{equation}
\label{est-8.2} I_6 \geq c_7\int_{\Omega} |\nabla(\chitau{k}
-\chitau{k-1})|^p \dd x = c_7 \tau \int_{\Omega}\tau^{p-1}
|\nabla\dtau{k}{\chi}|^p \dd x
\end{equation}
Moreover,
 by monotonicity we have
\begin{equation}
\label{est-8.3}
\quad \int_{\Omega} (\chitau{k}-\chitau{k-1})(\beta_\tau(\chitau{k})
-\beta_\tau(\chitau{k-1}) ) \dd x \geq 0, \quad
 \tau\int_{\Omega}
\zetau{k} (\beta_\tau(\chitau{k}) -\beta_\tau(\chitau{k-1}) ) \dd
x\geq 0.
\end{equation}
Furthermore, always by monotonicity, we get
\[
\tau\int_{\Omega}
\zetau{k}(\opchi(\chitau{k})-\opchi(\chitau{k-1})) \dd x \geq 0.
\]
Here, in order to perform a rigorous argument we should approximate
the graph $\alpha$ with a Lipschitz continuous function $\alpha_\e$
as we have done in Lemma \ref{lemma:ex-discr}. However we prefer not
to do it now in order not to overburden the calculations. Combining
\eqref{est-8.1} and \eqref{est-8.3} with the inequalities
\begin{align}\no
& \int_{\Omega}
(\opchi(\chitau{k})+\beta_{\tau}(\chitau{k}))(\tau\dtau{k}{\opchi(\chi)
+\beta_\tau(\chi)})
 \dd x \\
 \no
 &\geq
\frac12\int_{\Omega}|\opchi(\chitau{k})+\beta_{\tau}(\chitau{k})|^2
\dd x -
\frac12\int_{\Omega}|\opchi(\chitau{k-1})+\beta_{\tau}(\chitau{k-1})|^2
\dd x,
\end{align}
and summing over the index $k=1, \ldots, K_\tau$,  we get  (cf. \eqref{not-h} for the notation $h_\tau^{k-1}$,
with $\Theta(\wtau{k-1})$ replaced by $\Thetatau{k-1}$)
\begin{align}
\no
\frac12\int_{\Omega}|\opchi(\chitau{K_\tau})+\beta_{\tau}(\chitau{K_\tau})|^2
\dd x \leq &
\frac12\int_{\Omega}|\opchi(\chitau{0})+\beta_{\tau}(\chitau{0})|^2
\dd x \\
\label{est-8.4}
&+ \ddd{\sum_{k=1}^{K_\tau}\tau  \int_{\Omega}(h_\tau^{k-1}
-\gamma(\chitau{k}))\dtau{k}{\opchi(\chi) +\beta_\tau(\chi)}\dd
x\,.}{$\doteq I_7$}{}
\end{align}
Clearly, the first term on the right-hand side of \eqref{est-8.4} is
bounded thanks to \eqref{additional-chizero}.
 Applying the discrete integration by part formula
\eqref{discr-by-part}, we find
\begin{equation}
\label{est-8.5}
\begin{aligned}
I_7 =(\opchi(\chitau{K_\tau}) +\beta_\tau(\chitau{K_\tau})) &
(h_\tau^{K_\tau-1}+\gamma(\chitau{K_\tau})) -(\opchi(\chitau{0})
+\beta_\tau(\chitau{0}))(h_\tau^{0} +\gamma(\chitau{1})) \\ &
-\sum_{k=2}^{K_\tau}\tau (\opchi(\chitau{k-1})
+\beta_\tau(\chitau{k-1})) (\dtau{k-1}{h} + \dtau{k}{\gamma(\chi)})\,.
\end{aligned}
\end{equation}
Now, by the Lipschitz continuity of $\gamma$ on $[0,1]$,
 we have $ \|
\dtau{k}{\gamma(\chi)}\|_{L^2(\Omega)} \leq C \|\dtau{k}{\chi}
\|_{L^2(\Omega)}$. Furthermore, we find
\[
\begin{aligned}
\| \dtau{k-1}{h}\|_{L^2(\Omega)}  & \leq \|\dtau{k-1}{\Theta^*}
\|_{L^2(\Omega)}   +  C \tau |b|\||\eps( \utau{k-1})|^2-|\eps(
\utau{k-2})|^2 \|_{L^2(\Omega)}
\\ &
 \leq
\|\dtau{k-1}{\Theta^*} \|_{L^2(\Omega)}  + C'\| \eps( \utau{k-1}) +
\eps( \utau{k-2})\|_{L^4(\Omega)}^2 \|\dtau{k-1}{\uu}
\|_{L^4(\Omega)}^2 \doteq j_{\tau}^{k-1},
\end{aligned}
\]
where the second inequality  also follows from the fact that
 $b'$ is constant.
 Collecting
\eqref{est-8.4}--\eqref{est-8.5} and the above inequalities, we thus
infer
\[
\frac12\int_{\Omega}|\opchi(\chitau{K_\tau})+\beta_{\tau}(\chitau{K_\tau})|^2
\dd x \leq C+\sum_{k=1}^{K_\tau} \tau(\|\dtau{k+1}{\chi}
\|_{L^2(\Omega)}+j_{\tau}^{k}) \|
 \opchi(\chitau{k})
+\beta_\tau(\chitau{k}) \|_{L^2(\Omega)},
\]
where we set $\dtau{K_\tau+1}{\chi}=0$. Then, estimate
\eqref{aprio11} ensues via the discrete Gronwall Lemma, taking into
account that
\begin{equation}
\label{L1data} \sum_{k=1}^{K_\tau} \tau (\|\dtau{k}{\chi}
\|_{L^2(\Omega)}+j_{\tau}^{k}) \leq C
\end{equation}
in view of \eqref{converg-interp-Theta-d}, \eqref{aprio1}, and
\eqref{aprio4}.
 Ultimately, \eqref{aprio11bis} follows from \eqref{aprio11} and
 the regularity result~\eqref{sav-reg}.

 \noindent
\textbf{Eighth a priori estimate for $\mu=1$ and in the isothermal case:}
\emph{comparison in \eqref{eq2d}.} From a comparison argument in
\eqref{eq-discr-chi-irr-iso}, we conclude that
$\dtau{k}{\chi}+\zetau{k}$ is estimated in
$L^\infty(0,T;L^2(\Omega))$. Then, \eqref{aprio12} and
\eqref{aprio12bis} follow from the fact that $\int_{\Omega}
\dtau{k}{\chi}\zetau{k}\dd x \geq 0$.
\fin
%%%%%%%%%%%%%%%%%%%%%%%%%%%%
\paragraph{Proof of Proposition \ref{prop:aprio-2}.}
The a priori bounds \eqref{aprio3}--\eqref{aprio6} follow from the calculations developed
for the
\emph{First a priori estimate}, which also yields \eqref{aprio0} and \eqref{aprio-1}  for a
constant independent of $M>0$.
The \
\textsc{Boccardo\&Gallou\"{e}t}-type \emph{Third estimate} is replaced by the following

\noindent
\textbf{Ninth a priori estimate for $\mu=0$, $\rho\neq 0$:}  \emph{test~\eqref{eq0d} by w.}
We  test \eqref{eq-discr-w-TRUNC} by $\tau\wtau{k}$.
 Summing
over $k=1,\ldots,K_\tau$ and recalling \eqref{def-trunc-ope}--\eqref{def-theta-m}   we obtain
\begin{equation}
\label{est-9.1}
\begin{aligned}
&
\frac12\io |\wtau{K_\tau}|^2\dd x +c_{10}\sum_{k=1}^{K_\tau} \tau\io|\nabla \wtau{k}|^2 \dd x
\\ &  \leq
\int_{\Omega}|\wtau{0}|^2 \dd x + \sum_{k=1}^{K_\tau}\left(
\tau\io\gtau{k}\wtau{k}\dd x  + \tau \io \left(|\dtau{k}\chi|+ |\rho |  |\dive(\dtau{k}\uu)|\right)|\Theta_M(\wtau{k})||\wtau{k}|\dd x\right)
\\
&
\leq \int_{\Omega}|\w_0|^2 \dd x + \nu \sum_{k=1}^{K_\tau} \tau \| \wtau{k} \|_{H^1(\Omega)}^2
+
 C_\nu \sum_{k=1}^{K_\tau} \tau \left(\| \gtau{k} \|_{H^1(\Omega)'}^2 +\|\dtau{k}\chi\|_{L^2(\Omega)}^2
+\rho^2\|\dive(\dtau{k}\uu)\|^2_{L^2(\Omega)}\right),
\end{aligned}
\end{equation}
for a suitably small constant $\nu>0$,  where we have used that
the terms
$\Theta_M (\wtau{k})$ are uniformly bounded (by a constant depending on $M>0$).
Hence, estimate  \eqref{aprio6-rho} follows from
using that $\w_0 \in L^2(\Omega)$, taking into account the previously proved bounds
\eqref{aprio4} and \eqref{aprio0}, and applying the discrete Gronwall Lemma.
Estimate \eqref{aprio5-rho} then ensues from a comparison in \eqref{eq-discr-w-TRUNC},
in view of the previously proved estimates.

Finally, relying on \eqref{aprio6-rho}, we are able to perform the analogue of the \emph{Second a priori estimate}
 on the momentum equation in the case $\rho \neq 0$ as well, as the following calculations show.
%%%%%%%%%%%%%%%%%%%%%%%%%

\noindent
\textbf{Tenth a priori estimate for $\mu=0$, $\rho\neq 0$:}
\emph{test~\eqref{eq1d} by $- \dive (\eps(\uu_t))$ and integrate
in time.}
  We test
\eqref{eq-discr-u-TRUNC} by $ -{\dive} (\eps(\utau{k}-\utau{k-1})) $.
 Every term can be dealt with  like in the \emph{Second estimate},
in addition
we  need to estimate the term
\[
\left|\tau \rho \int_\Omega    \nabla(\Theta_M(\wtau{k})) \dive(\eps(\dtau{k}\uu) )\dd x\right|
\leq C_\nu\rho\tau \|\nabla(\Theta_M(\wtau{k}))  \|_{L^2(\Omega)}^2+\nu\tau\|\dtau{k}\uu\|_{H^2(\Omega)}^2\,.
\]
 Choosing $\nu$ sufficiently small
 in such a  way as to absorb $\|\dtau{k}\uu\|_{H^2(\Omega)}^2$ into
\eqref{est-3.-1},
and estimating $\|\nabla(\Theta_M(\wtau{k}))  \|_{L^2(\Omega)}^2$  via \eqref{aprio6-rho}
(observe that $\Theta_M$ is Lipschitz continuous),   we re-obtain \eqref{aprio1}--\eqref{aprio1bis},
for  a constant \emph{depending} on $M$.
 Moreover, estimate  \eqref{aprio2} ensues from a comparison in \eqref{eq-discr-u-TRUNC}.

Finally, estimates \eqref{aprio9} and \eqref{aprio9bis} can be obtained by repeating on equation
\eqref{eq-discr-chi-TRUNC} the very same calculations developed for the \emph{Sixth estimate}: again, we get
bounds depending on the truncation parameter $M$.
\fin
%%%%
%%%
\begin{remark}
\upshape
\label{rmk:est-indep-delta}
A close perusal of the proof of Proposition \ref{prop:aprio}, and in particular
 of the calculations performed in the Second and Fourth a priori estimates,
  reveals that in fact estimates \eqref{aprio3}--\eqref{aprio5} hold for constants \emph{independent} of
  $\delta>0$  in both cases $\mu=0$ and $\mu=1$.  This will play a key role in the proof of Theorem \ref{teor5}.
\end{remark}

 We conclude this section by mentioning in advance,
for the reader's convenience,
that the relevant estimates
\begin{compactenum}
\item for the proof of Thm.\ \ref{teor1} are the \emph{First, Second, Third, Fourth, Fifth}, and
 \emph{Sixth a priori estimate};
 \item for the proof of Thm.\ \ref{teor1bis} are the \emph{First,  Fourth, Sixth,  Ninth}, and
 \emph{Tenth a priori estimate};
 \item for the proof of Thm.\ \ref{teor3} are the \emph{First, Second, Third, Fourth}, and
 \emph{Fifth a priori estimate};
 \item  for the proof of Thm.\ \ref{teor4} are the \emph{First, Second,  Fourth, Seventh}, and the
 \emph{Eighth a priori estimate}.
 \end{compactenum}
\section{Proofs of Theorems \ref{teor1},  \ref{teor1bis},  and  \ref{teor2}}
\label{sec:proof2}
\subsection{Proof of Theorem \ref{teor1}}
\label{ss:4.1}
%\noindent \emph{Step $0$: approximate equations.}
 Preliminarily,
we rewrite
equations \eqref{eq-discr-w}--\eqref{eq-discr-chi} in terms of the
interpolants
$\pwc\w\tau,$  $\upwc\w\tau,$   $\pwc\uu\tau,$ $\upwc\uu\tau,$
$\pwl\uu\tau,$ $\pwwll\uu\tau,$ $\pwc\chi\tau,$
$\upwc\chi\tau,$  $\pwl\chi\tau,$
  and $\pwc \xi\tau$,
 namely
\begin{align}
&
\begin{aligned}
\label{eq-w-interp}
& -\int_0^{\pwc{\mathsf{t}}{\tau}(t)}\io
\pwc\w\tau \varphi_t \dd x \dd s   +
\int_0^{\pwc{\mathsf{t}}{\tau}(t)}\io
\partial_t
\pwl {\chi}{\tau} \Theta(\pwc \w{\tau}) \varphi \dd x \dd s  +\int_0^{\pwc{\mathsf{t}}{\tau}(t)}\io
 K(  \upwc \w{\tau} ) \nabla\pwc
\w{\tau} \nabla \varphi \dd x \dd s
%+\rho\int_0^{\pwc{\mathsf{t}}{\tau}(t)}\io
%\dive(\partial_t
%\pwl {\uu}{\tau})\Theta(\pwc \w{\tau}) \varphi \dd x \dd s
\\ &  \quad  =
\int_0^{\pwc{\mathsf{t}}{\tau}(t)}\io \pwc g{\tau}\varphi \dd x \dd
s   -\io \pwc\w\tau(t) \varphi(t) \dd x  +\io w_0 \varphi(0) \dd x \qquad \text{ for all }\varphi \in \mathscr{F}, \ t \in [0,T], %\text{ and
%for all } t \in [0,T],
\end{aligned}
\\
& \label{eq-u-interp} \partial_t\pwwll {\uu}{\tau}(t)+
\opj{(a(\pwc\chi\tau (t)) +\delta)}{\partial_t \pwl \uu\tau(t)} +
\oph{b(\pwc\chi\tau (t))}{\pwc \uu{\tau}(t)}  %+ \ciro(\Theta(\pwc\w{\tau})
 = \pwc
{\mathbf{f}}{\tau}(t) \quad \aein\, \Omega,  \ \foraa\, t \in (0,T),
\\
& \label{eq-chi-interp}
\begin{aligned}
\partial_t \pwl
{\chi}{\tau}(t) + \opchi(\pwc\chi\tau(t)) +
\pwc\xi\tau(t)+\gamma(\pwc\chi\tau(t)) &= -
b'(\upwc\chi\tau(t))\frac{\eps(\upwc\uu\tau(t))\mathrm{R}_e
\eps(\upwc\uu\tau(t))}2 + \Theta(\pwc\w\tau(t))\\
&\qquad \qquad \aein\, \Omega, \ \foraa\, t \in (0,T),
  \end{aligned}
\end{align}
where for later use in \eqref{eq-w-interp} we have already
integrated by parts in time, and $\mathscr{F}$ is as in
\eqref{eq0d}.
 In what follows  we will take the limit of
\eqref{eq-w-interp}--\eqref{eq-chi-interp} as $\tau \down 0$ by
means of compactness arguments, combined with techniques from
maximal monotone operator %\medskip
 theory.

%\paragraph{Proof of Theorem \ref{teor1}.}
\noindent
\emph{Step $1$: compactness.}
 First of all, we observe that
 due to estimates \eqref{aprio1} and \eqref{aprio2}, there holds
 \begin{equation}
\label{e:stability-estimate}
\begin{array}{ll}
 &  \| \pwl \uu\tau - \pwc \uu\tau\|_{L^\infty (0,T;\boY)} \leq \tau^{1/2}  \| \partial_t \pwl
 {\uu}\tau \|_{L^2 (0,T;\boY)} \leq S \tau^{1/2},
 \\
&  \| \pwwll \uu\tau - \partial_t\pwl {\uu}\tau\|_{L^\infty
(0,T;L^2(\Omega;\R^d))}\leq \tau^{1/2}  \| \partial_t \pwwll
 {\uu}\tau \|_{L^2 (0,T;L^2(\Omega;\R^d))} \leq S \tau^{1/2}.
\end{array}
 \end{equation}
Therefore, \eqref{aprio1}--\eqref{aprio2}, joint with
\eqref{e:stability-estimate} and well-known weak and strong
compactness results (cf.\ \cite{simon}), yield that there exist a
vanishing sequence of time-steps $(\tau_k)$ and
 $\uu$ as in
\eqref{reg-u} such that as $k \to \infty$
\begin{equation}
\label{e:convergences-u}
\begin{array}{lll}
& \pwl \uu{\tau_k} \weaksto \uu  &  \text{ in $H^1(0,T;\boY) \cap
W^{1,\infty}(0,T;\boZ)$,}
\\
&
 \pwl \uu{\tau_k},\, \pwc \uu{\tau_k},\, \upwc \uu{\tau_k} \to \uu &
 \text{ in $L^\infty(0,T;H^{2-\epsilon}(\Omega;\R^d)) $ for all $\epsilon \in (0,1]$,}
\\
& \partial_t \pwwll {\uu}{\tau_k} \weakto \partial_{tt}\uu &
 \text{ in $L^2(0,T;L^{2}(\Omega;\R^d)) $,}
 \\
 &\partial_t \pwl{\uu}{\tau_k}\to \partial_t \uu &\text{ in $L^2(0,T;H^1(\Omega;\R^d)) $.}
 \end{array}
 \end{equation}
A stability estimate analogous to the first of
\eqref{e:stability-estimate}, the a priori bounds
\eqref{aprio3}, \eqref{aprio4} and \eqref{aprio9} and
 the previously
mentioned compactness arguments, imply that there exist $\chi
\in L^\infty (0,T;W^{1,p}(\Omega))\cap H^1 (0,T;L^2(\Omega))$ and
$\xi, \, \lambda \in L^2 (0,T;L^2(\Omega))$ such that, along a not
relabeled subsequence there hold as $k \to \infty$
\begin{equation}
\label{e:converg-chi}
\begin{array}{lll}
&  \pwl \chi{\tau_k} \weaksto \chi &  \text{ in
$L^\infty(0,T;W^{1,p}(\Omega)) \cap H^1 (0,T;L^2(\Omega))$,}
\\
& \pwl \chi{\tau_k}, \, \pwc \chi{\tau_k},\, \upwc \chi{\tau_k}  \to
\chi & \text{ in $L^\infty(0,T;W^{1-\epsilon,p}(\Omega)) $ for all
$\epsilon \in (0,1]$,}
\end{array}
\end{equation}
as well as
\begin{align}
\label{e:converg-xi} & \pwc \xi{\tau_k} \weakto \xi \quad \text{in
$L^2(0,T;L^2(\Omega))$,}
\\
 \label{e:converg-opchi} & \opchi (\pwc \chi{\tau_k}) \weakto
\lambda \quad \text{in $L^2(0,T;L^2(\Omega))$.}
\end{align}
Furthermore, if in addition $\phi$ complies with \eqref{pcoercive}
and \eqref{Lipschitz-x}, then, due to \eqref{aprio9bis}  we also
have the enhanced regularity \eqref{furth-reg-chi}, and the  strong
convergence
\begin{equation}
\label{e:converg-chi-better} \pwl \chi{\tau_k}, \, \pwc \chi{\tau_k},
\, \upwc \chi{\tau_k} \to \chi \qquad  \text{ in
$L^s(0,T;W^{1,p}(\Omega)) $ for all $1\leq s<\infty$.}
\end{equation}

As for $(\pwc \w \tau)_\tau$, estimates
\eqref{aprio6}--\eqref{aprio5} and a
 generalization of the Aubin-Lions theorem  to the case of
 time derivatives as measures (see e.g.\ \cite[Chap.\ 7, Cor.~7.9]{roub-NPDE})
yield that there exists $\w$ as in \eqref{reg-ental} such that, up
to the extraction of a further subsequence, as $k \to \infty$ there
hold
\begin{equation}
\label{up-to-extraction}
\begin{array}{lll}
& \pwc \w {\tau_k}, \, \upwc \w {\tau_k}  \weakto \w & \text{ in
$L^r(0,T;W^{1,r}(\Omega)),$}
\\
& \pwc \w {\tau_k}, \, \upwc \w {\tau_k}   \to \w & \text{ in
$L^r(0,T;W^{1-\epsilon,r}(\Omega))\cap L^s(0,T;L^1(\Omega))$ for all
$\epsilon \in (0,1]$ and  $1\leq s<\infty$.}
\end{array}
\end{equation}
  Furthermore,
 by an infinite-dimensional version of  Helly's selection principle (cf.\ e.g.\ \cite{barbu-precupanu86}) we  have
 $\pwc \w {\tau_k} (t)
\weakto \w(t) $ in $W^{1,r'}(\Omega)^*$ for all $t \in [0,T]$.
Taking into account the a priori bound \eqref{aprio6} of $(\pwc \w
{\tau_k} (t))_{\tau_k}$ in $L^1 (\Omega)$, we then conclude  that
\begin{equation}
\label{further-converg-w} \pwc \w {\tau_k}(t) \weakto \w(t) \qquad
\text{in $\mathrm{M}(\Omega)$ for all $t \in [0,T]$.}
\end{equation}
Clearly,  the second of \eqref{up-to-extraction} implies that $\pwc
\w {\tau_k} \to \w $ a.e.\ in $\Omega \times (0,T)$, hence by the
continuity of $\Theta$ we also have $\Theta(\pwc \w {\tau_k}) \to
\Theta(\w)$ a.e.\ in $\Omega \times (0,T)$. Moreover, estimate
\eqref{aprio8} guarantees that $(\Theta(\pwc \w {\tau_k}))_{\tau_k}$
is uniformly integrable in $L^2(0,T;L^2(\Omega))$. Therefore, thanks
e.g.\  to \cite[Thm. III.3.6]{edwards}, we conclude that
\begin{equation}
\label{e:convej13} \Theta(\pwc{\w}{\tau_k})\to\Theta(\w)\qquad
\text{ in $L^{2} (0,T;L^2(\Omega))$}. \medskip
\end{equation}

\noindent
 \emph{Step $2$: passage to the limit in \eqref{eq-w-interp}--\eqref{eq-chi-interp}.}
It follows from \eqref{e:converg-chi} and \eqref{e:convej13} that
\begin{equation}
\label{conv-useful-1}
\partial_t \pwl {\chi}{\tau_k} \Theta(\pwc \w{\tau_k})  \weakto
\chi_t \Theta(\w) \qquad \text{in $L^1(0,T;L^1(\Omega))$.}
\end{equation}
Moreover, \eqref{up-to-extraction} and \eqref{hyp-K} easily yield
that $  (K(\upwc \w{\tau_k}) \nabla\pwc \w{\tau_k})_{\tau_k}$ is
bounded in $L^r (0,T;L^r(\Omega))$. Since
\begin{equation}
\label{conv-useful-2} K( \upwc \w{\tau_k})
\to K(w) \qquad \text{in $L^s(0,T;L^s(\Omega))$ for every $s\in [1, \infty)$}
\end{equation}
taking into account \eqref{up-to-extraction},
 we can pass to the limit as $k\to \infty$ in the third integral in the first line of
\eqref{eq-w-interp}.
Convergences \eqref{up-to-extraction}--\eqref{further-converg-w},
\eqref{conv-useful-1}--\eqref{conv-useful-2}, as well as
\eqref{converg-interp-g} for $(\pwc g{\tau_k})_{\tau_k}$, allow us to take
the limit of \eqref{eq-w-interp} as $\tau_k \downarrow 0$. Hence we
conclude that $(\w,\chi)$ comply with \eqref{eq0d}.

As for the   passage to the limit in \eqref{eq-u-interp}, we observe
that \eqref{e:converg-chi}, the compact embedding \eqref{hynek}, and
\eqref{data-a} imply that $a(\pwc \chi{\tau_k}) \to a(\chi)$ and
$b(\pwc \chi{\tau_k}) \to b(\chi)$ in $L^\infty (0,T;L^\infty
(\Omega))$. Therefore, by the \eqref{e:convergences-u} we
immediately conclude that
$
 \opj{(a(\pwc\chi{\tau_k} (t))
+\delta)}{\partial_t \pwl \uu{\tau_k}(t)} \to \opj{(a(\chi(t))
+\delta)}{\partial_t \uu (t)}$ and $\oph{b(\pwc\chi{\tau_k}
(t))}{\pwc \uu{\tau_k}(t)} \to \oph{b(\chi (t))}{\uu(t)} $
 in $L^2(\Omega)$ for almost all $t \in (0,T)$. Also relying on the
 third of
\eqref{e:convergences-u} and on \eqref{converg-interp-f} for $(\pwc
{\mathbf{f}}{\tau_k})_k$, we find that $(\uu,\chi)$ fulfill
\eqref{eq1d}.

Finally, combining \eqref{e:converg-chi} with
\eqref{e:converg-xi}--\eqref{e:converg-opchi} and taking into
account the strong-weak closedness of the graphs of the operators
$\partial\beta, \ \opchi: L^2(\Omega) \rightrightarrows
L^2(\Omega)$,  we immediately conclude that
\eqref{e:converg-xi}--\eqref{e:converg-opchi}  hold with
\[
 \xi(x,t) \in \beta(\chi(x,t)) \text{ and }
 \lambda(x,t)=\mathcal{B}(\chi(x,t)) \qquad \foraa\, (x,t) \in
 \Omega \times (0,T).
 \]
 We also observe that \eqref{e:convergences-u},
 \eqref{e:converg-chi}, and \eqref{data-a} yield
\begin{equation}
\label{later-on}
 b'(\upwc\chi{\tau_k}(t))\frac{\eps(\upwc\uu{\tau_k}(t))\mathrm{R}_e
\eps(\upwc\uu{\tau_k}(t))}2 \to
b'(\chi(t))\frac{\eps(\uu(t))\mathrm{R}_e \eps(\uu(t))}2 \qquad
\text{in } L^2(0,T;L^2(\Omega)).
\end{equation}
 Therefore, relying on
\eqref{e:converg-chi} and the Lipschitz continuity
\eqref{data-gamma} of $\gamma$ on bounded intervals, we easily pass
to the limit in \eqref{eq-chi-interp} and infer that $(\chi,\xi)$
fulfill
\medskip \eqref{xi-qualification}--\eqref{eq-with-xi}.

\noindent
 \emph{Step $3$: proof of  total energy equality
 \eqref{total-energy-eq}.} We choose $\varphi\equiv 1$ in
 \eqref{eq0d}, test \eqref{eq1d} by $\uu_t$ and integrate on time,
 and test \eqref{eq-with-xi} by $\chi_t$ and integrate on time, then
 add the resulting relations. Some terms cancel out, and
 to conclude \eqref{total-energy-eq}
 we use the
 chain rule formula \eqref{form-derivative} for the functional $\mathscr{E}(\chi):= \Phi(\chi)+\int_\Omega W(\chi) \dd
 x$,
 as well as the fact that
 \[
 \begin{aligned}
\frac{\dd }{\dd t} \left(  \frac12 \bilh {b(\chi)}{\uu}{\uu} \right)
 & =\frac12 \bilh {b'(\chi)\chi_t}{\uu}{\uu} + \bilh
{b(\chi)}{\uu}{\uu_t}
\\
&= \frac12 \int_\Omega b'(\chi)\chi_t\frac{\eps(\uu)\mathrm{R}_e
\eps(\uu)}2 \dd x +
\pairing{}{H^1(\Omega;\R^d)}{\oph{b(\chi)}{\uu}}{\uu_t} \qquad
\foraa\, t \in (0,T).
\end{aligned}
\medskip
 \]
 \fin
%%%%%%
%%%%
\subsection{Proof of Theorem
\ref{teor1bis}}
\label{sec:4.2new}
%\paragraph{Sketch of the proof of Theorem \ref{teor1bis}.}
\noindent
\emph{Outline.}
First of all,
relying on the a priori estimates of Prop.\ \ref{prop:aprio-2},
 we pass to the limit as $\tau \to 0$ in the time-discretization scheme \eqref{eq-discr-w-TRUNC}--\eqref{eq-discr-chi-TRUNC}: we thus obtain the existence of a
 triple $(\w_M,\uu_M,\chi_M)$ solving the truncated version of system \eqref{eq0d}--\eqref{eq2d}.

Secondly, we perform the passage to the limit as the truncation parameter $M $ tends to
$+\infty$. In this step, we need to obtain for the functions $(\w_{M})_M$  a bound in the spaces
specified in
\eqref{reg-w-rho}, \emph{independent} of the parameter $M$.
In this direction, the key estimate %for the passage to the limit as $M \to \infty$
consists in testing \eqref{eq0d-trunc} below by (a truncation of) $w_M \in H^1(\Omega)$,
which is now an admissible test function for \eqref{eq0d-trunc}:
it is indeed in view of
performing this test, that we need to keep the two passages to the limit
as $\tau \to 0$ and as $M \to \infty$ distinct.
In order to carry out the calculations related to such an estimate, we need to carefully
tailor to the present truncated setting the \emph{formal} computations outlined in Remark
\ref{rmk:afterThm2}. %To do so, we will  follow arguments from \cite[Appendix A.2]{gmrs}, to which we refer the readers for
%all details.

\noindent
\emph{Step $1$: passage to the limit as $\tau \to 0$, for $M>0$ fixed, in \eqref{eq-discr-w-TRUNC}--\eqref{eq-discr-chi-TRUNC}.}
 The argument follows the very same
lines as the one in the proof of Thm.\ \ref{teor1}: it is
even easier, due to the truncations
of the functions $K$ and $\Theta$.
Therefore, we omit the details.
Let us just observe that
passing to the limit as $\tau \to 0$ in
\eqref{eq-discr-w-TRUNC} leads to a solution
\begin{equation}
\label{better-wM}
\w_M \in L^2(0,T;H^1(\Omega)) \cap \mathrm{C}^0 ([0,T];L^2(\Omega)) \cap H^1(0,T;H^1(\Omega)')
\end{equation}
of
 the \emph{truncated} enthalpy equation
\begin{equation}
\label{eq0d-trunc}
\begin{aligned}\!\!\!\!
 \pairing{}{H^1(\Omega)}{\ental_t}{\varphi}    + \io
\chi_t \Theta_M(\ental)\varphi \dd x  +   \rho \io
\hbox{\rm div}(\ub_t) \Theta_M(\ental)\varphi \dd x +\io K_M(\ental)
\nabla \ental\nabla\varphi \dd x    =\pairing{}{H^1(\Omega)}{ g}{ \varphi}
\end{aligned}
\end{equation}
for all $\varphi \in H^1(\Omega).$
Indeed, regularity \eqref{better-wM} follows from estimates \eqref{aprio6-rho}--\eqref{aprio5-rho}, and
in
 turn
it allows for the \emph{stronger} formulation \eqref{eq0d-trunc} (in comparison with \eqref{eq0d}) of the
enthalpy equation.
Therefore,
for every $M>0$ the (Cauchy problem for the) truncated version of
system \eqref{eq0d}--\eqref{eq2d}, consisting of \eqref{eq0d-trunc} and of \eqref{eq1d}--\eqref{eq2d} with
$\Theta$ replaced by $\Theta_M$, admits a solution
$(\w_M, \uu_M, \chi_M)$, with the regularity \eqref{better-wM} for $\w_M$ and
\eqref{reg-u}--\eqref{reg-chi} for $(\uu_M,\chi_M)$,
further
fulfilling \eqref{xi-qualification}--\eqref{eq-with-xi} (with $\Theta_M$ in place of $\Theta$).

\noindent
\emph{Step $2$: passage to the limit as $M \to \infty$.}
Let $(\w_M, \uu_M, \chi_M)_{M}$ be the family of solutions constructed in the previous step.
Since estimates \eqref{aprio3}--\eqref{aprio6} and \eqref{aprio0}--\eqref{aprio-1}
hold with a constant   \emph{independent} of $M$, we conclude by   lower semicontinuity
that
\begin{equation}
\label{estimates-useful-M}
\begin{array}{ll}
\exists\, C>0 \quad \forall\, M>0\, : \ \    \| \w_M\|_{L^\infty(0,T;L^1(\Omega))} & \!\!\!\!\!\! +\| \uu_M\|_{H^1(0,T;H_0^1 (\Omega;\R^d)) \cap W^{1,\infty}(0,T;L^2 (\Omega;\R^d))}
 \\ & \!\!\!\!\!\! +\| \chi_M\|_{L^\infty(0,T;W^{1,p} (\Omega)) \cap H^{1}(0,T;L^2 (\Omega))}
\leq C\,.
\end{array}
\end{equation}

Next,
introduce the truncation operator
\[
\mathcal{T}_M(r)=
\left\{
\begin{array}{ll}
-M & \text{if } r < -M,
\\
r & \text{if } |r| \leq  M,
\\
M & \text{if } r >  M,
\end{array}
\right.
\]
and the sets
\begin{equation}
\label{not-set}
\!\!\!\!\!
\left\{
\begin{array}{lllll}
\mathscr{A}_M :=  \{ (x,t) \in \Omega \times (0,T)\, : \ |\w_M(x,t)|\leq M\},
  & \quad \mathscr{A}_M^t :=  \{ x \in \Omega\, : (x,t) \in \mathscr{A}_M   \}
\\
 \mathscr{O}_M :=   \{ (x,t) \in \Omega \times (0,T)\, : \ |\w_M(x,t)|> M\},  & \quad\mathscr{O}_M^t :=  \{ x \in \Omega\, : (x,t) \in \mathscr{O}_M   \}\,.
\end{array}
\right.
\end{equation}
Hence,
 we test \eqref{eq0d-trunc} by $\mathcal{T}_M(\w_M)$ and integrate on $(0,t)$, $t \in (0,T)$:
observing that
\begin{equation}
\label{key-trick}
\left.
\begin{array}{ll}
&
K_M(\w_M)\nabla w_M \cdot \nabla(\mathcal{T}_M(\w_M))= K(\mathcal{T}_M(\w_M)) |\nabla(\mathcal{T}_M(\w_M))|^2
\\
&
\Theta_M(\w_M)=\Theta (\mathcal{T}_M(\w_M))
\end{array}
\right\}\qquad
 \text{a.e.\ in }\Omega \times (0,T),
\end{equation}
 we thus obtain
\begin{equation}
\label{est-9.1-M}
\begin{aligned}
&\frac12\io |\mathcal{T}_M(\w_M(t))|^2\dd x +\int_0^t\io  K(\mathcal{T}_M(\w_M)) |\nabla(\mathcal{T}_M(\w_M))|^2 \dd x \dd s
\\ & \leq
\frac12\int_{\Omega}|\mathcal{T}_M(\w_M(0))|^2 \dd x + \int_0^t \io |g||\mathcal{T}_M(\w_M)|\dd x  + \int_0^t \io |\ell_M||\Theta (\mathcal{T}_M(\w_M))||\mathcal{T}_M(\w_M)|\dd x\dd s\,,
\end{aligned}
\end{equation}
where we have used the place-holder $\ell_M:= \partial_t \chi_M+\rho\dive(\partial_t\uu_M)$.
Now, arguing in the very same way
as in Rmk.\ \ref{rmk:afterThm2}  we observe that
\begin{equation}
\label{intermediate-M}
\begin{aligned}
&
\int_0^t\io  K(\mathcal{T}_M(\w_M)) |\nabla(\mathcal{T}_M(\w_M))|^2 \dd x
 \geq  c \int_0^t  \left(\|\nabla
\mathcal{T}_M(\w_M)\|_{L^2(\Omega)}^2+\|\mathcal{T}_M(\w_M)\|^{2(q+1)}_{L^{6(q+1)}(\Omega)}\right) \dd s -C
\end{aligned}
\end{equation}
 for positive constants
$c$ and $C$
 \emph{independent} of $M$. Let us now focus on the  on the r.h.s.\ of \eqref{est-9.1-M}:
note that $\|\mathcal{T}_M(\w_M(0))\|_{L^2(\Omega)}^2 \leq \| \w_0\|_{L^2(\Omega)}^2$, whereas the second integral
term can be estimated thanks to
\eqref{heat-source}  on $g$.
Taking into account the growth \eqref{conseq-1}
of $\Theta$
 and \eqref{hyp-K-rho},
the
third integral can be estimated by
 \begin{equation}
 \label{est-9.3.-M}
\begin{aligned}
C \int_0^t\int_\Omega |\ell_M|(|\mathcal{T}_M(\w_M)|^{q+1} {+}1) \dd x \dd s
 \leq
 \varrho \int_0^t \int_\Omega |\mathcal{T}_M(\w_M)|^{2(q+1)} \dd s  + C_\varrho,
\end{aligned}
 \end{equation}
where we have used estimates
\eqref{estimates-useful-M}. Choosing $\varrho>0$ sufficiently small, we can absorb the
integral term on the r.h.s.\ of \eqref{est-9.3.-M} into the
r.h.s.\ of \eqref{intermediate-M}. As in Rmk.\ \ref{rmk:afterThm2}, we thus conclude that
\begin{equation}
\label{estimates-useful-M-2}
\exists\, C>0 \  \forall\, M>0\, : \  \|\mathcal{T}_M(\w_M) \|_{L^2(0,T;H^1(\Omega))
\cap
 L^\infty(0,T; L^2(\Omega))\cap  L^{2(q+1)} (0,T;L^{6(q+1)}(\Omega))  } \leq C\,.
\end{equation}

We now use \eqref{estimates-useful-M-2} in order to infer an analogous estimate for the family $(\w_M)_M$.
 To do so, we preliminarily observe that from the bound for $\|\mathcal{T}_M(\w_M)\|_{L^\infty(0,T; L^2(\Omega))}$
we infer
\begin{equation}
\label{clever-trick}
C \geq \int_\Omega |\mathcal{T}_M(\w_M)(t)|^2 \dd x \geq \int_{\mathscr{O}_M^t} M^2 \dd x = M^2 |\mathscr{O}_M^t|.
\end{equation}
Therefore, upon
 testing \eqref{eq0d-trunc} by $\w_M$, integrating in time, and repeating the same calculations as above (also relying on
\eqref{ellipticity-retained}),
 we end up with
\begin{equation}
\label{est-9.4-M}
\begin{aligned}
&
\frac12\io |\w_M(t)|^2\dd x + c_{10} \int_0^t \|\nabla
\w_M\|_{L^2(\Omega)}^2 \dd s
+ c \int_0^t \|\w_M\|_{L^{6(q+1)}(\mathscr{A}_M^s)}^{2(q+1)} \dd s
\\ &  \leq C + \frac12\io |\w_0|^2\dd x +\varrho_1
\int_0^t \| \w_M \|_{H^1(\Omega)}^2 \dd s
+C_{\varrho_1}  \int_0^t \| g \|_{H^1(\Omega)'}^2 \dd s + I_8
\end{aligned}
\end{equation}
for some $\varrho_1>0$ to be specified later,
and  we estimate
\begin{equation}
\label{est-9.5-M}
\begin{aligned}
I_8  &
= \int_0^t \int_{\mathscr{A}_M^s} |\ell_M| |\Theta_M(\w_M)||\w_M| \dd x \dd s
+\int_0^t \int_{\mathscr{O}_M^s} |\ell_M| |\Theta_M(\w_M)||\w_M| \dd x \dd s
\\
&
\leq  C_{\varrho_2} \left(\int_0^t \| \ell_M\|_{L^2(\Omega)}^2+1 \right)+ \varrho_2
 \int_0^t \|\w_M\|_{L^{2(q+1)}(\mathscr{A}_M^s)}^{2(q+1)} \dd s
\\ & \qquad
+ C_{\varrho_3} \int_0^t \| \ell_M\|_{L^2(\Omega)}^2 \| \Theta_M (\w_M)\|_{L^3(\mathscr{O}_M^s)}^2 \dd s
  + \varrho_3 \int_0^t \| \w_M \|_{H^1 (\mathscr{O}_M^s)}^2 \dd s
\end{aligned}
\end{equation}
where  we have argued along the same lines as in Rmk.\ \ref{rmk:afterThm2}.
Now, observe that  for almost all $t \in (0,T)$
\[
\| \Theta_M (\w_M)\|_{L^3(\mathscr{O}_M^t)}^2 \leq   C ( |\Theta(M)|+|\Theta(-M)|)^{2} |\mathscr{O}_M^t|^{2/3}
\leq C ( |M|^{2/\sigma}+1) |\mathscr{O}_M^t|^{2/3} \leq C' \frac{|M|^{2/\sigma}+1}{M^{4/3}},
\]
where the second inequality is due to the growth \eqref{conseq-1} of $\Theta$, and the last one to
\eqref{clever-trick}. Observe that $2/\sigma - 4/3 <0$ for $d=3$ thanks to \eqref{hyp-heat}, therefore
$(|M|^{2/\sigma}+1)/M^{4/3} \to 0$ as $M \to \infty$. For $d=2$, in \eqref{est-9.5-M}
taking into account the Sobolev embedding
$H^1 (\mathscr{O}_M^s) \subset L^s (\mathscr{O}_M^s)$ for every $s \in [1,\infty)$, we can replace
$\| \Theta_M (\w_M)\|_{L^3(\mathscr{O}_M^s)}^2 $ with $\| \Theta_M (\w_M)\|_{L^{2+\epsilon}(\mathscr{O}_M^s)}^2 $
for any $\epsilon>0$, and tune $\epsilon $ in such a way that the latter term
will  again converge to $0$ as $M \to \infty$.
Therefore the third term on the r.h.s.\ of \eqref{est-9.5-M} is bounded.
It remains to choose $\varrho_2$ in such a way as to absorb the  term
$\int_0^t \|\w_M\|_{L^{2(q+1)}(\mathscr{A}_M^s)}^{2(q+1)} \dd s$ into the l.h.s.\ of
\eqref{est-9.4-M},
and  $\varrho_1$, $\varrho_3$ so that $\varrho_1+\varrho_3$ is  sufficiently small. Also applying the Gronwall Lemma, we conclude
that
\begin{equation}
\label{estimates-useful-M-2-bis}
\exists\, C>0 \  \forall\, M>0\, : \  \|\w_M \|_{L^2(0,T;H^1(\Omega))
\cap
 L^\infty(0,T; L^2(\Omega))} \leq C\,.
\end{equation}
With easy calculations we also find
\begin{equation}
\label{estimates-useful-M-3}
\exists\, C>0 \  \forall\, M>0\, : \| \w_M \|_{L^{2(q+1)} (0,T;L^{6(q+1)}(\Omega))}+
 \|\partial_t \w_M \|_{L^1(0,T;{W}^{2,\expo}(\Omega)')}
\leq C,
\end{equation}
with  $\expo$ as in \eqref{reg-w-rho}, the estimate for
$\partial_t \w_M $
following from a  comparison in \eqref{eq0d-trunc}.

We are now in
the position to obtain the further estimates
\begin{equation}
\label{estimates-useful-M-4}
\begin{array}{ll}
\exists\, C>0 \quad \forall\, M>0\, : \ \  & \| \uu_M\|_{H^1(0,T;H_0^2 (\Omega;\R^d)) \cap W^{1,\infty}(0,T;H_0^1 (\Omega;\R^d)) \cap H^2(0,T;L^2 (\Omega;\R^d)) }
 \\ & \quad  +\| \opchi(\chi_M)\|_{L^2(0,T; L^2 (\Omega))}+\| \xi_M\|_{L^2(0,T; L^2 (\Omega))}
\leq C\,,
\end{array}
\end{equation}
where $\xi_M $ is the selection in $\beta(\chi_M)$ fulfilling \eqref{eq-with-xi}. Indeed,
\eqref{estimates-useful-M-4} can be proved relying on the previously obtained
\eqref{estimates-useful-M} and \eqref{estimates-useful-M-2-bis}--\eqref{estimates-useful-M-3},
 by performing on the
truncated version of
system \eqref{eq0d}--\eqref{eq2d}, the time-continuous analogues of the \emph{Sixth} and \emph{Tenth a priori estimates}
(cf.\ Sec.\ \ref{ss:3.2}). % exploiting in a crucial way \eqref{key-trick}.

Hence, we  can  carry out the passage to the limit argument as $M\to \infty$.
Relying on the bounds \eqref{estimates-useful-M}
and \eqref{estimates-useful-M-4} and on the
compactness tools already exploited in the proof of Thm.\ \ref{teor1}, we
find that there exist $(\w,\uu,\chi)$ and a (not relabeled) subsequence of
$(\w_M,\uu_M,\chi_M)_M$ such that (the time-continuous analogues of) convergences
 \eqref{e:convergences-u}--\eqref{e:converg-chi-better} hold as $M \to \infty$.
Furthermore,
estimates  \eqref{estimates-useful-M-2-bis}--\eqref{estimates-useful-M-3},
the aforementioned generalization \cite[Chap.\ 7, Cor.\ 7.9]{roub-NPDE} of the Aubin-Lions theorem to the case of
$\mathrm{BV}$-functions applies, and
convergences \eqref{up-to-extraction}
improve to
\begin{equation}
\label{better-up-to-extraction}
\begin{array}{lll}
& w_M  \weaksto \w & \text{ in
$L^2(0,T;H^1(\Omega)) \cap L^{2(q+1)} (0,T;L^{6(q+1)}(\Omega))\cap L^\infty (0,T;L^2(\Omega)),$}
\\
& \w_M  \to \w & \text{ in
$L^2(0,T;W^{1-\epsilon,2}(\Omega))\cap L^\sigma(0,T;L^2(\Omega))$ for all
$\epsilon \in (0,1]$ and  $1\leq \sigma<\infty$.}
\end{array}
\end{equation}
with  $\w \in \BV([0,T];{W}^{2,\expo}(\Omega)')$  and $\expo$  as in \eqref{reg-w-rho}.
Relying on \eqref{better-up-to-extraction} and on the Lipschitz continuity of $\Theta$, it is
not difficult to infer that
$\nabla \Theta_M (\w_M) \weakto \nabla \Theta(\w)$ in $L^2(0,T;L^2(\Omega))$ as $k \to \infty$.
Furthermore, combining \eqref{e:convej13} and the last of \eqref{e:convergences-u} we also have that
$\dive(\partial_t \uu_M)\Theta_M(\w_M) \weakto \dive(\uu_t
)\Theta(\w)$ in  $L^1(0,T;L^{3/2}(\Omega))$.
Therefore, we are able to pass to the limit in
\eqref{eq0d-trunc} (with test functions as in the statement of Thm.\ \ref{teor1bis}), and in the corresponding equations for $\uu$ and $\chi$
 in the case $\rho \neq 0$, which concludes the proof.
\fin

%%%%

%%%%%%%%%%%%%%%%%%
\section{Proof of Theorem \ref{teor2}}
\label{CD}

\noindent Let $(\ub_i, \chi_i)$, $i=1,2$, be two solution pairs like
in the statement of Theorem~\ref{teor2} and set $
(\ub,\chi):=(\ub_1-\ub_2, \chi_1-\chi_2,) $. Taking into account
that
$a$ is constant (cf.\ \eqref{constant-a}), hence
$a(\chi_i) \equiv \bar{a}\geq 0$ for $i=1,2$  (cf.\
\eqref{constant-a}), it is immediate to check that
 $(\ub,\chi)$ fulfill
a.e.~in $\, \Omega\times (0,T)$
\begin{align}
 \label{eq1dif}
&\ub_{tt}+\mathcal{E}(b(\chi_1)\ub)+\mathcal{E}((b(\chi_1)-b(\chi_2))\ub_2)+
\mathcal{V}((\bar a+\delta)\ub_t) + \ciro(\Theta_1^*-\Theta_2^*) =\mathbf{f}_1 -\mathbf{f}_2\,,
\\
  \label{eq2dif}
& \chi_t + \mathcal{B}\chi_1-\mathcal{B}\chi_2+\beta(\chi_1) -\beta(\chi_2)  +
\gamma(\chi_1)-\gamma(\chi_2) \\
\no &  \ni-b'(\chi_1)\left( \frac{\varepsilon(\uu_1)
\mathrm{R}_e\varepsilon(\uu_1)}{2} - \frac{\varepsilon(\uu_2)
\mathrm{R}_e\varepsilon(\uu_2)}{2}
\right)-(b'(\chi_1)-b'(\chi_2))\frac{\varepsilon(\uu_2)
\mathrm{R}_e\varepsilon(\uu_2)}{2}+\Theta_1^*-\Theta_2^*\,.
\end{align}
Now, we test~\eqref{eq1dif} by  $\ub_t$ and integrate  in time.
Recalling \eqref{korn}, it is not difficult to
infer
\begin{equation}\label{acca1}
 \mezzo\|\ub_t(t)\hn^2+\delta \itt \| \ub_t \vn^2\dd s \leq\frac12 \|\vv_0^1
 -\vv_0^2\hn^2 +\int_0^t \|\mathbf{f}_1 -\mathbf{f}_2\|_{H^{-1}(\Omega)}\, \|\uu_t\|_{\V}\dd s +
  I_{9}+I_{10}+I_{11},
\end{equation}
where
we have
\begin{align}
\label{acca2} &
\begin{aligned}
I_{9}=-\itt \pairing{}{H^1(\Omega;\R^d)}{   \mathcal{E} (b(\chi_1)
\ub)}{ \ub_t} \dd s  & \leq C \itt \|b(\chi_1) \|_{L^\infty (\Omega)} \: \|
\ub \vn \: \| \ub_t \vn \dd s
\\ & \leq \frac{\delta}{4} \itt \| \ub_t \vn^2 \dd s  + C \itt \| \ub
\vn^2 \dd s \,,
\end{aligned}
\end{align}
whereas, the Lipschitz continuity of  $b$ on bounded intervals (cf.\
\eqref{data-a}) and the H\"{o}lder inequality yield
\begin{align}
 \label{acca3}
\begin{aligned}
 I_{10} &  = \int_0^t \pairing{}{H^1(\Omega;\R^d)}{
 \mathcal{E}((b(\chi_1)-b(\chi_2))\ub_2)}{\uu_t} \dd s  \\
  &  \leq C \itt \| \ub_2 \|_{W^{1,6}(\Omega)} \| \chi\|_{L^3 (\Omega)} \| \ub_t \vn \dd s
\\
 & \leq
\frac{\delta}{4}\itt \| \ub_t \vn^2 \dd s  +C  \| \ub_2
\|_{L^\infty(0,T;H^2(\Omega))}^2 \int_0^t \|\chi\|_{L^3(\Omega)}^2 \dd s
\\
&
 \leq
\frac{\delta}{4}\itt \| \ub_t \vn^2 \dd s  + \nu
 \int_0^t \|\nabla\chi\|_{L^2(\Omega)}^2 \dd s
+C  \int_0^t \|\chi\|_{L^2(\Omega)}^2 \dd s\,,
\end{aligned}
\end{align}
where in the last inequality we have exploited the embeddings
$H^1(\Omega)\Subset L^3(\Omega) \subset L^2(\Omega)$ and \cite[Thm.
16.4, p. 102]{LM},
 with $\nu>0$  a suitable
constant  to be chosen later and the constant  $C$  also depending
on $\| \ub_2 \|_{L^\infty(0,T;H^2(\Omega))}^2 $.
 Moreover, we get
\begin{align}\no
I_{11}=&   \rho\int_0^t\io(\Theta_1^*-\Theta_2^*)\dive(\ub_t)\dd x\dd s  \\
\no
&\leq  \frac{\delta}{4}\int_0^t\|\ub_t\|_{H^1(\Omega)}^2\dd s+\rho^2C_\delta\itt \|\Theta_1^*-\Theta_2^*\|_{L^2(\Omega)}^2\dd s\,.
\end{align}
Noting that
$
 \|\uu(t)\vn^2 \leq 2 \|\uu_0^1 -\uu_0^2 \vn^2 + 2t
\int_0^t \|\uu_t (r)\vn^2 \, {\rm  d}r\,,
$
we obtain from~\eqref{acca1}--\eqref{acca3} that
 %, also recalling Korn's
%inequality, that
\begin{equation}\label{uniu}
\begin{aligned}   & \frac12 \|\ub_t(t)\hn^2  +\frac{\delta}{4}\itt \|\ub_t \vn^2
\dd s \\ &
\leq \frac12 \|\vv_0^1
 -\vv_0^2\hn^2 +C\|\mathbf{f}_1 -\mathbf{f}_2\|_{L^2 (0,T; H^{-1}(\Omega))}^2
+\frac{\delta}8 \int_0^t \|\uu_t\vn^2 \dd s   +
 C\|\uu_0^1 -\uu_0^2\vn^2
\\ &
\quad  +
  C \int_0^t \left(\int_0^s
\|\uu_t (r) \vn^2 \, {\rm d}r\right)\, {\rm d}s + \nu
 \int_0^t \|\nabla\chi\|_{L^2(\Omega)}^2 \dd s
+C  \int_0^t \|\chi\|_{L^2(\Omega)}^2 \dd s\\
\no
&\quad+C\rho^2\itt \|\Theta_1^*-\Theta_2^*\|_{L^2(\Omega)}^2\dd s.
\end{aligned}
\end{equation}
Next, we test~\eqref{eq2dif} by $\chi$  integrate the resulting
equation in time. With elementary computations, also taking into
account the Lipschitz continuity of $\gamma$ \eqref{data-gamma},
 the monotonicity of $\beta$ \eqref{databeta},
and the crucial inequality \eqref{essential-inequality},
 we get
\begin{equation}\label{uchi1}
 \frac12 \|\chi(t)\hn^2+c_9 \kappa \itt\|\nabla\chi\|_{L^2 (\Omega)}^2 \dd s \leq \frac12 \|
\chi_0^1 -\chi_0^2 \|_{\Ha}^2 +  C \int_0^t \|\chi\hn^2 \dd s  +
I_{12}+I_{13}\,,
\end{equation}
 \[\begin{aligned} &I_{12}:=-\itt\io b'(\chi_1)\left(
\frac{\varepsilon(\uu_1) \mathrm{R}_e\varepsilon(\uu_1)}{2} -
\frac{\varepsilon(\uu_2)
\mathrm{R}_e\varepsilon(\uu_2)}{2} \right)\chi \dd x \dd s, \\
 &I_{13}:=-\itt\io \left( (b'(\chi_1)-b'(\chi_2))\frac{\varepsilon(\uu_2)
\mathrm{R}_e\varepsilon(\uu_2)}{2}\chi +
(\Theta_1^*-\Theta_2^*)\chi\right) \dd x \dd s\,.
\end{aligned}
\]
Using \eqref{hynek} and the fact that $b'(\chi_1)\in
L^\infty(\Omega)$ we get
\begin{equation}
\label{ei7}
\begin{aligned}
|I_{12}|&\leq \itt\io |b'(\chi_1)|\left(
\frac{|(\varepsilon(\uu_1)-\varepsilon(\ub_2))
\mathrm{R}_e(\varepsilon(\uu_1)+\varepsilon(\ub_2))|}{2}\right)|\chi| \dd x \dd s \\
&\leq C\itt\|\uu\|_{\V}(\|\uu_1\|_{W^{1,6}(\Omega)}+\|\uu_2\|_{W^{1,6}(\Omega)})\|\chi\|_{L^3(\Omega)}\dd s \\
 &\leq  \nu
 \int_0^t \|\nabla\chi\|_{L^2(\Omega)}^2 \dd s
+C  \int_0^t \|\chi\|_{L^2(\Omega)}^2
+C\itt\left(\|\uu_0^1-\uu_0^2\|_{\V}^2 \dd s
+\int_0^s\|\dt\uu\|_{\V}^2\right) \dd s\,,
\end{aligned}
\end{equation}
where the last inequality is obtained arguing as for \eqref{acca3}.
 Again exploiting the Lipschitz
continuity of $b'$ on bounded intervals and the bound for $\uu_2$ in
$L^\infty (0,T;\boY)$, we get
\begin{equation}
\label{ei8}
\begin{aligned}
 |I_{13}|&\leq C\itt\io|\chi|^2|\varepsilon(\uu_2)|^2 \dd x \dd s + \itt
\|\Theta_1^*-\Theta_2^*\|_{L^2(\Omega)}\|\chi\|_{L^2(\Omega)} \dd s
\\
 & \leq
\itt\|\chi\|_{L^6(\Omega)}\|\chi\|_{L^2(\Omega)}\|\uu_2\|^2_{W^{1,6}(\Omega)} \dd s
+\frac12 \itt\|\Theta_1^*-\Theta_2^*\|_{L^2(\Omega)}^2 \dd s+\frac12
\itt\|\chi\|_{L^2(\Omega)}^2 \dd s \,
\\
 &\leq \nu \itt\io \|\nabla\chi\|_{L^2(\Omega)}^2 \dd x \dd s+\frac12
\itt\|\Theta_1^*-\Theta_2^*\|_{L^2(\Omega)}^2 \dd s
+C\itt\|\chi\|_{L^2(\Omega)}^2\dd s\,,
\end{aligned}
\end{equation}
where the last estimate also follows from the continuous
embedding $H^1(\Omega)\subset L^6(\Omega)$ and the Young inequality.
Collecting now \eqref{uchi1}--\eqref{ei8}, we arrive at
\begin{equation}
\begin{aligned} \label{unichi}
 & \frac12 \itt\|\chi(t)\hn^2 \dd s+c_9 \kappa \itt\|\nabla
 \chi\|_{L^2(\Omega)}^2 \dd s
 \\& \leq \frac12  \| \chi_0^1 -\chi_0^2
\|_{L^2(\Omega)}^2+ 2\nu \itt\|\nabla
 \chi\|_{L^2(\Omega)}^2 \dd s \\ & \qquad +
 C \int_0^t \|\chi\|_{L^2(\Omega)}^2 \dd s
+C\itt\left(\|\uu_0^1-\uu_0^2\|_{\V}^2
+\int_0^s\|\dt\uu\|_{\V}^2 \dd r +\int_0^t\|\Theta_1^*-\Theta_2^*\|_{L^2(\Omega)}^2\right)\dd s \,.
\end{aligned}
\end{equation}
Summing up \eqref{uniu} and \eqref{unichi} and choosing $\nu \leq
c_9 \kappa/6$, we conclude
\begin{align}\no
 &\frac12 \|\ub_t(t)\hn^2 +\frac{\delta}{8}\itt \|\ub_t \vn^2 \dd s +\frac12 \|\chi(t)\hn^2
 +\frac{c_9\kappa}{6} \itt\|\nabla \chi\|_{L^2(\Omega)}^2 \dd s \\
 \no
 &\leq C\left(\|
\chi_0^1 -\chi_0^2 \hn^2 +\|\uu_0^1-\uu_0^2\|_{\V}^2+\|\vv_0^1
 -\vv_0^2\hn^2 +\|\mathbf{f}_1 -\mathbf{f}_2\|_{L^2 (0,T; H^{-1}(\Omega))}^2\right.\\
 \no
 & \left.\quad+ \int_0^t \|\chi\|_{L^2(\Omega)}^2\dd s
 +\itt\int_0^s\|\uu_t\|_{\V}^2 \dd r \dd s +\int_0^t\|\Theta_1^*-\Theta_2^*\|_{L^2(\Omega)}^2 \dd s \right)\,.
\end{align}
The application of the standard Gronwall lemma gives immediately the
desired continuous dependence estimate \eqref{cont-depe}.
\fin

\begin{remark}\label{sLaplCD}
\upshape
If we replace
 the
$p$-Laplacian \eqref{defAp} with
 the linear $s$-Laplacian \eqref{bilinear-s_a}   in the equation for $\chi$,
  the continuous
dependence estimate of Theorem \ref{teor2} can be performed without assuming $a$ to be constant (cf. \eqref{constant-a}). Indeed, in this case
we would be able to deal with the additional term
$\int_0^t\pairing{}{H^1(\Omega;\R^d)}{\mathcal{V}((a(\chi_1)-a(\chi_2))\partial_t\uu_2}{\partial_t\uu}\dd s$,
which results from subtracting the equations fulfilled by  solution pairs $(\uu_i,\chi_i)$, $i=1,2$.
It would be possible to
estimate it by means of  the
$H^s(\Omega)$-norm of $\chi=\chi_1-\chi_2$, which would pop in on  the left-hand side of \eqref{uchi1}.
\end{remark}

%%%%%%%%%%%%%%%%%%%%%%%%%%%%%%%%%%%%%%%%%%%%%%%%%%%%%%%%%%%%%%%%%%%%%

\section{Proofs of Theorems \ref{teor3} and \ref{teor4}}
\label{sec:proof34}
%%%%%%
\subsection{Proof of Theorem \ref{teor3}}
\label{ss:5.1}
\emph{Step $0$: approximate equations.} Equations   \eqref{eq-discr-w-irr}
and \eqref{eq-discr-u-irr} can be rephrased in terms of the
interpolants $\pwc \w \tau, \, \upwc \w \tau, \, \pwc \uu \tau, \,
\upwc \uu \tau,  \,   \pwwll \uu \tau, \,  \pwc \chi \tau, \, \pwl
\chi \tau, $ in a way analogous to
\eqref{eq-w-interp}--\eqref{eq-u-interp}.

Furthermore, %following\cite{hk1},
 taking into account that $\alpha=
\partial I_{(-\infty,0]}$ and that $\beta= \partial I_{[0,+\infty)}$
 we observe that the minimum problem \eqref{eq-discr-chi-irr} yields
 for every $k=1,\ldots, K_\tau$
  that
 \[
 \begin{aligned}
 \frac\tau 2 \int_\Omega\left( \left| \frac{\eta{-}\chitau {k-1}}\tau \right|^2 - \left| \frac{\chitau k{-}\chitau {k-1}}\tau \right|^2  \right) \dd x
 & +\Phi(\eta)-\Phi(\chitau k) +\int_\Omega \left(\widehat{\gamma}(\eta){-}\widehat{\gamma}(\chitau k)
 +h_{\tau}^{k-1}(\eta{-}\chitau k)
  \right)\dd x   \geq 0 \\ & \text{for all }\eta
\in W^{1,p}(\Omega) \text{ with } 0
\leq \eta  \text{ and } \eta \leq \chitau {k-1} \  \aein \,
\Omega\,,
\end{aligned}
 \]
 (recall the short-hand notation \eqref{not-h} for $h_{\tau}^{k-1}$).
  Writing necessary optimality conditions for the above minimum problem, we infer
\begin{equation}
\label{eq-chi-ineq}
\begin{aligned}
&
 \int_\Omega  \Big( \partial_t \pwl
\chi{\tau}(t) (\eta-\pwc \chi\tau (t)) + \mathbf{d}(x,\nabla{\pwc\chi\tau} (t) )
\cdot \nabla (\eta-\pwc \chi\tau (t) ) +   \gamma(\pwc \chi\tau(t) )  (\eta-\pwc
\chi\tau(t) )  + \upwc h\tau(t)  (\eta-\pwc \chi\tau(t) )
 \Big) \, \mathrm{d}x  \geq 0
\\
& \qquad \text{for all } t \in [0,T] \text{ and all }
 \eta \in W^{1,p}(\Omega) \text{ with } 0
\leq \eta  \text{ and } \eta \leq \upwc \chi\tau(t)   \  \aein \,
\Omega,
\end{aligned}
\end{equation}
 where we have used the short-hand notation (cf.\ \eqref{not-h})
\begin{equation}
\label{short-hand} \upwc h\tau (t) :=
{b}'(\upwc\chi\tau(t) )\frac{\varepsilon(\upwc\uu\tau(t) )
\mathrm{R}_e\varepsilon(\upwc\uu\tau(t) )}{2} -\Theta(\upwc\w\tau (t) ).
\end{equation}
Letting $\eta= \nu
\varphi + \pwc\chi\tau(t)$ in
\eqref{eq-chi-ineq}  and dividing the resulting inequality by
$\nu>0$,
  we deduce that
\begin{equation}
\label{eq-chi-ineq-better}
\begin{aligned}
&
  \int_\Omega  \Big( \partial_t \pwl
\chi{\tau}(t)  \varphi + \mathbf{d}(x,\nabla{\pwc\chi\tau}(t) ) \cdot \nabla
\varphi +     \gamma(\pwc \chi\tau (t) )   \varphi   + \upwc h\tau (t) \varphi
 \Big) \, \mathrm{d}x   \geq 0  \ \text{ for all } t \in [0,T]
\\
&   \text{ and all }
 \varphi \in W^{1,p}(\Omega) \text{ s.t.\ there exists $\nu>0$ with  } 0
\leq \nu \varphi + \pwc\chi\tau(t) \leq \upwc\chi\tau(t) \ \aein \,
\Omega.
\end{aligned}
\end{equation}
Choosing $\varphi= - \partial_t \pwl
\chi{\tau}(t) $ (observe that it complies with the constraint above, upon taking $\nu=\tau$), we therefore obtain
\begin{equation}
\label{eq:added-revised-ricky}
\begin{aligned}
 \int_{\Omega}
 \big( |\partial_t \pwl
\chi{\tau}(t)|^2    & +  \mathbf{d}(x,\nabla{\pwc\chi\tau}(t) ) \cdot \nabla(\partial_t \pwl
\chi{\tau}(t)) +   \gamma(\pwl \chi\tau (t) ) \partial_t \pwl
\chi{\tau}(t)
\big)
 \dd x\\
 & \leq - \int_\Omega \upwc h\tau (t)\partial_t \pwl
\chi{\tau}(t) \dd x + \int_\Omega (\gamma(\pwl \chi\tau (t)) - \gamma(\pwc \chi\tau (t)))\partial_t \pwl
\chi{\tau}(t) \dd x\,.
\end{aligned}
\end{equation}
% Moreover, choosing as a competitor $\chitau {k-1} $ in
%the minimum problem \eqref{eq-discr-chi-irr} yields the following
%inequality
%\[
%\begin{aligned}
%\frac{\tau}2 \int_{\Omega}\left| \frac{ \chitau k -
%\chitau{k-1}}\tau \right|^2 \dd x  &  +\Phi(\chitau k) +
%\int_{\Omega}(\widehat{\beta}(\chitau k) + \widehat{\gamma}(\chitau
%k)) \dd x +\int_\Omega h_\tau^{k-1} \chitau k \dd x
%\\
%& \leq \Phi(\chitau {k-1}) + \int_{\Omega}(\widehat{\beta}(\chitau
%{k-1}) + \widehat{\gamma}(\chitau {k-1})) \dd x +\int_\Omega
%h_\tau^{k-1} \chitau {k-1} \dd x
%\end{aligned}
%\]
Therefore,
 upon summing \eqref{eq:added-revised-ricky} over the index $k$ we deduce the \emph{discrete
version} of the energy inequality \eqref{energ-ineq} for all  $0
\leq s \leq t \leq T$, viz.
\begin{equation}
\label{energ-ineq-discrete}
\begin{aligned}
 &  \int_{\pwc{\mathsf{t}}{\tau}(s) }^{\pwc{\mathsf{t}}{\tau}(t) }
   \int_{\Omega} |\partial_t \pwl \chi\tau|^2 \dd x \dd r   +
 \Phi(\pwc\chi\tau(\pwc{\mathsf{t}}{\tau}(t))) +  \int_{\Omega} W(\pwc\chi\tau(\pwc{\mathsf{t}}{\tau}(t))) \dd x\\
 & \leq
 \Phi(\pwc\chi\tau(\pwc{\mathsf{t}}{\tau}(s))) + \int_{\Omega}  W(\pwc\chi\tau(\pwc{\mathsf{t}}{\tau}(s)))\dd x
  +  \int_{\pwc{\mathsf{t}}{\tau}(s) }^{\pwc{\mathsf{t}}{\tau}(t) } \int_\Omega \partial_t \pwl \chi\tau \left(- b'(\upwc\chi\tau)
  \frac{\varepsilon(\upwc\uu\tau)\mathrm{R_e}\varepsilon(\upwc\uu\tau)}2
+\Theta(\upwc\w\tau)\right)\dd x \dd r\\
& \qquad \qquad  + C \tau^{1/2} \|\partial_t \pwl \chi\tau \|_{L^2 (0,T; L^2(\Omega))}^2\,,
\end{aligned}
\medskip
\end{equation}
where we have estimated the last term on the right-hand side of \eqref{eq:added-revised-ricky} using that
$\| \gamma(\pwl \chi\tau (t)) - \gamma(\pwc \chi\tau (t))\|_{L^2 (0,T;L^2(\Omega))} \leq C\tau^{1/2} \|\partial_t \pwl \chi\tau \|_{L^2 (0,T; L^2(\Omega))} $,
thanks to the Lipschitz continuity of $\gamma$.
%%%

\noindent \emph{Step $1$: compactness.}
In view of the  a priori estimates from Proposition \ref{prop:aprio},
we infer that
 there exist a vanishing subsequence $(\tau_k)_k$ and limit
functions $(\w,\uu,\chi)$ such that convergences
\eqref{e:convergences-u}, \eqref{e:converg-chi},
\eqref{up-to-extraction}--\eqref{e:convej13} hold true as $k \to
\infty$. Observe that \eqref{e:converg-chi}  in particular yields
that $\chi \geq 0$ and $\chi_t \leq 0$ a.e.\ in $\Omega \times
(0,T)$.
 Arguing as in the proof of \cite[Lemma
5.11]{hk1}, we now prove that
\begin{equation}
\label{to-prove} \pwc \chi{\tau_k} \to \chi \qquad \text{in }
L^p(0,T;W^{1,p}(\Omega))\,.
\end{equation}
 Indeed,
\cite[Lemma 5.2]{hk1} gives a sequence $(\pwl \varphi{\tau_k})_k
\subset L^{p}(0,T;W_+^{1,p}(\Omega)) \cap L^\infty (Q)$ of test
functions for \eqref{eq-chi-ineq-better}, fulfilling
\begin{equation}
\label{characteristic-zeta} \pwl \varphi{\tau_k} \to \chi \text{ in
$L^{p}(0,T;W^{1,p}(\Omega))$,} \qquad 0 \leq \pwl \varphi{\tau_k} \leq
\upwc \chi{\tau_k} \ \aein \, \Omega \times (0,T).
\end{equation}
Observe that the first of \eqref{characteristic-zeta} and
convergences \eqref{e:converg-chi} yield in particular
\begin{equation}
\label{strong-convergence-L2} \pwc\chi{\tau_k} \to \chi \qquad
\text{in } L^2(0,T;L^2(\Omega)).
\end{equation}
 We have
\begin{equation}
\label{from-hk}
\begin{aligned}
 & c_7 \int_0^T \int_\Omega |\nabla \pwc \chi{\tau_k} - \nabla \chi|^p
\dd x \dd s
\\
   & \leq \int_0^T \int_\Omega \left(\mathbf{d}(x,\nabla \pwc \chi{\tau_k}) -\mathbf{d}(x,\nabla \chi)  \right)
   \cdot \nabla(\pwc \chi{\tau_k} -\chi) \dd x \dd s
   \\ &
   \leq
   \int_0^T \int_\Omega\mathbf{d}(x,\nabla \pwc \chi{\tau_k})
   \cdot \nabla(\pwc \chi{\tau_k} -\pwl\varphi{\tau_k}) \dd x \dd s
   +
    \int_0^T \int_\Omega\mathbf{d}(x,\nabla \pwc \chi{\tau_k})
   \cdot \nabla(\pwl\varphi{\tau_k}-\chi) \dd x \dd s
\\
& \qquad \qquad\qquad\qquad\qquad\qquad
    -
    \int_0^T \int_\Omega\mathbf{d}(x,\nabla \chi)
   \cdot \nabla(\pwc\chi{\tau_k}-\chi) \dd x \dd s
   \doteq I_{14}+ I_{15}+I_{16}
\end{aligned}
\end{equation}
where the first inequality follows from \eqref{pcoercive} and the
second one from elementary algebraic manipulations. Now, choosing
$\varphi:= \pwl \varphi{\tau_k}- \pwc \chi{\tau_k} $ in
\eqref{eq-chi-ineq-better} (which we are allowed to do thanks to
\eqref{characteristic-zeta}) and integrating in time,  we obtain
\[
\begin{aligned}
I_{14}   = \int_0^T \int_\Omega \left(\partial_t \pwl\chi{\tau_k} +
  \gamma(\pwc\chi{\tau_k})    + \upwc h{\tau_k}\right)(\pwl \varphi{\tau_k}{-}
\pwc \chi{\tau_k}) \dd x \dd s  \leq C \| \pwl \varphi{\tau_k}- \pwc
\chi{\tau_k}\|_{L^2(0,T;L^2(\Omega))} \to 0 \text{ as $k \to
\infty$},
\end{aligned}
\]
due to the bounds \eqref{aprio1bis} and \eqref{aprio4}, and to
\eqref{characteristic-zeta} and \eqref{strong-convergence-L2}. We also have
\[
|I_{15}| \leq \| \mathbf{d}(x,\nabla \pwc \chi{\tau_k})
\|_{L^{p'}(0,T;L^{p'}(\Omega))} \|
\nabla(\pwl\varphi{\tau_k}-\chi)\|_{L^{p}(0,T;L^{p}(\Omega))} \leq C\|
\nabla(\pwl\varphi{\tau_k}-\chi)\|_{L^{p}(0,T;L^{p}(\Omega))}  \to 0,
\]
where the second inequality follows from \eqref{datad2} and
\eqref{aprio4}, and the last passage is due to
\eqref{characteristic-zeta}. Taking into account that $\pwc
\chi{\tau_k} \weakto \chi$ in $L^{p}(0,T;W^{1,p}(\Omega))$ by
\eqref{e:converg-chi-better}, we also prove that $I_{16} \to 0$ as
$k \to \infty$. In this way, from \eqref{from-hk}  we conclude
\eqref{to-prove}. Observe that, \eqref{to-prove} combined with the bound
\eqref{aprio4} then yields \medskip  \eqref{e:converg-chi-better}.

\noindent \emph{Step $2$: passage to the limit.} Arguing in the very
same way as for the proof of Thm.\ \ref{teor1}, it is possible to
prove that $(\w,\uu,\chi)$ solve equations \eqref{eq0d} and
\eqref{eq1d}. It now remains to prove the variational inequality
\eqref{ineq-system2-integrated}, together with \eqref{xi-def}, and the energy
inequality \eqref{energ-ineq}. As for the latter, it is sufficient
to pass to the limit as $k \to \infty$ in
\eqref{energ-ineq-discrete}. For this, we use convergences
\eqref{e:convergences-u}, \eqref{e:converg-chi},
\eqref{up-to-extraction}--\eqref{e:convej13}, \eqref{conv-useful-1},
\eqref{later-on}, as well as \eqref{to-prove}, which in particular
yields
\[
\Phi(\pwc\chi\tau(\pwc{\mathsf{t}}{\tau}(s))) \to \Phi(\chi(s))
\qquad \foraa\, s \in (0,T).
\]
Clearly, the last term on the right-hand side of \eqref{energ-ineq-discrete} tends to zero.
 Since the argument  for \eqref{ineq-system2-integrated}--\eqref{xi-def}
  is perfectly analogous to the one developed in
the proof of \cite[Thm. 4.4]{hk1}, we refer the reader to \cite{hk1}
for all details and here just outline its main steps. Passing to the limit in \eqref{eq-chi-ineq-better}
as $\tau_k \down 0$ with suitable test functions from \cite[Lemma 5.2]{hk1}, we prove that for
almost all $t \in (0,T)$
\[
\begin{aligned}
&
 \int_\Omega  \Big( \chi_t(t) \tilde\varphi +
\mathbf{d}(x,\nabla\chi(t)) \cdot \nabla \tilde\varphi +
\gamma(\chi(t)) \tilde\varphi + b'(\chi(t))\frac{\varepsilon(\ub(t))
\mathrm{R}_e\varepsilon(\ub(t))}{2}\tilde\varphi  -\Theta(w(t))
\tilde\varphi \Big) \, \mathrm{d}x  \geq 0
\\ & \qquad \qquad \qquad \qquad\qquad \qquad \qquad \qquad\qquad \qquad
\text{for all } \tilde\varphi \in W_-^{1,p}(\Omega) \text{ with }
\{\tilde\varphi=0\} \supset \{\chi(t)=0\}.
\end{aligned}
\]
From this, arguing as in the proof of \cite[Thm. 4.4]{hk1} we deduce
that for almost all $t \in (0,T)$
\begin{equation}
\label{ineq-ssy2}
\begin{aligned}
&
 \int_\Omega  \Big( \chi_t(t) \varphi +
\mathbf{d}(x,\nabla\chi(t)) \cdot \nabla \varphi + \gamma(\chi(t))
\varphi
 + b'(\chi(t))\frac{\varepsilon(\ub(t))
\mathrm{R}_e\varepsilon(\ub(t))}{2}\varphi  -\Theta(w(t)) \varphi \Big)
\, \mathrm{d}x
\\ &
  \geq \int_{\{\chi(t)=0\}} \left( \gamma(\chi(t))  +
b'(\chi(t))\frac{\varepsilon(\ub(t))
\mathrm{R}_e\varepsilon(\ub(t))}{2}  -\Theta(w(t))  \right)^+ \varphi
\, \mathrm{d}x
 \ \ \text{for all } \varphi \in
W_-^{1,p}(\Omega).
\end{aligned}
\end{equation}
Therefore, we take
\begin{equation}
\label{xi-charact}
\begin{aligned}
  & \xi(x,t):= - \mathcal{I}_{\{\chi=0\}}(x,t) \left(
\gamma(\chi(x,t)) + b'(\chi(x,t))\frac{\varepsilon(\ub(x,t))
\mathrm{R}_e\varepsilon(\ub(x,t))}{2}    -\Theta(w(x,t)) \right)^+
\\  & \qquad \qquad \qquad \qquad \qquad \qquad \qquad \qquad  \qquad \foraa\, (x,t) \in  \Omega \times (0,T),
\end{aligned}
\end{equation}
$\mathcal{I}_{\{\chi=0\}}$ denoting the characteristic function of
the set $\{\chi=0\}$.
 From \eqref{ineq-ssy2} we deduce that, with this $\xi$ inequality
\eqref{ineq-system2-integrated} holds. Moreover, it is immediate to
check that $\xi$ also complies with \medskip \eqref{xi-def}.

 \noindent \emph{Step $3$: strict positivity
\eqref{strictpos} of the temperature.} Suppose that
\eqref{ipo:strictpos} holds: the discrete strict positivity
\eqref{strict-pos-wk} and convergences \eqref{up-to-extraction}
yield that, in the limit, $\w(x,t) \geq \underline{\w}_0(x) =
\Theta^{-1}(\underline{\teta}_0(x))$ for almost all $(x,t) \in \Omega \times (0,T)$.
Therefore, \eqref{strictpos} ensues.
 \fin

 \subsection{Proof of Theorem \ref{teor4}}
 \noindent
 \label{ss:5.2}
 \emph{Step $1$: compactness.}
For the interpolants $\pwc \uu\tau, \, \upwc \uu\tau, \, \pwl \uu
\tau, \, \pwwll \uu \tau, \, \pwc \chi \tau, \, \pwl \chi \tau,\, \pwc \xi \tau$ of
the solutions $(\utau k, \chitau k, \zetau
k)_{k=1}^{K_\tau}$ of the discrete Problem \ref{probk-irr-iso}, estimates
\eqref{aprio1}--\eqref{aprio4}  and \eqref{aprio11}--\eqref{aprio12bis} hold. Therefore, standard strong and
weak compactness results yield that there exist $(\uu,\chi)$
fulfilling \eqref{reg-u}--\eqref{reg-chi} and a subsequence $\tau_k
\downarrow 0$ such that convergences \eqref{e:convergences-u} and
\eqref{e:converg-chi} hold. Moreover, estimates
\eqref{aprio11}--\eqref{aprio12bis} also imply that
 $\chi$ has the enhanced regularity \eqref{enhanced-regu-chi}, and
 that
\begin{equation} \label{enhanced-converg-chi}
\begin{array}{lll}
& \pwl \chi {\tau_k} \weaksto \chi & \text{ in $L^\infty
(0,T;W^{1+\sigma,p}(\Omega))\cap W^{1,\infty}(0,T;L^2(\Omega))$ for
all $0<\sigma<\frac1p$,}
\\
& \pwl \chi {\tau_k},\, \pwc \chi {\tau_k}, \, \upwc \chi {\tau_k}
\to \chi & \text{ in $L^\infty (0,T;W^{1,p}(\Omega))$.}
\end{array}
 \end{equation}
Furthermore, there exist $\zeta$ and $\xi$ such that, possibly
along a further subsequence,
\begin{align}
\label{converg-omega} & \pwc \zeta{\tau_k} \weaksto \zeta & \text{
in $L^\infty(0,T;L^2(\Omega)),$}
\\
 \label{converg-xi} &
 \beta_{\tau_k} ( \pwc \chi {\tau_k})
\weaksto \xi & \text{ in $L^\infty(0,T;L^2(\Omega))$.} \medskip
\end{align}

\noindent
\emph{Step $2$: passage to the limit.} Relying on  convergences
\eqref{e:convergences-u}, \eqref{e:converg-chi}, and
\eqref{converg-interp-f},
 we take the limit of the discrete momentum equation
\eqref{eq-discr-u-irr}. As for \eqref{eq-discr-chi-irr-iso}, we
observe that, thanks to estimate \eqref{aprio11} and the second of \eqref{enhanced-converg-chi},
there holds
\begin{equation}
\label{converg-opchi} \opchi(\pwc \chi {\tau_k}) \to \opchi(\chi) \text{
weakly$^*$ in $L^\infty(0,T;L^2(\Omega))$ and strongly in $L^\infty(0,T;W^{1,p}(\Omega)^*)$.}
\end{equation}
Therefore, also taking into account \eqref{later-on} we pass to the
limit in \eqref{eq-discr-chi-irr-iso} and conclude
$(\uu,\chi,\xi,\zeta)$ fulfill \eqref{eq-with-zeta}, with
$\Theta(\w)$ replaced by $\Theta^*$. Furthermore, combining
\eqref{e:converg-xi} with \eqref{converg-xi} we have
\[
\limsup_{k \to \infty} \int_0^T \int_\Omega \beta_{\tau_k} ( \pwc
\chi {\tau_k}) \pwc \chi {\tau_k} \dd x \dd s\leq   \int_0^T
\int_\Omega \xi\chi\dd x \dd s.
\]
 Thanks to \cite[Prop. 2.5, p. 27]{brezis} we conclude that $\xi \in \beta(\chi)$ a.e.\ in
$\Omega \times (0,T).$ Finally, testing equation
\eqref{eq-discr-chi-irr-iso} by $\partial_t \pwl \chi{\tau_k}$ and
integrating in time, with calculations analogous to
\eqref{to-be-quoted-later} we find for all $t \in [0,T]$
\[
\begin{aligned}
 &  \limsup_{k \to \infty} \int_{0}^{\pwc{\mathsf{t}}{\tau}(t) }
\int_\Omega \partial_t \pwl \chi{\tau_k} \pwc \zeta{\tau_k} \dd x
\dd s
\\
&  \leq -\liminf_{k \to \infty} \int_{0}^{\pwc{\mathsf{t}}{\tau}(t)
} \int_\Omega | \partial_t \pwl \chi{\tau_k}|^2\dd x \dd s
-\liminf_{k \to \infty} \Phi(\pwc \chi{\tau_k}(t)) -\liminf_{k \to
\infty} \int_\Omega \widehat{\beta}_{\tau_k} (\pwc \chi{\tau_k}(t))
\\ & \qquad \qquad \qquad
-\liminf_{k \to \infty} \int_{0}^{\pwc{\mathsf{t}}{\tau}(t) }
\int_\Omega \gamma(\pwc \chi{\tau_k} ) \partial_t \pwl \chi{\tau_k}
\dd x \dd s -\liminf_{k \to \infty}
\int_{0}^{\pwc{\mathsf{t}}{\tau}(t) }\io \upwc h{\tau_k}
\partial_t \pwl \chi{\tau_k} \dd x \dd s
\\
&  \leq -\int_{0}^{t} \int_\Omega | \chi_t|^2\dd x \dd s -
\Phi(\chi(t)) -\int_\Omega \widehat{\beta}(\chi(t)) -\int_{0}^{t }
\int_\Omega \gamma(\chi)\chi_t \dd x \dd s \\
& \qquad \qquad \qquad- \int_0^t \io h
\partial_t \chi \dd x \dd s
= \int_0^t \int_\Omega \chi_t \zeta \dd x \dd s
\end{aligned}
\]
(here $ \upwc h{\tau}$ is as in \eqref{short-hand}, with $\upwc
{\Theta^*}{ \tau}$ in place of $\Theta(\upwc\w\tau)$, and
 $h:=
b'(\chi) \frac{\eps(\uu)\mathrm{R}_e \eps(\uu)}{2}-\Theta^*$), where the
last inequality is due to \eqref{e:converg-chi} and
\eqref{enhanced-converg-chi}, the Mosco-convergence of  $(\widehat{\beta}_{\tau_k})_{\tau_k}$
 to $\widehat{\beta}$, the Lipschitz continuity of
$\gamma$, and \eqref{later-on}. The
last identity follows from equation \eqref{eq-with-zeta}. The
aforementioned tokens of maximal monotone operator theory allow us
to deduce from the fact that
\[
\limsup_{k \to \infty} \int_0^t
\int_\Omega \partial_t \pwl \chi{\tau_k} \pwc \zeta{\tau_k} \dd x
\dd s\leq \int_0^t \int_\Omega \chi_t \zeta \dd x \dd s,
\]
that
$\zeta \in \alpha(\chi_t)$ a.e.\ in $\Omega \times (0,T)$, which
concludes the proof.
 \fin

 \begin{remark}
 \label{more-general-alpha}
\upshape Indeed, in the proof of Theorem \ref{teor4} the fact
that $\alpha=\partial I_{(-\infty,0]}$ has never been specifically
used, therefore Thm.\ \ref{teor4} extends to a maximal monotone
operator $\alpha$ as in \eqref{1-homog}, observing that, up to
perturbing $\widehat{\alpha} $ with an affine function, it is not
restrictive to suppose that \[ 0 \in \alpha(0) \text{ and }
\widehat{\alpha}(0)=0, \ \text{ whence } \ \widehat{\alpha}(x) \geq
0 \ \text{for all } x \in \R.
\]
 \end{remark}

 \section{Analysis of the degenerating system}
 \label{sec:5}
We now address the passage to the \emph{degenerate} limit $\delta
\downarrow 0$ in the full system \eqref{eq0d}--\eqref{eq2d}. For
technical reasons which will be clarified in Remark
\ref{rmk:explanation} later on,  we focus on the
\emph{irreversible}
 case $\mu=1$, %with
  %$\opchi$ given by the $s$-Laplacian
%$A_s$ \eqref{slobo-choice},
 and  neglect the thermal expansion term in the
momentum equation, i.e.\ take  $\rho=0$.
Furthermore, we
confine the discussion to the case, in which, for $\delta >0$ the
coefficients of \emph{both} the elliptic operators in \eqref{eq1d}
are truncated, cf.\ Remark \ref{rmk:added} below.
  In particular, we will  take % for the sake of simplicity and
%without loss of generality,
  the functions $a$ and $b$ of the form
%\emph{linear}, viz.
\begin{equation}
\label{coefficients-a-b}  a(\chi)= \chi, \ \  b(\chi)= \chi,  \text{ and replace both coefficients by }
\chi+\delta.
\end{equation}
\begin{remark}
\label{trivial-case} \upshape The choice $a(\chi)=1-\chi$ and
$b(\chi)=\chi$ in \eqref{eq1d}  and the truncation of both coefficients
 would lead to the momentum equation
\begin{equation}
\label{trivial-asymo}
\ub_{tt}+\opj{(1-\chi+\delta)}{\ub_t}+\oph{(\chi+\delta)}{\uu}=\mathbf{f} \quad \text{in } H^{-1}(\Omega;\R^d),
\quad \text{a.e. in } (0,T) \end{equation}
 for which the asymptotic
analysis $\delta \down 0$ would be less meaningful in the  case of
an \emph{irreversible} evolution for $\chi$. For, starting from an
initial datum $\chi_0 \in W^{1,p}(\Omega)$ with $\max_{x \in
\overline\Omega} \chi_0(x)<1$, we   would  have $1-\chi(x,t) \geq 1-
\max_{x \in \overline\Omega} \chi_0(x)>0$ for all $(x,t) \in
\overline\Omega \times (0,T]$. Hence,  the limit $\delta \downarrow
0$  would not lead to elliptic degeneracy in \eqref{trivial-asymo}.

 Observe that the ensuing discussion  can be suitably adjusted to the choice $a(\chi)=\chi$, $b(\chi)=1-\chi$, which is meaningful for phase transition models.
%An analogous argument applies to the case (meaningful for phase transition models)
%in which $a(\chi)=\chi$, $b(\chi)=1-\chi$ and  $\max_{x \in
%\overline\Omega} \chi_0(x)<1$, whereas if we drop the latter condition, then elliptic degeneracy is not ruled out
%and  the
%discussion below can be suitably adjusted to the choice $a(\chi)=\chi$, $b(\chi)=1-\chi$.
%That is
%why we have chosen to focus on the case \eqref{coefficients-a-b}.
\end{remark}
\begin{remark}
\upshape
\label{rmk:added}
It seems to us that \emph{both} the coefficients $a$ and $b$ need to be truncated  when
taking the degenerate limit in the momentum equation. Indeed, on the one hand
the truncation  of $a$ allows us to deal with the \emph{main part} of the elliptic operator
in \eqref{eq1d}. On the other hand, in order to pass to the limit in the quadratic term on the right-hand
side of \eqref{eq2d}, we will also need to truncate $b$, cf.\ \eqref{nice-argum} later on.
\end{remark}
Theorem \ref{teor3} guarantees that for every $\delta>0$  there
exists  a triple $(\w_\delta,\uu_\delta,\chi_\delta)$ as in
\eqref{reg-ental}--\eqref{reg-chi} fulfilling  the enthalpy equation
\eqref{eq0d}  with $\rho=0$,  the momentum equation
\begin{equation}
\label{momentum-eq-delta}
\partial_{tt}\ub_{\delta}-
\dive( (\chi+\delta)\mathrm{R}_v \eps(\partial_t \uu_\delta))-
\dive( (\chi+\delta) \mathrm{R}_e \eps(\uu_\delta)) =\mathbf{f}
\quad \text{in $H^{-1}(\Omega;\R^d)$, a.e.\ in } (0,T),
\end{equation}
(where for later convenience we have  dropped the operator notation
\eqref{operator-notation-quoted}),  as well as
\begin{align}
&
\label{ineq-system2-integrated-irrev}
\partial_t\chi_\delta(x,t) \leq 0 \quad \foraa\, (x,t) \in \Omega \times(0,T),
\\
 \label{ineq-system2-integrated-delta}
 &
\begin{aligned}
\int_\Omega \Big( \left(\partial_t\chi_\delta (t) +
 \xi_\delta(t)+\gamma(\chi_\delta(t))\right)\varphi
+  \mathbf{d}(x,\chi_\delta(t)) \cdot \nabla \varphi \Big)  \dd x
  & \leq \int_\Omega
\left(   -\frac{\varepsilon(\uu_\delta(t))
\mathrm{R}_e\varepsilon(\ub_\delta(t))}{2} + \Theta(w_\delta(t))
\right)  \varphi \dd x
   \\ &   \text{for all }  \varphi \in W_+^{1,p}(\Omega), \quad
   \foraa\, t \in (0,T),
\end{aligned}
\\
\label{ineq-system2-integrated-delta-2} & \text{with }
\xi_\delta(x,t) =- \mathcal{I}_{\{\chi_\delta=0\}}(x,t) \left(
\gamma(\chi_\delta(x,t)) + \frac{\varepsilon(\ub_\delta(x,t))
\mathrm{R}_e\varepsilon(\ub_\delta(x,t))}{2}  -\Theta(w_\delta(x,t))
 \right)^+
\end{align}
for almost all $ (x,t) \in \Omega \times (0,T),$   (changing
sign in \eqref{ineq-system2-integrated} and recalling
\eqref{xi-charact}), and the energy inequality
\eqref{energ-ineq}.
% with   $\frac12 a_s(\chi,\chi)$ in place
%of $\Phi(\chi)$  (where $a_s(\cdot,\cdot)$ is the bilinear form
%associated with the $s$-Laplacian $A_s$).
As observed in Section
\ref{glob-irrev}, the family
$(\w_\delta,\uu_\delta,\chi_\delta)_\delta$ then fulfills for all $t
\in (0,T]$ the \emph{energy inequality}
\begin{equation}
\label{total-energy-ineq-delta}
\begin{aligned}
 &
\int_\Omega w_\delta(t) (\mathrm{d} x) +\frac12 \int_\Omega
|\partial_t\uu_\delta (t)|^2\, \mathrm{d} x +\int_0^t \int_\Omega
|\partial_t\chi_\delta|^2 \, \mathrm{d} x\, \mathrm{d} r
+ \int_0^t
\bilj{(\chi_\delta+\delta)}{\partial_t\uu_\delta}{\partial_t\uu_\delta}
\, \mathrm{d} r
 \\  & \quad
+\frac12 \bilh{( \chi_\delta (t)
+\delta)}{\uu_\delta(t)}{\uu_\delta(t)}   +
\Phi(\chi_\delta(t))
%\frac12a_s(\chi_\delta(t),\chi_\delta(t))
+\int_\Omega W(\chi_\delta(t)) \, \mathrm{d} x
\\ &
 \leq \int_\Omega w_0\, \mathrm{d} x +\frac12 \int_\Omega |\vv_0|^2\, \mathrm{d} x
 +\frac12 \bilh{(\chi_0+\delta)}{\uu_0}{\uu_0} +
 \Phi(\chi_0)  +\int_\Omega W(\chi_0) \, \mathrm{d} x
\\ & \qquad \qquad \qquad
+\int_0^t \int_\Omega \mathbf{f} \, \cdot \, \partial_t\uu_\delta \,
\mathrm{d} x \mathrm{d} r + \int_0^t \int_\Omega g \, \mathrm{d} x.
\end{aligned}
\end{equation}

First of all, following \cite{mrz},  in Prop.\ \ref{prop:indepe-delta} below
we deduce from equations
\eqref{eq0d}, \eqref{momentum-eq-delta}, and from
\eqref{total-energy-ineq-delta} some a priori estimates for the
family $(\w_\delta,\uu_\delta,\chi_\delta)_\delta$,
\emph{independent} of $\delta>0$.

Let us  mention in advance  that
 estimate \eqref{aprio-delta-4} for
 $(w_\delta)_\delta$ holds true only for the solutions
 $(\w_\delta,\uu_\delta,\chi_\delta)_\delta$ obtained through the time-discretization procedure
 of Section \ref{ss:3.2}. Such solutions shall be referred to as \emph{approximable}.
Indeed, on the one hand, Remark \ref{rmk:est-indep-delta} ensures that  the
\emph{discrete}
 estimates \eqref{aprio3}--\eqref{aprio5} are valid with constants independent on $\delta$:
 hence they are inherited by the approximable solutions $(w_\delta)_\delta$, yielding estimate \eqref{aprio-delta-4} below.
 On the other hand, the calculations developed for the Fourth a priori estimate in Sec.\ \ref{ss:3.2}
 suggest that, in order to prove \eqref{aprio-delta-4} for \emph{all} weak solutions
 $(\w_\delta)_\delta$
  to \eqref{eq0d},
 it would be necessary to test \eqref{eq0d} by $\varphi=\Pi(\w_\delta)$
 with $\Pi$ as in \eqref{speciaL-test-func}. This is not
 an admissible choice  due to the poor regularity of $\w_\delta$.
 Since we do not dispose
 of a uniqueness result for the \emph{irreversible full} system, we
   cannot conclude  \eqref{aprio-delta-4} for \emph{all} weak  solutions
   (in the sense of Def.\ \ref{def-weak-sol})   $(w_\delta)_\delta$, and therefore we will restrict to \emph{approximable} solutions.

 As it will be clear from the proof of Prop.\ \ref{prop:indepe-delta},
 estimates \eqref{aprio-delta-1}--\eqref{aprio-delta-3} instead hold for \emph{all} weak solutions
 $(\uu_\delta,\chi_\delta)$.
%for the isothermal system this problem does not occur.
\begin{proposition}
\label{prop:indepe-delta}
 Assume Hypotheses (I),  (II),  and   (IV)  with
$\widehat{\beta}=I_{[0,+\infty)}$,  conditions
\eqref{bulk-force}--\eqref{datochi}  on the data $\mathbf{f}, \, g,
\teta_0,  \, \uu_0, \, \vv_0, \, \chi_0$,  and suppose that
$a,\, b$ are given by \eqref{coefficients-a-b}.  %and
%$\opchi=A_s$ \eqref{slobo-choice}.
 Then, there exists a constant
$\overline{S}>0$ such that for all $\delta>0$ and for all
$(\w_\delta,\uu_\delta,\chi_\delta)_\delta$  \emph{(approximable)
weak} solutions
  to the irreversible full system,
the following estimates hold
\begin{align}
& \label{aprio-delta-1} \|\w_\delta \|_{L^\infty(0,T;L^1(\Omega))} +
\| \partial_t \uu_\delta \|_{L^\infty(0,T;L^2(\Omega;\R^d))}+
\|
\chi_\delta \|_{L^\infty(0,T;W^{1,p}(\Omega)) \cap H^1(0,T;L^2(\Omega))}  \\
\no
&\qquad\qquad+ \| W(\chi_\delta)\|_{L^\infty(0,T;L^1(\Omega))} \leq \overline{S},
\\
& \label{aprio-delta-2} \| \sqrt{\chi+\delta}\,\mathrm{R}_v\,
\eps(\partial_t \uu_\delta) \|_{L^2(0,T;L^2(\Omega;\R^{d \times
d}))} + \| \sqrt{\chi+\delta}\,\mathrm{R}_e\,\eps(\uu_\delta)
\|_{L^\infty(0,T;L^2(\Omega;\R^{d \times d}))} \leq \overline{S},
\\
& \label{aprio-delta-3} \|\partial_{tt} \ub_{\delta}\|_{L^2
(0,T;H^{-1}(\Omega;\R^d))} \leq \overline{S},
\\
& \label{aprio-delta-4} \| \w_\delta
\|_{L^r(0,T;W^{1,r}(\Omega))\cap
\mathrm{BV}([0,T];W^{1,r'}(\Omega)^*)} \leq \overline{S}.
\end{align}
\end{proposition}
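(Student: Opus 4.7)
\medskip

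\noindent\textbf{Proof plan for Proposition \ref{prop:indepe-delta}.}
The strategy is to derive \eqref{aprio-delta-1}--\eqref{aprio-delta-3} from the total energy inequality \eqref{total-energy-ineq-delta} (valid for any weak solution by Proposition \ref{more-regu}), and to obtain \eqref{aprio-delta-4} only for approximable solutions by transferring the $\delta$-uniform discrete bounds of Remark \ref{rmk:est-indep-delta} to the limit $\tau \down 0$ via lower semicontinuity.

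First I would control the right-hand side of \eqref{total-energy-ineq-delta} uniformly in $\delta \in (0,1]$. The initial terms are harmless: since $\chi_0 \in \mathrm{dom}(\widehat\beta) \subset [0,1]$ (cf.\ \eqref{databeta}) and $\uu_0 \in \boY$, one has $\bilh{(\chi_0+\delta)}{\uu_0}{\uu_0} \leq (1+\delta)\, C_2\, \|\uu_0\|_{H^1(\Omega)}^2$ by \eqref{a:conti-form}, so this contribution is bounded independently of $\delta$; the remaining initial terms $\int_\Omega w_0,\, \tfrac12\|\vv_0\|_{L^2}^2,\, \Phi(\chi_0),\, \int_\Omega W(\chi_0)$ are finite by \eqref{datoteta}--\eqref{datochi}. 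The external-force contribution is absorbed via Young's inequality, $\int_0^t\int_\Omega \mathbf{f}\cdot\partial_t\uu_\delta \leq \tfrac14\|\partial_t\uu_\delta(t)\|_{L^2}^2 + C\|\mathbf{f}\|_{L^2(L^2)}^2 + C\int_0^t\|\partial_t\uu_\delta\|_{L^2}^2\,\mathrm{d}r$ (using \eqref{e:stability-estimate}-type manipulations), and a discrete/continuous Gronwall argument closes the estimate. The bilinear-form terms on the left are precisely
\[
\bilj{(\chi_\delta+\delta)}{\partial_t\uu_\delta}{\partial_t\uu_\delta} \;\sim\; \|\sqrt{\chi_\delta+\delta}\,\mathrm{R}_v^{1/2}\eps(\partial_t\uu_\delta)\|_{L^2}^2, \qquad \bilh{(\chi_\delta+\delta)}{\uu_\delta}{\uu_\delta} \;\sim\; \|\sqrt{\chi_\delta+\delta}\,\mathrm{R}_e^{1/2}\eps(\uu_\delta)\|_{L^2}^2,
\]
so they furnish \eqref{aprio-delta-2} directly, while $\tfrac12\|\partial_t\uu_\delta(t)\|_{L^2}^2$, $\int_\Omega w_\delta(t)$, $\int_0^t\|\partial_t\chi_\delta\|_{L^2}^2$ and the coercivity \eqref{datad2} of $\Phi$ (plus the constraint $0\le \chi_\delta \le \chi_0 \leq 1$ enforced by irreversibility and $\widehat\beta = I_{[0,+\infty)}$) yield \eqref{aprio-delta-1}.

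For \eqref{aprio-delta-3} I would use a comparison in \eqref{momentum-eq-delta}. For any test $\mathbf{v}\in H_0^1(\Omega;\R^d)$, the crucial algebraic identity
\[
\pairing{}{H^1(\Omega;\R^d)}{\mathrm{div}((\chi_\delta+\delta)\mathrm{R}_v\eps(\partial_t\uu_\delta))}{\mathbf{v}} = -\int_\Omega \bigl(\sqrt{\chi_\delta+\delta}\,\mathrm{R}_v\eps(\partial_t\uu_\delta)\bigr):\bigl(\sqrt{\chi_\delta+\delta}\,\eps(\mathbf{v})\bigr)\,\mathrm{d}x
\]
together with $\sqrt{\chi_\delta+\delta}\le \sqrt{2}$ and \eqref{aprio-delta-2} gives $\|\mathrm{div}((\chi_\delta+\delta)\mathrm{R}_v\eps(\partial_t\uu_\delta))\|_{L^2(0,T;H^{-1})} \le C \|\mmu_\delta\|_{L^2(L^2)} \leq C\overline S$; the elastic term is handled analogously through $\eeta_\delta$. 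Combined with $\mathbf{f}\in L^2(L^2)$, a direct comparison in \eqref{momentum-eq-delta} then yields the bound on $\partial_{tt}\uu_\delta$.

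The hard part is \eqref{aprio-delta-4}, which is the $\delta$-uniform \textsc{Boccardo\&Gallou\"et}-type bound on $w_\delta$: passing the regularized choice $\varphi=\Pi(w_\delta)$ of the \emph{Third a priori estimate} to the time-continuous level is not justified by the regularity \eqref{reg-ental} alone, and without uniqueness of solutions to the irreversible full system we cannot assert \eqref{aprio-delta-4} for \emph{every} weak solution. The workaround is to restrict to approximable $(\w_\delta,\uu_\delta,\chi_\delta)$, i.e.\ those arising as limits $\tau_k \down 0$ of the scheme of Problem \ref{probk-irr}: by Remark \ref{rmk:est-indep-delta} the discrete estimates \eqref{aprio6}--\eqref{aprio5} hold with constants \emph{independent} of $\delta$, so the convergences \eqref{up-to-extraction} and \eqref{further-converg-w} obtained in Step~1 of the proof of Theorem \ref{teor3} transfer these bounds to the limit by weak/weak-$\ast$ and BV lower semicontinuity, yielding \eqref{aprio-delta-4} uniformly in $\delta$. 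This is the step that forces the \emph{approximability} hypothesis in the statement and that will be invoked in Theorem \ref{teor5} to extract limits of $(w_\delta)_\delta$ as $\delta\down 0$.
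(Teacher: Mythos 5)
Your plan matches the paper's proof step for step: \eqref{aprio-delta-1}--\eqref{aprio-delta-2} from the energy inequality \eqref{total-energy-ineq-delta} together with $\int_\Omega W(\chi_\delta)\,\mathrm{d}x\geq -C$, Young's inequality on the force term and Gronwall; \eqref{aprio-delta-3} by comparison in the momentum balance; and \eqref{aprio-delta-4} by transferring the $\delta$-uniform discrete bounds of Remark~\ref{rmk:est-indep-delta} to the limit by lower semicontinuity, which is exactly why the approximability restriction appears in the statement. Your explicit reading of the comparison argument through the quasi-stresses $\mmu_\delta,\eeta_\delta$ with $\sqrt{\chi_\delta+\delta}\leq\sqrt{2}$ is in fact the correct way to make the $\delta$-uniform $H^{-1}$ bound rigorous, since a naive use of \eqref{reg-pavel-a}/\eqref{a:conti-form} with $\eta=\chi_\delta+\delta\in L^\infty$ would require a $\delta$-uniform $L^2(0,T;H^1_0)$ bound on $\partial_t\uu_\delta$, which one does not have.
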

\begin{proof}
Estimates \eqref{aprio-delta-1}--\eqref{aprio-delta-2} are
straightforward consequences of the energy inequality \eqref{total-energy-ineq-delta},
taking into account that
$\int_\Omega W(\chi_\delta(t))\dd x \geq -C$
for  a constant independent of $t \in [0,T]$,
 estimating
\[
\int_0^t \int_\Omega \mathbf{f} \, \cdot \, \partial_t\uu_\delta \,
\mathrm{d} x \mathrm{d} r \leq
 \frac12 \int_0^t \int_\Omega |\mathbf{f}|^2 \dd x  + \frac12 \int_0^t \int_\Omega|\partial_t\uu_\delta|^2
\dd x \mathrm{d} r,
\]
 and applying the Gronwall Lemma. Hence, \eqref{aprio-delta-3}
 follows from a comparison in \eqref{momentum-eq-delta}, in view of
 \eqref{reg-pavel-a}. Finally, \eqref{aprio-delta-4} can be proved
 by observing that the discrete estimates
 \eqref{aprio7}--\eqref{aprio5} are in fact \emph{independent} of the parameter
 $\delta>0$, hence they carry over to  the \emph{approximable} solutions
 $(\w_\delta)_\delta$.
\end{proof}

As pointed out in \cite{mrz} (see also \cite{bmr}), estimates
\eqref{aprio-delta-2} suggest that for the analysis $\delta \down 0$
it is meaningful to work with  the quantities
\begin{equation}
\label{mu-delta_eta-delta}
\mmu_\delta:=\sqrt{\chi_\delta+\delta}\, \eps(\partial_t \uu_\delta),
\qquad \eeta_\delta:= \sqrt{\chi_\delta+\delta}\,\eps(\uu_\delta)
\end{equation}
in terms of which \eqref{momentum-eq-delta} rewrites as
\begin{equation}
\label{momentum-eq-delta-2}
\partial_{tt}\ub_{\delta}-
\dive(  \sqrt{\chi_\delta+\delta} \, \mathrm{R}_v\,\mmu_\delta)- \dive(
 \sqrt{\chi_\delta+\delta} \,\mathrm{R}_e\,\eeta_\delta ) =\mathbf{f} \quad
\text{in $H^{-1}(\Omega;\R^d)$, a.e.\ in } (0,T).
\end{equation}
For later purposes, we also observe that, in the setting of \eqref{coefficients-a-b}
 and with notation \eqref{mu-delta_eta-delta},
 the total energy inequality \eqref{total-energy-ineq} for the triple $(\w_\delta,\uu_\delta,\chi_\delta)$
can be reformulated as
\begin{equation}
\label{pass-lim-delta-tot}
\begin{aligned}
 &
\int_\Omega w_\delta(t)(\mathrm{d} x) +\frac12 \int_\Omega |\partial_t\uu_\delta (t)|^2\,
\mathrm{d} x +\int_s^t \int_\Omega |\partial_t\chi_\delta|^2 \, \mathrm{d} x\,
\mathrm{d} r + \frac12   \int_s^t \int_\Omega
\mmu_\delta(r)\, \mathrm{R}_v \,\mmu_\delta(r)  \, \mathrm{d} x \, \mathrm{d} r  \\  & \quad
 +\frac12
 \int_\Omega \eeta_\delta(t) \,\mathrm{R}_e \,\eeta_\delta(t) \, \mathrm{d} x  +   \Phi(\chi_\delta(t))  +\int_\Omega
W(\chi_\delta(t)) \, \mathrm{d} x
\\ &
 \leq \int_\Omega w_\delta(s)(\mathrm{d} x) +\frac12 \int_\Omega |\partial_t\uu_\delta
(s)|^2\, \mathrm{d} x + \frac12  \int_\Omega
 \eeta_\delta(s) \,\mathrm{R}_e \,\eeta_\delta(s) \, \mathrm{d} x  +
  \Phi(\chi_\delta(s))
\\ & \qquad \qquad +\int_\Omega W(\chi_\delta(s)) \, \mathrm{d} x
+\int_s^t \int_\Omega \mathbf{f} \, \cdot \, \partial_t\uu_\delta \, \mathrm{d} x
\mathrm{d} r + \int_s^t \int_\Omega g \, \mathrm{d} x\mathrm{d} r \,.
\end{aligned}
\end{equation}

The following result shows that the limit $\delta \down 0$ preserves
the structure \eqref{momentum-eq-delta-2} of the momentum equation,
as well as    the enthalpy equation \eqref{eq0d}. The  weak formulation
\eqref{ineq-system2-integrated}--\eqref{energ-ineq} of the equation
for  $\chi$ is generalized by \eqref{variational-limit}--\eqref{energ-ineq-lim},
cf.\ Rmk.\ \ref{rmk:comparison-with-hk}.
\begin{maintheorem}
\label{teor5}
 Assume Hypotheses (I),  (II),     (IV)  with
$\widehat{\beta}=I_{[0,+\infty)}$,  and  (V) with  $\phi$ fulfilling \eqref{pcoercive}. Assume conditions
\eqref{bulk-force}--\eqref{datochi}  on the data $\mathbf{f}, \, g,
\teta_0,  \, \uu_0, \, \vv_0, \, \chi_0$,  and suppose that
$a,\, b$ are given by \eqref{coefficients-a-b}.
 %and $\opchi=A_s$
%\eqref{slobo-choice}.
Then, there exist $\w$ as in
\eqref{reg-ental}, and
\begin{align}
& \label{reg-u-gene} \uu \in W^{1,\infty}(0,T;L^2(\Omega;\R^d))\cap
H^2(0,T;H^{-1}(\Omega;\R^d)), \ \mmu \in
L^2(0,T;L^2(\Omega;\R^{d\times d})), \\
\no
& \eeta \in
L^\infty(0,T;L^2(\Omega;\R^{d\times d})),
\\
& \label{reg-chi-gener}
 \chi \in L^\infty (0,T;W^{1,p}(\Omega))  \cap H^1 (0,T;L^2(\Omega)),
\quad \chi(x,t) \geq 0, \ \  \chi_t (x,t) \leq 0 \qquad \foraa\,
(x,t) \in  \Omega \times (0,T),
\end{align}
such that
\begin{equation}
\label{interesting}
 \mmu  = \sqrt{\chi} \,\eps(\uu_t), \ \eeta  = \sqrt{\chi} \, \eps(\uu)
 \ \text{a.e. in any open set } A \subset \Omega \times (0,T)
 \text{ s.t. } \chi >0 \ \aein \, A,
\end{equation}
 fulfilling the weak enthalpy equation \eqref{eq0d}  with $\rho=0$,  the
\emph{weak} momentum equation
\begin{equation}
\label{momentum-eq-delta-limit}
\partial_{tt}\ub-
\dive( \sqrt{\chi}\,\mathrm{R}_v\,\mmu)- \dive(
\sqrt{\chi}\mathrm{R}_e\,\,\eeta )) =\mathbf{f} \quad \text{in
$H^{-1}(\Omega;\R^d)$, a.e.\ in } (0,T),
\end{equation}
as well as %\bebe  Si puo' localizzare in tempo? \ebe
\begin{equation}
\label{variational-limit}
\begin{aligned}
&  \int_0^T \int_\Omega \Big(  \left(\partial_t\chi+
\gamma(\chi)\right)\varphi + \mathbf{d}(x,\nabla\chi) \cdot \nabla
\varphi \Big)  \dd x  \dd t  \leq \int_0^T \int_\Omega \left(
 -\frac1{2 \chi} \eeta \,\mathrm{R}_e\, \eeta + \Theta(w)  \right) \varphi \dd x
\dd t
 \\ &
 \qquad   \quad   \text{for all }  \varphi \in   L^p
 (0,T;W_+^{1,p}(\Omega))
\cap L^\infty (Q) \text{ with } \mathrm{supp}(\varphi) \subset \{
\chi>0\},
\end{aligned}
\end{equation}
and the \emph{total energy inequality} for  almost all $t\in (0,T]$
\begin{align}
&
\label{energ-ineq-lim}
\begin{aligned}
 &
\mathcal{H}(t)
 +\int_0^t \int_\Omega |\chi_t|^2 \, \mathrm{d} x\,
\mathrm{d} r + \frac12 \int_0^t  \int_\Omega
\mmu(r) \, \mathrm{R}_v\, \mmu(r) \, \mathrm{d} x  \, \mathrm{d} r
\\
  &  \leq  \int_\Omega w_0\, \mathrm{d} x +\frac12 \int_\Omega |\vv_0|^2\, \mathrm{d} x
  +  \frac12 \bilh{\chi_0}{\uu_0}{\uu_0}  +
  \Phi(\chi_0)  +\int_\Omega W(\chi_0) \, \mathrm{d} x
 \\
 & \qquad \qquad \qquad \qquad \qquad
+\int_0^t \int_\Omega \mathbf{f} \, \cdot \, \uu_t \, \mathrm{d} x
\mathrm{d} r + \int_0^t \int_\Omega g \, \mathrm{d} x \mathrm{d} r
\end{aligned}
\\
&
\label{inequality-H-sec7}
\begin{aligned}
\text{with } \mathcal{H}(t)   &  \geq
\int_\Omega w(t)(\mathrm{d} x)   +\frac12 \int_\Omega |\partial_t\uu  (t)|^2\, \mathrm{d}x  %  +\int_0^t \int_\Omega |\chi_t|^2 \, \mathrm{d} x\,
%\mathrm{d} r +  \frac12 \int_0^t\int_\Omega
%\mmu(r) \, \mathrm{R}_v\, \mmu(r) \, \mathrm{d} r
  +  \Phi(\chi(t))
  +\int_\Omega
W(\chi(t)) \, \mathrm{d} x + \mathcal{J}(t),
\\
&
\qquad \qquad \qquad \qquad \qquad
  \text{where }  \mathcal{J}(t):= \frac12 \liminf_{\delta_k \down 0} \int_\Omega
\eeta_{\delta_k}(t) \,\mathrm{R}_e \,\eeta_{\delta_k}(t) \dd x,
\end{aligned}
\end{align}
(with $(\eeta_{\delta_k})$ a suitable subsequence of
$(\eeta_\delta)$ from \eqref{mu-delta_eta-delta}),
and for all $0 \leq t_1\leq t_2 \leq T$ there holds
\begin{equation}
\label{integral-inequality-forH}
 \!\!\! \int_{t_1}^{t_2}  \mathcal{H}(r)    \dd r
  \geq  \int_{t_1}^{t_2}  \left(
\int_\Omega w(r)(\mathrm{d} x)   +  \Phi(\chi(r))  +
\int_\Omega\left( \frac12 |\partial_t\uu (r)|^2 +W(\chi(r) +\frac12
\eeta(r) \,\mathrm{R}_e \,\eeta(r) \right) \dd x    \right) \dd r\,.
\end{equation}
\end{maintheorem}
%\bebe Si puo' scrivere pointwise? O solo integrata? \ebe
\begin{remark}
\upshape \label{rmk:comparison-with-hk} Let us briefly compare the
concept of weak solution (to the  \emph{degenerating} irreversible
full system \eqref{eq0d}--\eqref{eq2d}) arising from
\eqref{interesting}--\eqref{energ-ineq-lim}, with the notion of weak
solution (to the  \emph{non-degenerating} irreversible full system
\eqref{eq0d}--\eqref{eq2d}) given in Definition \ref{def-weak-sol},
in the case  in which  %the $p$-Laplacian operator
%\eqref{defAp} is replaced by the $s$-Laplacian \eqref{bilinear-s_a},
%and
$a(\chi)=b(\chi)=\chi$.  Suppose that the functions
$(\uu,\chi)$ in \eqref{reg-u-gene} and \eqref{reg-chi-gener}
 have
 further regularity properties
\eqref{reg-u}--\eqref{reg-chi}, and  that $\chi>0$ a.e.\ in $\Omega\times (0,T)$.
 Then, \eqref{interesting} holds a.e.\ in $\Omega\times (0,T)$, hence it is immediate to realize
 that \eqref{variational-limit} coincides with \eqref{ineq-system2-integrated}. Furthermore,
 subtracting from \eqref{energ-ineq-lim} the weak enthalpy equation
 \eqref{eq0d} tested by $1$, we obtain
 a generalized form of
  the energy inequality \eqref{energ-ineq} for almost all  $t \in (0,T]$
 and for $s=0$. %Indeed,
 %just like in \cite{mrz}, it remains an open problem to prove the
 %(degenerate) total energy inequality \eqref{energ-ineq-lim} on \emph{any} interval $[s,t]\subset [0,T]$.
\end{remark}

\noindent
\begin{proof}
It follows from estimates
\eqref{aprio-delta-1}--\eqref{aprio-delta-4} and the same
compactness arguments as in the proofs of Thms.\ \ref{teor1} and
\ref{teor3} that there exist a vanishing sequence $\delta_k \down 0$
and  functions $\w$ as in \eqref{reg-ental} and
$(\uu,\chi,\mmu,\eeta)$ as in
\eqref{reg-u-gene}--\eqref{reg-chi-gener}
 such that as $k \to \infty$
 \begin{align}
 &
\label{convergences-degen-w}
 \w_{\delta_k} \to w  \text{ in $L^r(0,T;W^{1-\epsilon,r}(\Omega)) \cap L^s (0,T;L^1(\Omega))$
  for all $\epsilon \in (0,1]$
 and all $1\leq s<\infty$,}
 \\
& \label{convergences-degen-u-1}
 \uu_{\delta_k} \weaksto \uu  \text{ in $W^{1,\infty}(0,T;L^2(\Omega;\R^d)) \cap H^{2}(0,T;H^{-1}(\Omega;\R^d)) $,}
 \\
 &
 \label{convergences-degen-mu}
 \mmu_{\delta_k} \weakto \mmu  \text{ in
 $L^2(0,T;L^2(\Omega;\R^{d\times d}))$,}
 \\
 &
 \label{convergences-degen-eta}
 \eeta_{\delta_k} \weaksto \eeta  \text{ in
 $L^\infty(0,T;L^2(\Omega;\R^{d\times d}))$,}
 \\
& \label{convergences-degen-chi-1} \chi_{\delta_k} \weaksto \chi
\text{ in
 $  L^\infty(0,T;W^{1,p}(\Omega)) \cap H^1 (0,T;L^2(\Omega))$,}
 \\
&
 \label{convergences-degen-chi-2}
\chi_{\delta_k} \to \chi  \text{ in
 $\mathrm{C}^0([0,T];\mathrm{C}^0(\overline\Omega))$, }
 \end{align}
the latter convergence due to the compactness results in
\cite{simon} and the compact embedding $W^{1,p}(\Omega)
\Subset \mathrm{C}^0(\overline\Omega)$.  Observe that
\eqref{convergences-degen-u-1} and \eqref{convergences-degen-chi-1}
respectively yield
\begin{equation}
\label{eq:-added-conve-sec7}
\partial_t  \uu_{\delta_k} \to \uu_t \text{ in $  \mathrm{C}^0_{\mathrm{weak}}([0,T];L^2 (\Omega;\R^d))$}, \qquad \chi_{\delta_k} \to \chi
\text{ in   $  \mathrm{C}^0_{\mathrm{weak}}([0,T];W^{1,p}
(\Omega))$.}
\end{equation}
From \eqref{convergences-degen-w}, exploiting \eqref{conseq-1} we
deduce that
\begin{equation}
\label{conv-teta-wk} \Theta(\w_{\delta_k}) \to \Theta(\w) \quad
\text{ in $L^2(0,T;L^2(\Omega))$.}
\end{equation}
 Thus, we are in the position of
passing to the limit as $\delta_k \down 0$ in \eqref{eq0d} for the
functions $(\w_{\delta_k},\chi_{\delta_k})$, and conclude
\eqref{eq0d} for $(\w,\chi)$.

Exploiting \eqref{convergences-degen-chi-2} and the fact that
$t\mapsto \chi_\delta (x,t)$ is nonincreasing for all $x \in
\overline{\Omega},$ with the very same argument as in the proof of
\cite[Prop. 4.3]{mrz} it is possible to prove that $\mmu$ and
$\eeta$ have the form \eqref{interesting}. In order to do that we can use the boundedness
of $\e(\uu_{\delta_k})$ and of $\e(\partial_t\uu_{\delta_k})$ in $L^2(K; \RR^{d\times d})$
for any compact cylinder $K$ of the form $ K_0 \times [0,t]$ on which $\chi>0$. {Notice that on these cylinders $\chi\geq \bar\delta>0$ for some $\bar\delta>0$. Hence, }
 exploiting convergence \eqref{convergences-degen-chi-2}, we infer that
%exploiting the compact embedding $H^s(\Omega) \Subset \mathrm{C}^0(\overline\Omega)$, we have
%that for any  $\bar{\delta}>0$
 there exists  $\delta_0>0$ such that,
for any $0<\delta_k\leq \delta_0$, we have $\chi_{\delta_k}(x,t)+\delta_k\geq \bar\delta$ for all $x\in K_0$. Thus also
$\chi_{\delta_k}(x,s)+\delta_k\geq \bar\delta$ for all $(x,s)\in K= K_0 \times [0,t]$ because
$t\mapsto \chi_\delta (x,t)$ is nonincreasing for all $x \in
\overline{\Omega}$. Then we can identify at the limit $\mmu$ and $\eeta$ and cover $A$
in \eqref{interesting} by cylinders of the form $K$ above.
Hence, relying on
\eqref{convergences-degen-u-1}--\eqref{convergences-degen-chi-2} it
is immediate to pass to the limit in \eqref{momentum-eq-delta-2} and
conclude \eqref{momentum-eq-delta-limit}.

Next, we prove that
\begin{equation}
\label{to-prove-delta}  \chi_{\delta_k} \to \chi \qquad \text{in }
L^p(0,T;W^{1,p}(\Omega))\,.
\end{equation}
For this, we repeat the arguments from Step $2$ in the proof of  Thm.\ \ref{teor3}, based on  \cite[Lemma
5.11]{hk1}. Namely, we apply
\cite[Lemma 5.2]{hk1},
which gives
 a sequence $(\varphi_{\delta_k})_k
\subset L^{p}(0,T;W_+^{1,p}(\Omega)) \cap L^\infty (Q)$, such that
$  \varphi_{\delta_k} \to \chi $   in
$L^{p}(0,T;W^{1,p}(\Omega))$ and $ 0 \leq  \varphi_{\delta_k} \leq \chi_{\delta_k}  $ a.e.\ in  $\Omega \times (0,T).$
Relying on  assumption \eqref{pcoercive}, with the same calculations as in \eqref{from-hk} we then have
\begin{equation}
\label{from-hk-delta}
\begin{aligned}
 & c_7 \int_0^T \int_\Omega |\nabla \chi_{\delta_k} - \nabla \chi|^p
\dd x \dd s
   \\ &
   \leq
   \int_0^T \int_\Omega\mathbf{d}(x,\nabla  \chi_{\delta_k})
   \cdot \nabla( \chi_{\delta_k} -\varphi_{\delta_k}) \dd x \dd s
   +
    \int_0^T \int_\Omega\mathbf{d}(x,\nabla\chi_{\delta_k})
   \cdot \nabla(\varphi_{\delta_k}-\chi) \dd x \dd s
\\
& \qquad \qquad\qquad\qquad\qquad\qquad
    -
    \int_0^T \int_\Omega\mathbf{d}(x,\nabla \chi)
   \cdot \nabla(\chi_{\delta_k}-\chi) \dd x \dd s
   \doteq I_{17}+ I_{18}+I_{19}.
\end{aligned}
\end{equation}
 Now, choosing
$\widetilde{\varphi}_{\delta_k}:=  \varphi_{\delta_k}-  \chi_{\delta_k} $
as a test function for \eqref{ineq-system2-integrated-delta}
  and integrating in time,  we obtain
\[
\begin{aligned}
I_{17}   = \int_0^T \int_\Omega \left(\partial_t \chi_{\delta_k} + \xi_{\delta_k}+
  \gamma(\chi_{\delta_k}) +   \frac{\varepsilon(\uu_{\delta_k})
\mathrm{R}_e\varepsilon(\ub_{\delta_k})}{2} - \Theta(w_{\delta_k})\right)( \varphi_{\delta_k}{-}
 \chi_{\delta_k}) \dd x \dd s \doteq I_{17}^{a} + I_{17}^{b} +I_{17}^{c},
 \end{aligned}
 \]
 where
 \[
 \begin{aligned}
 &
 I_{17}^{a} = \int_0^T \int_\Omega \left(\partial_t \chi_{\delta_k} +
  \gamma(\chi_{\delta_k})  - \Theta(w_{\delta_k})\right)( \varphi_{\delta_k}{-}
 \chi_{\delta_k}) \dd x \dd s   \leq C \|  \varphi_{\delta_k}-
\chi_{\delta_k}\|_{L^2(0,T;L^2(\Omega))} \to 0 \text{ as $k \to
\infty$},
\\
&
 I_{17}^{b} =  \int_0^T  \pairing{}{W^{1,p}(\Omega)}{\xi_{\delta_k}}{\varphi_{\delta_k}{-}
 \chi_{\delta_k}} \dd s \leq 0
 \\
 &
  I_{17}^{c} =  \int_0^T \int_\Omega \frac{\varepsilon(\uu_{\delta_k})
\mathrm{R}_e\varepsilon(\ub_{\delta_k})}{2} \left(\varphi_{\delta_k}{-}
 \chi_{\delta_k}\right) \dd x \dd s \leq 0
 \end{aligned}
 \]
 the second inequality due to \eqref{xi-def}, and the third one to the fact that $\varphi_{\delta_k}\leq
 \chi_{\delta_k}$ a.e.\ in $\Omega \times (0,T)$. Calculations completely analogous to
 the ones developed in the proof of  Thm.\ \ref{teor3} yield that $I_{18}, \, I_{19} \to 0$ as $k\ \to \infty$.
 In this way, from \eqref{from-hk-delta}  we conclude
\eqref{to-prove-delta}.

We are now in the position  to pass to the limit in (the time-integrated version
of)  \eqref{ineq-system2-integrated-delta} and conclude
\eqref{variational-limit}. To this aim,  we observe that, for any fixed test
function $\varphi$ as in \eqref{variational-limit}, $\supp(\varphi)$
is a compact subset of $\overline{\Omega}\times [0,T]$. Hence  there
exists $\underbar{\chi}>0$ such that $\chi(x,t) \geq
\underbar{\chi}>0$ for all $(x,t )\in \supp(\varphi)$, and, by
\eqref{convergences-degen-chi-2}, there exists $\bar{k} \in
\mathbb{N}$ such that for $k \geq \bar k$
\begin{equation}
\label{preliminary} \chi_{\delta_k}(x,t) \geq \frac12
\underbar{\chi}>0 \quad \text{for all }(x,t) \in \supp(\varphi).
\end{equation}
 Therefore,
\begin{equation}
\label{escamotage} \lim_{k \to \infty} \int_0^T \int_\Omega
\xi_{\delta_k} \varphi \dd x \dd t =0,
\end{equation}
since $\supp(\xi_\delta) \subset \{ \chi_\delta=0\}$ by
\eqref{ineq-system2-integrated-delta-2}.
Also exploiting \eqref{to-prove-delta}, we succeed in taking the limit of the left-hand side of
\eqref{ineq-system2-integrated-delta}.
 As for the
right-hand side, we use \eqref{conv-teta-wk} and argue in the
following way
\begin{equation}
\label{nice-argum}
\begin{aligned}
  \limsup_{\delta_k \to 0} \left(-\int_0^T \int_\Omega
 \frac{\varepsilon(\uu_{\delta_k})
\mathrm{R}_e\varepsilon(\ub_{\delta_k})}{2}\varphi \dd x \dd t \right)
 &  = - \liminf_{\delta_k \to 0} \int_0^T \int_\Omega
\frac{1}{2(\chi_{\delta_k}+\delta_k)} \eeta_{\delta_k}
\,\mathrm{R}_e \,\eeta_{\delta_k} \varphi  \dd x \dd t
\\ &
\leq - \int_0^T \int_\Omega \frac{1}{2\chi} \eeta \,\mathrm{R}_e
\,\eeta \varphi \dd x \dd t\,,
\end{aligned}
\end{equation}
where   we have used that, thanks to
\eqref{convergences-degen-chi-2} and \eqref{preliminary},
$\frac{1}{2(\chi_{\delta_k}+\delta_k)}  \to \frac1{2\chi}$ uniformly
on $\supp(\varphi)$, thus the last inequality e.g.\ follows from the lower
semicontinuity result of  \cite{acerbi-buttazzo}.

Finally, \eqref{energ-ineq-lim} follows from taking  the limit as
$\delta_k \to 0$ of the total energy inequality \eqref{pass-lim-delta-tot}, written on the interval
$(0,t)$ for \emph{any} $t \in (0,T]$.
Observe  that, by \eqref{pass-lim-delta-tot}   (cf.\ also the
arguments in the proofs of \cite[Prop.\ 4.3]{mrz}), %and Prop.\ \ref{more-regu}),
the map
\[
\begin{aligned}
t\mapsto \mathcal{H}_\delta(t):=
 \int_\Omega w_\delta(t)\, \mathrm{d}x & +\frac12 \int_\Omega |\partial_t\uu_\delta (t)|^2\, \mathrm{d}x
  %+\int_0^t \int_\Omega |\partial_t\chi_\delta|^2 \, \mathrm{d}x \, \mathrm{d}r   + \frac12 \int_0^t
%\int_\Omega\mu_\delta\, \mathrm{R}_v \,\mu_\delta\, \mathrm{d}r  \\ &
 +\frac12
\int_\Omega \eeta_\delta(t) \,\mathrm{R}_e \,\eeta_\delta(t) \, \mathrm{d}x
 +  \Phi(\chi_\delta(t)) +\int_\Omega
W(\chi_\delta(t))\, \mathrm{d}x
\end{aligned}
\]
has (uniformly) bounded variation.
 Therefore, by Helly's theorem up to a subsequence there exists $\mathcal{H}$ such that
$\mathcal{H}_{\delta_k}(t) \to \mathcal{H}(t)$   for  all  $t \in [0,T]$.   To identify $\mathcal{H}$,  we
 take the $\liminf$ as $\delta_k \to 0$
  of the first, second,   fourth,   and fifth term  in $\mathcal{H}_\delta(t)$,  exploiting
convergences \eqref{convergences-degen-w},  %}--\eqref{convergences-degen-mu},
\eqref{convergences-degen-chi-1}, \eqref{convergences-degen-chi-2},
 \eqref{eq:-added-conve-sec7},  as well as \eqref{to-prove-delta},
and relying on
lower semicontinuity arguments.
 Therefore we conclude that \eqref{inequality-H-sec7} holds.
 Finally,  inequality
\eqref{integral-inequality-forH} follows combining the following facts: on the one hand,
since $(\mathcal{H}_{\delta})_\delta \subset L^\infty (0,T)$ is uniformly bounded due to estimates
\eqref{aprio-delta-1}--\eqref{aprio-delta-4},
the  dominated convergence theorem ensures
\[
\int_{t_1}^{t_2} \mathcal{H}_{\delta_k}(r) \dd r \to  \int_{t_1}^{t_2} \mathcal{H}(r) \dd r \ \text{as } k \to \infty \quad \text{for all } 0 \leq t_1 \leq t_2 \leq T.
\]
On the other hand,
 on account
convergences
\eqref{convergences-degen-w}--\eqref{convergences-degen-chi-2},
by weak lower semicontinuity arguments we have that
$\liminf_{k \to \infty} \int_{t_1}^{t_2} \mathcal{H}_{\delta_k}(r) \dd r $ is greater or equal than the right-hand side of \eqref{integral-inequality-forH}. This concludes the proof.
%  Observe that
%\eqref{energ-ineq-lim} for $\mathcal{J}$ only holds in an integral
%form, since \eqref{convergences-degen-u-1}--\eqref{convergences-degen-chi-1} are not pointwise in time.
\end{proof}

\begin{remark}
\upshape
\label{rmk:explanation}
As it is clear from the above lines, the proof of Thm.\ \ref{teor5}
  strongly relies on the following properties:
\begin{compactenum}
\item[1.] the compact embedding of  $W^{1,p}(\Omega)$   into $\mathrm{C}^0(\overline\Omega)$;
%\item[2.] the fact that the $s$-Laplacian operator is linear: if instead we had stayed with the
 %$p$-Laplacian operator, we would have
%not been able to pass to the limit in  the nonlinear term
 %$|\nabla\chi_\delta|^{p-2}\nabla\chi_\delta\nabla\zeta$
 %featuring in inequality \eqref{variational-limit} in place of $a_s(\chi_\delta, \zeta)$;
\item[2.] the fact that
$t\mapsto \chi_\delta (t,x)$ is nonincreasing for all $x \in\overline{\Omega}$, which follows from the
irreversibility constraint.
%\item[4.]
\end{compactenum}
These are the reasons why  we have restricted the analysis of the degenerate limit
 to the irreversible system. Within this setting, we further need to
assume $\rho=0$. Indeed, because of the lack of
 estimates on $\dive(\ub_t)$ for $\delta \down 0$,
 we would not be able to
 the limit in the term
 $\rho \dive(\ub_t) \Theta(\w)$ in
 \eqref{eq0d} as
$\delta\down 0$.

 We also point out that, seemingly, the total energy inequality \eqref{energ-ineq-lim}
cannot be improved to an inequality holding on any subinterval $(s,t)\subset (0,T)$. Indeed, for
the sequence $(\eeta_{\delta_k})_k$ only the weak convergence \eqref{convergences-degen-eta} is available,
which does not allow us to take the limit of the right-hand side of \eqref{pass-lim-delta-tot} but for $s=0$.

Finally, observe that the proof of Thm.\ \ref{teor5} simplifies if the operator $\mathcal{B} $ is given by the
nonlocal $s$-Laplacian operator $A_s$. In this case, in order to pass to the limit in \eqref{ineq-system2-integrated-delta} it is no longer necessary to prove the strong convergence \eqref{to-prove-delta}
for $(\chi_{\delta_k})_k$. In fact, the term $ \mathbf{d}(x,\chi_{\delta_k}) \cdot \nabla \varphi $ in  \eqref{ineq-system2-integrated-delta} is replaced by  $a_s (\chi_{\delta_k},\varphi)$,  which can be dealt with by
 weak convergence arguments due to the linearity of the operator $A_s$.

% Note that in
%\cite{mrz} the authors could deal with the $p$-Laplacian operator because
%in the rate-independent case the energetic formulation does not display the nonlinear coupling of the solution
%$\chi_\delta$ and the test function $|\nabla\chi_\delta|^{p-2}\nabla\chi_\delta\nabla\zeta$, but a term
%of the type $|\nabla\chi_\delta|^p-|\nabla\zeta|^p$ which is easier to handle.
\end{remark}
%%%
\paragraph{Acknowledgments.}
The authors would like to thank all the referees for their careful reading of the paper, and in particular one of them for a very useful suggestion on how to improve Thm.\ \ref{teor5}.   They are also grateful to  Christiane Kraus and Christian Heinemann for  fruitful discussions on some topics related to this paper. 
%%%%
\bibliographystyle{alpha}

\end{document}